\documentclass[12pt,a4paper]{amsart}
\usepackage[utf8]{inputenc}
 \usepackage[english]{babel}
\usepackage[a4paper,inner=2.8cm,outer=2.5cm,top=2.8cm,bottom=2.5cm]{geometry}     
\usepackage[all]{xy}
\usepackage{microtype}
\usepackage{booktabs}   
\usepackage{tabularx}   
\usepackage{makecell}   
\usepackage{ragged2e}
\usepackage{array} 
\usepackage{amsmath,amssymb,amscd,amsthm,amsfonts,anyfontsize,color,enumerate,fix-cm,layout,lipsum,lpic,mathrsfs,mdwlist,stmaryrd,tensor,tikz,thmtools,xspace,comment}
\usepackage{float}
\usepackage{pdfpages}
\usepackage{graphicx}
\usepackage[all]{xy}
\usepackage{mathtools}
\usepackage[bottom]{footmisc}
\usepackage{ifthen}
\usepackage{xfrac}
\usepackage{tikz-cd}
\theoremstyle{plain}                 
\newtheorem{theorem}{Theorem}[section]     
\newtheorem{proposition}[theorem]{Proposition}
\newtheorem{corollary}[theorem]{Corollary}

\newtheorem{lemma}[theorem]{Lemma}    
\theoremstyle{definition}           
\newtheorem{definition}[theorem]{Definition} 
\newtheorem{problem}[theorem]{Problem}
\newtheorem{example}[theorem]{Example}

\theoremstyle{remark}       
\newtheorem{remark}[theorem]{Remark} 
\newtheorem{conjecture}[theorem]{Conjecture} 

\usetikzlibrary{calc,decorations.markings,positioning}

\usepackage{hyperref}
\hypersetup{colorlinks=true,linkcolor=blue,citecolor=purple,filecolor=magenta,urlcolor=cyan}

\newcommand{\R}{\mathbb{R}}

\def\bea{\begin{eqnarray}}
\def\eea{\end{eqnarray}}

\newcolumntype{M}[1]{>{\centering\arraybackslash}m{#1}}

\newcommand{\boris}[1]{\textcolor{red}{#1}}

\begin{document}
\usetikzlibrary{arrows.meta}
\title[Electrical networks, Grassmannians, and cluster algebras]{Electrical networks, Grassmannians, and cluster algebras}

\author[B.~Bychkov]{B.~Bychkov}
\address{B.~B.: Department of Mathematics, University of Haifa, Mount Carmel, 3498838, Haifa, Israel}
\email{bbychkov@hse.ru}

\author[L. Guterman]{L.~Guterman}
\address{L.~G.: Einstein Institute of Mathematics, Hebrew University, Jerusalem 91904, Israel.}
\email{lazar.guterman@mail.huji.ac.il}

\author[A. Kazakov]{A.~Kazakov}
\address{A.~K.: National Research University Higher School of Economics;  Centre of Integrable Systems, P. G. Demidov Yaroslavl State University, Sovetskaya 14, 150003, Yaroslavl, Russia}
\email{anton.kazakov.4@mail.ru}
\begin{abstract}

The paper studies the problem of circular total positivity of the symmetric matrices with zero row sums. These matrices are exactly response matrices of the electrical networks. Alman, Lian and Tran described tests for circular total positivity in two related frameworks: the cluster algebra $\mathcal{CM}_n$ and the Laurent Phenomenon algebra $\mathcal{LM}_n$. 
Our first result is the construction of a seed in Scott's cluster algebra structure on the coordinate ring of the Grassmannian $\mathrm{Gr}(n-1,2n)$ that consists entirely of circular minors. We compare the cluster structure induced by this seed with $\mathcal{CM}_n$. In particular, for odd $n$ the cluster algebra structure $\mathcal{CM}_n$ is isomorphic to the cluster algebra structure on $\mathrm{Gr}(n-1,2n)$ subject to natural freezing and trivialization of certain cluster variables in their initial seeds. We use this isomorphism to relate circular total positivity to positivity in the Grassmannian.
Our second result is that the Laurent Phenomenon algebra $\mathcal{LM}_n$ is isomorphic to the coordinate ring of the noncompactified space of electrical network, or equivalently, 
to a certain localization of the grove algebra.

\medskip

\noindent\textbf{MSC2020:} 13F60, 14M15, 05E99, 14A05, 94C05

\medskip

\noindent\textbf{Keywords:} electrical networks; cluster algebras; totally non-negative Grassmannians

\end{abstract}
\maketitle

\tableofcontents

\section{Introduction} \label{sec:background}

\subsection{State of the art}
A matrix $A \in \mathrm{Mat}_{n \times n}(\mathbb R)$ is called  {\itshape totally positive} if all its minors are positive. The theory of totally positive matrices, initially motivated by problems in physics \cite{Gan}, currently plays an important role in algebra and geometry. 
A {\itshape positivity test} is a minimal set of minors, whose positivity guarantees the positivity of all minors, see \cite{BFZ}. It turned out that such tests can be systematically described using {\itshape cluster algebras} \cite{FZ tests and par, FZ positivity, FZ I, BFZ III}. 

A {\itshape cluster algebra} is a commutative ring generated inside an ambient field by a family of distinguished generators called {\itshape cluster variables} which are grouped into {\itshape clusters}. These generators are produced recursively from a
fixed initial family of variables by a process called {\itshape mutation} inside the ambient field. A {\itshape quiver}, whose vertices are labeled by the variables of the cluster, is associated with each cluster; and a mutation is a rule to transform the quiver. A common way to generate a cluster algebra is to start from a single {\itshape seed}, i.e. a pair of a quiver and the corresponding cluster, and generate the cluster algebra by means of all possible mutations.

The cluster algebra that encodes total positivity of matrices was constructed in \cite{BFZ III}. This became one of the foundational examples in the theory of cluster algebras \cite{FZ I, FZ II, BFZ III}. The cluster algebra $\mathcal{A}_{k,n}$ on the coordinate ring $\mathbb C[\mathrm{Gr}(k,n)]$ of the Grassmannian $\mathrm{Gr}(k,n)$ was constructed in \cite{Sc}. 

The {\itshape totally non-negative Grassmannian} $\mathrm{Gr}_{\ge0}(k,n)$ was described combinatorially in many different ways in \cite{Pos}. 
This raises the question of describing a positivity test for a point in $\mathrm{Gr}(k,n)$ in terms of the Plücker coordinates. The cluster algebra $\mathcal{A}_{k,n}$ encodes these positivity tests. There is a relation between total positivity of matrices and positivity in the Grassmannian. Namely, there is a seed in $\mathcal{A}_{n,2n}$ that coincides with a seed in the cluster algebra for matrices. Thus, tests for total positivity of matrices are induced by tests for positivity of points in the Grassmannian $\mathrm{Gr}(n,2n)$, see \cite[Remark 6.7.15]{FZW Ch6}.

A {\itshape circular minor} is a minor whose rows and columns indices, if arranged on a circle, lie in a special {\itshape circular clockwise order}. A {\itshape circular total positivity} of a matrix is a relaxation of total positivity meaning that all signed circular minors are positive. In addition to totally positive matrices, \textit{circular totally   positive} matrices also play an important role in applications, see \cite{BDV, CM, Lom}.

An \textit{electrical network} with $n$ boundary vertices induces a matrix that measures the electrical properties of the network and is called a {\itshape response matrix}. The set of response matrices for \textit{well-connected networks} (i.e. the general position networks) was completely characterized in \cite{CM} as a special class of circular totally positive matrices. Namely, these are symmetric circular totally positive matrices whose row sums equal to zero. Thus, it is natural to study positivity tests for circular minors. One such test is given by the so-called {\itshape central circular minors} as shown in \cite{KW space}. In \cite{ALT}, positivity tests on circular minors were studied systematically, and in this way, the authors constructed the Laurent Phenomenon algebra $\mathcal{LM}_n$ and the cluster algebra $\mathcal{CM}_n$ on circular minors, whose initial seeds consist of central circular minors. 

The embedding $\mathcal{Y}: E_n \to \mathrm{Gr}_{\geq 0}(n-1,2n)$ of the set of electrical networks modulo the {\itshape electrical equivalence} into the totally non-negative Grassmannian $\mathrm{Gr}_{\ge0}(n-1,2n)$ was constructed in \cite{L} and subsequently studied in \cite{CGS, BGKT, BGGK}.

In the present paper we prove that Plücker coordinates of a certain form on $\mathrm{Gr}(n-1,2n)$  and {\itshape contiguous} circular pairs are in a natural bijection. 
We next find a seed in the cluster algebra $\mathcal{A}_{n-1,2n}$ in which 
every Plücker coordinate corresponds to a central circular pair, and hence equals to a
central circular minor after restriction to Lam’s image. Also, we note that the obtained seed is closely related to the initial seed of the cluster algebra $\mathcal{CM}_n$. The initial seeds of $\mathcal{A}_{k,n}$ are given by {\itshape plabic graphs} and their {\itshape clusters} are given by {\itshape Scott's rule}. For odd $n$, we choose as {\itshape the fixed initial seed} of $\mathcal{A}_{n-1,2n}$ the plabic graph obtained by {\itshape generalized Temperley's trick} from a certain well-connected electrical network.

The first main result of the paper is that for odd $n$ the cluster algebra structure $\mathcal{A}_{n-1,2n}$ coincides with the cluster algebra structure $\mathcal{CM}_n$ after a natural freezing and subsequent  trivialization of certain variables in the initial seeds (see Theorem \ref{thm: cluster algebras coinside}). 

\begin{theorem} \label{thm: introduction}
Let $n$ be a positive odd integer. 
The cluster algebra structure  $\mathcal{A}_{n-1,2n}$ coincides with the cluster algebra structure $\mathcal{CM}_n$ subject to natural freezing and subsequently trivializing of certain cluster variables. 
\end{theorem}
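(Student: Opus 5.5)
The plan is to compare the two structures seed by seed. Since a cluster algebra structure is determined by any one of its seeds, it suffices to find one seed of $\mathcal{A}_{n-1,2n}$ and one seed of $\mathcal{CM}_n$ that match after the prescribed freezing and trivialization.

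\emph{Step 1: the plabic seed.} I would take the well-connected electrical network whose response matrix has the central circular minors as its natural coordinates; for odd $n$ this is the standard ``cylindrical'' or critical network. Applying the generalized Temperley trick to it gives a reduced plabic graph $G$ for the top cell of $\mathrm{Gr}(n-1,2n)$. Scott's rule then labels the faces of $G$ by $(n-1)$-subsets of $[2n]$.

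\emph{Step 2: identify the labels.} Using the bijection between Plücker coordinates of the relevant form and contiguous circular pairs, I would show that every face label of $G$ corresponds to a central circular pair. Restricted to Lam's image $\mathcal{Y}(E_n)$, each such Plücker coordinate becomes the corresponding central circular minor, up to a fixed monomial or sign normalization. The bookkeeping here is by face position: faces of $G$ correspond to vertices and faces of the medial graph of the network, and hence to centered pairs $(P,Q)$ of the right size.

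\emph{Step 3: compare quivers.} Next I would compute the quiver of $G$, using the standard rule that arrows are dual to edges with black and white endpoints and that 2-cycles are cancelled. I would compare it with the initial quiver of $\mathcal{CM}_n$ from \cite{ALT}. The $2n$ frozen boundary faces of $G$, together with certain mutable faces, map to central minors that are either identically $1$ or equal to other variables on $\mathcal{Y}(E_n)$; the latter happens because of symmetry of the response matrix and zero row sums. This is precisely why the statement speaks of ``freezing and trivializing''. The procedure is therefore:
\begin{itemize}
\item freeze the faces whose restrictions become redundant;
\item set the trivial ones equal to $1$, which deletes those vertices and their incident arrows;
\item check that the remaining quiver is isomorphic to the $\mathcal{CM}_n$ quiver, with the cluster variables matching under the map of Step 2.
\end{itemize}
Because freezing and specializing frozen variables to $1$ commute with mutation at mutable vertices, the identification of initial seeds propagates to all seeds. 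This gives the coincidence of the two cluster structures.

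\emph{Main obstacle.} The hard part is Step 3: an explicit, uniform-in-$n$ description of the quiver of $G$, and the verification that, after deleting the trivialized vertices, the arrows exactly reproduce the ALT quiver. Oddness of $n$ should matter here, because for odd $n$ the central pairs close up symmetrically around the circle. I would first work out $n=3$ and $n=5$ explicitly to identify the pattern. I would then prove the general case by describing $G$ as a periodic lattice-like strip and reading off the local arrow patterns in the bulk and near the boundary separately.
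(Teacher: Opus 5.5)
Your overall architecture is the paper's: Temperley's trick of a critical network, Scott's labels, translation of labels into central pairs via Lemma~\ref{lemma: bijection}, quiver comparison, and propagation because freezing and trivialization commute with mutation. However, Step~3 misidentifies which vertices are frozen and trivialized, and as written it would not produce the seed of $\mathcal{CM}_n$.

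\textbf{The boundary faces.} The $2n$ boundary faces carry the cyclic intervals of length $n-1$ in $[2n]$. Each such interval has the form in Lemma~\ref{lemma: bijection}, with $\widehat P$ a block of $n-2$ consecutive integers. So these faces are exactly the $2n$ maximal central pairs of $D_n^\dagger$. Those are frozen variables of $\mathcal{CM}_n$ and must be kept, not trivialized.

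\textbf{Symmetric pairs.} The faces labelled by $(P;Q)$ and $(\widetilde Q;\widetilde P)$ do restrict to the same grove measurement on Lam's image. But $\mathcal{CM}_n$ keeps both as distinct mutable variables. Identifying them is the folding that produces $\mathcal{LM}_n$, not a freezing. So coincidence of restrictions to $\mathcal{Y}(E_n)$ is the wrong criterion for deciding what to remove.

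\textbf{Where the discrepancy actually is.} It sits entirely at the centre. The plabic graph has $n^2$ faces, while $Q_{\mathcal{CM}_n}$ has $n(n-1)+1$ vertices. The $n$ innermost faces, which are mutable in $\mathcal{A}_{n-1,2n}$, carry the $n$ labels $I\subset\{2,4,\ldots,2n\}$. All of them correspond to the empty pair and all equal $L_{unc}$ on Lam's image (Example~\ref{bulcon-ex}). These $n$ vertices are what get frozen and then trivialized.

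\textbf{The missing step on the $\mathcal{CM}_n$ side.} The frozen vertex $(\emptyset;\emptyset)$ of $Q_{\mathcal{CM}_n}$ must also be trivialized. Your plan omits this, and it is unavoidable: $(\emptyset;\emptyset)$ is adjacent to all $2n$ pairs of size one, whereas each innermost face borders only two of them. No freezing or trivialization on the Grassmannian side alone can reproduce $Q_{\mathcal{CM}_n}$. The theorem is really the coincidence of the two trivialized structures $\mathcal{A}'_{n-1,2n}$ and $\mathcal{CM}'_n$.

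\textbf{Where the real work lies.} The network must be one whose Temperley trick is a polar grid with $n$ faces around a central vertex. The paper uses the Curtis--Morrow graph $G_n$, not the ``standard'' network of Fig.~\ref{fig:standart}. More importantly, Step~2 is not mere bookkeeping by face position. The paper computes Scott's labels by following, at each step outward in an angular sector, the two zig-zag paths being crossed, whose halves are staircases. This gives explicit recursions for the labels and shows each label is a contiguous block flanked by runs of even integers, so Lemma~\ref{lemma: bijection} applies. An induction then shows the chords are added alternately on the left and right, so the pairs are central.

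\textbf{Step 3 is then cheap.} Once the labels are known, no uniform arrow computation is needed. Both quivers are the same polar grid as graphs. The placement of labels around each circle is forced, because each interior face and its four neighbours satisfy Scott's short Plücker relation. On Lam's image this becomes a Grassmann--Plücker relation among central minors with the middle face on the left-hand side. By Remark~\ref{rem: Alman condition} there is only one such relation, namely the one defining the arrows of $Q_{\mathcal{CM}_n}$.
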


We prove Theorem \ref{thm: introduction} by providing a seed in $\mathcal{A}_{n-1,2n}$ which coincides with the initial seed of $\mathcal{CM}_n$ subject to natural freezing and subsequent trivialization of certain cluster variables. The quiver underlying the fixed initial seed of $\mathcal{A}_{n-1,2n}$, as well as the initial seed of $\mathcal{CM}_n$, is centrally symmetric. In the fixed initial seed of $\mathcal{A}_{n-1,2n}$, the $n$ vertices closest to the center of symmetry, and in the initial seed of $\mathcal{CM}_n$, the unique central vertex, correspond to the empty circular pair. These are precisely the vertices corresponding to the
$n$ variables referred to in Theorem \ref{thm: introduction}.

In parallel, in the case of even $n$ we prove the following result (see Theorem \ref{thm: even case}):
\begin{theorem}
Let $n$ be an even natural number. 
There is a cluster in the cluster algebra $\mathcal{A}_{n-1,2n}$ consisting entirely of Plücker coordinates corresponding to central circular minors.
\end{theorem}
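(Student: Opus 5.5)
The statement will follow from exhibiting one explicit reduced plabic graph $G$ of type $(n-1,2n)$ whose face labels, under the correspondence between Plücker coordinates of $\mathrm{Gr}(n-1,2n)$ and contiguous circular pairs, all correspond to central circular pairs. By Scott's theorem (the face labels of any reduced plabic graph for the top cell form a cluster of $\mathcal{A}_{n-1,2n}$), this gives the required cluster.

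To produce $G$, I will use the generalized Temperley trick, as in the odd case. I will take a well-connected electrical network $\Gamma$ on $n$ boundary vertices, chosen to be the standard centrally symmetric "circular" network adapted to even $n$ (for instance, the critical network with $\binom n2$ edges drawn as a half-grid). I will then pass to its medial/Temperley plabic graph $G(\Gamma)$, which is reduced and represents $\mathcal{Y}(E_n)$ inside the top cell of $\mathrm{Gr}_{\ge0}(n-1,2n)$. The key steps, in order, are:
\begin{enumerate}
\item Compute the face labels of $G(\Gamma)$ by Scott's rule, via trips from the $2n$ boundary vertices. This gives $\binom n2+1$ mutable-plus-frozen faces, counted appropriately against $\dim \mathrm{Gr}(n-1,2n)+1$ after the extra faces coming from Temperley's construction.
\item Translate each label $I\in\binom{[2n]}{n-1}$ into its circular pair $(P,Q)$ via the established bijection. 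Check that $|P|=|Q|$ and that $P,Q$ are contiguous intervals placed opposite each other on the circle, i.e.\ central.
\item Confirm that the frozen faces correspond to the boundary (frozen) central minors. For faces near the center of symmetry, confirm that they give the empty circular pair, hence trivial variables.
\end{enumerate}

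The face labels of such a grid-like graph are typically unions of two cyclic intervals, so the combinatorics of step (2) reduces to checking that the two intervals have the right lengths and are antipodal. I will organize this by the "depth" of the face from the boundary, proving inductively on depth that depth-$d$ faces give central pairs of size roughly $d$.

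I expect the main obstacle to be the parity issue. For even $n$ the central symmetry of the odd construction breaks, and the natural Temperley graph may contain faces whose labels give non-central (off-by-one) circular pairs. If that happens, I would first try a short sequence of square moves (mutations at quadrilateral faces) on those offending faces, tracking labels with the exchange relation $\Delta_{Sac}\Delta_{Sbd}=\Delta_{Sab}\Delta_{Scd}+\Delta_{Sad}\Delta_{Sbc}$ until all labels become central. Alternatively, I would deform $\Gamma$ by a single electrical star–triangle move to restore the balance between the two half-grids.
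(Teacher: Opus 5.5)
Your overall frame is fine: produce a seed of $\mathcal{A}_{n-1,2n}$ via Scott's theorem and read its labels through Lemma~\ref{lemma: bijection}. The gap is that the route you commit to is the one that fails for even $n$, and your fallback carries none of the actual argument.

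The paper notes that for $n=4$ the only critical well-connected networks are the standard network and its dual, and neither Temperley graph gives a seed of central pairs. For general even $n$ it is not known whether any well-connected network works. Your star--triangle alternative does not help, since it only moves between critical networks (for $n=4$, between the two failing ones).

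So everything rests on your fallback of ``a short sequence of square moves until all labels become central''. Since all reduced plabic graphs of the top cell are connected by moves, this is not wrong in principle. But as written you fix no starting graph and compute no labels. You also give no reason why the offending faces are square faces, why mutations at them do not interfere with each other, or why the exchange relation outputs a central pair rather than some other, possibly non-circular, Plücker coordinate.

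These are exactly the points the paper supplies, and the proof turns on them:
\begin{enumerate}
\item For even $n$ it abandons Temperley's trick and writes down a plabic graph $S_n$ directly: concentric circles with radial edges, a black center joined to every other vertex of the innermost circle, and paired edges on the outer ring.
\item By counting stairs of zig-zag paths it verifies that the strand permutation of $S_n$ is $i\mapsto i+n-1$ and that $S_n$ is reduced.
\item It computes all face labels by explicit recursions. All of them lie in $CM_n^\dagger$, i.e.\ they are contiguous pairs whose chords are either central or off-center by exactly one ($D$-statistic $0$ or $2$).
\item The key step is that the $D$-statistic alternates in a chessboard pattern. Hence every non-central face has exactly four neighbors, all in $D_n^\dagger$, and these faces are pairwise non-adjacent and lie in a semicircle.
\item Mutating at all of them, the three-term relation (in the form \eqref{eq: Grassmann Plucker in statistics}) replaces each pair off-center by one on one side of its central diagonal by the pair off-center by one on the other side, which is the central one.
\end{enumerate}
Without an analogous classification of the bad labels and this isolation property, your plan is not yet a proof.

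Your face count is also off. A $\pi_{n-1,2n}$-diagram has $(n-1)(n+1)+1=n^2$ faces, not $\binom{n}{2}+1$. These are $2\binom{n}{2}$ faces for $D_n^\dagger$ (a pair and its transpose give different Plücker coordinates) plus $n$ faces with even labels (the empty pair).
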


If we interpret the Plücker coordinates as Plücker coordinates of $\mathcal{Y}(e)$ and circular minors as circular minors of $M(e)$ for some well-connected electrical network $e$, then the correspondence from Theorem \ref{thm: introduction} is actually an identity. Namely, we prove that any contiguous circular minor of the response matrix of a given electrical network is equal, up to a scalar, to a Plücker coordinate of the image of this network in the Grassmannian. 
We use this property and Theorem \ref{thm: introduction} in order to show that the tests for circular total positivity are induced by positivity tests on the Plücker coordinates of points in the Grassmannian $\mathrm{Gr}(n-1,2n)$. For an $n\times n$ symmetric matrix with zero row sums, we obtain the following
result (see Corollary \ref{cor: positivity tests}):

\begin{corollary} \label{cor: introduction}
        The test for circular positivity given by the set of central circular minors, or equivalently by the initial seed of $\mathcal{CM}_n$, coincides with the test for positivity of a certain point in the Grassmannian $Gr(n-1,2n)$ given by a certain initial seed for $\mathcal{A}_{n-1,2n}$.
\end{corollary}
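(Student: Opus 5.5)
The plan is to transport positivity through Lam's embedding $\mathcal{Y}$. Let $M$ be an $n\times n$ symmetric matrix with zero row sums; we may assume $M=M(e)$ for a well-connected network $e$, or more generally extend $\mathcal{Y}$ algebraically to all such matrices. Write $X=\mathcal{Y}(M)\in \mathrm{Gr}(n-1,2n)$, given by Lam's explicit row-space construction, whose rows are built from $M$ together with the alternating $\pm1$ pattern. This extension is essential: the test must be stated for arbitrary symmetric zero-row-sum matrices, not only for actual response matrices.

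The first step is the identity announced in the introduction. For every contiguous circular pair $(P,Q)$ with associated Plücker index set $I(P,Q)$, one has
\[
\Delta_{I(P,Q)}(X)=c_{P,Q}\,\det M(P,Q),
\]
with $c_{P,Q}$ a nonzero constant depending only on $(P,Q)$, and with the sign chosen so that it matches the sign convention for circular minors. This is proved by a Laplace or Schur-complement expansion of the Plücker coordinate of Lam's matrix. In addition, the Plücker coordinate corresponding to the empty circular pair is a positive constant.

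The second step uses Theorem~\ref{thm: cluster algebras coinside}. It supplies a seed $\Sigma$ of $\mathcal{A}_{n-1,2n}$ (for $n$ odd; for $n$ even, use the cluster of Theorem~\ref{thm: even case}) all of whose variables are Plücker coordinates $\Delta_{I(P,Q)}$ with $(P,Q)$ central. After freezing and trivializing the $n$ variables attached to the empty pair, $\Sigma$ coincides with the initial seed of $\mathcal{CM}_n$. Substituting $X$ into this seed and applying step one, positivity of all central circular minors of $M$ becomes equivalent to positivity of all cluster variables of $\Sigma$ at $X$, up to rescaling Plücker coordinates by a common positive factor. The trivialized variables cause no difficulty, since they are automatically positive constants at points of the form $\mathcal{Y}(M)$.

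The third step combines the two directions of the test. On the Grassmannian side, positivity of one seed's cluster variables at $X$ implies, by the Laurent phenomenon with positive coefficients in $\mathcal{A}_{n-1,2n}$ (Scott's cluster structure), that all Plücker coordinates of $X$ are positive, so $X$ is totally positive. In particular every $\Delta_{I(P,Q)}(X)>0$, and therefore every circular minor of $M$ corresponding to a contiguous pair is positive. The remaining circular minors should follow from the contiguous ones in the same way as in \cite{ALT}. Conversely, circular total positivity of $M$ trivially gives positivity on the seed. So the two tests are the same set of inequalities.

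The main obstacle is step one, in particular getting the constants and signs exactly right, and making sure the image of an arbitrary symmetric zero-row-sum matrix lies in the open cell where the frozen and trivialized coordinates are nonzero. To handle this I would first compute small cases, $n=3$ and $n=5$, and then argue uniformly via the generalized Temperley construction, where Plücker coordinates count groves.
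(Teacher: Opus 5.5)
Your outline follows the paper's proof of Corollary \ref{cor: positivity tests}. Theorem \ref{thm: cluster algebras coinside} identifies the face labels of the seed $(\tilde{\mathbf{x}};Q_{\mathcal{A}_{n-1,2n}})$ with $D_n^\dagger$ plus the $n$ all-even sets. Lemma \ref{lem:cir-min-gen} turns each label of the first kind into a signed central minor. The all-even coordinates are constant, so the two families of inequalities coincide.

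What you call the main obstacle is already settled in the paper. With $X$ represented by $\Phi_n'(M)$, the constant is exactly $c_{P,Q}=(-1)^k$. The all-even Plücker coordinates of $\Phi_n'(M)$ equal $1$ for every $M$, since the even columns form a unit bidiagonal pattern, so there is no open-cell issue. The paper does not expand $\Phi_n'$ directly. It proves the identity on response matrices of well-connected networks using the Kenyon--Wilson grove formula (Theorem \ref{minors via groves}) and the uniqueness of the concordant partition (Lemma \ref{lemma: bijection}), then extends it as a polynomial identity from that open set. Your Laplace expansion would also work, but it needs careful sign bookkeeping. In particular, when $\bar 1\in Q$ the deleted first row forces you to use the zero row sums; the grove argument avoids all of this.

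The step that does not hold as written is the end of your third paragraph. Total positivity of $X$ gives the signs of Plücker coordinates, and hence of the semicontiguous circular minors (Proposition \ref{circ-con-gro1}). A general circular minor, however, equals a single grove measurement $L_\sigma/L_{unc}$ by \eqref{form:gen-min-grove}, which is not a Plücker coordinate. So its sign does not follow, and ``as in \cite{ALT}'' does not supply the missing implication.

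The paper closes this by citing Theorem \ref{thm: central form test}: every circular minor is a subtraction-free rational function of the central ones. Alternatively, use that $\Phi_n(M)\in\mathrm{Gr}_{>0}(n-1,2n)\cap\mathbb{P}H$ forces $L_\sigma>0$ for all $\sigma$ (Theorem \ref{th: the_first_ph}), then apply \eqref{form:gen-min-grove}, as in the remarks of Section \ref{sec: Positivity tests for circular minors}. This detour is not needed for the coincidence itself. That is complete after your second step, once you cite Corollary \ref{pos-test-gen-sc} and Theorem \ref{thm: central form test} for the fact that each family is a positivity test.
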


We apply Theorem \ref{thm: introduction} in two more directions. Firstly, we provide a new proof of the Laurent Phenomenon for contiguous circular minors based on the Laurent Phenomenon in cluster algebras (see Theorem \ref{loran-ph-circ}). Secondly, we prove that the coordinate ring of the noncompactified space of electrical networks, which was shown in \cite{GLX} to be a certain localization of the grove algebra, admits a structure of a Laurent Phenomenon algebra (see Theorem \ref{thm: LP grove algebra}): 
\begin{theorem} \label{cl-alg-1}
     The coordinate ring of the noncompactified space of electrical networks is isomorphic to $\mathcal{LM}_n$, where the latter is regarded as an algebra over Laurent polynomials in the central frozen variable of the initial seed of $\mathcal{LM}_n$. 
\end{theorem}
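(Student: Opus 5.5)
We plan to realize both algebras inside a common ambient field and compare generators. The natural ambient field is the field of rational functions on the space of response matrices, i.e. $n\times n$ symmetric matrices with zero row sums. Equivalently, it is the field of rational functions in the $\binom{n}{2}$ central circular minors, which by \cite{ALT} (and \cite{KW space}) form a transcendence basis; this is the initial cluster of $\mathcal{LM}_n$. On the other side, \cite{GLX} identifies the coordinate ring $R_n$ of the noncompactified space of electrical networks with a localization of the grove algebra. The grove algebra is generated by the grove partition functions $L_\sigma$, and the localization inverts the partition function $L_{\mathrm{unc}}$ of uncrossing spanning forests. Under the Curtis--Ingerman--Morrow/Kenyon--Wilson identification, the ratios $L_\sigma/L_{\mathrm{unc}}$ are polynomials in the response matrix entries. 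Moreover, every circular minor $M(P;Q)$ equals, up to sign, $L_{\sigma(P,Q)}/L_{\mathrm{unc}}$ for the corresponding planar partition. We will take as given the statement that $L_{\mathrm{unc}}$ corresponds to the central frozen variable of the initial seed of $\mathcal{LM}_n$, which in the initial seed of $\mathcal{CM}_n$ is the vertex of the empty circular pair. Normalizing it to $1$ then matches the trivialization in Theorem~\ref{thm: introduction}. So $R_n$ is viewed as an algebra over $\mathbb{C}[L_{\mathrm{unc}}^{\pm1}]$, exactly as in the statement.

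The proof then has two inclusions. For $\mathcal{LM}_n\subseteq R_n$, every cluster variable of $\mathcal{LM}_n$ is, by \cite{ALT}, a circular minor, or a specific polynomial expression in minors arising from the exchange relations. Each of these is a grove ratio or a polynomial combination of them, hence lies in $R_n$. I would check this on the explicit list of LP-cluster variables in \cite{ALT}. For the non-minor ones I would write them as Plücker-type polynomials in $\mathcal{Y}(e)$, using that contiguous circular minors are, up to scalar, Plücker coordinates of Lam's embedding. Since Plücker coordinates of $\mathcal{Y}(e)$ are grove partition functions divided by $L_{\mathrm{unc}}$, these also lie in $R_n$.

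For $R_n\subseteq\mathcal{LM}_n$, it suffices to show each generator $L_\sigma/L_{\mathrm{unc}}$ lies in $\mathcal{LM}_n$. By the Lam-embedding identification, each such ratio is a Plücker coordinate of $\mathcal{Y}(e)$ in $\mathrm{Gr}(n-1,2n)$. Every Plücker coordinate is a cluster variable of $\mathcal{A}_{n-1,2n}$ (Scott). For odd $n$, Theorem~\ref{thm: introduction} carries it to a cluster variable of $\mathcal{CM}_n$ after freezing and trivializing. Cluster variables of $\mathcal{CM}_n$ lie in $\mathcal{LM}_n$, since \cite{ALT} shows $\mathcal{CM}_n$ mutations are LP mutations in $\mathcal{LM}_n$, up to frozen factors that are units in our base ring. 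For even $n$ I would instead use Theorem~\ref{thm: even case} together with direct Laurent-phenomenon arguments (Theorem~\ref{loran-ph-circ}).

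The main obstacle is this second inclusion. The trivialization step sets $n$ variables to $1$, so we must ensure it does not identify distinct elements or lose generators. We also need non-contiguous (non-Plücker) grove ratios to be expressible through contiguous ones with only $L_{\mathrm{unc}}$ inverted. First I would try to show that the grove algebra is generated by the Plücker-type groves, i.e. those whose partitions come from contiguous circular pairs, using the Kenyon--Wilson linear relations between $L_\sigma$'s. Finally, isomorphism rather than equality of subrings follows because both sides sit in the same field, and the initial cluster is algebraically independent there.
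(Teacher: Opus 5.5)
Both inclusions have genuine gaps.

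\textbf{First inclusion, $\mathcal{LM}_n\subseteq R_n$.} You propose to check regularity on an explicit list of LP cluster variables from \cite{ALT}. No such list exists, since $\mathcal{LM}_n$ has infinitely many cluster variables in general. The Laurent phenomenon alone only places a cluster variable in $R_n$ localized at the central minors, and these are not units of $R_n$. The missing idea is the codimension-two argument of Lemma~\ref{lemma: phi is well-defined}, adapted from Scott. A cluster variable is a Laurent polynomial both in the initial seed and in the seeds one mutation away. Hence it is regular on the union of the corresponding non-vanishing loci. The complement of that union has codimension at least two, so by normality the function extends to a regular function on the whole variety. Short of this argument, you would have to quote the isomorphism $\psi$ of \cite[Theorem 6.2.16]{ALT} outright.

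\textbf{Second inclusion, $R_n\subseteq\mathcal{LM}_n$.} Your chain breaks in three places.
\begin{enumerate}
\item $L_\sigma/L_{\mathrm{unc}}$ is not a Pl\"ucker coordinate in general. $\Delta_I$ is the sum of $L_\sigma$ over all $\sigma$ concordant with $I$, and it collapses to a single term only for the special $I$ of Lemma~\ref{lemma: bijection} and Proposition~\ref{circ-con-gro1}.
\item Every Pl\"ucker coordinate is a cluster variable of $\mathcal{A}_{n-1,2n}$, but reaching it may require mutating at the $n$ central vertices $\mathbf{x}_{\mathrm{even}}$. These are frozen in $\mathcal{A}'_{n-1,2n}$, and you give no argument that they can be avoided. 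So Theorem~\ref{thm: cluster algebras coinside} does not carry the coordinate to $\mathcal{CM}_n$. Moreover, cluster variables of $\mathcal{CM}_n$ project into $\mathcal{LM}_n$ via Lemma~\ref{lemma: symmetric mutations} only along symmetric mutation sequences.
\item For even $n$, the Laurent phenomenon (Theorem~\ref{loran-ph-circ}) yields Laurent expressions, not membership in $\mathcal{LM}_n$.
\end{enumerate}

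\textbf{How to repair the second inclusion.} Half the fix is already in your setup. Every $L_\sigma/L_{\mathrm{unc}}$ is a polynomial in the entries $x_{ij}=-L_{ij}/L_{\mathrm{unc}}$; the paper obtains this from Lemma~\ref{Parametrization of the top-cell} and Corollary~\ref{col-gr-ij}. So it suffices to know that each $1\times 1$ pair $y_{(\{\bar i\};\{\bar j\})}$ is a cluster variable of $\mathcal{LM}_n$, reachable from the initial seed by Grassmann--Pl\"ucker mutations. That reachability statement is the substantive combinatorial input your plan lacks. It is \cite[Theorem 6.2.16]{ALT}, proved in Appendix~\ref{sec:appendix B} via the pyramid mutation sequences $\mu_{n,v}$. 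Once you have it, homogeneity of the Laurent expression for $y_{(\{\bar i\};\{\bar j\})}$ shows that $L_{ij}$ lies in the image of $\phi$. No detour through $\mathrm{Gr}(n-1,2n)$, freezing, or trivialization is then needed.
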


\subsection{Motivation}\label{sec: Motivation}
Here we describe two constructions in more detail:\\
1. The classical construction relating total positivity for matrices and positivity for Grassmannians (see \cite[Section 3]{Pos}, \cite[Section 4]{Sc}, \cite[Appendix A4]{MSp},
and\\ 
2. The new construction relates circular total positivity for matrices to positivity for Grassmannians. 
\begin{itemize}
    \item  First, embed a matrix $A=(a_{ij}) \in \mathrm{Mat}_{n \times n}$ into the Grassmannian $\mathrm{Gr}(n, 2n)$ using the following matrix:
   \begin{gather*}
        X(A)= 
        \left(
	\begin{array}{cccccccccc}
			0& \dots & 0 & 0  & 1 &a_{11} & a_{12} & a_{13} & \dots &a_{1n}\\
			0& \dots & 0 & -1  & 0 & a_{21} & a_{22} & a_{23} & \dots & a_{2n}\\
            0& \dots & 1 & 0  & 0 & a_{31} & a_{32} & a_{33} & \dots&a_{3n} \\
	\vdots&\ddots&\vdots&\vdots&\vdots&\vdots& \vdots &  \vdots &\ddots&\vdots\\
    -1&\dots & 0&0&0&a_{n1}&a_{n2}&a_{n3}&\dots &a_{nn}
		\end{array}
		\right).
    \end{gather*}
It is easy to verify that for each minor of the matrix $A$ there is a unique Plücker 
 coordinate of $X(A)$ that equals to it, see \cite[Remark 6.7.15]{FZW Ch6}.
\item  Then construct a certain reduced plabic graph 
and label its faces by Plücker coordinates using Scott's rule, see Section \ref{sec:Scottdef}. This plabic graph, combinatorially, is the same as a plabic graph defining $X(A)$, assuming $X(A)\in Gr_{\ge0}(n,2n)$; see Theorem \ref{th:plabicgr}.
\item   The obtained set of Plücker coordinates corresponds to a  test for total positivity of the matrix $A$. Moreover, the same set forms a cluster in the \textit{cluster algebra} for $\mathrm{Gr}(n, 2n)$, see Section \ref{sec:Scottdef}. 
\end{itemize}

We next present our construction relating circular total positivity for symmetric matrices with row entry sums equal to $0$ and positivity for Grassmannians.
\begin{itemize}
    \item First, embed a symmetric matrix $M = (x_{ij})$ with zero row sums equal to $0$, into the Grassmannian $\mathrm{Gr}(n-1, 2n)$ using the following matrix, see Section \ref{sec: Positivity tests for circular minors}:
\begin{gather*}
  \Phi_n(M) =
     \left(
	\begin{array}{cccccccccc}
        x_{11} & 1 & -x_{12} & 0 & x_{13} & 0 & \cdots & (-1)^n \\
        -x_{21} & 1 & x_{22} & 1 & -x_{23} & 0 & \cdots & 0 \\
        x_{31} & 0 & -x_{32} & 1 & x_{33} & 1 & \cdots & 0 \\
        \vdots &\vdots & \vdots & \vdots& \vdots &\vdots  & \ddots & \vdots \\
    \end{array} \right).
\end{gather*}
   It can be verified that each contiguous circular minor of $M$ is equal, up to a sign, to a Plücker coordinate of $\Phi_n(M)$, see Lemma \ref{lem:cir-min-gen}.
   \item Then we  construct a certain reduced plabic graph 
   and label its faces by the Plücker coordinates according to Scott's rule.
\item   The obtained set of Plücker coordinates  corresponds to a test for circular total positivity of the matrix $M$. Moreover, the same set forms a seed of the cluster algebra for $\mathrm{Gr}(n-1, 2n)$, which is closely related to the initial seed in the cluster algebra $\mathcal{CM}_n$, see Section \ref{Central circular as seed}.
\end{itemize}

The problem of obtaining tests for circular total positivity is also important due to its connection (see \cite[Theorem 1]{BDV}) to the numerical solution of the classical Calderón problem. Specifically, this method is  based on solving the discrete Calderón problem (also known  as the black box problem \cite{CM}), which consists of reconstructing the unknown conductivities of a given electrical network from its known response matrix. It has been shown (see \cite{Kaz} and \cite{Geo}) that this problem can be solved by applying the generalized chamber ansatz to a plabic graph associated with an electrical network using Temperley's trick. In Section \ref{Central circular as seed}, we provide an explicit labeling rule for the faces of the plabic graph associated with electrical networks on graphs $G_{2n+1}$; see Section \ref{sub:spclass} for the definition.   Together with the formulas from \cite[Example 3.7]{Kaz}, this yields a non-recursive solution to the discrete Calderón problem for electrical networks on $G_{2n+1}$, which are dual to the networks used in \cite{BDV} for the numerical solution of the classical Calderón problem. 

\subsection{Organization of the paper}

In Section \ref{sec: preliminaries} we provide the necessary background on the theory of electrical networks and its connection to the non-negative Grassmannian. In Section \ref{sub:circ} we discuss circular minors and prove the key Lemma \ref{lemma: bijection} and Proposition \ref{circular minors via Plücker coordinates 0}.
In Section \ref{sec:cl} we provide background on cluster algebras and present the constructions of cluster algebras needed for our work. 

The main result of the paper is Theorem \ref{thm: cluster algebras coinside}, which we prove in Section \ref{Central circular as seed}, relying on the logic described above and based on the key Lemma \ref{lemma: bijection}.
In Section \ref{sec:even n} we prove the analogous result in the more subtle case of even $n$.
In Section \ref{sec: Positivity tests for circular minors} we specialize Theorem \ref{thm: cluster algebras coinside} in order to study tests for circular total positivity; in Section \ref{sec: Laurent phenomenon for circular minors} we apply Theorem \ref{thm: cluster algebras coinside} to provide a new proof of the Laurent Phenomenon for contiguous circular minors; in Section \ref{sec: Laurent Phenomenon structure on the grove algebra} we equip a certain localization of the grove algebra with the structure of a Laurent Phenomenon algebra. 
In the main part of the paper we deal with well-connected electrical networks, which are the maximal cells in the poset of electrical networks. We discuss the non-maximal cells in Section \ref{sec:cells}. In Appendix \ref{gen-form-ap} we prove Lemma \ref{Parametrization of the top-cell} and in Appendix \ref{sec:appendix B} we prove a technical lemma about the reachability of contiguous circular pairs in $\mathcal{CM}_n$.

\subsection{Acknowledgments} 
We thank Boris~Feigin, Vassily~Gorbounov, Pavlo~Pylyavskyy, Dmitry~Talalaev and Alek~Vainshtein for valuable discussions.
B.~B. was partially supported by the ISF grant 2848/25. L.~G. was partially supported by the ISF grant 687/24. The work of Anton Kazakov on Sections  \ref{sub:circ}, \ref{sec: main results} was supported by RSF (project No. 26-11-00379, https://rscf.ru/project/26-11-00379/), which finances the work of the author at P. G. Demidov Yaroslavl State University. The work of A.~K. on Section \ref{sec: preliminaries}, \ref{sec:cl} is an output of a research project implemented as part of the Basic Research Program at the National Research University Higher School of Economics (HSE University).

\section{Preliminaries} \label{sec: preliminaries}

\subsection{Electrical networks} \label{sec:networks} 
We start with a brief introduction to the theory of (circular) electrical networks following \cite{CM} and \cite{K}.
\begin{definition} 
An {\itshape electrical network} $e(\Gamma,  \omega)$ is a  planar graph $\Gamma=(V, E)$ embedded into a disk, which satisfies the following conditions:  
\begin{itemize}
    \item The nodes are partitioned into the set of interior nodes
$V_I$ and the set of boundary nodes $V_B$; each boundary node lies on the boundary circle;
    \item Boundary nodes are numbered clockwise from $1$ to $n:=|V_B|.$ Interior nodes are enumerated arbitrarily from $n+1$ to $|V|$;
    \item Every edge $v_iv_j$ of $\Gamma$ is assigned a positive weight  $\omega_{ij}$, representing its conductance. We denote the set of all weights by $\omega.$
\end{itemize}

\end{definition}

Consider an electrical network $e(\Gamma,  \omega)$ and prescribe boundary voltages  $\textbf{U}:V_{B}\to \mathbb{R}$. Then these boundary voltages extend uniquely to a function $U: V \to \mathbb{R}$ on all vertices, which is determined from Kirchhoff's and Ohm's laws:
 \begin{equation*}
     \sum \limits_{j \in V}\omega_{ij}\bigr(U(i)-U(j)\bigl)=0, \ \forall i \in V_I,
 \end{equation*}
 The boundary current vector $\textbf{I}=\{I_1, \dots, I_n\}$ associated with a prescribed boundary voltage vector $\textbf{U}=\{U(1), \dots, U(n)\}$ is defined by
 \begin{equation*}
    I_k:=\sum \limits_{j \in V}\omega_{kj}\bigr(U(k)-U(j)\bigl), \ k\in \{1, \dots, n\}.
 \end{equation*}

A fundamental result in the theory of electrical networks is that boundary voltages and currents are linearly related to each other.  
\begin{theorem} \textup{\cite[Theorem 3.2]{CM}} \label{gen-theorem}
Consider an electrical network $e(\Gamma, \omega).$ Then there is a  matrix $M_R(e)=(x_{ij}) \in Mat_{n \times n}(\mathbb{R})$ such that the following holds:
$$M_R(e)\textbf{U}=\textbf{I}.$$

This matrix satisfies the following conditions:
\begin{itemize}
    \item The matrix $M_R(e)$ is symmetric;
    \item The sum of the entries of each row $($column$)$ equals $0;$
    \item All off-diagonal entries $x_{ij}$ of $M_R(e)$ are non-positive.
\end{itemize}
\end{theorem}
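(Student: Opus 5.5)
The plan is to write the map $\mathbf{U}\mapsto\mathbf{I}$ as a Schur complement of the Kirchhoff matrix of the whole graph. Order the vertices so that the boundary nodes $V_B$ come first and the interior nodes $V_I$ come last. Define the weighted Laplacian $L=(L_{ij})_{i,j\in V}$ by
\[
L_{ij}=-\omega_{ij}\quad (i\neq j), \qquad L_{ii}=\sum_{j\neq i}\omega_{ij},
\]
with $\omega_{ij}=0$ when $v_iv_j\notin E$. Write it in block form
\[
L=\begin{pmatrix} A & B\\ B^{T} & C\end{pmatrix},
\]
where $A$ is indexed by $V_B\times V_B$ and $C$ by $V_I\times V_I$. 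In this notation Kirchhoff's law at interior nodes reads $B^{T}\mathbf U+CU_I=0$, and the definition of the boundary currents reads $\mathbf I=A\mathbf U+BU_I$.

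The key step is to show that $C$ is invertible, which also gives the existence and uniqueness of the harmonic extension asserted before the theorem. I will use the standing assumption (implicit in the paper) that every connected component of $\Gamma$ contains a boundary node. Then $C$ is a weakly diagonally dominant symmetric matrix with nonpositive off-diagonal entries. In each connected component of the interior subgraph, some row has strict dominance, namely a row whose vertex is adjacent to a boundary node. By the maximum principle, if $Cx=0$ then $x$ attains its maximum modulus on such a vertex and propagates to zero, so $\ker C=0$. Equivalently, $x^{T}Cx=\sum_{\text{edges}}\omega_{ij}(x_i-x_j)^2+\sum_{\text{boundary-adjacent}}\omega x_i^2$, which is positive definite under the connectivity assumption. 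The positive-definite form is the cleanest argument and is the one I will use.

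Eliminating $U_I=-C^{-1}B^{T}\mathbf U$ gives $\mathbf I=(A-BC^{-1}B^{T})\mathbf U$, so I set $M_R(e)=A-BC^{-1}B^{T}$. The three properties then follow.

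\textbf{Symmetry.} $M_R(e)$ is symmetric because $A$ and $C$ are symmetric.

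\textbf{Zero row sums.} Let $\mathbf 1$ be the all-ones vector. Then $L\mathbf 1=0$, i.e. $A\mathbf 1+B\mathbf 1=0$ and $B^{T}\mathbf 1+C\mathbf 1=0$. Hence $C^{-1}B^{T}\mathbf 1_B=-\mathbf 1_I$, and so $M_R\mathbf 1=A\mathbf 1-B(-\mathbf 1)=0$. Physically, constant voltages produce no current. Column sums vanish by symmetry.

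\textbf{Nonpositive off-diagonal entries.} This is the main obstacle. $A$ has nonpositive off-diagonal entries and $B\le 0$ entrywise, so it suffices to show $C^{-1}\ge 0$ entrywise; then $BC^{-1}B^{T}\ge0$ and the off-diagonal entries of $A-BC^{-1}B^{T}$ are $\le0$. The first thing I will try is the M-matrix argument. Write $C=D-N$ with $D$ diagonal positive and $N\ge0$. Because $C$ is positive definite and diagonally dominant with strict dominance reachable in each component, the spectral radius satisfies $\rho(D^{-1}N)<1$. Then the Neumann series
\[
C^{-1}=\sum_{k\ge 0}(D^{-1}N)^kD^{-1}
\]
converges and is entrywise nonnegative.

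If that argument is awkward, the fallback is the maximum principle. For $i\ne j$ boundary nodes, apply voltage $\mathbf U=e_j$. The harmonic extension then takes values in $[0,1]$, so the current at node $i$ is $I_i=\sum_k\omega_{ik}(0-U(k))\le 0$, which gives $x_{ij}\le0$ directly.
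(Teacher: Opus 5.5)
The paper does not prove this statement itself; it quotes it from Curtis--Morrow, where the response matrix is built exactly as you propose, as the Schur complement $A-BC^{-1}B^{T}$ of the Kirchhoff matrix with respect to its interior block, so your argument is correct and is essentially that standard one. Your tacit assumption that every component of $\Gamma$ meets the boundary is the one the paper already needs for the uniqueness of the harmonic extension. Either of your two arguments for $x_{ij}\le 0$ goes through; for the first, note that $\rho(D^{-1}N)<1$ follows from the positive definiteness of $C$ together with Perron--Frobenius applied to the nonnegative symmetric matrix $D^{-1/2}ND^{-1/2}$.
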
 
The matrix $M_R(e)$ is called the {\itshape response matrix} of the network $e(\Gamma, \omega)$.

Two electrical networks are called {\itshape equivalent} if they can be transformed into each other by a sequence of electrical transformations, see  Fig. \ref{fig:el_trans}. 

\begin{theorem} \textup{\cite[Theorem 1]{CM}, \cite[Theorem 4]{CGV}}
   Two electrical networks are equivalent if and only if they have the same  response matrix. An electrical network is uniquely determined by its response matrix $M_R(e)$ up to electrical transformations.
\end{theorem}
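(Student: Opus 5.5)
The statement has two parts: (i) networks with the same response matrix are equivalent, and (ii) a network is determined by its response matrix up to electrical transformations. Part (ii) is just a reformulation of (i), so the work is in the equivalence. The direction ``equivalent $\Rightarrow$ same response matrix'' is local. For each elementary electrical transformation in Fig.~\ref{fig:el_trans} I would check that it does not change the map $\mathbf U\mapsto \mathbf I$ from Theorem \ref{gen-theorem}. The moves are: series and parallel reduction, removal of loops and of pendant interior vertices, and the star--triangle ($Y$--$\Delta$) move.

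For each move, the harmonic extension $U$ on the unaffected vertices is unchanged. For series, parallel and $Y$--$\Delta$ this is a Schur complement computation on the interior vertex that is eliminated. For example, in the $Y$--$\Delta$ move with star conductances $a,b,c$, eliminating the centre gives triangle conductances $ab/(a+b+c)$ and so on. Hence the currents at boundary vertices are unchanged, and so is $M_R(e)$. Since $M_R(e)$ is the Schur complement of the full Laplacian onto $V_B$, it suffices to note that each move preserves this Schur complement.

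The converse is the substantive direction, and I would follow Curtis--Ingerman--Morrow and Colin de Verdière--Gitler--Vertigan. First, using electrical transformations, reduce any network to a \emph{critical} (minimal) network. Its medial graph has no lens and no self-intersecting strand, and the network is electrically equivalent to the original. This step uses the medial-graph combinatorics: $Y$--$\Delta$ moves correspond to Reidemeister~III-type moves of strands, and lenses can be pulled to the boundary and removed by series or parallel reduction. Next, show that for critical networks the response matrix determines the conductances. Recover the edge conductances one at a time from boundary spikes and boundary edges, using the vanishing and positivity of suitable circular minors of $M_R(e)$, and peel them off inductively.

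Then show that two critical networks with the same response matrix have medial graphs with the same pairing of boundary points by strands. This pairing is read off from which circular minors of $M_R$ vanish. Finally, use the theorem that two lensless medial graphs with the same boundary pairing are related by Reidemeister~III moves, i.e.\ by $Y$--$\Delta$ moves. Combined with the recovery step, this shows the two networks are electrically equivalent.

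The main obstacle is this last combinatorial step. One has to establish both that the strand connectivity is an invariant of $M_R$, via the rank and vanishing pattern of circular minors, and that the connectivity determines the critical graph up to $Y$--$\Delta$ moves. Since the statement is quoted from \cite{CM,CGV}, in the paper I would simply invoke those results rather than reprove them.
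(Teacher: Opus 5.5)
Your outline is correct and matches how the paper treats this statement: the paper gives no argument of its own and simply invokes \cite{CM, CGV}. Your sketch is the standard argument from those sources: invariance of the Schur complement under the local moves; reduction to a lensless medial graph; recovery of conductances; reading off connections and strand pairing from which circular minors vanish; and connectivity of lensless medial graphs under $Y$--$\Delta$ moves. One terminological caution: in this paper ``critical'' means minimal \emph{and} well-connected, so your reduction step should say ``minimal'' (which is what your medial-graph description actually uses), since a general network cannot be reduced to a well-connected one.
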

\begin{figure}[H]
    \centering
    \includegraphics[scale=0.2]{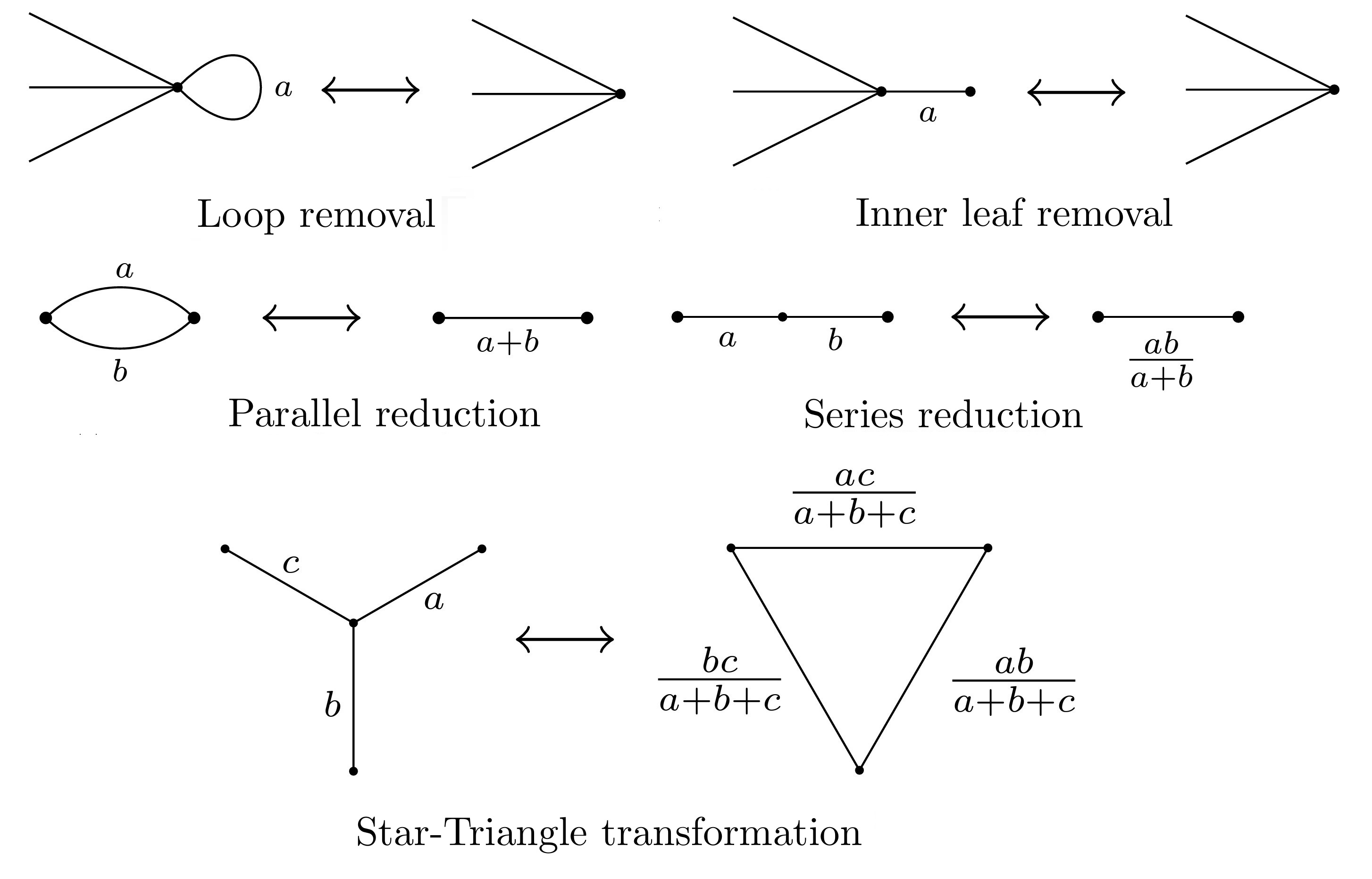}
 \hspace{-1.5cm}   \caption{Electrical transformations}
    \label{fig:el_trans}
\end{figure}

We denote by $E_n$ the set of all electrical networks with $n$ boundary nodes  up to electrical transformations. 
It is natural to consider generic electrical networks, namely networks that cannot be simplified by electrical transformations and have the maximum possible number of edges. This motivates the definition of well-connected networks:

\begin{definition}   
\label{def:well-con} 
    An electrical network $e(\Gamma, \omega) \in E_n$  is called {\itshape well-connected}  
    if it is equivalent to the standard network, see Fig. \ref{fig:standart}; for more details, see \cite[Section 4.4.4]{K}.
\end{definition}
\begin{figure}[H]
     \hspace*{-7mm}
     \includegraphics[scale=0.21]{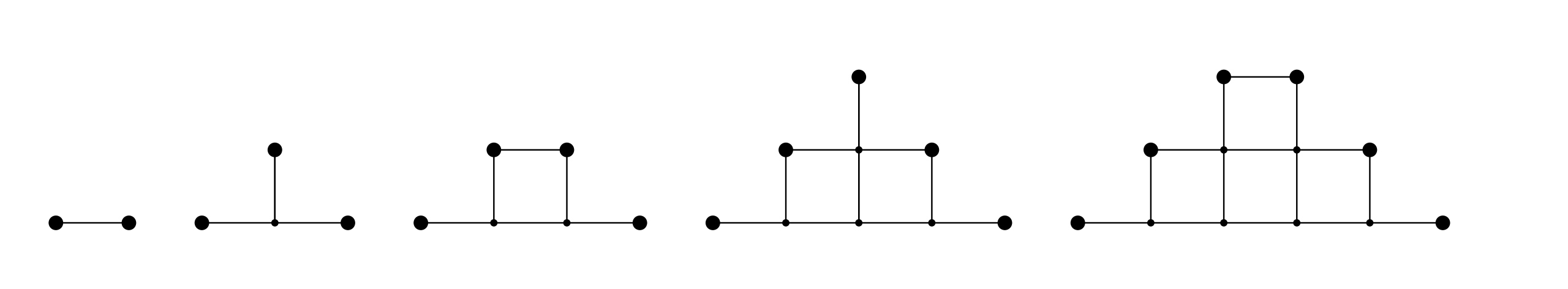}
     \caption{Standard networks, boundary nodes are bold } \label{fig:standart}
     \end{figure}
\begin{remark}
    Note that standard electrical networks are just one particular choice of a representative within the equivalence class of well-connected networks. A different, more symmetric, representative will be used in Section \ref{sub:spclass}. Well-connected networks have a different combinatorial interpretation in terms of connections, see \cite[Section 3.7]{CM}. Later we will find out that as Theorem \ref{Set of response matrices all network well conected} asserts, response matrices of
well-connected networks are
precisely the circular totally
positive matrices among
response matrices; arbitrary
response matrices are
circular totally non-negative.
\end{remark}

\subsection{Embedding of \texorpdfstring{$E_n$}{En} into totally the non-negative Grassmannian} \label{sec: lam emb} 
In this section we will describe an embedding of $E_n$ into $\mathrm{Gr}_{\geq0}(n-1,2n).$
\begin{definition}
The totally non-negative Grassmannian $\mathrm{Gr}_{\geq 0}(k, m)$ is the subset of all points of the complex Grassmannian $\mathrm{Gr}(k, m),$ whose Plücker coordinates $\Delta_I, \ I \subset \binom{m}{k} $ can be chosen to be real and non-negative.
\end{definition}

\begin{theorem} \textup{\cite[Theorem 3.3]{BGKT}} \label{th: main_gr}
 Consider an  electrical network  $e(\Gamma, \omega) \in E_n$ with a response matrix $M_R(e)=(x_{ij}).$ Then there is an injective map $\mathcal{L}: E_n\to \mathrm{Gr}_{\geq 0}(n-1,2n)$ such that $\mathcal{L}(e)$ is the row space of the $(n\times 2n)$ matrix
\begin{equation*}
\Omega_n(e)=\left(
\begin{array}{cccccccc}
x_{11} & 1 & -x_{12} & 0 & x_{13} & 0 & \cdots & (-1)^n\\
-x_{21} & 1 & x_{22} & 1 & -x_{23} & 0 & \cdots & 0 \\
x_{31} & 0 & -x_{32} & 1 & x_{33} & 1 & \cdots & 0 \\
\vdots & \vdots &  \vdots &   \vdots &  \vdots & \vdots & \ddots & \vdots 
\end{array}
\right)    
\end{equation*}
In particular, we have:
\begin{itemize}
    \item 
    The dimension of the row space of $\Omega_n(e)$ is equal to $n-1$;
    \item Every $n-1 \times n-1$ minor of  $\Omega_n(e)$  is non-negative.
   \item   Plücker coordinates of the point $\mathcal{L}(e)$ correspond  to $n-1 \times n-1$ minors  of the matrix $\Omega'_n(e)$ obtained from $\Omega_n(e)$  by deleting the first row.
\end{itemize}
\end{theorem}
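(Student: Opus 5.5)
We prove the three bulleted claims directly from the structure of $\Omega_n(e)$, and reduce injectivity and non-negativity to known facts about response matrices. Throughout we use the form of $\Omega_n(e)$: row $i$ has entries $(-1)^{i+j}x_{ij}$ in the odd column $2j-1$, and the even columns carry the bidiagonal pattern (ones in columns $2i-2$ and $2i$, with the cyclic sign $(-1)^n$ in the corner).

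\textbf{Rank.} The rows of $\Omega_n(e)$ are linearly dependent. The candidate relation is the alternating sum $\sum_i (-1)^{i}r_i$ (up to a global sign).

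On the odd columns this sum becomes $\sum_i (-1)^i(-1)^{i+j}x_{ij}=(-1)^j\sum_i x_{ij}$. This vanishes because the column sums of $M_R(e)$ are zero (Theorem \ref{gen-theorem}). On each even column exactly two consecutive rows contribute $1$ with opposite alternating signs, so they cancel. The corner entry $(-1)^n$ is placed precisely so that the wrap-around column also cancels.

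Hence the rank is at most $n-1$. Deleting the first row leaves the even columns forming a triangular (bidiagonal) $(n-1)\times(n-1)$ block with ones on the diagonal. So the rank is exactly $n-1$. This also proves the third bullet: the row space is spanned by $\Omega'_n(e)$, and its Plücker coordinates are its maximal minors.

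\textbf{Non-negativity.} This is the main obstacle. We would realize the point via Temperley's trick. Take a representative network $e$, build the associated bipartite plabic graph $N(e)$ (boundary vertices of $e$ and of its dual alternating around $2n$ boundary points), and use Postnikov's boundary measurement map, whose image lies in $\mathrm{Gr}_{\ge0}(n-1,2n)$. Alternatively, one can use the grove/spanning-forest expansions of Curtis--Ingerman--Morrow for minors of $M_R(e)$.

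We then show that the row space of $\Omega'_n(e)$ equals this boundary measurement point, by comparing Plücker coordinates. Each maximal minor of $\Omega'_n(e)$ expands, via the bidiagonal even-column block, into signed circular minors of $M_R(e)$. The grove expansions show that each such minor is a positive sum of grove weights, which coincides with the almost-perfect-matching expansion on $N(e)$. The delicate point is the sign bookkeeping, namely the choice of the alternating signs $(-1)^{i+j}$ and the corner $(-1)^n$. We would handle it first for well-connected networks, where all circular minors are positive, and then extend to all of $E_n$ by continuity and closedness of $\mathrm{Gr}_{\ge0}$. Alternatively, the degenerate cells can be reached by taking conductances to $0$ or $\infty$.

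\textbf{Injectivity.} We recover $M_R(e)$ from the row space. Within the row space, the vector whose even-column part is fixed by the bidiagonal normalization is unique. Concretely, the $x_{ij}$ appear as ratios of Plücker coordinates $\Delta_{I}/\Delta_{E}$, where $E$ is the set of even columns excluding one (whose minor is $\pm1$) and $I$ swaps in one odd column. Since $M_R(e)$ determines $e$ up to equivalence by \cite[Theorem 1]{CM}, the map $\mathcal{L}$ is injective on $E_n$.
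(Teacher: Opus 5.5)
\textbf{Verdict: there is a genuine gap.} The rank, third-bullet and injectivity parts are fine, but the non-negativity claim, which is the actual content of the theorem, is not proved, and the step your sketch rests on is false.

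\textbf{What is correct.} The relation $\sum_i(-1)^ir_i=0$ uses exactly the zero column sums and the corner entry $(-1)^n$. Columns $2,4,\dots,2n-2$ of $\Omega'_n(e)$ form a unipotent bidiagonal block, so $\Delta_{\{2,4,\dots,2n-2\}}(\Omega'_n(e))=1$ and the rank is exactly $n-1$. Requiring that block to equal this fixed matrix singles out $\Omega'_n(e)$ among all representatives of its row space. This recovers $x_{ij}$ for $i\ge2$, then $M_R(e)$ via zero column sums, then $e$ by \cite[Theorem 1]{CM}. Since $r_1=\sum_{i\ge2}(-1)^ir_i$, deleting any row other than the first gives the same maximal minors, which covers the literal wording of the second bullet.

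\textbf{The gap.} The paper does not prove non-negativity; it imports it from \cite{BGKT}, together with $\Delta_I(\Omega'_n(e))=\Delta_I/L_{unc}$ (Theorem~\ref{Emb}). Expanding a maximal minor of $\Omega'_n(e)$ along its even columns gives $\sum_R\pm\det M_{R,J}$. Here $J$ indexes the odd columns in $I$, and $R\subseteq\{\bar2,\dots,\bar n\}$ runs over all choices of one row in each gap between consecutive missing even columns. These minors are in general not circular, and need not even have $R\cap J=\emptyset$. Already for $n=4$ and $I=\{1,5,6\}$ one finds, with rows and columns in increasing order,
\[\Delta_{156}(\Omega'_4(e))=-\det M_{\{\bar2,\bar4\},\{\bar1,\bar3\}}-\det M_{\{\bar2,\bar3\},\{\bar1,\bar3\}}.\]
By Theorem~\ref{minors via groves}, the first term equals $(L_{(\bar1,\bar4|\bar2,\bar3)}-L_{(\bar1,\bar2|\bar3,\bar4)})/L_{unc}$, which has no definite sign. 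The second equals $(L_{(\bar1,\bar2,\bar3|\bar4)}+L_{(\bar1,\bar2|\bar3,\bar4)})/L_{unc}$. Only after $L_{(\bar1,\bar2|\bar3,\bar4)}$ cancels do you reach Lam's value $(L_{(\bar1,\bar4|\bar2,\bar3)}+L_{(\bar1,\bar2,\bar3|\bar4)})/L_{unc}$.

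So there is no termwise positivity. Proving that these cancellations always leave exactly $\sum L_\sigma/L_{unc}$ over $\sigma$ concordant with $I$ (equivalently, agreement with the matching expansion on $N(e)$) is the whole theorem, not sign bookkeeping. Restricting first to well-connected $e$ does not help by itself. For general $I$, e.g.\ $I=\{1,5,6\}$, the coordinate $\Delta_I(\Omega'_n(e))$ is not a circular minor, so positivity of circular minors does not directly give $\Delta_I(\Omega'_n(e))\ge0$.

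\textbf{How to close it.} You need either an actual proof of $\Delta_I(\Omega'_n(e))=\Delta_I/L_{unc}$ for all $I$, or a substitute. One substitute, not verified in detail here:
\begin{enumerate}
\item Check directly that the face labels of the $\pi_{n-1,2n}$-diagrams $S_n$ of Sections~\ref{Central circular as seed}--\ref{sec:even n} evaluate correctly. These labels are all-even or of the form \eqref{eq: I for lemma on Lam minors}, and should give $1$ and $(-1)^k\det M(P;Q)$ respectively. In the Laplace expansion above the gaps then have length one, and zero column sums handle the case where the missing even columns wrap from $2n$ to $2$.
\item Apply Scott's test, Corollary~\ref{pos-test-gen-sc}, for well-connected $e$.
\item Pass to the closure to cover all of $E_n$.
\end{enumerate}
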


\begin{theorem} \textup{\cite[Theorem 5.8]{L}} \label{Image of the Lam's embedding}
The image of the set of  well-connected networks under the map $\mathcal{L}$    lies within the top cell of the totally non-negative Grassmannian $\mathrm{Gr}_{\geq 0}(n-1,2n)$, i.e. in the totally positive part  $\mathrm{Gr}_{> 0}(n-1, 2n):=\{X \in \mathrm{Gr}(n-1, 2n): \forall I \ \Delta_I(X)>0\}.$
\end{theorem}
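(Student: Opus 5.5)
The statement is Lam's theorem that well-connected networks land in $\mathrm{Gr}_{>0}(n-1,2n)$. My plan is to combine three facts: a closure property of the non-negative Grassmannian, a dimension count, and the injectivity of $\mathcal{L}$ from Theorem \ref{th: main_gr}. The key point is that $\mathrm{Gr}_{\ge0}(n-1,2n)$ is stratified into positroid cells. The top cell is $\mathrm{Gr}_{>0}(n-1,2n)$, of dimension $(n-1)(n+1)=n^2-1$.

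The image of $E_n$ is contained in the subvariety cut out by the linear conditions of Lam's embedding. Concretely, the point is annihilated by a fixed isotropic-type condition, so the image of $\mathcal{L}$ lies in a subvariety of dimension $\binom{n}{2}$, matching the number of free entries of a symmetric matrix with zero row sums. So a pure dimension count cannot by itself place points in the top cell. Instead I will argue via the explicit matrix $\Omega'_n(e)$.

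First, I would use the parametrization of well-connected networks by the standard network (Definition \ref{def:well-con}). The response matrix $M_R(e)$ of a well-connected network is circular totally positive: all circular minors with appropriate sign are strictly positive. I take this from Curtis–Ingerman–Morrow, which characterises response matrices of well-connected networks and is cited in the paper's remark.

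Second, I would expand each maximal minor $\Delta_I$ of $\Omega'_n(e)$. The columns of $\Omega_n(e)$ alternate between signed columns of $M_R(e)$ (odd positions) and the fixed "$0/1$ bidiagonal" columns (even positions). Choosing $I$ thus chooses some response-matrix columns $J$ and some unit-pattern columns. Expanding along the unit-pattern columns by Laplace expansion reduces $\Delta_I$ to a signed sum of minors of $M_R(e)$, with rows ranging over sets determined by the even columns of $I$. Here I would invoke the grove/Kenyon–Wilson interpretation: such a sum equals a positive combination of grove partition functions $L_\sigma$. These partition functions are polynomials in $\omega$ with non-negative coefficients. This recovers the non-negativity in Theorem \ref{th: main_gr}.

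Third, for strict positivity I would show that for every $I$ at least one planar partition $\sigma$ appearing with nonzero coefficient is realised by a grove in the standard network. Since all weights are positive, $L_\sigma>0$ and hence $\Delta_I>0$. Because electrical transformations preserve $\mathcal{L}(e)$, it suffices to check this on the standard network.

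The main obstacle is this existence statement. For each $(n-1)$-subset $I$ of $[2n]$, one must produce a compatible planar partition that is realisable by a grove in the standard graph. I would first try to reduce it via cyclic symmetry. The shift $i\mapsto i+2$ of Grassmannian columns corresponds to rotating the network, and the shift by one corresponds to the planar-dual network, which is also well-connected. After this reduction, I would settle the remaining cases by induction on $n$ via contracting or deleting a boundary spike or edge, using the circular-pair connection criterion of Curtis–Ingerman–Morrow.
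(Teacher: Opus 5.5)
The paper gives no proof of its own here; it quotes Lam. Measured against that, your proposal has a genuine gap, and it is exactly the step you call ``the main obstacle''. Your reduction is correct. By Theorem \ref{Emb}, $\Delta_I(\Omega'_n(e))=L_{unc}^{-1}\sum L_\sigma$, the sum running over $\sigma$ concordant with $I$, and every $L_\sigma\ge 0$. So strict positivity is equivalent to the following: for every $(n-1)$-subset $I\subset[2n]$, some $\sigma$ concordant with $I$ is the boundary partition of a grove of the standard network. But this is the entire content of the theorem, and you do not prove it. It splits into two non-obvious claims:
\begin{itemize}
\item[(a)] The concordance matrix $A_n$ has no zero row. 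Equivalently, every $(n+1)$-subset $[2n]\setminus I$ is a transversal of the blocks of some merged partition $(\sigma|\widetilde\sigma)$.
\item[(b)] Such a $\sigma$ is realised by a grove. That is, the standard graph contains vertex-disjoint connected subgraphs whose boundary vertices are exactly the blocks of $\sigma$.
\end{itemize}
The Curtis--Ingerman--Morrow connection criterion gives disjoint \emph{paths} for circular pairs. It does not give simultaneous disjoint \emph{trees} for blocks of size $\ge 3$, so (b) does not follow from it directly. The convenient shortcut ``$L_\sigma>0$ for all $\sigma$'' (Theorem \ref{th: the_first_ph}) cannot be used either: applying it to $\mathcal{L}(e)$ presupposes $\mathcal{L}(e)\in\mathrm{Gr}_{>0}(n-1,2n)$, which is what you are proving. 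Your proposed reductions do not close the gap. Rotation and the shift by one only cut the $\binom{2n}{n-1}$ cases by a factor linear in $n$. Deleting or contracting an edge of a critical network does not change $n$ and takes you out of the well-connected class. So the induction you sketch would need a hypothesis about lower cells, or a separate mechanism for lowering $n$, and you have formulated neither. Also, your Step 1 (circular total positivity of $M_R(e)$) is never actually used.

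The argument behind the cited result avoids any case-by-case existence problem, and its ingredients already appear in this paper. First, replace $e$ by an equivalent critical network and apply the generalized Temperley trick. This gives a positively weighted plabic graph $N(e)$ whose almost perfect matchings with boundary $I$ correspond, with weights preserved, to groves whose boundary partition is concordant with $I$. Hence $\mathcal{L}(e)$ is the point of $\mathrm{Gr}_{\ge0}(n-1,2n)$ represented by $N(e)$. By Lemma \ref{lemma: minimal}, $N(e)$ is reduced. Lam's relation $\tau(i)\equiv T(i)-1$, combined with $T(i)=i+n$ for critical networks (as in Lemma \ref{lemma: permutation}), shows its strand permutation is $i\mapsto i+n-1$. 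This is the Grassmann permutation $\pi_{n-1,2n}$ of the top cell. Postnikov's theory then places the point in $\mathrm{Gr}_{>0}(n-1,2n)$, so all $\Delta_I>0$ at once. This route proves your (a) and (b) as by-products instead of requiring them as input.
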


We now describe the  combinatorial interpretation of the Plücker coordinates of the points  $\mathcal{L}(e)$.

\begin{definition} \label{groves, ncp}
A {\itshape grove} $F$ on $\Gamma$ is a spanning forest, that is, an acyclic subgraph that contains all vertices, such that each connected component $F_i\subset F$ contains at least one boundary vertex. The {\itshape boundary partition} $\sigma(F)$ is the set partition of the set $\{\bar{1},\bar{2},\ldots,\bar{n}\}$ that specifies which boundary vertices lie in the same connected component of $F$. Note that since $\Gamma$ is planar, $\sigma(F)$ must be a {\itshape non-crossing partition}. 

We often write set partitions in the form $\sigma=(\overline{a},\overline{b},\overline{c}|\overline{d},\overline{e}|\overline{f},\overline{g}|\overline{h})$. We also use a notation $[_{\overline{p}_1}^{\overline{q}_1}|_{\overline{p}_2}^{\overline{q}_2}|\ldots|_{\overline{p}_k}^{\overline{q}_k}|\overline{o}_1|\overline{o}_2,\ldots \overline{o}_l]$ meaning $(\overline{p}_1,\overline{q}_1|\overline{p}_2,\overline{q}_2|\ldots |\overline{p}_k,\overline{q}_k|\overline{o}_1|\overline{o}_2|\ldots|\overline{o}_l)$. We denote by $\mathcal{NC}_n$ the
set of non-crossing partitions on the set $\{\overline{1},\overline{2},\ldots,\overline{n}\}$.
Each non-crossing partition $\sigma$ on the set $\{\overline{1},\overline{2},\ldots,\overline{n}\}$ has a dual non-crossing partition on $\{\widetilde{1},\widetilde{2},\ldots,\widetilde{n}\}$
 where, by convention,  $\widetilde{i}$ lies between $\overline{i}$ and $\overline{i+1}$ and $\widetilde{n}$ lies between $\overline{n}$ and $\overline{1}$.

 We often consider the non-crossing partition of $[2n]$ defined by merging the initial partition $\sigma$ and its dual $\widetilde\sigma$, using the identification $\{1,2,\ldots,2n-1,2n\} =\{\overline{1},\widetilde{1},\overline{2},\widetilde{2},\ldots,\overline{n},\widetilde{n}\}$. We denote this partition $(\sigma|\widetilde\sigma)$, see Fig. \ref{concordance example}.

\end{definition}
\begin{figure}[ht]
\centering
\begin{tikzpicture}
    \draw (0,0) circle (2);

    \draw (0:2) -- node [left] {}(180:2);
    \draw (30:2) -- node [left] {}(150:2);
    \draw (60:2) -- node [left] {}(120:2);
    \draw (0:2) -- node [left] {}(-120:2);
    \draw (-120:2) -- node [left] {}(180:2);
    \draw (-30:2) -- node [left] {}(-90:2);

    \filldraw[fill=black] (0:2) node [right=5pt] {$\overline{4}$} circle (2pt);
    \filldraw[fill=white] (30:2) node [above right=3pt] {$\widetilde{3}$} circle (2pt);
    \filldraw[fill=black] (60:2) node [above right=3pt] {$\overline{3}$} circle (2pt);
    \filldraw[fill=white] (90:2) node [above=3.5pt] {$\widetilde{2}$} circle (2pt);
    \filldraw[fill=black] (120:2) node [above left=3pt] {$\overline{2}$} circle (2pt);
    \filldraw[fill=white] (150:2) node [above left=3pt] {$\widetilde{1}$} circle (2pt);
    \filldraw[fill=black] (180:2) node [left=3.5pt] {$\overline{1}$} circle (2pt);
    \filldraw[fill=white] (-30:2) node [below right=3pt] {$\widetilde{4}$} circle (2pt);
    \filldraw[fill=black] (-60:2) node [below right=3pt] {$\overline{5}$} circle (2pt);
    \filldraw[fill=white] (-90:2) node [below=3.5pt] {$\widetilde{5}$} circle (2pt);
    \filldraw[fill=black] (-120:2) node [below left=3pt] {$\overline{6}$} circle (2pt);
    \filldraw[fill=white] (-150:2) node [below left=3pt] {$\widetilde{6}$} circle (2pt);

    \draw (0:2) +(-0.15,-0.15) rectangle +(0.15,0.15);
    \draw (-30:2) +(-0.15,-0.15) rectangle +(0.15,0.15);
    \draw (-120:2) +(-0.15,-0.15) rectangle +(0.15,0.15);
    \draw (150:2) +(-0.15,-0.15) rectangle +(0.15,0.15);
    \draw (60:2) +(-0.15,-0.15) rectangle +(0.15,0.15);

    \draw (0,-2.8) node {$\sigma$ of $[\bar n]$ and $\widetilde{\sigma}$ of $[\widetilde{n}]$};
\end{tikzpicture} \phantom{aaaaaaaaaa}
\begin{tikzpicture}
    \draw (0,0) circle (2);

    \draw (0:2) -- node [left] {}(180:2);
    \draw (30:2) -- node [left] {}(150:2);
    \draw (60:2) -- node [left] {}(120:2);
    \draw (0:2) -- node [left] {}(-120:2);
    \draw (-120:2) -- node [left] {}(180:2);
    \draw (-30:2) -- node [left] {}(-90:2);

    \filldraw[fill=black] (0:2) node [right=5pt] {$7$} circle (2pt);
    \filldraw[fill=white] (30:2) node [above right=3pt] {$6$} circle (2pt);
    \filldraw[fill=black] (60:2) node [above right=3pt] {$5$} circle (2pt);
    \filldraw[fill=white] (90:2) node [above=3.5pt] {$4$} circle (2pt);
    \filldraw[fill=black] (120:2) node [above left=3pt] {$3$} circle (2pt);
    \filldraw[fill=white] (150:2) node [above left=3pt] {$2$} circle (2pt);
    \filldraw[fill=black] (180:2) node [left=3.5pt] {$1$} circle (2pt);
    \filldraw[fill=white] (-30:2) node [below right=3pt] {$8$} circle (2pt);
    \filldraw[fill=black] (-60:2) node [below right=3pt] {$9$} circle (2pt);
    \filldraw[fill=white] (-90:2) node [below=3.5pt] {$10$} circle (2pt);
    \filldraw[fill=black] (-120:2) node [below left=3pt] {$11$} circle (2pt);
    \filldraw[fill=white] (-150:2) node [below left=3pt] {$12$} circle (2pt);

    \draw (0:2) +(-0.15,-0.15) rectangle +(0.15,0.15);
    \draw (60:2) +(-0.15,-0.15) rectangle +(0.15,0.15);
    \draw (150:2) +(-0.15,-0.15) rectangle +(0.15,0.15);
    \draw (-150:2) +(-0.15,-0.15) rectangle +(0.15,0.15);
    \draw (-30:2) +(-0.15,-0.15) rectangle +(0.15,0.15);

    \draw (0,-2.8) node {$(\sigma|\widetilde{\sigma})$ of $[2n]$};
\end{tikzpicture}
    \caption{Non-crossing partition $\sigma=(\bar1,\bar4,\bar6|\bar2,\bar3|\bar5)$ with its dual non-crossing partition  $\widetilde{\sigma}=(\tilde1,\tilde3|\tilde2|\tilde4,\tilde5|\tilde6)$ and merged partition $(\sigma|\widetilde{\sigma})$.}
    \label{concordance example}
\end{figure}

\begin{definition}
For a non-crossing partition $\sigma$ the {\itshape grove measurement} associated with $\sigma$ is defined by
$$L_{\sigma}:=\sum\limits_{F|\sigma(F)=\sigma}wt(F),$$
where the sum is over all groves with boundary partition $\sigma$, and $wt(F)$ is the product of the weights of the edges in $F$. We denote by $L_{unc}$ the grove measurement $L_{\bar1|\bar2|\ldots|\bar n}$. For $\sigma=(\bar p_1,\bar q_1|\bar p_2,\bar q_2|\ldots |\bar p_k,\bar q_k|\bar o_1|\bar o_2|\ldots|\bar o_l)$ we also write $L_\sigma$ as $L[_{\bar p_1}^{\bar q_1}|_{\bar p_2}^{\bar q_2}|\ldots|_{\bar p_k}^{\bar q_k}|\bar o_1|\bar o_2,\ldots, \bar o_l]$.
\end{definition}

\begin{definition} \label{def-ordcon}
We say that an $(n-1)$-element subset $I\subset \{1,\ldots,2n\}$ is {\itshape concordant} with a non-crossing partition $\sigma$ if each part of $\sigma$ and each part of the dual partition $\widetilde{\sigma}$ contains exactly one element not in $I$. In this situation, we also say that $\sigma$ or $(\sigma,\widetilde{\sigma})$ is concordant with $I$, see Fig. \ref{concordance example} for an example. 
\end{definition}

Let $e(\Gamma, \omega) \in E_n$ and define boundary measurements $\Delta_I$ for all $I\subset\{1,\ldots,2n\},\, |I|=n-1$ as
\begin{equation} \label{form:pl-gr}
\Delta_I:=\sum\limits_{(\sigma,I)}L_{\sigma},
\end{equation}
where the sum is over all $\sigma$ that are concordant with $I$.

\begin{theorem} \textup{\cite[Theorem 5.8]{L}, \cite[Theorem 3.3]{BGKT} \label{Emb}}
The collection of $\Delta_I$, defined in \eqref{form:pl-gr}, viewed as homogeneous Plücker coordinates, defines a point in $\mathrm{Gr}_{\geq 0}(n-1,2n)$. Thus, we obtain an injective map $\mathcal{Y}: E_n \to \mathrm{Gr}_{\geq 0}(n-1,2n)$.

Moreover, the maps $\mathcal{L}$ and $\mathcal{Y}$ agree pointwise, and 

$$\Delta_I\bigr(\Omega'_n(e)\bigl)=\frac{\Delta_I}{L_{unc}}. $$

\end{theorem}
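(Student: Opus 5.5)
The plan is to deduce everything from Theorem \ref{th: main_gr} by proving the pointwise identity
$$\Delta_I\bigl(\Omega'_n(e)\bigr)=\frac{\Delta_I}{L_{unc}}\qquad\text{for every } I\subset\{1,\ldots,2n\},\ |I|=n-1,$$
where the right-hand side is the grove expression \eqref{form:pl-gr}. Once this identity holds, the other claims follow quickly. The minors of $\Omega'_n(e)$ are the Plücker coordinates of $\mathcal L(e)$. Since $L_{unc}>0$ (it is a nonempty sum of positive weights), the vector $(\Delta_I)_I$ is a positive rescaling of these coordinates. Hence it defines the same point $\mathcal L(e)\in \mathrm{Gr}_{\ge 0}(n-1,2n)$. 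So $\mathcal Y=\mathcal L$ pointwise, and injectivity of $\mathcal Y$ is inherited from injectivity of $\mathcal L$ in Theorem \ref{th: main_gr}.

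To prove the identity, I first record the grove formulas for the response matrix, which are the all-minors version of the matrix-tree theorem from \cite{CM} and Kenyon--Wilson. For $i\neq j$ one has $-x_{ij}=L[_{\bar i}^{\bar j}|\ldots]/L_{unc}$, where all other boundary nodes are singletons. More generally, for disjoint $P=\{p_1,\ldots,p_k\}$ and $Q=\{q_1,\ldots,q_k\}$ in circular order, the minor $\det(x_{p_a q_b})$ equals, up to the sign $(-1)^k$, the sum of $L_\sigma/L_{unc}$ over partitions pairing $p_a$ with $q_{k+1-a}$ and leaving the remaining nodes singletons. Minors with overlapping index sets are handled via the zero row sums, writing $x_{ii}=-\sum_{j\ne i}x_{ij}$.

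Next, I expand the minor of $\Omega'_n(e)$ on the columns $I$ by a Laplace expansion. The even columns $2k$ carry only the pattern of $1$'s in rows $k$ and $k+1$ (cyclically, with the sign $(-1)^n$ in the corner). The odd columns $2k-1$ carry $\pm$ the $k$-th column of $M_R(e)$. Choosing which rows are absorbed by the even columns of $I$ reduces the minor to a signed sum of minors of $M_R(e)$ whose row and column sets are governed by $I$: the missing odd indices and the blocks of consecutive even indices decide which rows survive. Substituting the grove formulas turns this into a signed sum of $L_\sigma/L_{unc}$.

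The main obstacle is the final combinatorial step: showing that after cancellations each $\sigma$ appears with coefficient $1$ exactly when $\sigma$ is concordant with $I$, and with coefficient $0$ otherwise. I would organize this by the bijection between $I$ and the merged partition $(\sigma|\widetilde\sigma)$. Each part of $\sigma$ and of $\widetilde\sigma$ must omit exactly one element of $I$. I would then verify the sign bookkeeping first for the $I$ that give contiguous circular minors, where the Laplace expansion has a single term, and afterwards for general $I$ by induction on the number of even columns in $I$. The inductive step uses the linear relations coming from zero row sums. If the sign tracking becomes unmanageable, I would fall back on checking the identity on the Plücker coordinates of a single cluster chart of $\mathrm{Gr}(n-1,2n)$, and then argue that the grove side satisfies the Plücker relations via Lam's boundary-measurement construction \cite{L}.
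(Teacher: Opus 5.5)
Your reduction is sound. If $\Delta_I(\Omega'_n(e))=\Delta_I/L_{unc}$ holds for every $I$, the grove vector is $L_{unc}>0$ times the Plücker vector of the row space of $\Omega'_n(e)$. So it lies in $\mathrm{Gr}_{\ge0}(n-1,2n)$, $\mathcal Y=\mathcal L$, and injectivity comes from Theorem~\ref{th: main_gr}.

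But that identity is the entire content of the theorem, and you do not prove it. You call the coefficient computation ``the main obstacle'' and give an outline whose concrete ingredients are not correct as stated:
\begin{itemize}
\item There is no bijection between a general $I$ and a merged partition $(\sigma|\widetilde\sigma)$; Lemma~\ref{lemma: bijection} only covers subsets of the special contiguous form. Already for $n=3$, $I=\{1,4\}$ is concordant with both $(\bar1,\bar2|\bar3)$ and $(\bar1,\bar3|\bar2)$, and $\Delta_{14}(\Omega'_3)=-x_{21}-x_{31}=(L_{12|3}+L_{13|2})/L_{unc}$.
\item The contiguous case is not always a single Laplace term. For $I=\{4,5\}$, the pair $(\bar3;\bar1)$, you get $\Delta_{45}(\Omega'_3)=x_{33}+x_{23}$, which reduces to $-x_{13}$ only after a zero-row-sum cancellation.
\item The cause is general. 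Since the first row of $\Omega_n(e)$ is deleted, a run of consecutive even columns of $I$ meets one more row than it has columns, unless one of those rows is the deleted first row. The expansion therefore sums over which row escapes to the odd columns.
\item ``Induction on the number of even columns'' gives neither the inductive statement nor a reason why the resulting signed sum of Kenyon--Wilson expansions (with overlapping row and column sets) collapses to exactly the concordance sum with coefficient $1$.
\end{itemize}
That cancellation argument is the missing idea.

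Your fallback is closer to a proof, but it relies on more than you acknowledge. That the grove vector satisfies the Plücker relations is Lam's theorem, i.e.\ the first half of the statement itself. In the sources the paper cites, this comes from the generalized Temperley bijection between groves and almost perfect matchings of $N(e)$, which realizes $\Delta_I$ as dimer boundary measurements of a plabic graph. The point is then identified with the row space of $\Omega_n(e)$. Even granting Lam's result, you still must do two things.
\begin{itemize}
\item Fix a cluster whose labels are all sets for which you have verified the identity directly, e.g.\ the contiguous and all-even face labels of $S_n$. You cannot use the paper's proposition on contiguous minors for this, since that proposition is deduced from the present theorem.
\item Treat networks that are not well-connected. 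There some cluster coordinate must vanish, so the chart argument breaks. One remedy is Kenyon--Wilson's theorem that each $L_\sigma/L_{unc}$ is an integer polynomial in the $x_{ij}$, which lets the identity pass by density from well-connected response matrices to all of them.
\end{itemize}
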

\begin{remark}
Recall that we consider electrical networks up to electrical transformations. It can be verified that all grove measurements $L_{\sigma}$ are multiplied by the same nonzero factor under these transformations, see  \cite[Proposition $4.4$]{L}. 
Consequently, the projective point defined by the collection of $\Delta_I$ is
invariant under electrical transformations.
\end{remark}
We call the map $\mathcal{Y}: E_n \to \mathrm{Gr}_{\geq 0}(n-1,2n)$ {\itshape Lam's embedding}. By Theorem \ref{Emb} we will also use the notation $\mathcal{L}$ for Lam's embedding.

\begin{example} \label{bulcon-ex}
  By Definition \ref{def-ordcon} for any $n$  the following holds:
  \begin{itemize}
      \item For any $I \subset [2n], \ |I|=n-1$ containing only even indices we have  $\Delta_{I}=L_{unc}$, in particular, $\Delta_{246\dots2n-2}=L_{unc};$
      \item For any $I \subset [2n], \ |I|=n-1$ containing only odd indices we have  $\Delta_{I}=L_{ \overline{1  2 3}\ldots  \overline{n}},$ in particular, $\Delta_{135\dots 2n-3}=L_{\overline{1}\overline{2}\overline{3}\dots \overline{n}}.$
  \end{itemize} 
\end{example}

\subsection{The image of Lam's embedding}
In this section we describe the image of Lam's embedding using the geometry encoded by combinatorics of concordance subsets. 
\begin{definition} \label{def:concordance}
Let $I$ be a $(n-1)$-element subset of $\{1,\ldots,2n\}$ and $\sigma\in \mathcal{NC}_n$.

Define a $(\binom{2n}{n-1}\times C_n)$-matrix $A_n=(a_{I\sigma})$, where $I$ is a $(n-1)$-element subset of $\{1,\ldots,2n\}$, $\sigma\in \mathcal{NC}_n$ and $C_n$ is the $n$-th Catalan number as follows:

$$a_{I\sigma}=\begin{cases}
1, \text{\ if\ }  \sigma\ \text{is concordant with}\ I; \\
0, \text{\ otherwise.}
\end{cases}$$
\end{definition}

Let the {\itshape concordance space} $H$ be the column space of $A_n$, it is a subspace of $\bigwedge^{n-1}\mathbb{R}^{2n}$ and $\mathrm{dim}\, H=C_n$. Denote by $\mathbb PH$ the projectivization of $H$. 
\begin{definition}
    A {\itshape cactus network} is an electrical network, whose conductivities are allowed to be zero or infinite.  Due to Ohm's law, setting an edge conductance to be zero (infinite) means that we can delete (respectively contract) this edge. Denote by $\overline{E}_n$ the set of cactus networks with $n$ boundary vertices up to electrical equivalence. 

     The identity \eqref{form:pl-gr} extends naturally to any cactus electrical network.
    \end{definition}

\begin{theorem} \textup{\cite[Theorem 5.8]{L}} \label{Image of the Lam's embedding surjectivity}
The extension of Lam's embedding to $\overline{E}_n$, $\mathcal{L}:\overline{E}_n\to \mathrm{Gr}_{\ge0}(n-1,2n)$ has the image $\mathrm{Gr}_{\ge0}(n-1,2n)\cap\mathbb PH$. This map is a bijection from $\overline{E}_n$ to $\mathrm{Gr}_{\ge0}(n-1,2n)\cap\mathbb PH$. 

Moreover, the image of well-connected networks under Lam's embedding  is exactly  $\mathrm{Gr}_{> 0}(n-1,2n)\cap\mathbb P H$.
\end{theorem}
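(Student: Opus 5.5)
This statement is quoted from \cite[Theorem 5.8]{L}. Within the present paper it serves as an input, so strictly speaking nothing new has to be proved. Still, here is the plan I would follow to establish it from the results stated above. It has three parts: (a) the image of $\overline{E}_n$ lies in $\mathrm{Gr}_{\ge0}(n-1,2n)\cap\mathbb PH$; (b) the map is injective; (c) the map is surjective onto this set, and the well-connected networks correspond exactly to the positive part.

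For (a), formula \eqref{form:pl-gr} writes the vector $(\Delta_I)_I$ as $A_n\cdot(L_\sigma)_{\sigma\in\mathcal{NC}_n}$. Hence it lies in the column space $H$ by definition. Non-negativity is the content of Theorem \ref{Emb}, and it passes to cactus networks by taking limits of conductances to $0$ or $\infty$, using that $\mathrm{Gr}_{\ge0}$ is closed. One must check that the limit vector is nonzero. After rescaling all $L_\sigma$ by a common factor, some grove measurement survives, and concordance guarantees that every $\sigma$ has at least one concordant $I$; so some $\Delta_I$ is nonzero.

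For (b), I would use that the columns of $A_n$ are linearly independent ($\dim H=C_n$). Then the point of $\mathbb PH$ determines the projective vector $(L_\sigma)$. By the classical recovery of cactus networks from their grove measurements (Kenyon–Wilson, Curtis–Ingerman–Morrow), this vector in turn determines the network up to equivalence.

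The main obstacle is (c), surjectivity. My plan is a cell-by-cell comparison. Stratify $\overline{E}_n$ by the combinatorial type of a minimal (critical) cactus network, and stratify $\mathrm{Gr}_{\ge0}(n-1,2n)$ into positroid cells. Via the generalized Temperley trick, each critical network gives a reduced plabic graph, and the edge conductances become face or edge weights. This shows that each network stratum maps homeomorphically onto a subset of a positroid cell of the same dimension, cut out by the linear conditions of $\mathbb PH$. Next I would argue that inside each positroid cell $S$ the intersection $S\cap\mathbb PH$ is exactly the image of the corresponding network stratum. The image is open and closed there, by invariance of domain and properness of the extended map, since $\overline{E}_n$ is compact. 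Connectedness then gives equality; connectedness of $S\cap\mathbb PH$ should follow because it is parametrized by a positive orthant in the conductances. Finally, the top stratum (well-connected networks, Definition \ref{def:well-con}) has dimension $\binom{n}{2}$. It maps into the top cell $\mathrm{Gr}_{>0}(n-1,2n)$ by Theorem \ref{Image of the Lam's embedding}. Conversely, a point of $\mathrm{Gr}_{>0}\cap\mathbb PH$ coming from a non-well-connected cactus network would have some Plücker coordinate equal to zero: degenerating an edge kills every grove measurement needed for some concordant $\Delta_I$. This gives the last claim.
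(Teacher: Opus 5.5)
The paper does not prove this statement; it is imported from Lam, so your sketch has to stand on its own. Parts (a) and (b) are acceptable as outlines, with one caveat on (b): recovering a \emph{cactus} network from its grove measurements is not classical. When $L_{unc}=0$ there is no response matrix, and this injectivity is one of Lam's own results, obtained by reducing along the cactus structure to Curtis--Ingerman--Morrow.

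The genuine gap is in (c). Your cell-by-cell comparison only looks at positroid cells $S$ that already contain the image of some network stratum. Surjectivity also requires that $\mathbb PH$ misses the totally non-negative part of every \emph{other} positroid cell of $\mathrm{Gr}_{\ge0}(n-1,2n)$. These are the cells whose strand permutation is not of the form $i\mapsto T(i)-1$ for the medial pairing $T$ of some cactus network. Positroid cells vastly outnumber such pairings. A point of $\mathrm{Gr}_{\ge0}(n-1,2n)\cap\mathbb PH$ lies in \emph{some} positroid cell, and nothing in your plan forces that cell to be an electrical one. This exclusion is exactly where the content of the first claim lies.

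Even for the cells you do treat, the open--closed argument is not justified. Invariance of domain needs $S\cap\mathbb PH$ to be a topological manifold of the same dimension as the network stratum. You have this only for the top cell: there $\mathrm{Gr}_{>0}(n-1,2n)\cap\mathbb PH$ is open in $\mathrm{Gr}(n-1,2n)\cap\mathbb PH\cong\mathrm{LG}(n-1,2n-2)$, which has dimension $\binom n2$. For lower cells no such structure is available. Note also that a stratum and its positroid cell do not have the same dimension: at the top they are $\binom n2$ and $n^2-1$.

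Your connectedness argument for $S\cap\mathbb PH$ is circular. Saying it is parametrized by a positive orthant of conductances presupposes the equality $S\cap\mathbb PH=\mathcal L(\text{stratum})$ that you are trying to prove. So even the ``moreover'' part is not established.

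That part can be obtained directly from tools in the paper. Let $X\in\mathrm{Gr}_{>0}(n-1,2n)\cap\mathbb PH$.
\begin{enumerate}
\item By Remark \ref{rem:Lunc}, $L_{unc}=\Delta_{246\ldots 2n-2}>0$.
\item By Lemma \ref{Parametrization of the top-cell}, $X$ is the row space of $\Phi_n(M)$ for some symmetric $M$ with zero row sums.
\item By Lemma \ref{lem:cir-min-gen}, the signed central minors of $M$ equal Pl\"ucker coordinates of $\Phi'_n(M)$. Its polynomial identity only needs checking on the nonempty open set of circular totally positive matrices, which are response matrices by Theorem \ref{Set of response matrices all network well conected}. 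These coordinates are positive, since $\Delta_{246\ldots 2n-2}(\Phi'_n(M))=1$ and $X$ is totally positive.
\item Theorem \ref{thm: central form test} then makes all signed circular minors positive.
\item Theorem \ref{Set of response matrices all network well conected} yields a well-connected $e$ with $\mathcal L(e)=X$.
\end{enumerate}

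For the full non-negative statement you would still need a new ingredient, which your plan does not supply. One option is to show that $\mathrm{Gr}_{\ge0}(n-1,2n)\cap\mathbb PH$ is the closure of $\mathrm{Gr}_{>0}(n-1,2n)\cap\mathbb PH$; compactness of $\mathcal L(\overline{E}_n)$ then finishes. Another is a classification of the positroid cells whose positive part meets $\mathbb PH$.
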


Denote by $e_i$ the standard basis vectors of $\mathbb R^{2n}$. For $I=(i_1,i_2,\ldots,i_{n-1})$ we denote by $e_I$ the standard vector $e_{i_1}\wedge e_{i_2}\wedge\ldots\wedge e_{i_{n-1}}$. Thus, the vectors of the form $e_I=e_{i_1}\wedge e_{i_2}\wedge \ldots \wedge e_{i_{n-1}}$, where  $i_1<i_2<\ldots<i_{n-1}$ form a basis of the space $\bigwedge^{n-1}\mathbb{R}^{2n}$; and vectors $w_\sigma=\sum\limits_I a_{I \sigma}e_I\in\bigwedge^{n-1}\mathbb{R}^{2n}$ form a basis of the space $H$. In these notations Theorem \ref{Image of the Lam's embedding surjectivity} can be rewritten as follows. 

\begin{theorem} \textup{\cite[Theorem 5.10]{L}} \label{th: the_first_ph}
Any point $X\in \mathrm{Gr}(n-1,2n)\cap\mathbb P H$ has the following form: 
\begin{equation}\label{eq:Grovesum}
    X=\sum _{\sigma \in \mathcal{NC}_n}L_{\sigma}w_\sigma.
\end{equation}

Moreover, if  $X \in  \mathrm{Gr}_{>0}(n-1,2n)\cap\mathbb P H $, then $L_{\sigma}>0$ for any $\sigma$. 
\end{theorem}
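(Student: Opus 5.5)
The statement is Theorem \ref{th: the_first_ph}: every $X\in \mathrm{Gr}(n-1,2n)\cap\mathbb P H$ can be written as $\sum_\sigma L_\sigma w_\sigma$, and the coefficients are positive on the totally positive part. I would argue from the linear-algebraic description of $H$, using Theorem \ref{Image of the Lam's embedding surjectivity} for the positivity claim.

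\textbf{Existence of the expansion.} The vectors $w_\sigma$, $\sigma\in\mathcal{NC}_n$, form a basis of $H$, since $\dim H=C_n$ and $H$ is the column space of $A_n$. So any $X\in\mathbb PH$, viewed through its Plücker vector in $\bigwedge^{n-1}\mathbb R^{2n}$, is a linear combination $X=\sum_\sigma c_\sigma w_\sigma$ with unique coefficients up to a common scalar. I will simply name these coefficients $L_\sigma:=c_\sigma$, which gives the first claim. For cactus networks, comparing with \eqref{form:pl-gr} shows that $\Delta_I=\sum_\sigma a_{I\sigma}L_\sigma$. Hence the Plücker vector of $\mathcal L(e)$ is exactly $\sum_\sigma L_\sigma(e)\,w_\sigma$, and this notation agrees with grove measurements.

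\textbf{Positivity.} Let $X\in\mathrm{Gr}_{>0}(n-1,2n)\cap\mathbb PH$. By Theorem \ref{Image of the Lam's embedding surjectivity}, $X=\mathcal L(e)$ for a well-connected network $e$. Uniqueness of coordinates in the basis $\{w_\sigma\}$ then forces the coefficients of $X$ to be the grove measurements $L_\sigma(e)$, up to a global positive scalar. The scalar is positive because the Plücker coordinates are positive and $\Delta_{246\dots}=L_{unc}>0$ by Example \ref{bulcon-ex}.

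It then remains to show $L_\sigma(e)>0$ for every non-crossing $\sigma$ when $e$ is well-connected. Since $L_\sigma$ is a sum of products of positive weights, this amounts to showing that a grove with boundary partition $\sigma$ exists. I would use the standard network of Fig. \ref{fig:standart} together with invariance of the projective point under electrical transformations. For the standard network (or any well-connected representative), I would construct for each $\sigma$ a grove realizing it by induction on the number of blocks, using the connection characterization of well-connectedness from \cite[Section 3.7]{CM}. That characterization says that non-interleaved boundary sets are joined by vertex-disjoint paths, so the blocks of $\sigma$ can be connected and then extended to a spanning forest.

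\textbf{Main obstacle.} The step I expect to be hardest is producing groves for every non-crossing $\sigma$; connections only give disjoint paths between pairs of boundary sets. If that construction becomes messy, I would try a second route. One could express each $L_\sigma$ as a positive combination of Plücker coordinates, by inverting $A_n$ on a suitable set of rows $I$ where the concordance matrix is triangular. That would give $L_\sigma>0$ directly from $\Delta_I>0$, although positivity of such an inversion is itself nontrivial.
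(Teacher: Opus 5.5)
\textbf{The first assertion and your reduction are fine.} The first assertion is just the expansion of $X$ in the basis $\{w_\sigma\}$ of $H$. For the second, writing $X=\mathcal L(e)$ with $e$ well-connected makes the coordinates of $X$ equal to the grove measurements $L_\sigma(e)\ge 0$ up to a positive scalar. This matches how the paper treats the statement: it gives no proof, but quotes it from \cite[Theorem 5.10]{L} as a rewriting of Theorem~\ref{Image of the Lam's embedding surjectivity}.

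\textbf{The gap.} After your reduction, the ``moreover'' part is \emph{equivalent} to the claim that every $\sigma\in\mathcal{NC}_n$ is the boundary partition of some grove of a well-connected network. That claim is the whole content of the statement, and your argument for it does not work. A connection in the sense of \cite{CM} is a family of vertex-disjoint paths $\bar p_i\to\bar q_i$ for a single circular pair $(P;Q)$. It therefore yields only groves of type $[_{\bar p_1}^{\bar q_1}|\cdots|_{\bar p_k}^{\bar q_k}|\bar o_1|\cdots|\bar o_l]$, which are exactly the cases already covered by Proposition~\ref{circ-con-gro1} together with Theorem~\ref{Set of response matrices all network well conected}.
\begin{itemize}
\item A block with three or more elements cannot come from one connection, since each boundary vertex is the endpoint of at most one path.
\item A matching such as $(\bar1,\bar2|\bar3,\bar4|\bar5,\bar6)$ for $n=6$ is not a circular pair at all.
\item If you take unions of paths from several connections, nothing keeps the trees of different blocks vertex-disjoint, and that disjointness is exactly what a grove of type $\sigma$ needs.
\end{itemize}
Your final step, extending pairwise disjoint block trees to a grove by a multi-source search, is correct, but it presupposes those trees.

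\textbf{The fallback does not help coordinatewise.} Take the $n=6$ partition above. Then $[2n]\setminus I$ must contain the dual singletons $2,6,10$ and exactly one of $4,8,12$. Suppose it contains $4$. Then $I$ is also concordant with $(\bar1,\bar2|\bar5|\bar3,\bar4,\bar6)$ when $9\notin I$, and with $(\bar1,\bar2,\bar5|\bar3,\bar4|\bar6)$ when $11\notin I$. The cases $8$ and $12$ follow by rotation. So $L_\sigma$ is never a single Plücker coordinate, and its positivity must come from the structure of the well-connected cell, not from inverting $A_n$.

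\textbf{What would close the gap.} You need an extra ingredient. Options:
\begin{itemize}
\item cite the corresponding result of \cite{L};
\item build the groves explicitly on one fixed critical graph, such as $G_n$;
\item use a closure argument.
\end{itemize}
For the closure argument: every well-connected network is equivalent to one on the standard graph (Definition~\ref{def:well-con}). On that graph $L_\sigma$ is a polynomial in the conductances with nonnegative coefficients. Hence it is either positive on all of $\mathrm{Gr}_{>0}(n-1,2n)\cap\mathbb PH$ or zero on all of it. Vanishing is impossible, because the cactus network obtained by gluing the blocks of $\sigma$ has image $w_\sigma$, where $L_\sigma=1$. This last step requires that $\mathrm{Gr}_{>0}(n-1,2n)\cap\mathbb PH$ be dense in $\mathrm{Gr}_{\ge0}(n-1,2n)\cap\mathbb PH$, which is itself a nontrivial input from \cite{CM,L}.
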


The space $\mathbb P H$ has a more geometric description (see \cite[Section 5.2]{BGKT}), which helps relate the combinatorics of the concordance subsets to the geometry of the Lagrangian Grassmannian.
\begin{theorem}  \textup{\cite[Theorem 1.6]{CGS}, \cite[Lemma 5.8, Theorem 5.9]{BGKT}, \cite[Theorem 6.3]{GLX}} \label{theorem:to def of x}
   The complex projective variety  $\mathrm{Gr}(n-1,2n)\cap\mathbb P H$ is isomorphic to the Lagrangian Grassmannian $\mathrm{LG}(n-1,2n-2).$ 
\end{theorem}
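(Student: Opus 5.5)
We want to show that $\mathrm{Gr}(n-1,2n)\cap\mathbb PH$ is isomorphic to $\mathrm{LG}(n-1,2n-2)$. The approach is to find an explicit symplectic structure hidden in the concordance space. Take the $2n$ coordinates of $\mathbb R^{2n}$ and the vector $v=\sum_{i=1}^{2n}(-1)^i e_i$; this is the alternating vector that is orthogonal to every row of $\Omega_n(e)$. That orthogonality holds because the row sums of $M_R(e)$ vanish, the ones sit in adjacent pairs, and the sign pattern is alternating.

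\textbf{Step 1: every point lies in a fixed hyperplane.} Show that every point $X\in\mathrm{Gr}(n-1,2n)\cap\mathbb PH$ is contained in the hyperplane $V=v^{\perp}$, or dually contains a fixed line after suitable identification. By Theorem \ref{th: the_first_ph}, $X=\sum L_\sigma w_\sigma$, so it suffices to check that each $w_\sigma$ lies in $\bigwedge^{n-1}V\subset\bigwedge^{n-1}\mathbb R^{2n}$. Equivalently, contracting each $w_\sigma$ with the covector $v^*$ should give zero. This is a combinatorial sign-cancellation among the $I$ concordant with $\sigma$: removing an element from $I$ pairs concordant sets that differ by moving the missing element within a block of $\sigma$ or $\widetilde\sigma$.

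\textbf{Step 2: reduce to $\mathrm{Gr}(n-1,2n-2)$.} Put a quotient by a second fixed vector, for instance $u=\sum_i e_i$ or the kernel direction, so that the ambient becomes a $(2n-2)$-dimensional space $W=V/\langle u\rangle$. Check that $u\in V$ and that $u\notin X$ for points of the variety; the latter follows from positivity on the dense positive part, and then on all of the variety by a direct argument. The induced map $X\mapsto (X+\langle u\rangle)/\langle u\rangle$ then lands in $\mathrm{Gr}(n-1,W)$.

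\textbf{Step 3: identify the Lagrangian condition.} Equip $W$ with the symplectic form $\omega$ induced by the cyclic pairing $\omega(e_i,e_j)=\pm1$ for $|i-j|=1$ (cyclically), suitably signed, and show that the image is $\omega$-Lagrangian. For points coming from networks this can be verified directly on the rows of $\Omega'_n(e)$: the pairing of two rows reduces to $x_{ij}-x_{ji}=0$ by the symmetry of $M_R(e)$. By Theorem \ref{Image of the Lam's embedding surjectivity} these points are Zariski dense, so the condition holds on the whole variety. Conversely, since $\dim \mathrm{LG}(n-1,2n-2)=\binom{n}{2}$ equals the dimension of the space of symmetric zero-row-sum matrices, the resulting map is a closed embedding between irreducible varieties of equal dimension. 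It is therefore an isomorphism, provided the variety $\mathrm{Gr}\cap\mathbb PH$ is irreducible and the inverse is given algebraically. The inverse $\mathrm{LG}\to\mathrm{Gr}\cap\mathbb PH$ is obtained by taking the preimage in $V$ and checking that its Plücker vector lies in $H$, which is $C_n$-dimensional; this can be matched with the dimension of the fundamental representation $\bigwedge^{n-1}W/\omega\wedge\bigwedge^{n-3}W$, which is also $C_n$.

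\textbf{Main obstacle.} The hardest part is the converse inclusion: showing that every Lagrangian subspace gives a point of $\mathbb PH$, and not just the positive ones. The first thing to try is representation theory. $H$ should coincide with the image of the irreducible $\mathrm{Sp}(2n-2)$-representation $V(\varpi_{n-1})$ inside $\bigwedge^{n-1}\mathbb R^{2n}$ under the lift along $W\leftarrow V\subset\mathbb R^{2n}$. Both have dimension $C_n$, and $\mathrm{LG}$ is exactly $\mathrm{Gr}\cap\mathbb P V(\varpi_{n-1})$, since the Lagrangian Grassmannian is cut out by linear relations in its Plücker embedding. With that identification the isomorphism is immediate.
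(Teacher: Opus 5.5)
Your overall plan is the right one: find a fixed $(2n-2)$-dimensional symplectic space, show isotropy, then compare $\dim H=C_n$ with $\dim V(\varpi_{n-1})$. However, two steps fail as written.

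\textbf{Step 1 is false.} Pairing $v=\sum_i(-1)^ie_i$ with row $i\ge 2$ of $\Omega_n(e)$ gives $2-(-1)^i\sum_j(-1)^jx_{ij}$, which is not zero in general. Since $v$ has constant sign on the odd slots, the zero row sums do not help, and the two $1$'s add instead of cancelling. The rows actually satisfy two independent conditions: $\sum_i(-1)^iz_{2i}=0$ (from the adjacent $1$'s) and $\sum_i(-1)^iz_{2i-1}=0$ (from the zero row sums). So every point lies in the codimension-two subspace $V$ of Appendix~\ref{gen-form-ap}. That $V$ is already $(2n-2)$-dimensional and is symplectic via $\Lambda_{2n-2}$ in the basis $v_j=e_j+e_{j+2}$; this is exactly the setup of the map $\mathcal G$ and Theorem~\ref{lem: map_bij}, and no quotient is needed.

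\textbf{Step 2 is then superfluous, and it is also unsound.} The condition ``$u\notin X$'' is open, so it cannot be pushed from a dense subset to the whole variety. Concretely, for $u=\sum_ie_i$ and even $n$ one has $u\in V$, so some Lagrangian subspace, hence some point of the variety, contains $u$. In addition, the preimage of an $(n-1)$-plane of $V/\langle u\rangle$ is $n$-dimensional, so your proposed inverse does not land in $\mathrm{Gr}(n-1,2n)$.

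\textbf{Step 3 rests on an unproved density claim.} You obtain isotropy of every point from Zariski density of the network points. But Theorem~\ref{Image of the Lam's embedding surjectivity} only describes the real nonnegative part. Zariski density of $\mathrm{Gr}_{>0}(n-1,2n)\cap\mathbb PH$ in the complex intersection amounts to the irreducibility you list as a proviso, and that is essentially the content of the theorem. Similarly, your final paragraph only asserts that $H$ ``should'' equal $V(\varpi_{n-1})$.

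\textbf{The missing idea is to argue linearly.} Set $K=\{\xi\in\bigwedge^{n-1}V:\ \iota_\omega\xi=0\}$ and show $w_\sigma\in K$ for every $\sigma$. There are two ways to do this.
\begin{itemize}
\item Check it directly by a cancellation argument, as you began to do in Step 1.
\item Use closure. Your row computation (rows lie in $V$, and $\omega$-pairings reduce to $x_{ij}-x_{ji}$) puts each well-connected point into the closed set $\mathbb PK$. Each $[w_\sigma]$ is a limit of such points: give conductance $t$ to the edges of a grove of type $\sigma$ and $1/t$ to all other edges, and let $t\to\infty$. Then $L_\sigma$ has order $t^{|F|}$ while every other $L_\tau$ is $O(t^{|F|-1})$.
\end{itemize}
Once $H\subseteq K$, the equality $\dim K=\binom{2n-2}{n-1}-\binom{2n-2}{n-3}=C_n=\dim H$ forces $H=K$. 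A decomposable $\xi$ lies in $K$ exactly when its plane is an isotropic subspace of $V$. Therefore $\mathrm{Gr}(n-1,2n)\cap\mathbb PH=\mathrm{LG}(n-1,V)$, with no irreducibility or equal-dimension argument needed.
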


\begin{remark}\label{rem:Lunc}
    Given a point $X\in\mathrm{Gr}(n-1,2n)\cap\mathbb PH$ we can represent it as $X=\sum_I\Delta_Ie_I$ and as $X=\sum_\sigma L_\sigma w_\sigma$. For every $I\in\binom{[2n]}{n-1}$ such that $I\subset\{2,4,\ldots,2n\}$ we have $\Delta_I=L_{unc}$. One can obtain this by substituting one expression of $X$ into the other and using the definition of concordance vectors.
\end{remark}

\begin{remark}\label{rem:Lunc2}
    By Theorem \ref{Image of the Lam's embedding surjectivity} $\mathcal{L}(\overline{E}_n)=\mathrm{Gr}_{\ge0}(n-1,2n)\cap\mathbb PH$. For $X\in \mathrm{Gr}_{\ge0}(n-1,2n)\cap\mathbb PH$ the condition that there exists $e\in E_n$ such that $\mathcal{L}(e)=X$ is equivalent to $L_{unc}\ne 0$, where $L_{unc}$ is the coefficient of $w_{\bar1|\bar2|\bar3|\ldots|\bar n}$ in $X=\sum_\sigma L_\sigma w_\sigma$. In other words, the condition $L_{unc}\ne 0$ cuts out the noncompactified space of electrical networks from the space of cactus networks.
\end{remark}

\section{Circular minors, groves, and Plücker coordinates}
\label{sub:circ}
\subsection{Circular minors} \label{sub:circ-min}
The main goal of this section is to relate certain minors of a response matrix of an electrical network $e$ to the Plücker coordinates of the point $\mathcal{L}(e) $ defined by Lam's embedding. 

\begin{definition}
Let $P=(\overline{p}_1,\ldots,\overline{p}_k)$ and $Q=(\overline{q}_1,\ldots,\overline{q}_k)$ be disjoint ordered $k$-tuples of boundary nodes arranged on a circle. We say that $(P;Q)=(\overline{p}_1,\ldots,\overline{p}_k;\overline{q}_1,\ldots,\overline{q}_k)$ is a {\itshape circular pair} if $\overline{p}_1,\ldots, \overline{p}_k,\overline{q}_k,\ldots,\overline{q}_1$ occur in clockwise order around the circle, i.e. lie on the circle in the specified order modulo $n$.
 A {\itshape contiguous circular pair} is a circular pair of the form\\ $(\overline{a},\overline{a+1},\ldots,\overline{a+y-1};\overline{b+y-1},\ldots,\overline{b+1},\overline{b})$. A {\itshape semicontiguous circular pair} is a circular pair $(P;Q)$ where only one of $P$ or $Q$ is contiguous, i.e. the arc $P$ or $Q$ does not contain additional nodes. Let $(P;Q)$ be a circular pair and let  $M = (m_{ij}), 1\leq i,j\leq n$, be an arbitrary matrix. The {\itshape circular minor} associated with a circular pair $(P;Q)$ is the determinant of the circular submatrix $M(P;Q)$ whose rows are taken in the order $\overline{p}_1,\ldots,\overline{p}_k$ and whose columns are taken in the order $\overline{q}_1,\ldots,\overline{q}_k$ of the matrix $M$. A {\itshape contiguous} (resp. {\itshape semicontiguous}) {\itshape circular minor} is a circular minor that corresponds to a contiguous (resp. semicontiguous) circular pair.
 We use the term \emph{minor} both for a submatrix and for its determinant whenever it does not lead to confusion.
\end{definition}

\begin{example}
Consider a matrix
\begin{equation*} 
M=\left(\begin{matrix}
131 & -66 & -12 & -53  \\
-66  & 76 & -8 & -2  \\
-12 & -8 & 24 & -4  \\
-53 & -2 & -4 & 59    
\end{matrix}\right)
\end{equation*}    
and  circular pairs $(\overline{2}, \overline{3}; \overline{1}, \overline{4})$ and $(\overline{1}, \overline{2}; \overline{4}, \overline{3})$ then we have the following positive circular minors:
\begin{equation*} 
\det M(\overline{2}, \overline{3};\overline{1}, \overline{4})=\begin{vmatrix}
-66 & -2   \\
-12 & -4      
\end{vmatrix}, \
\det M(\overline{1}, \overline{2};\overline{4}, \overline{3})=\begin{vmatrix}
-53 & -12   \\
-2 & -8      
\end{vmatrix}.
\end{equation*}
\end{example}
Define {\itshape circular totally non-negative} matrix to be a $n\times n$ matrix $M$  that satisfies the following condition: for every $k$ if $M(P;Q)$ is  a circular $k \times k$ submatrix, then 
$$(-1)^k\det M(P;Q) \geq 0.$$ 
The next theorem provides a characterization of the set of response matrices:
\begin{theorem}\textup{\cite[Theorem 3 and Theorem 4.2]{CIM}} \label{Set of response matrices all network well conected} 
  A matrix $M \in Mat_{n \times n}(\mathbb{R})$ is a response matrix of an electrical network if and only if the following holds:
  \begin{itemize}
      \item $M$ is symmetric with zero row sums;
      \item   $M$ is circular totally positive, i.e. if $M(P;Q)$ is  a circular $k \times k$ submatrix, then $(-1)^k\det M(P;Q) \ge 0.$
  \end{itemize}
  Moreover, the strict inequality holds if and only if the electrical network is well-connected.
\end{theorem}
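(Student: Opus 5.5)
The plan is to derive the statement from its two directions separately, using the Curtis–Ingerman–Morrow theory of response matrices together with the grove expansion of circular minors.

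\emph{Necessity.} Let $M=M_R(e)$. Symmetry and zero row sums are given by Theorem \ref{gen-theorem}. For the sign conditions, I would use the grove (or path) formula for circular minors. For a circular pair $(P;Q)$ with $|P|=|Q|=k$, one has
\[(-1)^k\det M(P;Q)=\frac{L_{\sigma(P;Q)}}{L_{unc}}.\]
Here $\sigma(P;Q)$ is the non-crossing partition pairing $\bar p_i$ with $\bar q_i$ and leaving every other boundary node as a singleton. This formula comes from the Schur-complement description of $M$ via the Kirchhoff matrix. Expanding the relevant minor by the all-minors matrix-tree theorem gives a signed sum over forests. The circular order of $P$ and $Q$ forces every permutation contributing a connection to be the identity, because of planarity. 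This is exactly the step where circularity is used. Since both $L_\sigma$ and $L_{unc}$ are nonnegative sums of weights, and $L_{unc}>0$, the inequality $(-1)^k\det M(P;Q)\ge 0$ follows.

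\emph{Strictness for well-connected networks.} Strict positivity for every circular pair is equivalent to $L_{\sigma(P;Q)}>0$ for all such pairs. That in turn means that every circular pair is connected by vertex-disjoint paths, which is the definition of well-connectedness in the sense of \cite[Section 3.7]{CM}. For the standard networks of Fig. \ref{fig:standart}, one exhibits these disjoint paths explicitly. Alternatively, one can invoke Theorem \ref{Image of the Lam's embedding}: every $\Delta_I$ is positive, and one checks that each $L_{\sigma(P;Q)}$ appears among the grove measurements, which are positive by Theorem \ref{th: the_first_ph}.

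\emph{Sufficiency.} This is the main obstacle. Given a symmetric matrix $M$ with zero row sums that is circular totally non-negative, we must construct a network realizing it. I would first treat the strict case by induction on $n$, following Curtis–Ingerman–Morrow.
\begin{itemize}
\item Take the standard graph $G_n$. Its conductances can be recovered from the response matrix by a recursive procedure of peeling off boundary spikes and boundary edges, each using a ratio of circular minors.
\item Positivity of these ratios follows from strict circular total positivity. The remaining matrix is again circular totally positive with zero row sums, which closes the induction.
\end{itemize}
For the non-strict case I would use a density and closure argument. The set of response matrices of networks on $G_n$, with conductances allowed in $[0,\infty)$ and edges contracted where needed, is closed in the relevant region. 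Every circular totally non-negative $M$ is a limit of strictly positive ones, for example $M+\varepsilon M_0$ with $M_0$ the response matrix of a fixed well-connected network. Degenerations produce zero conductances or contractions, but one must keep finitely many boundary nodes unidentified and avoid infinite conductances on boundary-connecting edges. I expect this limiting step, ensuring that the limit is an honest network rather than a cactus network, to be the most delicate point. Here I would use Remark \ref{rem:Lunc2}: finiteness of the entries of $M$ forces $L_{unc}\neq 0$ in the limit.

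Finally, the ``moreover'' equivalence combines both directions. Strict inequalities yield a network on $G_n$ with all conductances positive, which is standard and hence well-connected. Conversely, a well-connected network gives strict inequalities by the strictness part above.
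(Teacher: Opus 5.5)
The paper gives no proof of this statement; it is imported from \cite{CIM}, so your outline has to stand on its own. Necessity and the implication ``well-connected $\Rightarrow$ strict'' are fine. They are the grove computation of Proposition~\ref{circ-con-gro1} plus disjoint paths in the standard network. Sufficiency, however, has two genuine gaps.

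\emph{The strict case: the peeling induction does not close.} Since $G_n$ is critical, deleting a boundary edge or contracting a boundary spike breaks some connection. So the peeled matrix $M'$ must have a vanishing circular minor. Already for $G_3$ (the star with centre $v$), peeling the spike at $\bar1$ makes $v$ a boundary node through which alone $\bar2$ and $\bar3$ are joined, so $x'_{23}=0$.

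Thus ``the remaining matrix is again circular totally positive'' is false in the strict sense, and the induction hypothesis no longer applies. Peeling also does not change $n$, so this is not an induction on $n$. Only the first ratio is a ratio of minors of $M$; later ratios are minors of $M'$, whose signs you do not control.

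To make peeling work you must carry a refined statement over all critical graphs $\Gamma$: $(-1)^k\det M(P;Q)\ge 0$, with equality exactly for the pairs not connected through $\Gamma$. You then have to prove that the conductance you read off makes $M'$ satisfy this statement for the smaller graph. Every minor of a newly broken pair must vanish, not only the one defining the ratio. All other minors must stay non-negative, and positive on pairs still connected. That is the real content of the cited theorem, and you assert it without argument. Your proof of the converse in the ``moreover'' part inherits this gap.

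One way to avoid peeling is to use central minors. Each $x_{ij}$ with $i\ne j$ is itself a circular minor, so by Theorem~\ref{thm: central form test} a symmetric zero-row-sum matrix with positive central minors is determined by them. It would then suffice to show that positive conductances on $G_n$ realize every positive vector of central minors, though that too needs proof.

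\emph{The non-strict case: the approximation $M+\varepsilon M_0$ fails.} Adding response matrices superimposes two networks on the same boundary circle, which is generally not planar, and circular non-negativity is not preserved. Take $n=4$ and let $M$ be the response matrix of a single edge of conductance $1$ joining $\bar1$ and $\bar3$. This is a circular planar network, so $M$ satisfies all the hypotheses. For any well-connected $M_0$,
\[
\det\,(M+\varepsilon M_0)(\bar1,\bar2;\bar4,\bar3)=\varepsilon\,(M_0)_{24}+\varepsilon^2\det M_0(\bar1,\bar2;\bar4,\bar3),
\]
and $(M_0)_{24}<0$, so this $2\times 2$ circular minor is negative for small $\varepsilon>0$. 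With the matrix of the example in Section~\ref{sub:circ-min} it equals $400\varepsilon^2-2\varepsilon$.

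So the density of strictly positive matrices among the non-negative ones is unproved. It is not a soft fact: you would need an approximation that provably preserves circular non-negativity for matrices not yet known to be response matrices, which is no easier than the statement itself. By contrast, the step you flagged as most delicate, that the limit is an honest network rather than a cactus network, is handled correctly. Your appeal to Remark~\ref{rem:Lunc2}, together with compactness of $\mathrm{Gr}_{\ge0}(n-1,2n)\cap\mathbb PH$, covers it.
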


We now state an analogue of the all-minor matrix-tree Theorem (see \cite{Cha}, \cite{Moon} and \cite{Wa}) for response matrices.  
\begin{theorem} \textup{\cite[Theorem 8.1]{KW}}\label{minors via groves} 
  Consider an electrical network $e(\Gamma, \omega) \in E_n$ and its response matrix $M=M_R$. Let $O, R, S, T$ be a partition of the boundary vertices  with $|R|=|S|=k$. Then,

    $$(-1)^{|S\cup T|}\det M_{S\cup T}^{R\cup T}=(-1)^{|T|}\sum\limits_{\rho\in S_k}(-1)^{sgn(\rho)}\frac{L[_{\overline{r}_1}^{\overline{s}_{\rho (1)}}|\cdots|_{\overline{r}_k}^{\overline{s}_{\rho(k)}}|\overline{o}_1|\cdots|\overline{o}_l]}{L_{unc}}.$$
    Here the vertices in $T$ are treated as internal vertices in the corresponding grove measurements.
\end{theorem}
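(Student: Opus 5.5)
The plan is to derive the formula from the classical all-minors matrix-tree theorem applied to the Kirchhoff matrix of $\Gamma$, using the fact that $M$ is a Schur complement. Let $K$ be the weighted Laplacian of $\Gamma$, with $K_{ii}=\sum_j\omega_{ij}$ and $K_{ij}=-\omega_{ij}$. Write it in block form with respect to $V=V_B\sqcup V_I$. Eliminating the interior voltages via Kirchhoff's law (the content of Theorem \ref{gen-theorem}) gives
\[
M=K_{BB}-K_{BI}K_{II}^{-1}K_{IB}.
\]

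\textbf{Step 1 (reduce to minors of $K$).} By the Schur-complement determinant identity, for any row set $X\subset V_B$ and column set $Y\subset V_B$ with $|X|=|Y|$,
\[
\det M_X^Y=\frac{\det K_{X\cup V_I}^{Y\cup V_I}}{\det K_{II}},
\]
where rows and columns are ordered with the boundary indices first and then the interior ones in a common order. We apply this with $X=S\cup T$ and $Y=R\cup T$. The submatrix $K_{X\cup V_I}^{Y\cup V_I}$ is $K$ with the rows of $O\cup R$ and the columns of $O\cup S$ deleted.

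\textbf{Step 2 (matrix-tree expansions).}
\begin{itemize}
\item For the denominator, the matrix-tree theorem with the rows and columns of $V_B$ removed gives $\det K_{II}=\sum_F wt(F)$. The sum runs over spanning forests in which every component contains exactly one boundary vertex, so this equals $L_{unc}$.
\item For the numerator, we use Chaiken's all-minors theorem. The deleted row set is $A=O\cup R$ and the deleted column set is $B=O\cup S$. The theorem expresses the minor as a signed sum over spanning forests with $|A|$ components, each containing exactly one vertex of $A$ and exactly one vertex of $B$. The sign is given by the induced bijection $A\to B$, composed with the position signs of $A$ and $B$ in the index order.
\end{itemize}

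\textbf{Step 3 (identify the forests).} Each $o\in O$ lies in both $A$ and $B$. Its component therefore already contains an $A$-vertex and a $B$-vertex, so it contains no other vertex of $O\cup R\cup S$. Hence each $o$ is a singleton block of the boundary partition. The remaining $k$ components each contain exactly one $r_i$ and one $s_{\rho(i)}$, for some $\rho\in S_k$. The vertices of $T$ are in neither $A$ nor $B$, so they may lie in any component; this is exactly treating $T$ as internal. Grouping the forests by $\rho$ yields the grove measurements $L[_{\overline r_1}^{\overline s_{\rho(1)}}|\cdots|\overline o_1|\cdots]$, with $T$ regarded as internal.

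\textbf{Step 4 (signs), the main obstacle.} Chaiken's sign is $(-1)^{\sum_{a\in A}\mathrm{pos}(a)+\sum_{b\in B}\mathrm{pos}(b)}$ times the sign of the bijection $A\to B$. This must be combined with the row and column ordering of the circular minor. The negative off-diagonal entries of $K$ are already absorbed into the matrix-tree theorem, but the parity bookkeeping is needed. I would first check the total sign on small cases: $k=1$, $T=\emptyset$, where $-x_{rs}=L[^{\overline s}_{\overline r}|\ldots]/L_{unc}$; and a single $t\in T$. These checks should confirm the global factor $(-1)^{|S\cup T|}\cdot(-1)^{|T|}$.

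For the general case, I would then choose the orderings so that $O$ is common to $A$ and $B$ and its contribution to the sign cancels. What remains is the sign of the bijection $R\to S$, which is $\operatorname{sgn}\rho$, together with the parity coming from the $|S\cup T|$ factors of $-1$ that relate the Schur complement to $K$.
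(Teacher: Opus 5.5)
The paper does not prove this statement. It is quoted from \cite{KW} and presented as the response-matrix version of the all-minors matrix-tree theorem, which is exactly your route. Steps 1--3 are correct. The Schur complement identity holds with no sign, and $\det K_{II}=L_{unc}$. Your analysis of the surviving forests is also right: each $o\in O=A\cap B$ has no other vertex of $O\cup R\cup S$ in its component, each $r_i$ is joined to exactly one $s_{\rho(i)}$, and $T$ is unconstrained, hence behaves as interior.

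The one real gap is Step 4, which is the only non-routine part. You leave it as case checks plus a mechanism that is wrong. You attribute part of the sign to ``$|S\cup T|$ factors of $-1$ that relate the Schur complement to $K$'', but your own Step 1 shows there are none.

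Since $(-1)^{|S\cup T|}(-1)^{|T|}=(-1)^k$, what must be proved is
\[
\det M^{R\cup T}_{S\cup T}=(-1)^k\sum_{\rho\in S_k}\mathrm{sgn}(\rho)\,\frac{L_\rho}{L_{unc}},
\]
where $L_\rho$ is the grove measurement in the statement. Rows are ordered $\overline s_1,\dots,\overline s_k$ followed by $T$, and columns $\overline r_1,\dots,\overline r_k$ followed by $T$ in the same order; without fixing this the sign is not even defined. The factor $(-1)^k$ comes entirely from Chaiken's positional sign. To see it, label the vertices of $\Gamma$ in the order $R,S,O,T,V_I$; Chaiken's theorem holds for any labeling. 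Then:
\begin{itemize}
\item The surviving rows $S\cup T\cup V_I$ and columns $R\cup T\cup V_I$ appear in their natural order, which is exactly the ordering used in Step 1.
\item The positional exponent is $\sum_{i=1}^{2k}i+2\sum_{o\in O}\mathrm{pos}(o)\equiv k(2k+1)\equiv k \pmod 2$.
\item The induced bijection from $A=(r_1,\dots,r_k,o_1,\dots,o_l)$ to $B=(s_1,\dots,s_k,o_1,\dots,o_l)$, both listed increasingly, is $\rho$ followed by the identity, so its sign is $\mathrm{sgn}(\rho)$.
\end{itemize}
This gives the identity directly, with no case checks.

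Alternatively, you can absorb $T$ first. Write $\det M^{R\cup T}_{S\cup T}=\det M_{TT}\cdot\det (M')^{R}_{S}$, where $M'$ is the response matrix with $T$ made interior. Here $\det M_{TT}=L'_{unc}/L_{unc}$, with $L'_{unc}$ the all-singletons measurement with $T$ interior. This reduces everything to the case $T=\emptyset$.
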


\begin{lemma} \label{lemma: bijection}
   
     Let $k\ge 1$. Given a contiguous set $P=\{\overline{p}_1,\ldots,\overline{p}_k\}$, the following objects are in a bijection:
     \begin{enumerate}
     \item The set of contiguous circular pairs $(P;Q)$, where $P=\{\overline{p}_1,\ldots,\overline{p}_k\}$ and $Q=\{\overline{q}_1,\ldots,\overline{q}_k\}$.
     
     \item The set of non-crossing partitions $[_{\overline{p}_1}^{\overline{q}_1}|\cdots|_{\overline{p}_k}^{\overline{q}_k}|\overline{o}_1|\cdots|\overline{o}_l]$, where $\{\overline{o}_1,\ldots,\overline{o}_l\}:=[\bar n]\setminus (P\cup Q)$.
     
     \item The subsets $I\subset[2n],\ |I|=n-1$ of the following form: $I=J_l\cup \widehat P\cup J_r$, where $\widehat P=\{2p_1-1,2p_1,\ldots,2p_k-2,2p_k-1\}$; the subsets $J_l$ and $J_r$ consist of even integers; $\max J_l=2p_1-2$; $\min J_r=2p_k$ (see Fig. \ref{fig: induction in an area}). In other words, $I$ is of the form:
     \end{enumerate}

     \begin{equation} \label{eq: I for lemma on Lam minors}
    I=\{2p_1-1,2p_1,\ldots,2p_k-2,2p_k-1\}\cup\{2p_k,2p_k+2,\ldots,2q_k-4\}\cup
\end{equation}
$$\cup\{2p_1-2,2p_1-4,\ldots,2q_1+2\}.$$

    Informally, $I$ consists of a contiguous segment of integers $\widehat P=\{2p_1-1,2p_1,\ldots,2p_k-2,2p_k-1\}$ possibly surrounded by contiguous segments of even integers $J_l$ and $J_r$; moreover, these segments of even indices are attached to the segment $\widehat P$. See Fig. \ref{I which is concordant with central circular pair} for an illustration.
\end{lemma}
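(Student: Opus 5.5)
Fix the contiguous set $P=\{\overline{p}_1,\ldots,\overline{p}_k\}$. A contiguous circular pair $(P;Q)$ is determined by where the block $Q$ sits. Since $Q$ is contiguous and the order $\overline{p}_1,\ldots,\overline{p}_k,\overline{q}_k,\ldots,\overline{q}_1$ is clockwise, $Q$ is determined by the gap between $\overline{p}_k$ and $\overline{q}_k$ going clockwise. Equivalently, it is determined by the gap between $\overline{q}_1$ and $\overline{p}_1$. Both gaps are sets of nodes that will become singletons $\overline{o}_i$. Write $a\ge 0$ for the number of nodes strictly between $\overline{p}_k$ and $\overline{q}_k$, and $b\ge 0$ for the number strictly between $\overline{q}_1$ and $\overline{p}_1$. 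These satisfy $a+b=n-2k$, so the pairs in (1) are parametrized by $a\in\{0,\ldots,n-2k\}$ (empty when $2k>n$).

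I will build the maps (1)$\to$(2) and (1)$\to$(3) explicitly, and show that (3) is parametrized by the same $a$.

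\textbf{Map (1)$\to$(2).} Send $(P;Q)$ to $\sigma=[_{\overline{p}_1}^{\overline{q}_1}|\cdots|_{\overline{p}_k}^{\overline{q}_k}|\overline{o}_1|\cdots]$. This partition is non-crossing because the chords $\overline{p}_i\overline{q}_i$ are nested: the circular order is $p_1<\dots<p_k<q_k<\dots<q_1$. Conversely, a partition of the form in (2) with $Q$ contiguous recovers $Q$ and the matching; non-crossingness forces $\overline{p}_i$ to be paired with $\overline{q}_i$ in exactly this nested way. So (1)$\leftrightarrow$(2) is essentially tautological once we check that the matching in (2) is forced.

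\textbf{Map (1)$\to$(3).} Associate to $a$ the set $I$ in \eqref{eq: I for lemma on Lam minors}. Here $J_r=\{2p_k,2p_k+2,\ldots,2q_k-4\}$, which has $a$ elements since it covers the even labels $\widetilde{p_k},\ldots$ strictly before $\widetilde{q_k-1}$. Similarly, $J_l=\{2q_1+2,\ldots,2p_1-2\}$ has $b$ elements. Counting gives $|I|=(2k-1)+a+b=2k-1+n-2k=n-1$, as required.

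Conversely, take any $I$ of the shape described in (3). Then $\widehat P$ is fixed by $P$, and $J_l$, $J_r$ are runs of even integers attached to $\widehat P$. So $I$ is determined by $|J_r|=a$, subject to $|J_l|+|J_r|=n-2k$. Reading off $q_k$ from $\min J_r$, $\max J_r$ and $q_1$ from $\min J_l$ recovers $Q$. These constructions are mutually inverse.

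\textbf{The main obstacle: indices modulo $2n$.} The step needing care is making sense of the indices cyclically modulo $2n$ (with $2q_k-4$ etc.), including the edge cases $a=0$ or $b=0$, where $J_r$ or $J_l$ is empty. I would handle this by rotating labels so that $p_1=1$; this is legitimate since all the definitions are rotation-invariant. With $p_1=1$, everything becomes a verification with honest integers.

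I would also check, as a consistency remark matching the paper's later use, that $I$ is concordant with $\sigma$:
\begin{itemize}
\item the part $\{\overline{p}_i,\overline{q}_i\}$ misses exactly $2q_i-1$;
\item each singleton $\overline{o}$ misses its own odd label;
\item the dual parts are the arcs between consecutive chords, and each contains exactly one even label outside $I$.
\end{itemize}
Concordance is not needed for the bijection itself.
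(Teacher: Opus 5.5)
Your parametrization by $a$ is correct, but read that way the lemma is nearly vacuous: three finite index sets of the same size are trivially in bijection. What the paper's proof actually establishes, and what the next proposition relies on, is that the correspondence (2)$\leftrightarrow$(3) is \emph{concordance}. The set $I$ of \eqref{eq: I for lemma on Lam minors} is concordant with $\sigma=[_{\overline{p}_1}^{\overline{q}_1}|\cdots|_{\overline{p}_k}^{\overline{q}_k}|\overline{o}_1|\cdots|\overline{o}_l]$ \emph{and with no other} non-crossing partition. That uniqueness collapses $\Delta_I=\sum_{\tau\text{ concordant with }I}L_\tau$ to the single term $L_\sigma$. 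This is how the lemma is used to identify contiguous circular minors with Pl\"ucker coordinates. You explicitly set concordance aside as unnecessary. Your closing check only shows that $\sigma$ is concordant with $I$, which is the easy half. Nothing in your argument excludes a second partition $\tau\ne\sigma$ concordant with the same $I$.

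The missing step is: if $\tau\in\mathcal{NC}_n$ is concordant with $I$, then $\tau=\sigma$.

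The paper proves this in two stages. First, the even labels of $J_r$ (resp.\ $J_l$) all lie in a single dual block. Otherwise some odd label $i$ in their span, which is not in $I$, must share a block with an odd label of $\widehat P$. The chord joining them cuts off a region all of whose even labels lie in $I$, so some dual block contains no element outside $I$. Second, the odd labels of $P$ can only be matched to those of $Q$ by parallel chords.

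Within your setup there is also a short counting argument:
\begin{enumerate}
\item The only odd labels in $I$ are those of $P$, so every block of $\tau$ contains exactly one node outside $P$.
\item The even labels outside $I$ are exactly the consecutive labels $2q_k-2,2q_k,\ldots,2q_1$. Each $\overline{q}_i$ is flanked by two of them, which must lie in different dual blocks. Since the two dual labels flanking a singleton lie in the same dual block, $\overline{q}_i$ is not a singleton of $\tau$.
\item Since $|P|=|Q|=k$, each $\overline{q}_i$ then has exactly one partner in $P$, and every $\overline{o}_j$ is a singleton.
\item Non-crossingness forces $\overline{p}_i\leftrightarrow\overline{q}_i$.
\end{enumerate}
With this added, your proof would be complete.
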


\begin{proof}
    Note that a subset $I$ that satisfies the assumptions of the lemma is concordant with the partition $[_{\overline{p}_1}^{\overline{q}_1}|\cdots|_{\overline{p}_k}^{\overline{q}_k}|\overline{o}_1|\cdots|\overline{o}_l]$, where the set $\{\overline{q}_1,\ldots,\overline{q}_k\}$ is the set of all odd numbers on the arc $(\max J_r+2,\min J_l-2)$.

    We now prove that $I$ is concordant only with $[_{\overline{p}_1}^{\overline{q}_1}|\cdots|_{\overline{p}_k}^{\overline{q}_k}|\overline{o}_1|\cdots|\overline{o}_l]$. Suppose that some $\sigma$ is concordant with $I$.

    \begin{figure}[ht]
\centering
\begin{tikzpicture}
    \draw (0,0) circle (2);

    \draw (40:2) -- node [left] {}(80:2);
    \draw (160:2) -- node [left] {}(120:2);


    \draw (100:2) .. controls (140:1.2) .. (180:2);

    \draw (80:2) +(-0.15,-0.15) rectangle +(0.15,0.15);
    \draw (120:2) +(-0.15,-0.15) rectangle +(0.15,0.15);
    \draw (160:2) +(-0.15,-0.15) rectangle +(0.15,0.15);
    \draw (200:2) +(-0.15,-0.15) rectangle +(0.15,0.15);
    \draw (240:2) +(-0.15,-0.15) rectangle +(0.15,0.15);
    \draw (280:2) +(-0.15,-0.15) rectangle +(0.15,0.15);
    \draw (180:2) +(-0.15,-0.15) rectangle +(0.15,0.15);
    \draw (220:2) +(-0.15,-0.15) rectangle +(0.15,0.15);
    \draw [<->] (80:2.55) arc (80:160:2.55);
    \draw [<->] (180:2.55) arc (180:220:2.55);
    \draw [<->] (240:2.55) arc (240:280:2.55);

    
    \filldraw[fill=white] (40:2) node [right] {} circle (2pt);
    \filldraw[fill=white] (80:2) node [right] {} circle (2pt);
    \filldraw[fill=white] (120:2) node [right] {} circle (2pt);
    \filldraw[fill=white] (160:2) node [right] {} circle (2pt);
    \filldraw[fill=white] (200:2) node [right] {} circle (2pt);
    \filldraw[fill=white] (240:2) node [right] {} circle (2pt);
    \filldraw[fill=white] (280:2) node [right] {} circle (2pt);
    \filldraw[fill=white] (320:2) node [right] {} circle (2pt);
    \filldraw[fill=white] (360:2) node [right] {} circle (2pt);

    \filldraw[fill=black] (20:2) node [right] {} circle (2pt);
    \filldraw[fill=black] (60:2) node [right] {} circle (2pt);
    \filldraw[fill=black] (100:2) node [above] {$i$} circle (2pt);
    \filldraw[fill=black] (140:2) node [right] {} circle (2pt);
    \filldraw[fill=black] (180:2) node [left=3.5pt] {$j$} circle (2pt);
    \filldraw[fill=black] (220:2) node [right] {} circle (2pt);
    \filldraw[fill=black] (260:2) node [right] {} circle (2pt);
    \filldraw[fill=black] (300:2) node [right] {} circle (2pt);
    \filldraw[fill=black] (340:2) node [right] {} circle (2pt);

    \draw (200:2.8) node {$P$};
    \draw (110:2.8) node {$J_r$};
    \draw (250:2.8) node {$J_l$};
    
    \draw (0,-3) node {Separating chord};
\end{tikzpicture}\phantom{aaaaaaaaaa}
\begin{tikzpicture}
    \draw (0,0) circle (2);

    \draw (40:2) -- node [left] {}(80:2);
    \draw (80:2) -- node [left] {}(120:2);
    \draw (160:2) -- node [left] {}(120:2);
    \draw (160:2) -- node [left] {}(40:2);

    \draw (320:2) -- node [left] {}(280:2);
    \draw (280:2) -- node [left] {}(240:2);
    \draw (240:2) -- node [left] {}(320:2);

    
    \filldraw[fill=white] (40:2) node [right] {} circle (2pt);
    \filldraw[fill=white] (80:2) node [right] {} circle (2pt);
    \filldraw[fill=white] (120:2) node [right] {} circle (2pt);
    \filldraw[fill=white] (160:2) node [right] {} circle (2pt);
    \filldraw[fill=white] (200:2) node [right] {} circle (2pt);
    \filldraw[fill=white] (240:2) node [right] {} circle (2pt);
    \filldraw[fill=white] (280:2) node [right] {} circle (2pt);
    \filldraw[fill=white] (320:2) node [right] {} circle (2pt);
    \filldraw[fill=white] (360:2) node [right] {} circle (2pt);

    \filldraw[fill=black] (20:2) node [right] {} circle (2pt);
    \filldraw[fill=black] (60:2) node [right] {} circle (2pt);
    \filldraw[fill=black] (100:2) node [right] {} circle (2pt);
    \filldraw[fill=black] (140:2) node [right] {} circle (2pt);
    \filldraw[fill=black] (180:2) node [right] {} circle (2pt);
    \filldraw[fill=black] (220:2) node [right] {} circle (2pt);
    \filldraw[fill=black] (260:2) node [right] {} circle (2pt);
    \filldraw[fill=black] (300:2) node [right] {} circle (2pt);
    \filldraw[fill=black] (340:2) node [right] {} circle (2pt);
    \draw [<->] (80:2.55) arc (80:160:2.55);
    \draw [<->] (180:2.55) arc (180:220:2.55);
    \draw [<->] (240:2.55) arc (240:280:2.55);


    \draw (80:2) +(-0.15,-0.15) rectangle +(0.15,0.15);
    \draw (120:2) +(-0.15,-0.15) rectangle +(0.15,0.15);
    \draw (160:2) +(-0.15,-0.15) rectangle +(0.15,0.15);
    \draw (200:2) +(-0.15,-0.15) rectangle +(0.15,0.15);
    \draw (240:2) +(-0.15,-0.15) rectangle +(0.15,0.15);
    \draw (280:2) +(-0.15,-0.15) rectangle +(0.15,0.15);
    \draw (180:2) +(-0.15,-0.15) rectangle +(0.15,0.15);
    \draw (220:2) +(-0.15,-0.15) rectangle +(0.15,0.15);

    \draw (200:2.8) node {$P$};
    \draw (110:2.8) node {$J_r$};
    \draw (250:2.8) node {$J_l$};
    
    \draw (0,-3) node {Components containing $J_l$ and $J_r$};
\end{tikzpicture}
    \caption{$\sigma$ is concordant with $I$.}
    \label{fig: induction in an area}
\end{figure}

\textbf{Step 1.} First we prove that all elements of $J_r$ belong to the same component of the partition $\sigma$.

    Assume the converse, then there exists an odd $i\in[\min J_r;\max J_r]$ separating two components containing elements of $J_r$. It follows that $i$ does not belong to a component of size $1$. By the form of $I$, $i\notin I$. Putting these facts together, we conclude that since $\sigma$ is concordant with $I$, $i$ lies in a component that contains some elements of $\widehat P$. Among all chords $(i,j)$ connecting $i$ with element $j\in \widehat P$, choose the one with the largest $j$. This chord cuts the circle into two regions: one containing $\min J_r$ and the other. Note that by the construction, all even elements of the first region belong to $I$. Then $\sigma$ is not concordant with $I$ because $I$ contains a whole part of $\sigma$, see Fig. \ref{fig: induction in an area}.

    \textbf{Step 2.} Similarly, all elements of $J_l$ belong to the same component of $\sigma$.

    \textbf{Step 3.}
    Since $\mathrm{min}\,J_r=2p_k$ and $\mathrm{max}\,J_l=2p_1-2$, we have that the elements of the arc $P$ are in bijection with elements of the arc $(\max J_r+3;\min J_l-3)$.
    It follows that the only way to match the elements of these two arcs is by parallel series of chords as in $[_{\overline{p}_1}^{\overline{q}_1}|\cdots|_{\overline{p}_k}^{\overline{q}_k}|\overline{o}_1|\cdots|\overline{o}_l]$.
Thus, the subset $I$ is concordant only with the partition $[_{\overline{p}_1}^{\overline{q}_1}|\cdots|_{\overline{p}_k}^{\overline{q}_k}|\overline{o}_1|\cdots|\overline{o}_l]$, so we proved the bijection $(2)\leftrightarrow (3)$.


Finally, note that the partition $[_{\overline{p}_1}^{\overline{q}_1}|\cdots|_{\overline{p}_k}^{\overline{q}_k}|\overline{o}_1|\cdots|\overline{o}_l]$ defines the contiguous circular pair uniquely by restriction to the set of odd indices that belong to components of size greater than $1$, which proves the bijection $(1)\leftrightarrow (2)$.
\end{proof}

\begin{remark} \label{rem: empty circular pair}
    The correspondence from Lemma \ref{lemma: bijection} can be naturally extended to the case $k=0$ but is no longer a bijection. In that case the empty circular pair $(\emptyset;\emptyset)$ corresponds to the non-crossing partition $unc:=(\bar 1|\bar 2|\ldots|\bar n)$ and to all subsets $I\subset\{2,4,\ldots,2n\},\ |I|=n-1$.
\end{remark}

\begin{proposition} \label{circular minors via Plücker coordinates 0} 
Let $e\in E_n$, let $M^Q_P$ be a contiguous minor
with $(P;Q)=(\overline{p}_1,\ldots,\overline{p}_k;\overline{q}_1,\ldots,\overline{q}_k)$ 
and let a set $O$ be $\{\overline{o}_1,\ldots,\overline{o}_l\}:=[\bar n]\setminus (P\cup Q)$. Let $I\subset [2n]$ be given by \eqref{eq: I for lemma on Lam minors}. Then,


$$(-1)^k\det M_P^Q=\frac{\Delta_I}{L_{unc}} = \Delta_I(\Omega'(e)).$$ 
Equivalently
$$
\Delta_I = L[_{\overline{p}_1}^{\overline{q}_1}|\cdots|_{\overline{p}_k}^{\overline{q}_k}|\overline{o}_1|\cdots|\overline{o}_l].
$$
\end{proposition}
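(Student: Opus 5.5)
The plan is to combine three earlier results: the grove formula for $\Delta_I$ in \eqref{form:pl-gr}, the uniqueness of concordance in Lemma \ref{lemma: bijection}, and the all-minors formula of Theorem \ref{minors via groves}. We prove the second (equivalent) form $\Delta_I = L[_{\overline{p}_1}^{\overline{q}_1}|\cdots|_{\overline{p}_k}^{\overline{q}_k}|\overline{o}_1|\cdots|\overline{o}_l]$ first, and then deduce the minor identity from it.

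\textbf{Step 1: the Plücker coordinate.} By \eqref{form:pl-gr}, $\Delta_I=\sum_{\sigma}L_\sigma$, where the sum runs over all $\sigma\in\mathcal{NC}_n$ concordant with $I$. Lemma \ref{lemma: bijection} (the bijection $(2)\leftrightarrow(3)$) shows that the only such $\sigma$ is the parallel-chord partition $[_{\overline{p}_1}^{\overline{q}_1}|\cdots|_{\overline{p}_k}^{\overline{q}_k}|\overline{o}_1|\cdots|\overline{o}_l]$. The sum therefore collapses to a single term, which gives the second displayed identity. Theorem \ref{Emb} then gives $\Delta_I(\Omega'(e))=\Delta_I/L_{unc}$.

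\textbf{Step 2: the minor.} Apply Theorem \ref{minors via groves} with $T=\emptyset$, $R=P$, $S=Q$, and $O$ the remaining vertices. This gives
\[
(-1)^k\det M_P^Q=\sum_{\rho\in S_k}(-1)^{\mathrm{sgn}(\rho)}\frac{L[_{\overline{p}_1}^{\overline{q}_{\rho(1)}}|\cdots|_{\overline{p}_k}^{\overline{q}_{\rho(k)}}|\overline{o}_1|\cdots|\overline{o}_l]}{L_{unc}} .
\]
We then use planarity. The points $\overline{p}_1,\ldots,\overline{p}_k,\overline{q}_k,\ldots,\overline{q}_1$ lie in clockwise order on the boundary circle. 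Any permutation $\rho\ne\mathrm{id}$ has indices $i<j$ with $\rho(i)>\rho(j)$, and the chords $\overline{p}_i\overline{q}_{\rho(i)}$ and $\overline{p}_j\overline{q}_{\rho(j)}$ then cross. So the corresponding partition is not non-crossing, no grove in the planar graph $\Gamma$ realizes it, and its $L$ vanishes. Only $\rho=\mathrm{id}$ survives, with sign $+1$. This yields $(-1)^k\det M_P^Q=L[\ldots]/L_{unc}=\Delta_I/L_{unc}$.

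\textbf{Main obstacle: signs.} The delicate point is matching the sign conventions. Theorem \ref{minors via groves} is stated for row and column sets in some order, while the circular minor takes rows in the order $\overline{p}_1,\ldots,\overline{p}_k$ and columns in the order $\overline{q}_1,\ldots,\overline{q}_k$, as prescribed by the circular pair. One must check that, with this ordering, the identity permutation pairs $\overline{p}_i$ with $\overline{q}_i$ and carries sign $+1$; reordering rows or columns changes the determinant and the permutation signs consistently. One should also confirm that contiguity is consistent with the index conventions of \eqref{eq: I for lemma on Lam minors}, so that the $\overline{q}$'s produced by Lemma \ref{lemma: bijection} are exactly $Q$ and in the matching order; this is the content of the bijection $(1)\leftrightarrow(2)$.
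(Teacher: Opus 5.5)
Your proposal is correct and follows essentially the same route as the paper. Both arguments apply the all-minors grove formula with $T=\emptyset$, use planarity to kill every $\rho\neq\mathrm{id}$, use the uniqueness of the concordant partition from the bijection lemma to collapse $\Delta_I$ to a single $L_\sigma$, and finish with Lam's normalization $\Delta_I/L_{unc}=\Delta_I(\Omega'_n(e))$. The only differences are that you do the two halves in the opposite order, and you spell out why an inversion $i<j$, $\rho(i)>\rho(j)$ forces the chords $\overline{p}_i\overline{q}_{\rho(i)}$ and $\overline{p}_j\overline{q}_{\rho(j)}$ to cross, which the paper leaves implicit.
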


\begin{proof}

Since the pair $(P;Q)$ is circular, i.e. $\overline{p}_1,\ldots, \overline{p}_k,\overline{q}_k,\ldots,\overline{q}_1$ are in cyclic clockwise order, $P\cap Q=\emptyset$. Thus we can apply Theorem \ref{minors via groves} with $R=P,\ S=Q,\ T=\emptyset,\ O=[\bar n]\setminus (P\cup Q)=\{\overline{o}_1,\ldots,\overline{o}_l\}$:
\begin{equation}
\label{eq: Plücker=Grove}
(-1)^k\det M_P^Q=(-1)^0\sum\limits_{\rho\in S_k}(-1)^{sgn(\rho)}\frac{L[_{\overline{r}_1}^{\overline{s}_{\rho (1)}}|\cdots|_{\overline{r}_k}^{\overline{s}_{\rho(k)}}|\overline{o}_1|\cdots|\overline{o}_l]}{L_{unc}}.
\end{equation}

Since \(\Gamma\) is planar, a grove on \(\Gamma\) can induce only a
non-crossing boundary partition. Therefore, in the right-hand side of \eqref{eq: Plücker=Grove}, all terms corresponding to crossing pairings between \(P\) and
\(Q\) vanish.

Since \((P;Q)\) is a circular pair, the elements $\overline{p}_1,\ldots,\overline{p}_k,\overline{q}_k,\ldots,\overline{q}_1$
occur in this clockwise cyclic order. Hence there is exactly one matching
between \(P\) and \(Q\) that gives a non-crossing partition, namely
$
\overline{p}_i \leftrightarrow \overline{q}_i,\; i=1,\ldots,k.
$
Thus the only
non-zero term in \eqref{eq: Plücker=Grove} is the one corresponding to \(\rho=\mathrm{id}\),
and we obtain
\begin{equation} \label{formula:from-min-to-groves}
    (-1)^k\det M_P^Q=\frac{L[_{\overline{p}_1}^{\overline{q}_1}|\cdots|_{\overline{p}_k}^{\overline{q}_k}|\overline{o}_1|\cdots|\overline{o}_l]}{L_{unc}}.
\end{equation}

Now let \(I\subset[2n]\), \(|I|=n-1\), be the subset defined by \eqref{eq: I for lemma on Lam minors}.
By Lemma \ref{lemma: bijection}, the only non-crossing partition concordant with \(I\) is
$
\sigma=
\left[_{\overline{p}_1}^{\overline{q}_1}|\cdots|_{\overline{p}_k}^{\overline{q}_k}|\overline{o}_1|\cdots|\overline{o}_l
\right].
$
Therefore, by the definition of the boundary measurement \(\Delta_I\),
\[
\Delta_I
=
\sum_{\tau\text{ concordant with }I} L_\tau
=
L[_{\overline{p}_1}^{\overline{q}_1}|\cdots|_{\overline{p}_k}^{\overline{q}_k}|\overline{o}_1|\cdots|\overline{o}_l].
\]

Combining this identity with \eqref{formula:from-min-to-groves}, we get
\[
(-1)^k\det M^Q_P
=
\frac{
L[_{\overline{p}_1}^{\overline{q}_1}|\cdots|_{\overline{p}_k}^{\overline{q}_k}|\overline{o}_1|\cdots|\overline{o}_l]}
{L_{\mathrm{unc}}}
=
\frac{\Delta_I}{L_{\mathrm{unc}}}.
\]
Finally, by Theorem \ref{Emb},
\[
\frac{\Delta_I}{L_{\mathrm{unc}}}
=
\Delta_I(\Omega'_n(e)).
\]
This proves the claim.

\begin{figure}[ht]
\centering
\begin{tikzpicture}[scale=0.8]
    \draw (0,0) circle (3);

    \draw (150:3) -- node [left] {}(30:3);
    \draw (165:3) -- node [left] {}(15:3);
    \draw (210:3) -- node [left] {}(-30:3);
    \draw (180:3) -- node [left] {}(0:3);
    
    \draw (135:3) -- node [left] {}(105:3);
    \draw (105:3) -- node [left] {}(75:3);
    \draw (75:3) -- node [left] {}(45:3);
    \draw (135:3) -- node [left] {}(45:3);

    \draw (-135:3) -- node [left] {}(-105:3);
    \draw (-105:3) -- node [left] {}(-75:3);
    \draw (-75:3) -- node [left] {}(-45:3);
    \draw (-135:3) -- node [left] {}(-45:3);

    \filldraw[fill=black] (0:3) node [right] {$\ \overline{q}_2$} circle (2pt);
    \filldraw[fill=black] (30:3) node [right] {$\ \overline{q}_k$} circle (2pt);
    \filldraw[fill=black] (60:3) node [above] {} circle (2pt);
    \filldraw[fill=black] (90:3) node [above] {} circle (2pt);
    \filldraw[fill=black] (120:3) node [left] {} circle (2pt);
    \filldraw[fill=black] (150:3) node [left] {$\overline{p}_k\ $} circle (2pt);
    \filldraw[fill=black] (180:3) node [left] {$\overline{p}_2\ $} circle (2pt);
    \filldraw[fill=black] (210:3) node [left] {$\overline{p}_1\ $} circle (2pt);
    \filldraw[fill=black] (240:3) node [below] {} circle (2pt);
    \filldraw[fill=black] (270:3) node [left] {} circle (2pt);
    \filldraw[fill=black] (300:3) node [left] {} circle (2pt);
    \filldraw[fill=black] (330:3) node [right] {$\ \overline{q}_1$} circle (2pt);

    \filldraw[fill=white] (15:3) node [right] {} circle (2pt);
    \filldraw[fill=white] (45:3) node [right] {} circle (2pt);
    \filldraw[fill=white] (75:3) node [above] {} circle (2pt);
    \filldraw[fill=white] (105:3) node [above] {} circle (2pt);
    \filldraw[fill=white] (135:3) node [left] {} circle (2pt);
    \filldraw[fill=white] (165:3) node [left] {} circle (2pt);
    \filldraw[fill=white] (195:3) node [right] {} circle (2pt);
    \filldraw[fill=white] (225:3) node [below] {} circle (2pt);
    \filldraw[fill=white] (255:3) node [below] {} circle (2pt);
    \filldraw[fill=white] (285:3) node [left] {} circle (2pt);
    \filldraw[fill=white] (315:3) node [left] {} circle (2pt);
    \filldraw[fill=white] (345:3) node [left] {} circle (2pt);

    \filldraw[fill=black] (0,-0.75) node [left] {} circle (2pt);
    \filldraw[fill=black] (-0.5,-0.75) node [left] {} circle (2pt);
    \filldraw[fill=black] (0.5,-0.75) node [left] {} circle (2pt);

    \draw (75:3) +(-0.15,-0.15) rectangle +(0.15,0.15);
    \draw (105:3) +(-0.15,-0.15) rectangle +(0.15,0.15);
    \draw (135:3) +(-0.15,-0.15) rectangle +(0.15,0.15);
    \draw (150:3) +(-0.15,-0.15) rectangle +(0.15,0.15);
    \draw (165:3) +(-0.15,-0.15) rectangle +(0.15,0.15);
    \draw (180:3) +(-0.15,-0.15) rectangle +(0.15,0.15);
    \draw (195:3) +(-0.15,-0.15) rectangle +(0.15,0.15);
    \draw (210:3) +(-0.15,-0.15) rectangle +(0.15,0.15);
    \draw (225:3) +(-0.15,-0.15) rectangle +(0.15,0.15);
    \draw (255:3) +(-0.15,-0.15) rectangle +(0.15,0.15);
    \draw (285:3) +(-0.15,-0.15) rectangle +(0.15,0.15);

    \draw (0,-2.8) node {};
\end{tikzpicture}
    \caption{The non-crossing partition $L[_{\overline{p}_1}^{\overline{q}_1}|\cdots|_{\overline{p}_k}^{\overline{q}_k}|\overline{o}_1|\cdots|\overline{o}_l]$ and a subset $I$, which is concordant only with this partition.}
    \label{I which is concordant with central circular pair}
\end{figure}

\end{proof}

Proposition \ref{circular minors via Plücker coordinates 0} admits the following generalization:
\begin{proposition} \label{circ-con-gro1}
\hskip 1pt
\begin{itemize}
    \item If the minor $M^Q_P$ is semicontiguous, then 
    $$(-1)^k\det M_P^Q=\frac{\Delta_I}{L_{unc}},$$
  for some $I\subset[2n]$, $|I|=n-1$. 
  \item For any circular minor $M^Q_P$
  with $(P;Q)=(\overline{p}_1,\ldots,\overline{p}_k;\overline{q}_1,\ldots,\overline{q}_k)$ we have:
  \begin{equation} \label{form:gen-min-grove}
    (-1)^k\det M_P^Q=\frac{L[_{\overline{p}_1}^{\overline{q}_1}|\cdots|_{\overline{p}_k}^{\overline{q}_k}|\overline{o}_1|\cdots|\overline{o}_l]}{L_{unc}}.  
  \end{equation}
\end{itemize}
\end{proposition}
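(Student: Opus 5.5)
The second bullet is proved exactly as the first half of the proof of Proposition \ref{circular minors via Plücker coordinates 0}, which never used contiguity. For an arbitrary circular pair $(P;Q)$ we have $P\cap Q=\emptyset$, so Theorem \ref{minors via groves} applies with $R=P$, $S=Q$, $T=\emptyset$ and $O=[\bar n]\setminus(P\cup Q)$. A grove on the planar graph $\Gamma$ only induces non-crossing boundary partitions. Since $\overline{p}_1,\ldots,\overline{p}_k,\overline{q}_k,\ldots,\overline{q}_1$ appear in clockwise order, the only non-crossing pairing of $P$ with $Q$ is $\overline{p}_i\leftrightarrow\overline{q}_i$. Hence only $\rho=\mathrm{id}$ survives in the sum, and this gives \eqref{form:gen-min-grove}.

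For the first bullet it suffices, by \eqref{form:gen-min-grove}, to find $I$ with $|I|=n-1$ such that $\sigma=[_{\overline{p}_1}^{\overline{q}_1}|\cdots|_{\overline{p}_k}^{\overline{q}_k}|\overline{o}_1|\cdots|\overline{o}_l]$ is the unique non-crossing partition concordant with $I$; then $\Delta_I=L_\sigma$. Using the symmetry $(P;Q)\mapsto(Q;P)$ (reflection of the circle, which preserves the structure), I may assume $P$ is contiguous and $Q$ is arbitrary. I will mimic \eqref{eq: I for lemma on Lam minors}. Take $\widehat P=\{2p_1-1,\ldots,2p_k-1\}$, which is all positions on the arc of $P$; this forces $\widetilde p_1,\ldots,\widetilde p_{k-1}$ and $\overline{p}_1,\ldots,\overline{p}_k$ to be non-excluded only through their partners. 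Then take $J_r=\{2p_k,2p_k+2,\ldots,2q_k-2\}$ and $J_l=\{2q_1,\ldots,2p_1-2\}$, the even (dual) positions on the two arcs between $P$ and $Q$. Finally, on the arc spanned by $Q$, I place in $I$ exactly the dual indices $2m$ lying in the gaps between consecutive $q_i,q_{i+1}$, except for one excluded dual element per region. The count is then adjusted so that $|I|=n-1$. The guiding principle is that each part of $\sigma$ and of $\widetilde\sigma$ must omit exactly one element. The parts of $\widetilde\sigma$ are the dual regions: the one between the $P$-side and the $Q$-side chords, and the regions cut out between consecutive chords $(\overline{p}_i,\overline{q}_i)$, $(\overline{p}_{i+1},\overline{q}_{i+1})$, which contain the singletons $\overline{o}$ between $q_{i+1}$ and $q_i$. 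In each region I choose the omitted dual element to be the one adjacent to $q_i$. I keep in $I$ all singleton $\overline{o}$'s lying in $Q$-gaps? No: singletons must omit themselves, so the odd $\overline{o}$ are excluded and the chord parts omit their $q$-endpoint. Writing this $I$ explicitly and checking $|I|=n-1$ is routine bookkeeping.

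The main obstacle is uniqueness: no other $\tau$ may be concordant with $I$. I will run the argument of Lemma \ref{lemma: bijection}. Steps 1 and 2 show that the runs of consecutive even elements of $I$ adjacent to $\widehat P$ must lie in single dual components, via the separating-chord argument. Step 3 shows that the contiguous arc $\widehat P$ forces $k$ parallel chords ending at the odd non-elements across the circle. The new difficulty is that the far endpoints are no longer consecutive. I expect to handle this by induction on the gaps in $Q$: in each gap, the included dual elements plus the excluded $\overline{o}$'s pin down the blocks locally. An alternative would be a direct check that any concordant $\tau$ has the same dual blocks as $\widetilde\sigma$, which determines $\tau=\sigma$. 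If uniqueness fails for this choice of $I$, I would instead let the gaps of $Q$ contribute only odd indices and recheck.
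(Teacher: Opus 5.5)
Your proof of the second bullet is correct and is the paper's argument. The first bullet has a genuine gap: the subset $I$ you build is the wrong one, and for it uniqueness is not just unproved but false.

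Once the whole block $\widehat P=\{2p_1-1,2p_1,\ldots,2p_k-1\}$ of the \emph{contiguous} side lies in $I$, concordance with $\sigma$ forces the following out of $I$:
every $2q_i-1$ and every odd $\bar o$;
exactly one dual position in each of the two outer arcs;
exactly one $Q$-side dual in each inner region.
Your $J_r$ and $J_l$ as written contain the entire outer dual blocks, which is why your count comes out as $|I|=n+1$.

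Even after this correction, several partitions are concordant with $I$. Take $n=5$ and $(P;Q)=(\bar1,\bar2;\bar5,\bar3)$, so $\sigma=(\bar1,\bar5|\bar2,\bar3|\bar4)$ and $\widetilde\sigma=(\tilde1,\tilde3,\tilde4|\tilde2|\tilde5)$. The only $I\supseteq\{1,2,3\}$ concordant with $\sigma$ are $\{1,2,3,6\}$ and $\{1,2,3,8\}$.
\begin{itemize}
\item $\{1,2,3,6\}$ is also concordant with $(\bar1,\bar5|\bar2,\bar4|\bar3)$, whose dual is $(\tilde1,\tilde4|\tilde2,\tilde3|\tilde5)$.
\item $\{1,2,3,8\}$ is also concordant with $(\bar1,\bar4|\bar2,\bar3|\bar5)$, whose dual is $(\tilde1,\tilde3|\tilde2|\tilde4,\tilde5)$.
\end{itemize}
Hence, for example, $\Delta_{\{1,2,3,6\}}=L_\sigma+L_{(\bar1,\bar5|\bar2,\bar4|\bar3)}$, so $\Delta_I/L_{unc}$ is a sum of two circular minors.

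The failure is general. On the non-contiguous side the excluded elements are interleaved with singletons, so a chord from $\bar p_i$ can end at a gap singleton instead of at $\bar q_i$, with the excluded dual shifting to compensate. Your fallback does not repair this: the odd indices in the gaps of $Q$ are singleton blocks $\bar o$, which concordance forbids in $I$.

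The missing idea is to swap the roles of the two sides. The paper takes $Q$ contiguous and
\[
I=\{2p_1-1,\ldots,2p_k-1\}\cup\{2p_1,2p_1+2,\ldots,2p_k-2\}\cup\{2p_k,\ldots,2q_k-4\}\cup\{2p_1-2,\ldots,2q_1+2\}.
\]
Here the \emph{non}-contiguous side contributes its odd endpoints and all dual positions of its arc. The complement of $I$ is the full segment $2q_k-2,2q_k-1,\ldots,2q_1$ around the contiguous arc, together with the odd singletons.

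Uniqueness is then short. Let $\tau$ be any partition concordant with $I$.
\begin{enumerate}
\item The $k+1$ excluded duals $2q_k-2,2q_k,\ldots,2q_1$ are consecutive and must lie in different blocks of $\widetilde\tau$.
\item So each $\bar q_i$ sits between two different dual blocks, hence lies in a non-singleton block of $\tau$.
\item That block contains no other excluded element, so it must contain some $\bar p_j$.
\item There are only $k$ elements $\bar p_j$, so each such block is a chord $\{\bar p_j,\bar q_i\}$ and every $\bar o$ is a singleton.
\item Non-crossingness then forces $\bar p_i\leftrightarrow\bar q_i$, i.e.\ $\tau=\sigma$.
\end{enumerate}
In your example, relabelled as $(\bar3,\bar5;\bar2,\bar1)$, this gives $I=\{5,6,8,9\}$, which is concordant only with $\sigma$.
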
 

\begin{proof}
    The proof of the second assertion repeats the proof of the corresponding part in the proof of Proposition \ref{circular minors via Plücker coordinates 0}. To deduce the first assertion from the second one, we need to find a subset $I\in[2n]$ which is concordant only with the grove defined by the circular minor $M_P^Q$. The argument is similar to that of Lemma \ref{lemma: bijection} but is technically more involved. In this case, the subset $I$ corresponding to the minor $M_P^Q$ with contiguous $Q$ is equal to:
    $$I=\{2p_1-1,2p_2-1,\ldots,2p_k-1\}\cup\{2p_1,2p_1+2,2p_1+4,\ldots,2p_k-2\}\cup$$
    $$\cup\{2p_k,2p_k+2,\ldots,2q_k-4\}\cup\{2p_1-2,2p_1-4,\ldots,2q_1+2\},$$
    where the difference with \eqref{eq: I for lemma on Lam minors} is that $I$ does not contain the whole segment $[2p_1-1;2p_k-1]$, but only $P$ and all even numbers from this segment. Moreover, every subset $I\subset[2n]$, $|I|=n-1$ of this form is concordant with the only one grove.
\end{proof}

\subsection{Central circular minors} \label{sec: Central circular minors}

\begin{definition}\label{def:Dn}
For an odd $n$ and every $1\le k\le \frac{n-1}{2}$ and $1\leq j\leq n$ with all indices understood modulo $n$, a circular pair of the form 
$$\left(\overline{j},\overline{j+1},\ldots,\overline{j+k-1};\overline{\frac{n-1}{2}+k+j-1},\overline{\frac{n-1}{2}+k+j-2},\ldots,\overline{\frac{n-1}{2}+j}\right)$$
is called {\itshape central}. We denote the set of central circular pairs on $[\overline{n}]$ by $D_n$. The minors corresponding to the central pairs are called {\itshape central}. 
\end{definition}

There is an equivalent way to define central minors, which is more technical but allows one to define central circular pairs simultaneously for odd and even $n$. This requires some preparatory notation. A contiguous circular pair is denoted by $CM_{a,b,y}:=(\overline{a},\overline{a+1},\ldots,\overline{a+y-1};\overline{b+y-1},\ldots,\overline{b+1},\overline{b})$. Then a {\itshape central pair} is a contiguous pair of the form $CM_{x,y}:=CM_{a,b,y}$ with 
\begin{equation}\label{eq: central minors}
a=\left\lfloor\frac{x-y}{2}\right\rfloor\ \ \ \ \ b=\left\lfloor\frac{x-y+n-(n-1\ \mathrm{mod}\ 2)}{2}\right\rfloor,
\end{equation}
where $1\le x\le n$, $1\le y<n/2$, or $y=n/2$ and $x+y$ is odd. Denote the set of central pairs on $[\overline{n}]$ by $D_n$. We also denote by $CM_n$ the analogous set, the only difference being that $x$ ranges over $1\le x\le 2n$. For further details, see \cite[Section 4.1]{KW space}.

There are $\binom{n}2$ central circular pairs. These pairs are called central pairs since, if $n$ points are arranged on a circle, the $k$ chords connecting each point of $P$ to the corresponding opposite point of $Q$, are parallel chords which are “central” in the sense that they are as close to a fixed {\itshape central diagonal} of the circle as possible while remaining disjoint from each other. 
The same pictorial definition works for even $n$; however, its definition in terms of concrete sets $P$ and $Q$ is more difficult. For more context on central minors see \cite[Section 4.5.3]{K}. The central circular minors were called {\itshape small central minors} in \cite[Section 4.1]{KW space} and for an odd $n$ {\itshape diametric solid minors} in \cite[Section 6.1]{ALT}. The central circular pairs corresponding to $k=\frac{n-1}{2}$ are called {\itshape maximal}.

We will also use the statistics defined in \cite[Definition 6.2.7]{ALT}. Let $(P;Q)=(\overline{p}_1,\ldots,\overline{p}_k;\overline{q}_1,\ldots,\overline{q}_k)$ be a non-empty circular pair. For $\overline{a},\overline{b}\in[\overline{n}]$, let $d(\overline{a},\overline{b})$ be the number of boundary vertices on the clockwise arc from $\overline{a}$ to $\overline{b}$, including both endpoints. Write $d_1=d_1(P;Q)=d(p_k,q_k)$, and $d_2=d_2(P;Q)=d(q_1,p_1)$. The statistics 
$D(P;Q), T(P;Q)$, and $k(P;Q)$ are defined by:
$$D(P;Q):=d_1(P;Q)-d_2(P;Q)=d(p_k,q_k)-d(q_1,p_1),$$
$$T(P;Q):=\begin{cases}
    \frac{p_1+q_1}{2}\ (\mathrm{mod}\ n)\ \mathrm{ if } \ p_1<q_1 \\
    \frac{p_1+q_1+n}{2}\ (\mathrm{mod}\ n)\ \mathrm{if}\ p_1>q_1
\end{cases}$$
$$k(P;Q):=|P|,\ \mathrm{that\ is,\ the\ size\ of\ } (P;Q).$$

\begin{remark} \label{remark: central minors}
    As noted in \cite[Remark 6.1.5]{ALT}, $D_n$ consists of the contiguous circular pairs with $|d_1-d_2|=1$ when $n$ is odd, and $CM_n$ consists of some of the contiguous circular pairs with either $|d_1-d_2|=0$ or $|d_1-d_2|=2$ when $n$ is even. More precisely, for odd $n$, $D_n=CM_n$, and there is a unique central pair along any central diagonal. For even $n$ the situation is more complicated: if $x+y$ is odd, then everything is similar to the odd case. If $x+y$ is even, then $CM_{x,y}$ is off-center by one and $CM_{x+n,y}$ is off-center by one in the other direction; they both belong to $CM_n$ but only one of them belongs to $D_n$. In terms of $|d_1-d_2|$ this can be rephrased as follows: a central diagonal passing through nodes gives a unique central pair with $|d_1-d_2|=0$; whereas a central diagonal that does not pass through any node gives two contiguous pairs with $|d_1-d_2|=2$ belonging to $CM_n$ but only one of them is central. The formula \eqref{eq: central minors} implements the specific choice of the central one among the two.
\end{remark}

The following theorem is the first test for circular total positivity. We will later state a description of more tests using cluster algebras.
\begin{theorem}\textup{\cite[Theorem 10]{KW space}} \label{thm: central form test}
    Any circular minor of a symmetric $n\times n$ matrix is a subtraction-free rational function of the central circular minors.
\end{theorem}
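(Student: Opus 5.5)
This is Kenyon–Wilson's Theorem 10, quoted from \cite{KW space}; since the paper cites it rather than proving it, the plan below describes how I would prove it using the tools set up so far. The setting is a symmetric matrix $M$ with zero row sums, which is where the statement is used. The approach is a \emph{recursive reduction} to central minors. It relies on Dodgson/Desnanot--Jacobi (condensation) identities among circular minors. These identities, read in the right direction, express a "less central" circular minor as a ratio whose numerator is a sum of products of positive-sign quantities and whose denominator is a "more central" minor.

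\textbf{Step 1: reduce general circular minors to contiguous ones.} Proposition \ref{circ-con-gro1} identifies every signed circular minor with a grove measurement $L_\sigma/L_{unc}$. I would first show that every grove measurement $L[_{\bar p_1}^{\bar q_1}|\cdots]$ for a non-contiguous pair is a subtraction-free expression in contiguous ones. This uses the Desnanot--Jacobi identity applied to a circular submatrix. Its rows and columns stay circular after deleting extremal rows and columns, so all six minors in the identity are circular. Once the sign $(-1)^k$ is inserted, the identity takes the form
\[
\det A\,\det A^{1,k}_{1,k}=\det A^{1}_{1}\det A^{k}_{k}-\det A^{1}_{k}\det A^{k}_{1},
\]
and with the signs the second product appears with a plus sign. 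Solving for the minor that spans the largest gap gives a subtraction-free formula. The quantity to induct on is the total number of nodes lying strictly inside the arcs $P$ and $Q$ that are not in the pair.

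\textbf{Step 2: reduce contiguous minors to central ones.} Contiguous pairs $CM_{a,b,y}$ are parametrized by their size $y$ and the offset $D=d_1-d_2$. For the contiguous pair with offset $D$, I would apply the same condensation identity to a square $(y+1)\times(y+1)$ contiguous circular block. In that identity the target minor appears multiplied by a minor of smaller $|D|$ or smaller size, and every other term has $|D|$ closer to the central value ($|D|=1$ for odd $n$; $0$ or $2$ for even $n$, as in Remark \ref{remark: central minors}). I would then induct on $|D|$, and within fixed $|D|$ on $y$. The base cases are $y=1$, where entries $x_{ij}$ with off-center index pairs reduce via $x_{ii}=-\sum_{j\neq i}x_{ij}$ and symmetry, and the central pairs themselves. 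Symmetry is also used to identify $M(P;Q)$ with $M(Q;P)$, which halves the set of offsets to handle.

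\textbf{Main obstacle.} The hard part is making sure the condensation step never calls for a minor whose size exceeds the room available on the circle. When $y$ is near $n/2$, the enlarged block of size $y+1$ can fail to be a circular pair. For these boundary cases I would instead use the zero-row-sum relation: adding all columns of $Q$'s complement expresses the overlapping minor as a sum, and this sum must then be shown subtraction-free using the grove interpretation of Proposition \ref{circ-con-gro1}, i.e.\ as a positive combination of $L_\sigma$. Alternatively, I would invoke the cluster/LP structure of \cite{ALT}, whose mutations are exactly these exchange relations. For even $n$, the choice between the two off-center pairs in $CM_n$ requires an extra identity relating them, which I would likewise derive from a single condensation relation.
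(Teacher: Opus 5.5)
The paper does not prove this statement; it is quoted from Kenyon--Wilson, so your outline has to stand on its own. Step~2 is sound. Enlarge an off-center contiguous pair by one node at each end of its larger outer gap, which has at least two nodes, and apply Desnanot--Jacobi. This is exactly the relation \eqref{eq: Grassmann Plucker in statistics}, read as a formula for the $(D+2)$-term. In particular, the ``main obstacle'' you describe does not occur in Step~2.

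The genuine gap is Step~1, which rests on a sign error. Write $m(X)=(-1)^{j}\det X$ for a $j\times j$ block. Every product in the identity has total size $2k-2$, so inserting the signs changes nothing:
\[
m(A)\,m(A^{1,k}_{1,k})=m(A^{1}_{1})\,m(A^{k}_{k})-m(A^{1}_{k})\,m(A^{k}_{1}).
\]
The subtraction-free form is $m(A^{1}_{1})m(A^{k}_{k})=m(A)m(A^{1,k}_{1,k})+m(A^{1}_{k})m(A^{k}_{1})$. This is the paper's relation $M^{a,c}M^{b,d}=M^{a,d}M^{b,c}+M^{ab,cd}M^{\emptyset,\emptyset}$, so solving for $m(A)=m(P;Q)$ forces a subtraction. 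If your sign were right, already for $k=2$ you would get $m(A)=m(A^1_1)m(A^2_2)+m(A^1_2)m(A^2_1)$. That contradicts the paper's $4\times 4$ example, where $400=53\cdot 8-12\cdot 2$. Note also that Step~2 is subtraction-free only with the correct sign, so the two steps cannot both be right as written.

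A correct Step~1 must realize $M(P;Q)$ as a single row-and-column deletion $A^a_c$ (with $a<b$, $c<d$) of a \emph{larger} circular block. That means adding a gap node of $P$ together with a free node on the $Q$-side, and this is where the real difficulty lies. It is impossible for semicontiguous pairs whose contiguous side fills the rest of the circle. For $n=5$ and $(\bar1,\bar3;\bar5,\bar4)$, no $3\times3$ circular block exists. Desnanot--Jacobi then only yields $m(\bar1,\bar3;\bar5,\bar4)=m(\bar1;\bar5)m(\bar3;\bar4)-m(\bar1;\bar4)m(\bar3;\bar5)$. A subtraction-free formula does exist, but it appears only after you substitute the Step~2 expressions for $m(\bar1;\bar5)$ and $m(\bar3;\bar4)$ and let the $m(\bar1;\bar4)m(\bar3;\bar5)$ terms cancel.

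So semicontiguous minors need separate input. One option is Proposition~\ref{circ-con-gro1} together with the subtraction-free exchange relations of $\mathcal{A}_{n-1,2n}$, starting from the seeds of Theorems~\ref{thm: cluster algebras coinside} and~\ref{thm: even case}; this is the proof of Theorem~\ref{loran-ph-circ}. Only after that does a gap-filling condensation work for pairs with gaps on both sides. Insert one gap node $g$ of $P$ and one gap node $h$ of $Q$, take $(P;Q)=A^g_h$, and let the partner be the deletion of the last row and last column. Then induct on the number of gap nodes, and then on size.

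Your fallbacks do not supply the missing input:
\begin{itemize}
\item Zero row sums are irrelevant, since circular minors never contain diagonal entries.
\item Positivity of the $L_\sigma$ on response matrices concerns values, not subtraction-free formulas in central minors.
\item The Grassmann--Pl\"ucker mutations of $\mathcal{CM}_n$ used in Appendix~\ref{sec:appendix B} reach contiguous pairs, not the non-contiguous ones Step~1 is about.
\end{itemize}
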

Note that this test is minimal, see \cite[Section 4.5.3]{K}.

\section{Cluster algebras}
\label{sec:cl}
\subsection{Background on cluster algebras}
In this section we present the necessary background on cluster algebras following \cite{FZW Ch1-3, FZW Ch4-5, Sc}. In this paper, we restrict our attention to skew-symmetric cluster algebras of geometric type.

A {\itshape quiver} is a finite oriented graph, consisting of vertices and directed edges (called {\itshape arrows}). We allow multiple edges, but we do not allow loops (i.e. an arrow cannot connect a vertex to itself) and oriented $2$-cycles (i.e. no arrows of opposite orientation may connect the same pair of vertices). In what follows, we will need a slightly richer notion, with some vertices in a quiver designated as {\itshape frozen}. The remaining vertices are called {\itshape mutable}. We will always assume that there are no edges between pairs of frozen vertices.

Let $Q$ be a quiver as above with $m$ vertices, $n$ of which are mutable. Let $k$ be a mutable vertex in a quiver $Q$. The {\itshape quiver mutation} $\mu_k$ transforms $Q$ into a new quiver $Q'=\mu_k(Q)$ via a sequence of three steps:
\begin{enumerate}
    \item For each oriented two-arrow path $i\to k\to j$, add a new arrow $i\to j$ (unless both $i$ and $j$ are frozen, in which case do nothing).
    \item Reverse the direction of all arrows incident to the vertex $k$.
    \item  Repeatedly remove oriented $2$-cycles until unable to do so.
\end{enumerate}

 As an {\itshape ambient field} for a cluster algebra, we take a field $\mathcal{F}$ of rational functions over $\mathbb C$ in $m$ independent variables.

A {\itshape seed} of geometric type in $\mathcal{F}$ is a pair $(\tilde{\textbf{x}},Q)$ where $\tilde{\mathbf{x}}$ is an $m$-tuple of elements of $\mathcal{F}$ forming a {\itshape free generating set}; that is, $x_1,\ldots,x_m$ are algebraically independent, and $\mathcal{F}=\mathbb C(x_1,\ldots,x_m)$, and $Q$ is a quiver with $m$ vertices, $n$ of which are mutable. We use the following terminology: $\tilde{\mathbf{x}}$ is called the {\itshape extended cluster} of the seed $(\tilde{\textbf{x}},Q)$; the $n$-tuple $\mathbf{x}=(x_1,\ldots,x_n)$ is called the {\itshape cluster} of this seed; the elements $x_1,\ldots,x_n$ are called its {\itshape cluster variables}; and the remaining elements $x_{n+1},\ldots,x_m$ of $\tilde{\mathbf{x}}$ are called {\itshape frozen variables} (or {\itshape coefficient variables}).

Let $(\tilde{\textbf{x}},Q)$ be a seed. Take an index $k\in\{1,\ldots,n\}$. The {\itshape seed mutation} $\mu_k$ in direction $k$ transforms $(\tilde{\textbf{x}},Q)$ into the new seed $\mu_k(\tilde{\textbf{x}},Q)=(\tilde{\textbf{x}}', Q')$ defined as follows:

$\bullet$ \ $Q'=\mu_k(Q)$.

$\bullet$ \ the extended cluster $\tilde{\mathbf{x}}'=(x_1',\ldots,x_m')$ is given by $x_j'=x_j$ for $j\ne k$, whereas $x_k'\in\mathcal{F}$ is determined by the {\itshape exchange relation}
\begin{equation} \label{eq: exchange relation}
x_kx_k'=\prod\limits_{i\to k}x_i^{b_{ik}}+\prod\limits_{k\to i}x_i^{b_{ki}},
\end{equation}
where $b_{ik}$ is the number of arrows from the vertex $i$ to the vertex $k$ in $Q$. In other words, the first (resp., second) monomial on the right-hand side of \eqref{eq: exchange relation} corresponds to the arrows pointing toward (resp., away from) the vertex $k$.


Choose an initial seed $(\tilde{\mathbf{x}},Q)$ and let $\mathcal{S}$ be the unique family of seeds containing the {\itshape initial seed} $(\tilde{\mathbf{x}},Q)$ and closed under mutations. The {\itshape cluster algebra} $\mathcal{A}=\mathcal{A}(\mathcal{S})$ is the $\mathbb C[x_{n+1},\ldots,x_m]$-subalgebra of $\mathcal{F}$ generated by all cluster variables from $\mathcal{S}$. The number $n$ is called the {\itshape rank} of the cluster algebra $\mathcal{A}$. 

We will need the two following operations on cluster algebras. 

Let $(\tilde{\mathbf{x}},Q)$ be a seed of rank $n$, and let $x_i\in \mathbf{x}$ be a cluster variable. {\itshape Freezing} the variable $x_i$, or
equivalently freezing the vertex $i$, is a transformation of the seed that reclassifies $i$ and $x_i$ as frozen, and accordingly removes all arrows connecting frozen vertices to each other. (In addition, this would typically require a change of indexing, provided we want to keep using the smaller indices $1,\ldots,n-1$ for the mutable variables). More generally, we can freeze any subset of cluster variables. The order of freezing does not matter. Note that freezing commutes with mutations at the remaining mutable vertices, \cite[Lemma 4.2.2]{FZW Ch4-5}. 

Let $(\tilde{\mathbf{x}},Q)$ be a seed with an $m$-element extended cluster $\tilde{\mathbf{x}}$ and let $x_i\in\tilde{\mathbf{x}}$ be a frozen variable. {\itshape Trivialization} at the index $i$ (or, of the variable $x_i$) is a transformation of the seed that removes $x_i$ from $\tilde{\mathbf{x}}$, and removes the index $i$ from the index set, and removes the frozen vertex $i$ from  $Q$, together with all arrows incident to it. (As in the case of freezing, a renumbering may be required if we want to use the indices $1,\ldots,m-1$ after trivialization.) More generally, we can trivialize any subset of frozen variables; the order of operations does not matter. Note that trivialization commutes with seed mutation, \cite[Lemma 4.3.2]{FZW Ch4-5}. Note also that trivialization of a frozen variable $x_i$ can be interpreted as setting $x_i=1$.

Cluster algebras have many remarkable properties, one of them is the Laurent Phenomenon.
\begin{theorem} \cite[Theorem 3.3.1 and Theorem 3.3.6]{FZW Ch1-3} \label{thm: laurent phenomenon}
    Let $\mathcal{A}$ be a cluster algebra and let $\tilde{\mathbf{x}}$ be an extended cluster whose cluster is $\mathbf{x}$. Any cluster variable $x$ in $\mathcal{A}$ can be expressed as a Laurent polynomial in the variables $\tilde{\mathbf{x}}$ whose denominator is a monomial in the mutable variables of  $\mathbf{x}$.
\end{theorem}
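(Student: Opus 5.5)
The plan is the classical Fomin--Zelevinsky ``caterpillar'' argument, proved separately for the two assertions. Fix the initial extended cluster $\tilde{\mathbf{x}}=(x_1,\ldots,x_m)$ and put
$$\mathcal{L}(\tilde{\mathbf{x}}):=\mathbb{Z}[x_{n+1},\ldots,x_m][x_1^{\pm1},\ldots,x_n^{\pm1}].$$
Since every seed of $\mathcal{S}$ is reached from $(\tilde{\mathbf{x}},Q)$ by a finite sequence of mutations $\mu_{k_1},\ldots,\mu_{k_N}$, it suffices to show that every cluster variable appearing along such a sequence lies in $\mathcal{L}(\tilde{\mathbf{x}})$. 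Frozen variables need no separate treatment. They are never mutated, and the right-hand side of \eqref{eq: exchange relation} is a polynomial. So once we know a variable lies in \emph{some} ring of Laurent polynomials in the mutable variables with polynomial dependence on the frozen ones, the denominator is automatically a monomial in $x_1,\ldots,x_n$ alone.

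I would argue by induction on the length $N$ of the mutation sequence. The statement is trivial for $N\le 1$ and immediate for $N=2$ by direct substitution. For the inductive step, write the path as $t_0\xrightarrow{k}t_1\xrightarrow{l}t_2\to\cdots\to t_N$ and let $x$ be the new variable at $t_N$.
\begin{itemize}
\item Applying the induction hypothesis with $t_1$ as the initial seed gives $x\in\mathcal{L}(\tilde{\mathbf{x}}(t_1))$. Substituting the exchange relation for $x_k'$ shows that $x$ is a Laurent polynomial in $\tilde{\mathbf{x}}$ whose denominator may contain a power of the exchange binomial $P_k=x_kx_k'$, the right-hand side of \eqref{eq: exchange relation}.
\item If $l=k$ the path backtracks and can be shortened, so assume $l\ne k$. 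Insert the alternative route $t_0\xrightarrow{l}t_1'$, followed by a path from $t_1'$ to $t_N$ whose length is $N-1$ (through the seed obtained from $t_0$ by $\mu_l\mu_k\mu_l$, suitably arranged). Applying induction at $t_1'$ shows that $x$ is also Laurent in $\tilde{\mathbf{x}}$ up to a power of the other exchange binomial $P_l$.
\end{itemize}
If $P_k$ and $P_l$ (after the relevant substitutions) are coprime in the unique factorization domain $\mathbb{Z}[x_1,\ldots,x_m]$, the only possible denominators are monomials, and hence $x\in\mathcal{L}(\tilde{\mathbf{x}})$.

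The main obstacle is exactly this coprimality step. One must show that $P_k$ is coprime to $x_l$, and that $P_k$ is coprime to the polynomial obtained from $P_l$ after substituting $x_k\mapsto P_k/x_k$ (using the rule of quiver mutation to control how $P_l$ changes when $b_{kl}\ne0$). In the skew-symmetric geometric-type setting, $P_k$ is a binomial in the neighbours of $k$ that does not involve $x_k$, and each such binomial is primitive, i.e.\ not divisible by any variable. The first thing I would try is to separate two cases. When $b_{kl}=0$ the two binomials involve the variables $x_k$ and $x_l$ in disjoint ways, and coprimality is immediate. When $b_{kl}\ne0$ I would reduce to the rank-two computation in the variables $x_k,x_l$, treating all other variables as coefficients, where the required coprimality can be checked by hand. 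If the general case is troublesome, I would strengthen the induction hypothesis to the full caterpillar lemma, namely Laurentness along every path in the $n$-regular exchange tree with a nondegenerate coefficient condition, and verify its hypotheses exactly as in \cite[Theorem 3.3.1]{FZW Ch1-3}.
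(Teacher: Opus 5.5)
The paper gives no proof of its own here; it cites Fomin--Williams--Zelevinsky, whose argument is the caterpillar-type induction you outline. Measured against that argument, your inductive step has two genuine gaps.

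\textbf{The second route uses the wrong seed.} In general $\mu_l(t_0)$ is \emph{not} within $N-1$ mutations of $t_N$. It is when $\mu_k$ and $\mu_l$ commute, i.e.\ $b_{kl}=0$, but not otherwise. The seed that is always within $N-1$ steps is $t_3=\mu_k\mu_l\mu_k(t_0)$, the neighbour of $t_2$ in direction $k$ (note the word is $k,l,k$, not $l,k,l$). Its cluster differs from $\tilde{\mathbf{x}}$ in two entries:
$x_l(t_2)=Q|_{x_k\leftarrow P_k/x_k}/x_l$, where $Q$ is the exchange polynomial at $l$ in $t_1$, and $x_k(t_3)$.
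To close the intersection argument you need both to lie in $\mathcal{L}(\tilde{\mathbf{x}})$, and both to be coprime there to $x_k(t_1)=P_k/x_k$. Your plan only addresses $x_l(t_2)$. For $b_{kl}\neq 0$ that part is indeed easy, because $Q|_{x_k=0}$ is a monomial and $P_k$ is divisible by no variable. What is missing is the rank-two input about $x_k(t_3)$: that it is Laurent in $\tilde{\mathbf{x}}$ at all, and that it shares no factor with $P_k$. This is what the condition on three consecutive exchange polynomials in the caterpillar lemma is for.

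\textbf{Coprimality is not immediate when $b_{kl}=0$.} Take four mutable vertices with arrows $3\to k\to 4$ and $3\to l\to 4$. Then $P_k=P_l=x_3+x_4$ and $\mu_k\mu_l\mu_k(t_0)=\mu_l(t_0)$. Put $x_k'=(x_3+x_4)/x_k$ and $x_l'=(x_3+x_4)/x_l$. For the mutation sequence $k,l,3$, both Laurent rings you intersect contain
$x_kx_l/(x_3+x_4)=x_l/x_k'=x_k/x_l'$,
which is not in $\mathcal{L}(\tilde{\mathbf{x}})$. So the intersection argument cannot show that the target $\bigl((x_3+x_4)^2+x_4^2x_kx_l\bigr)/(x_3x_kx_l)$ is Laurent, even though it is. This case needs an additional idea. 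One standard repair runs as follows. Adjoin frozen rows so that $\tilde B$ has full rank $n$; rank is mutation-invariant. Then in every seed the exchange binomials are pairwise coprime, since their Newton polytopes are pairwise non-parallel segments. Run the induction there, and finally trivialize the auxiliary frozen variables, which commutes with mutation.

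\textbf{The frozen-variable remark is circular.} Your claim that frozen variables ``need no separate treatment'' assumes what is to be proved. Polynomiality in $x_{n+1},\dots,x_m$ comes from carrying out every intersection step in the UFD $\mathbb{Z}[x_{n+1},\dots,x_m][x_1^{\pm1},\dots,x_n^{\pm1}]$, where frozen variables are primes. One must then check that no exchange binomial is divisible by one of them.
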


\subsection{Postnikov's theory} \label{sub:postt}
Scott's classical  construction of the cluster algebra on the Grassmannian begins with Postnikov arrangements, see Section \ref{sec:Scottdef}. To construct such seeds for electrical network, we need to use the so-called generalized Temperley's trick.

The key result of \cite{Pos} is that each point $X \in \mathrm{Gr}_{\geq 0}(k, n)$ can be parametrized by plabic graphs:
\begin{definition}  \label{def:plabicgraph}
A planar bipartite graph $G$ together with a weight function $w:E(G)\rightarrow \mathbb R_{> 0}$ is called a {\itshape plabic graph} $N(G, w)$ if the following holds:
\begin{itemize}
\item $G$ is embedded into a disk $D$, and the set of its nodes is divided into two subsets: the set of  interior nodes $V_I$ and the set of boundary nodes $V_B;$ 
    \item The boundary nodes $V_B$ lie on a boundary circle $\partial D$ and are numbered clockwise from $1$ to $n:=|V_B|$;
    \item The degrees of the boundary vertices are all equal to one;
    \item Each vertex of $G$ is colored  either black or white color and each edge is incident to the vertices of different colors.
\end{itemize}
Note that on the figures we do not specify the colors of boundary nodes, and we do not indicate edges of weight $1.$   
\end{definition}

\begin{theorem}\textup{\cite[Theorem 4.8]{Pos}, \cite[Theorem 3.2]{L}} \label{th:plabicgr}
 There is  a bijection between the set of weighted  plabic graphs   and the totally non-negative Grassmannian $\mathrm{Gr}_{\geqslant 0} (k,n)$,  where the plabic graphs are considered up to Postnikov moves and gauge transformations, see \cite[Section 12]{Pos}.  
\end{theorem}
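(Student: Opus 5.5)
The plan is to follow Postnikov's boundary measurement construction. Given a weighted plabic graph $N(G,w)$, fix a perfect orientation of $G$: each white interior vertex has exactly one incoming edge and each black one exactly one outgoing edge. The sources of this orientation form a $k$-subset $I_0$ of the boundary. Using these, I will define the boundary measurement matrix $A(N)$, a $k\times n$ matrix whose entries are signed generating functions of directed paths from sources to boundary vertices. The signs are chosen so that the $I_0$-columns form an identity matrix.

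The key computation is that every maximal minor $\Delta_J(A(N))$ equals the weight generating function of families of vertex-disjoint flows from $I_0\setminus J$ to $J\setminus I_0$. This is a Lindström–Gessel–Viennot argument, adapted to graphs with cycles using Talaska's formula or Postnikov's loop-erased sums. Equivalently, in Lam's dimer language, $\Delta_J$ is the sum of weights of almost perfect matchings with boundary set $J$. Either formula makes all Plücker coordinates manifestly nonnegative, so the row space of $A(N)$ lies in $\mathrm{Gr}_{\ge0}(k,n)$. The dimer formula also shows directly that the point does not depend on the chosen orientation.

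Well-definedness on equivalence classes is next. A gauge transformation at an interior vertex multiplies all edge weights at that vertex by a common $t>0$. This rescales every almost perfect matching by $t$, so it rescales all $\Delta_J$ by the same factor, and the projective point is unchanged. For each Postnikov move (square move, contraction or expansion of same-colour vertices, removal of degree-two vertices), I will check by a local computation that the matching generating functions transform compatibly after the standard local reparametrization of weights, for example $a'=a/(1+ac)$ for the square move.

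For surjectivity, I use Postnikov's cell decomposition of $\mathrm{Gr}_{\ge0}(k,n)$ into positroid cells, indexed by decorated permutations or Le-diagrams. For each Le-diagram I construct the associated reduced plabic graph, and I show that the boundary measurement map from face weights, modulo the product-one relation, is a bijection onto the corresponding positroid cell. Dimension matches: the number of faces minus one equals the number of pluses in the Le-diagram. Concretely, I will invert the map explicitly by a chamber-ansatz-type formula expressing face weights as ratios of Plücker coordinates attached to faces by the source/target labelling rule. This gives both surjectivity onto each cell and injectivity of face weights within each cell.

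Injectivity of the full map then needs one more ingredient. Two reduced plabic graphs giving points in the same positroid cell have the same trip (decorated) permutation, because the positroid is read off from the trip permutation. I then invoke Postnikov's theorem that reduced graphs with the same trip permutation are connected by moves. Non-reduced graphs are reduced by moves and reductions without changing the point.

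I expect the main obstacle to be this move-connectedness theorem, together with the explicit inversion on each cell. The move-connectedness is a genuinely combinatorial statement about trips and alternating strand diagrams, and I would try to prove it by induction on $n$, removing a boundary leaf or lollipop and using a bridge decomposition.
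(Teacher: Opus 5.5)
The paper does not prove this statement; it quotes it from \cite[Theorem 4.8]{Pos} and \cite[Theorem 3.2]{L}. Your outline reconstructs Postnikov's argument: boundary measurements of a perfect orientation, Talaska's flow formula or the dimer formula for nonnegativity, invariance under gauge and moves, Le-networks for the cells, and move-connectedness of reduced graphs with equal trip permutation. So your strategy matches the source the paper relies on. However, the step you use for surjectivity does not deliver what you claim.

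\textbf{Surjectivity.} An explicit chamber-ansatz formula $W$ recovering the face weights of the Le-graph from Pl\"ucker coordinates is a left inverse: $W(\mathrm{Meas}(t))=t$. That proves injectivity on the cell, not surjectivity. To see that an arbitrary point $X$ of the positroid cell is hit, you need $\mathrm{Meas}(W(X))=X$. Equivalently, a point of the whole cell, not just of the image of $\mathrm{Meas}$, must be determined by the finitely many minors entering $W$. This is the substantive half of Postnikov's Le-network theorem. It needs an argument about arbitrary totally nonnegative points with the prescribed matroid, for example:
\begin{itemize}
\item Postnikov's inductive reconstruction of the network weights from $X$, or
\item a bridge-removal induction that peels off a bridge by right multiplication with an elementary unipotent matrix and shows the result stays totally nonnegative in a strictly smaller cell.
\end{itemize}
The inversion identity on the image of $\mathrm{Meas}$ is not enough. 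Note also that \cite{Pos} establishes, by means of this same Le-network parametrization, that nonempty positroid cells are indexed by Le-diagrams and are homeomorphic to $\mathbb{R}^d$. So if you take the cell decomposition as a black box, you are assuming surjectivity rather than deriving it. Either cite the Le-network theorem explicitly or supply one of the arguments above.

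\textbf{Smaller points.} Your last step passes to reduced graphs using reductions. These are not among the operations allowed in the statement, and they are not invertible on weights: two parallel edges of weights $a,b$ become one edge of weight $a+b$. They also cannot be dropped. For a graph with parallel edges, weights $(a,b)$ and $(a',b')$ with $a+b=a'+b'$ give the same point. The fibre of $\mathrm{Meas}$ on weights modulo gauge is positive-dimensional there, while moves act through countably many fixed rational maps, so such pairs are generically not move-equivalent. What your argument actually proves is the bijection for reduced graphs, or for all graphs modulo moves, reductions and gauge. Say so explicitly, since the printed statement is imprecise exactly here. Finally, in the presence of oriented cycles Talaska's formula reads $\Delta_J(A(N))=\sum_F wt(F)\big/\sum_C wt(C)$, with $C$ running over conservative flows. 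Without the denominator the $I_0$-columns could not form an identity matrix. The common factor is positive, so your nonnegativity conclusion is unaffected.
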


\begin{definition}
    An {\itshape oriented medial graph}  of a plabic graph $N(G, w)$  is a  graph $G_M$ whose nodes are defined as follows:
    \begin{itemize}
        \item Boundary nodes of $G_M$ coincide with boundary nodes of $N(G, w)$;
        \item Interior nodes  of  $G_M$   are the midpoints of the  edges connecting interior nodes of $G$.
    \end{itemize}
    Two nodes of  $G_M$ are connected by an edge if the edges of the original graph $G$ are adjacent. The edges of $G_M$ are oriented as follows:
     \begin{itemize}
        \item Clockwise around each interior white node of $G$;
        \item Counterclockwise around  each interior black node of $G$.
    \end{itemize}
    Note that the orientation rule is well-defined. Moreover, since all interior nodes of $G_M$ have degree four, we can correctly define the {\itshape strands} of $G_M$ as  the oriented paths which pass straight through every degree-four node.  
\end{definition}

\begin{definition} \label{def: zig-zag path}
    A collection of strands naturally gives rise to a {\itshape strand permutation} $\tau(N)$, which is defined  by the following rule: if a strand starts at a boundary node labeled by $i$ and  ends  at a boundary node labeled by $j$, we have that $\tau(N)(i)=j.$

    It is easy to see that each strand induces an oriented {\itshape zig-zag path} on the edges. The boundary node at which the strand starts (respectively ends) coincides with the strand's starting (ending) boundary node; if we reach a black node $v$ we follow the edge $vw$ obtained by turning maximally right at $v$; if we reach a white  node $v$, we turn maximally left at $v$. 
\end{definition}

\begin{figure}[H]
     \centering
     \includegraphics[scale=0.14]{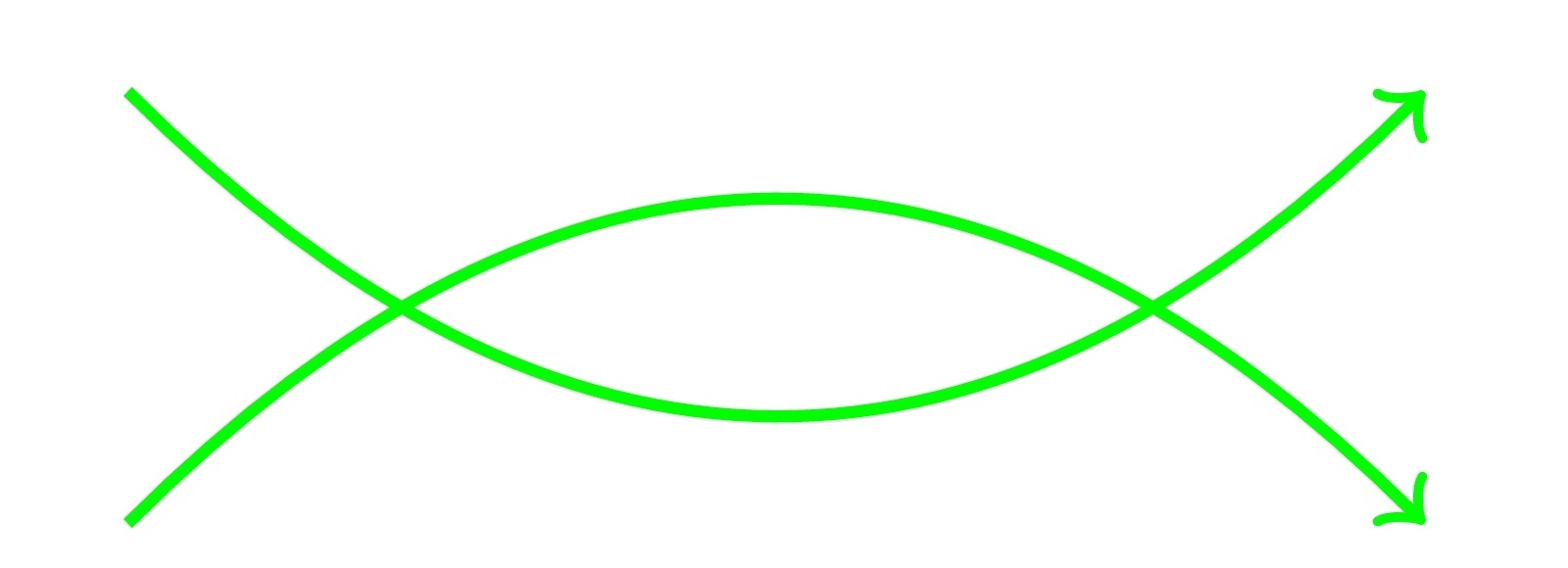}
     \caption{A forbidden oriented lens} \label{fig:forbid}
     \end{figure}

\begin{definition}  \label{def:minplabic}
   A plabic graph $N(\Gamma, \omega)$ is called {\itshape reduced} (or sometimes {\itshape minimal}) if the following holds:
    \begin{itemize}
        \item Each strand does not intersect itself;
        \item Strands do not form  closed cycles  apart from cycles attached to isolated boundary nodes;
        \item Strands do not form oriented lenses, see Fig. \ref{fig:forbid}.
    \end{itemize}   
\end{definition}

\subsection{The generalized Temperley's trick} \label{subsec:temptrick}
The generalized Temperley's trick of an electrical network is a planar bicolored graph (or plabic graph or Lam model) which is obtained by combining a graph $\Gamma$ and its dual graph $\Gamma^{*};$ all nodes of the graphs $\Gamma$ and $\Gamma^{*}$ are black, and additional nodes, which lie on intersections  of edges $\Gamma$ and $\Gamma^{*}$, are white; each edge coming from an edge of $\Gamma$ has a weight equal to the conductance of this edge, and all additional edges have weight $1$.

\begin{definition}  \label{temp_gen}
For each electrical network $e(\Gamma, \omega)\in E_n$ we  define a plabic graph  $N(e)$, which we call {\itshape generalized Temperley's trick} of $e$,
 by the following construction. The boundary nodes of $N(e)$ are defined as follows:
\begin{itemize}
    \item If $\Gamma$ has boundary nodes $\{\bar 1, \bar 2, . . . , \bar n\}$, then $N(e)$ will have white boundary nodes $\{1, 2, . . . , 2n\}$, where boundary node $\bar i$ is identified with $2i-1$ and the node $2i$ is identified with the node $\tilde i$ used to label dual non-crossing partitions;
    \item Each boundary node has a degree equal to  $1$.
\end{itemize}
The interior nodes of $N(e)$ are defined as follows:
\begin{itemize}
    \item We have a black interior node $b_v$ for each interior node $v$ of $\Gamma$;
    \item We have a black interior node $b_F$ for each interior face $F$ of $\Gamma$;
    \item We have a white interior node $w_e$ placed at the midpoint of each interior edge $e$ of $\Gamma;$
    \item  For each boundary node $\bar i$, we have a black interior node  $b_i$.
\end{itemize}  
The edges of $N(e)$ are defined as follows: 
\begin{itemize}
\item  If $v$ is a node of an edge $e$ in $\Gamma$, then $b_v$ and $w_e$ are joined, and the weight of this edge is equal
to the weight $\omega(e)$ of $e$ in $\Gamma$, 
\item  If $e$ borders $F$, then $w_e$ is joined to $b_F$ by an edge with weight $1$, 
\item  The node $b_i$ is joined by an edge with weight $1$ to the boundary node $2i - 1$ in $N(e)$, and $b_i$
is also joined by an edge with weight $1$ to $w_e$ for any edge $e$ incident
to $\bar i$ in $\Gamma$, 
\item  Even boundary nodes $2i$ in $N(e)$ are joined by an edge with weight $1$ to the face vertex $b_F$ of the face $F$ that they lie in.
\end{itemize}
\end{definition}

Each electrical network can be transformed into a minimal network using electrical transformations, see \cite[Corollary 9.4]{CM}. Moreover, minimal electrical networks are related to reduced plabic graphs through generalized Temperley's trick.

\begin{figure}[h!]
    \centering
    \includegraphics[width=0.35\textwidth]{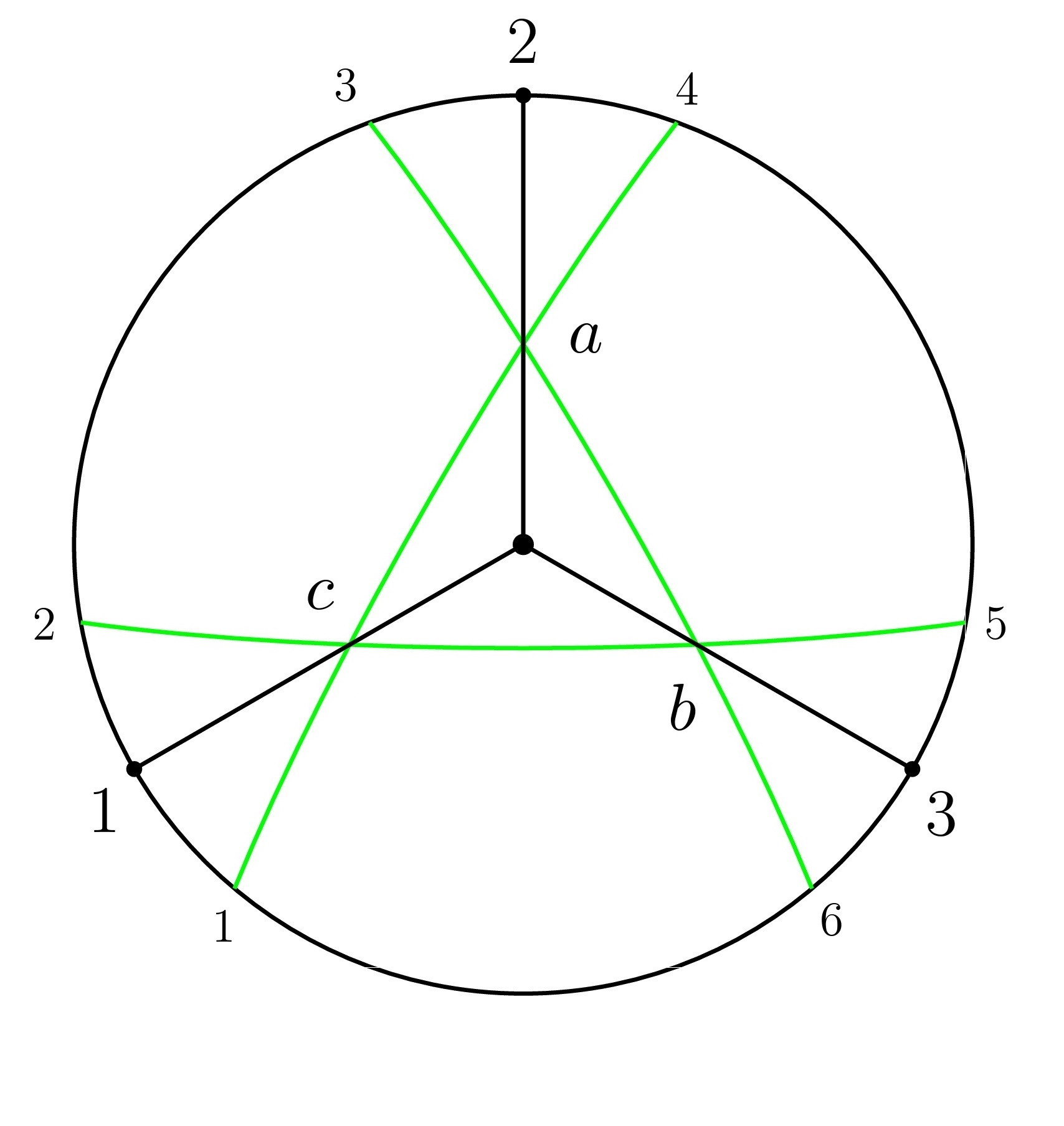}
    \caption{Star-shaped network, its medial graph and the strand permutation $T(e)=(14)(25)(36)$ }
    \label{fig:triangle-strands}
\end{figure}

\begin{definition}
    The {\itshape medial graph} of an electrical network $e(\Gamma, \omega) \in E_n $ is the graph $\Gamma_M$ whose internal vertices are the midpoints of the edges of $\Gamma$ and two internal vertices are connected by an edge if the edges of the original graph $\Gamma$ are adjacent. The boundary vertices of $\Gamma_M$ are defined as the intersections of the natural extensions of the edges of $\Gamma_M$  with the boundary circle. Since each interior vertex of the medial graph has degree four, the strands naturally define a permutation $T(e)$ on the set of points $\{1,\dots,2n\}$, see Fig. \ref{fig:triangle-strands}. 
\end{definition}

 \begin{definition}
An electrical network   $e(\Gamma, \omega) \in E_n $  is called {\itshape minimal} if each strand of its medial graph does not form a closed cycle; the strands  do not have self-intersections and any two strands intersect in at most one point, equivalently the medial graph has no loops or lenses, see  Fig. \ref{fig:forbid1}.

If an electrical network   $e $ is minimal and well-connected, it is called {\itshape critical}. Note that our notations are different from \cite{L} where minimal networks are called critical.
\end{definition}

\begin{figure}[H]
     \centering
     \includegraphics[scale=0.14]{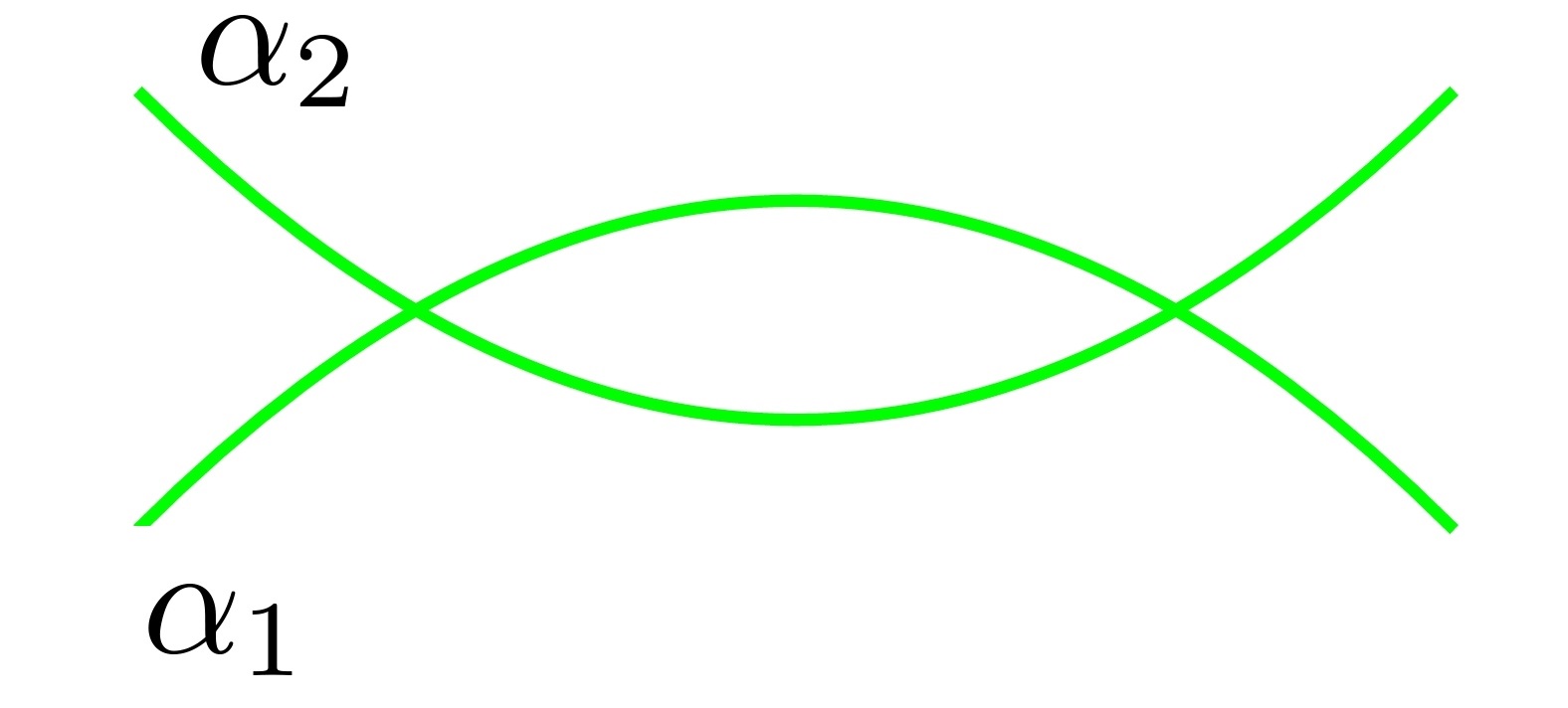}
     \caption{A forbidden  lens} \label{fig:forbid1}
     \end{figure}

\begin{lemma} \textup{\cite[Theorem 2.17]{Kaz}} \label{lemma: minimal}
    Consider a connected minimal electrical network   $e(\Gamma, \omega) \in E_n,$ then the associated  plabic graph 
    $N(e)$ is reduced.
\end{lemma}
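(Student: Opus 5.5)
The claim is that if the medial graph of $e$ has no closed strands, no self-intersecting strands and no lenses, then the plabic graph $N(e)$ has the same three properties. My plan is to set up a dictionary between strands of the medial graph $\Gamma_M$ and zig-zag strands of $N(e)$, and then transfer each forbidden configuration from $N(e)$ back to $\Gamma_M$.

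\textbf{Step 1: the strand dictionary.} In $N(e)$, every interior white vertex $w_e$ has degree four. Its neighbours are the two endpoints of $e$ (black vertices $b_v$ or $b_i$) and the two faces bordering $e$ (black vertices $b_F$). Every other white vertex is a boundary vertex of degree one. I will check locally, using the zig-zag rule (maximal right turn at black vertices, maximal left turn at white ones), what a zig-zag path does.

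At a black vertex $b_v$ or $b_F$, the path turns between two consecutive white vertices $w_e, w_{e'}$. Here $e$ and $e'$ are edges that are consecutive around $v$ or around $F$. In either case $e$ and $e'$ are adjacent in $\Gamma_M$ via a corner of a face.

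At a white vertex $w_e$, the path passes from a vertex-neighbour to a face-neighbour. This matches a medial strand passing straight through the midpoint of $e$.

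Taken together, each zig-zag path of $N(e)$ traces a strand of $\Gamma_M$ through the same sequence of midpoints $w_e$. The boundary is handled separately: the black vertex $b_i$ and the even boundary vertices $2i$ attached to the $b_F$ realize the endpoints of medial strands at the $2n$ boundary points. The upshot is that this gives a bijection between strands of $N(e)$ and strands of $\Gamma_M$ which preserves boundary endpoints, so $\tau(N(e))$ agrees with $T(e)$ up to the standard shift. The embedding also matches: crossings of two strands in $N(e)$ happen exactly at the degree-four white vertices $w_e$, which are the crossings of $\Gamma_M$.

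\textbf{Step 2: transferring the three conditions.} With the dictionary in hand, each reducedness condition in Definition~\ref{def:minplabic} becomes a condition on $\Gamma_M$.
\begin{enumerate}
\item A closed zig-zag cycle gives a closed medial strand. The exception is a cycle around an isolated boundary node, which the definition already allows. Connectedness of $\Gamma$ is used here to rule out degenerate components.
\item A self-intersection of a zig-zag path at some $w_e$ gives a self-intersection of a medial strand.
\item Two zig-zag strands forming an oriented lens cross twice, at $w_e$ and $w_{e'}$. The corresponding medial strands then meet twice, i.e.\ they form a lens in $\Gamma_M$. This contradicts minimality, which says any two strands intersect at most once.
\end{enumerate}

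\textbf{Main obstacle.} I expect the difficulty to be the local check at the boundary, and in degenerate spots such as interior vertices of degree one or two, faces bounded by only two edges, and boundary vertices of $\Gamma$ with several incident edges. There one has to be sure the zig-zag rule really reproduces the straight-through rule of the medial graph. My plan is to do this by a direct case analysis of the neighbourhood of $b_i$ together with its adjacent even boundary vertices. If degree-two black vertices cause trouble, I would first contract them, since this is a reducedness-preserving move that does not change the strands.
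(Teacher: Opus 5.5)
The paper does not prove this lemma; it only cites \cite[Theorem 2.17]{Kaz}. Your overall strategy (transfer the forbidden configurations from $N(e)$ to $\Gamma_M$) is the natural one. However, the Step~1 dictionary is wrong, and Step~2(3) depends on it.

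\textbf{The strand count does not match.} $N(e)$ has $2n$ boundary vertices and hence $2n$ zig-zag paths. $\Gamma_M$ has $2n$ boundary points but only $n$ strands, so there is no bijection. What your local analysis actually shows is weaker: each zig-zag path \emph{follows} a medial strand, visiting the vertices $w_e$ in the order in which the strand visits the midpoints $m_e$. Each medial strand $s$ is followed by exactly two zig-zag paths $s^{+}, s^{-}$. These run in opposite directions, one on each side of $s$, and at each end of $s$ they terminate at two adjacent boundary vertices $b-1, b$. This is exactly where $\tau(i)=T(i)-1$ comes from (used in Lemma~\ref{lemma: permutation}). Your sentence ``preserves boundary endpoints, so $\tau$ agrees with $T$ up to the standard shift'' is self-contradictory for this reason.

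\textbf{Crossings are not at the vertices $w_e$.} Strands of $N(e)$ cross at midpoints of edges of $N(e)$: two zig-zag paths cross when they traverse a common edge in opposite directions. A single crossing of $\Gamma_M$ at $m_e$ therefore produces four crossings in $N(e)$, one on each of the four edges at $w_e$.

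\textbf{Why this breaks Step~2(3).} Both lifts $s^{+}$ and $s^{-}$ pass through every $w_e$ on $s$. So ``two zig-zag paths meet at $w_e$ and at $w_{e'}$'' holds for the two lifts of any medial strand crossing at least two others, and it gives no lens in $\Gamma_M$. As written, your argument cannot exclude an oriented lens formed by $s^{+}$ and $s^{-}$.

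\textbf{The missing edge-level computation.} Take $w_e$ with neighbours $b_u, b_F, b_v, b_{F'}$ in cyclic order.
\begin{itemize}
\item By your turning rules, the four zig-zag segments through $w_e$ turn between cyclically adjacent neighbours.
\item Following each segment one more step, through the next black vertex, shows that they alternate between lifts of the two medial strands $s, t$ through $m_e$.
\item Hence each edge at $w_e$ is shared by one lift of $s$ and one lift of $t$. The two lifts of the same strand occupy opposite corners and share no edge. Each lift of $s$ crosses each lift of $t$ exactly once there.
\end{itemize}
Every interior edge of $N(e)$ has exactly one white endpoint, which is some $w_e$. So every crossing in $N(e)$ lies over a crossing of $\Gamma_M$. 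Two zig-zag paths crossing twice then project either to a self-crossing of one medial strand or to two distinct crossings of two distinct strands, and minimality excludes both.

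With this, and the analogous check at $b_i$ and at the boundary-face vertices $b_F$ joined to even boundary vertices, your three transfers go through. In fact you get that no two zig-zag paths cross twice at all.

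The interior degeneracies you worry about (interior vertices of degree $1$ or $2$, faces bounded by two edges) need no separate treatment. They create loops or lenses in $\Gamma_M$ and are already excluded by minimality.
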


\subsection{The special series of electrical networks} \label{sub:spclass}
We now define a series of critical networks, that were constructed in \cite[Section 2.5]{CM}. Later we will use the circular minors of their response matrices.

For each integer $n\ge 3$, the nodes of the graph $G_n$ are the points of intersection of $n$ radial lines and a finite collection of circles centered at the origin. The edges of $G_n$ lie on the $n$ rays $\rho_0,\rho_1,\ldots,\rho_{n-1}$ originating from the origin at angles $\theta_0, \theta_1,\ldots,\theta_{n-1}$ measured clockwise from the first ray $\rho_0$, where $0=\theta_0<\theta_1<\ldots<\theta_{n-1}<2\pi$. The circles have radii $0<r_1<\ldots<r_i<\ldots$. 

For convenience, $(i,j)$ will represent the point that is the intersection of the circle of radius $r_i$ with ray $\rho_j$. All points $(0,j)$ are identified with the single point $(0,0)$. All second indices are taken modulo $n$; thus the point $(i,j+n)$ is identified with the point $(i,j)$; in particular, $(i,n)$ is the same point as $(i,0)$.

The well-connected critical graph $G_n$ requires four separate descriptions depending on $n $ mod $4$. We give the construction for the two odd congruence classes.

Let $n=4m+1$. The boundary circle of radius $m+1$, centered at $(0,0)$. The nodes of $G_{4m+1}$ are the points $(i,j)$ for integers $i$ and $j$ with $0<i\le m+1$ and $1\le j\le 4m+1$. The radial edges are the radial line segments joining $(i,j)$ and $(i+1,j)$ for each $0<i\le m$ and each $1\le j\le 4m+1$. The circular edges are the circular arcs joining $(i,j)$ and $(i,j+1)$ for each $1\le i\le m$ and each $1\le j\le 4m+1$. The graph $G_{4m+1}$ has $2m(4m+1)$ edges and $(m+1)(4m+1)$ vertices. The boundary nodes of $G_{4m+1}$ are the points $v_j=(m+1,j)$, for $j=1,\ldots,4m+1$, with the convention that $v_0=v_{4m+1}$. The graph $G_5$ is shown in Fig. \ref{fig: G_5}.

Let $n=4m+3$. The boundary circle for $G_{4m+3}$ is the circle of radius $m+1$, centered at $(0,0)$. The nodes of $G_{4m+3}$ are the points $(h,j)$ for integers $h$ and $j$ with $0\le h\le m+1$ and $0\le j\le 4m+3$. The radial edges are the radial line segments joining $(h,j)$ to $(h+1,j)$ for each $0\le h\le m$ and each $0\le j<4m+3$. The circular edges are the circular arcs joining $(h,j)$ to $(h,j+1)$ for each $1\le h\le m$ and each $0\le j\le 4m+3$.

\begin{figure}[ht]
\centering
\begin{tikzpicture}
    \draw (0,0) circle (0.75);

    \draw (90:0.75) -- node [left] {}(90:3);
    \draw (162:0.75) -- node [left] {}(162:3);
    \draw (234:0.75) -- node [left] {}(234:3);
    \draw (306:0.75) -- node [left] {}(306:3);
    \draw (376:0.75) -- node [left] {}(376:3);
    \draw[white] (-90:0) -- node [left] {}(-90:3);

    \filldraw[fill=black] (90:0.75) node [right] {} circle (2pt);
    \filldraw[fill=black] (162:0.75) node [right] {} circle (2pt);
    \filldraw[fill=black] (234:0.75) node [right] {} circle (2pt);
    \filldraw[fill=black] (306:0.75) node [right] {} circle (2pt);
    \filldraw[fill=black] (376:0.75) node [right] {} circle (2pt);

    
\end{tikzpicture} \phantom{aaaa}
\begin{tikzpicture}
    \draw (0,0) circle (0.75);
    \draw (0,0) circle (1.5);

    \draw (90:0.75) -- node [left] {}(90:3);
    \draw (162:0.75) -- node [left] {}(162:3);
    \draw (234:0.75) -- node [left] {}(234:3);
    \draw (306:0.75) -- node [left] {}(306:3);
    \draw (376:0.75) -- node [left] {}(376:3);

    \draw (126:0) -- node [left] {}(126:3);
    \draw (198:0) -- node [left] {}(198:3);
    \draw (270:0) -- node [left] {}(270:3);
    \draw (342:0) -- node [left] {}(342:3);
    \draw (414:0) -- node [left] {}(414:3);

    \filldraw[fill=black] (90:0.75) node [right] {} circle (2pt);
    \filldraw[fill=black] (162:0.75) node [right] {} circle (2pt);
    \filldraw[fill=black] (234:0.75) node [right] {} circle (2pt);
    \filldraw[fill=black] (306:0.75) node [right] {} circle (2pt);
    \filldraw[fill=black] (376:0.75) node [right] {} circle (2pt);

    \filldraw[fill=black] (0:0) node [right] {} circle (2pt);
    \%filldraw[fill=black] (126:2) node [right] {} circle (2pt);
    \filldraw[fill=white] (126:0.75) node [right] {} circle (2pt);
    \filldraw[fill=white] (198:0.75) node [right] {} circle (2pt);
    \filldraw[fill=white] (270:0.75) node [right] {} circle (2pt);
    \filldraw[fill=white] (342:0.75) node [right] {} circle (2pt);
    \filldraw[fill=white] (414:0.75) node [right] {} circle (2pt);

    \filldraw[fill=white] (90:1.5) node [right] {} circle (2pt);
    \filldraw[fill=white] (162:1.5) node [right] {} circle (2pt);
    \filldraw[fill=white] (234:1.5) node [right] {} circle (2pt);
    \filldraw[fill=white] (306:1.5) node [right] {} circle (2pt);
    \filldraw[fill=white] (376:1.5) node [right] {} circle (2pt);

    \filldraw[fill=black] (90:2.25) node [right] {} circle (2pt);
    \filldraw[fill=black] (162:2.25) node [right] {} circle (2pt);
    \filldraw[fill=black] (234:2.25) node [right] {} circle (2pt);
    \filldraw[fill=black] (306:2.25) node [right] {} circle (2pt);
    \filldraw[fill=black] (376:2.25) node [right] {} circle (2pt);

    \filldraw[fill=black] (126:1.5) node [right] {} circle (2pt);
    \filldraw[fill=black] (198:1.5) node [right] {} circle (2pt);
    \filldraw[fill=black] (270:1.5) node [right] {} circle (2pt);
    \filldraw[fill=black] (342:1.5) node [right] {} circle (2pt);
    \filldraw[fill=black] (414:1.5) node [right] {} circle (2pt);

    
\end{tikzpicture} 
    \caption{Graph $G_5$ and its generalized Temperley's trick}
    \label{fig: G_5}
\end{figure}

\begin{figure}[ht]
\centering

\begin{tikzpicture}
    \draw (0,0) circle (1.5);

    \draw (51.4:0) -- node [left] {}(51.4:3);
    \draw (102.8:0) -- node [left] {}(102.8:3);
    \draw (154.2:0) -- node [left] {}(154.2:3);
    \draw (205.6:0) -- node [left] {}(205.6:3);
    \draw (257:0) -- node [left] {}(257:3);
    \draw (308.4:0) -- node [left] {}(308.4:3);
    \draw (359.8:0) -- node [left] {}(359.8:3);

    
   \filldraw[fill=black] (0:0) node [right] {} circle (2pt);
   \filldraw[fill=black] (359.8:1.5) node [right] {} circle (2pt);
   \filldraw[fill=black] (308.4:1.5) node [right] {} circle (2pt);
     \filldraw[fill=black] (257:1.5) node [right] {} circle (2pt);
 \filldraw[fill=black] (205.6:1.5) node [right] {} circle (2pt);
 \filldraw[fill=black] (154.2:1.5) node [right] {} circle (2pt);
\filldraw[fill=black] (102.8:1.5) node [right] {} circle (2pt);
    \filldraw[fill=black] (51.4:1.5) node [right] {} circle (2pt);

    
\end{tikzpicture} \phantom{aaaaaaa}
\begin{tikzpicture}
    \draw (0,0) circle (1.5);
    \draw (0,0) circle (0.75);
    \draw (0,0) circle (2.25);

    \draw (51.4:0) -- node [left] {}(51.4:3);
    \draw (102.8:0) -- node [left] {}(102.8:3);
    \draw (154.2:0) -- node [left] {}(154.2:3);
    \draw (205.6:0) -- node [left] {}(205.6:3);
    \draw (257:0) -- node [left] {}(257:3);
    \draw (308.4:0) -- node [left] {}(308.4:3);
    \draw (359.8:0) -- node [left] {}(359.8:3);

   \filldraw[fill=black] (0:0) node [right] {} circle (2pt);
   \filldraw[fill=black] (359.8:1.5) node [right] {} circle (2pt);
   \filldraw[fill=black] (308.4:1.5) node [right] {} circle (2pt);
     \filldraw[fill=black] (257:1.5) node [right] {} circle (2pt);
 \filldraw[fill=black] (205.6:1.5) node [right] {} circle (2pt);
 \filldraw[fill=black] (154.2:1.5) node [right] {} circle (2pt);
\filldraw[fill=black] (102.8:1.5) node [right] {} circle (2pt);
    \filldraw[fill=black] (51.4:1.5) node [right] {} circle (2pt);

\filldraw[fill=black] (359.8:2.625) node [right] {} circle (2pt);
   \filldraw[fill=black] (308.4:2.625) node [right] {} circle (2pt);
     \filldraw[fill=black] (257:2.625) node [right] {} circle (2pt);
 \filldraw[fill=black] (205.6:2.625) node [right] {} circle (2pt);
 \filldraw[fill=black] (154.2:2.625) node [right] {} circle (2pt);
\filldraw[fill=black] (102.8:2.625) node [right] {} circle (2pt);
    \filldraw[fill=black] (51.4:2.625) node [right] {} circle (2pt);

    \filldraw[fill=white] (51.4:0.75) node [right] {} circle (2pt);
    \filldraw[fill=white] (102.8:0.75) node [right] {} circle (2pt);
    \filldraw[fill=white] (154.2:0.75) node [right] {} circle (2pt);
    \filldraw[fill=white] (205.6:0.75) node [right] {} circle (2pt);
    \filldraw[fill=white] (257:0.75) node [right] {} circle (2pt);
     \filldraw[fill=white] (308.4:0.75) node [right] {} circle (2pt);
     \filldraw[fill=white] (359.8:0.75) node [right] {} circle (2pt);

     \filldraw[fill=white] (51.4:2.25) node [right] {} circle (2pt);
    \filldraw[fill=white] (102.8:2.25) node [right] {} circle (2pt);
    \filldraw[fill=white] (154.2:2.25) node [right] {} circle (2pt);
    \filldraw[fill=white] (205.6:2.25) node [right] {} circle (2pt);
    \filldraw[fill=white] (257:2.25) node [right] {} circle (2pt);
     \filldraw[fill=white] (308.4:2.25) node [right] {} circle (2pt);
     \filldraw[fill=white] (359.8:2.25) node [right] {} circle (2pt);

\filldraw[fill=black] (77.1:0.75) node [right] {} circle (2pt);
    \filldraw[fill=black] (128.5:0.75) node [right] {} circle (2pt);
    \filldraw[fill=black] (179.9:0.75) node [right] {} circle (2pt);
    \filldraw[fill=black] (231.3:0.75) node [right] {} circle (2pt);
    \filldraw[fill=black] (282.7:0.75) node [right] {} circle (2pt);
     \filldraw[fill=black] (334.1:0.75) node [right] {} circle (2pt);
     \filldraw[fill=black] (385.5:0.75) node [right] {} circle (2pt);

\filldraw[fill=black] (77.1:2.25) node [right] {} circle (2pt);
    \filldraw[fill=black] (128.5:2.25) node [right] {} circle (2pt);
    \filldraw[fill=black] (179.9:2.25) node [right] {} circle (2pt);
    \filldraw[fill=black] (231.3:2.25) node [right] {} circle (2pt);
    \filldraw[fill=black] (282.7:2.25) node [right] {} circle (2pt);
     \filldraw[fill=black] (334.1:2.25) node [right] {} circle (2pt);
     \filldraw[fill=black] (385.5:2.25) node [right] {} circle (2pt);

\filldraw[fill=white] (77.1:1.5) node [right] {} circle (2pt);
    \filldraw[fill=white] (128.5:1.5) node [right] {} circle (2pt);
    \filldraw[fill=white] (179.9:1.5) node [right] {} circle (2pt);
    \filldraw[fill=white] (231.3:1.5) node [right] {} circle (2pt);
    \filldraw[fill=white] (282.7:1.5) node [right] {} circle (2pt);
     \filldraw[fill=white] (334.1:1.5) node [right] {} circle (2pt);
     \filldraw[fill=white] (385.5:1.5) node [right] {} circle (2pt);
\filldraw[fill=white] (77.1:1.5) node [right] {} circle (2pt);
    \filldraw[fill=white] (128.5:1.5) node [right] {} circle (2pt);
    \filldraw[fill=white] (179.9:1.5) node [right] {} circle (2pt);
    \filldraw[fill=white] (231.3:1.5) node [right] {} circle (2pt);
    \filldraw[fill=white] (282.7:1.5) node [right] {} circle (2pt);
     \filldraw[fill=white] (334.1:1.5) node [right] {} circle (2pt);
     \filldraw[fill=white] (385.5:1.5) node [right] {} circle (2pt);

\draw (77.1:0.75) -- node [left] {}(77.1:3);
    \draw (128.5:0.75) -- node [left] {}(128.5:3);
    \draw (179.9:0.75) -- node [left] {}(179.9:3);
    \draw (231.3:0.75) -- node [left] {}(231.3:3);
    \draw (282.7:0.75) -- node [left] {}(282.7:3);
    \draw (334.1:0.75) -- node [left] {}(334.1:3);
    \draw (385.5:0.75) -- node [left] {}(385.5:3);
\filldraw[fill=white] (77.1:1.5) node [right] {} circle (2pt);
    \filldraw[fill=white] (128.5:1.5) node [right] {} circle (2pt);
    \filldraw[fill=white] (179.9:1.5) node [right] {} circle (2pt);
    \filldraw[fill=white] (231.3:1.5) node [right] {} circle (2pt);
    \filldraw[fill=white] (282.7:1.5) node [right] {} circle (2pt);
     \filldraw[fill=white] (334.1:1.5) node [right] {} circle (2pt);
     \filldraw[fill=white] (385.5:1.5) node [right] {} circle (2pt);

\end{tikzpicture} 
    \caption{Graph $G_7$ and its generalized Temperley's trick}
    \label{fig: G_7}
\end{figure}

\begin{figure}[h!]
\centering
	\scalebox{0.14} 
	{
\includegraphics{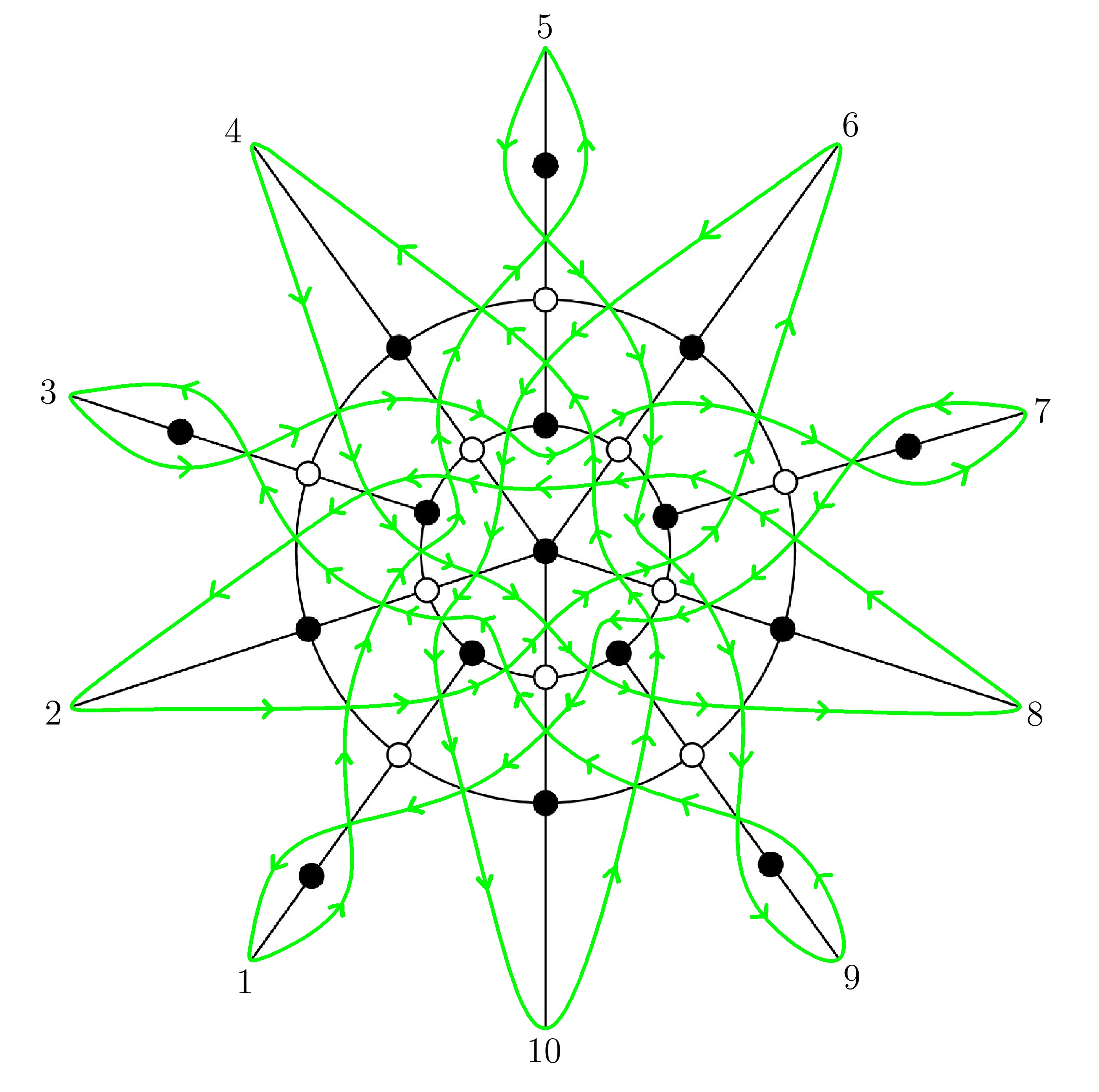}
}
\caption{Temperley's trick of $G_5$ and its oriented medial graph}
\label{fig: Temperley's trick of $G_5$ and its oriented medial graph}
\end{figure}

\begin{lemma} \label{lemma: permutation}
The strand permutation $\tau$ of the medial graph of Temperley's trick of an electrical network $G_n$ is equal to $\tau:i\mapsto i+(n-1)$.   
\end{lemma}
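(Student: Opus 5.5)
The plan is to reduce the strand permutation of $N(G_n)$ to the medial strands of $G_n$ itself, whose combinatorics is explicit. By the construction of the generalized Temperley's trick (Definition \ref{temp_gen}), the oriented medial graph of $N(e)$ is, up to isotopy, the medial graph $\Gamma_M$ of $\Gamma$. Each strand of $N(e)$ follows a strand of $\Gamma_M$: strands pass between the black vertices $b_v$ and $b_F$ and cross exactly at the white vertices $w_e$ (the midpoints of edges of $\Gamma$). The boundary endpoints $1,\ldots,2n$ match the boundary points of $\Gamma_M$ in the same order, with odd labels at the nodes $\bar i$ and even labels at the dual nodes $\tilde i$. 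Hence the first step is to show that $\tau(N(G_n))$ coincides with the medial permutation $T(G_n)$, with a consistent orientation. The orientation rule (clockwise around white, counterclockwise around black) fixes, for each strand, the direction from one endpoint to the other. I will check this on a single boundary node $b_i$, where the orientation forces the strand entering at boundary label $j$ to exit at the label obtained by moving clockwise.

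The second step is to compute the medial strands of $G_n$ directly, using the explicit description in Section \ref{sub:spclass}. Draw $\Gamma_M$: the midpoints of radial edges on ray $\rho_j$ and of circular arcs between $\rho_j$ and $\rho_{j+1}$ form a regular "zig-zag" lattice. Each medial strand enters at a boundary point, travels diagonally inward, and crosses successive rings, turning one ray-step per ring. It either passes around the centre (the vertex $(0,0)$ of degree $n$ in the case $n=4m+3$, or the innermost circle in the case $n=4m+1$) or reflects through it, and then travels diagonally outward. Counting the angular displacement gives the result. The strand advances by one half-step (in units of the $2n$ boundary labels) at each medial crossing. The number of crossings along a strand is determined by the $m+1$ rings, together with the passage through the centre, which contributes a half-turn, that is, about $n$ labels. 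The total displacement should come out to $n-1$ labels in the $2n$-point labelling. By rotational symmetry of $G_n$ (rotation by one ray sends label $j$ to $j+2$), it suffices to compute the endpoints of two strands, one starting at an odd label and one at an even label. The remaining strands are then determined: $\tau(j+2)=\tau(j)+2$.

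The main obstacle is the central region, where the two congruence classes behave differently. For $n=4m+3$ the rays meet at a common vertex. For $n=4m+1$ the innermost ring is a cycle with no centre vertex, so the strands just below it cross each other while travelling around the cycle rather than through a point. I would handle this by treating each class separately on the smallest cases, $G_5$ (Fig. \ref{fig: Temperley's trick of $G_5$ and its oriented medial graph}, where one reads off $\tau(i)=i+4$) and $G_7$. I would then carry out an induction on $m$: adding an outer ring (which takes $n$ to $n+4$) adds two crossings at each end of every strand, shifting its exit by $4$ labels. This matches the change from $n-1$ to $n+3$. As a consistency check, $i\mapsto i+(n-1)$ is a fixed-point-free permutation of $[2n]$ with the correct Grassmann necklace type for $\mathrm{Gr}(n-1,2n)$, in agreement with Theorem \ref{Image of the Lam's embedding} and Lemma \ref{lemma: minimal}.
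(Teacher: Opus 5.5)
There is a genuine gap, and it sits in your first step. The oriented medial graph of $N(e)$ is not isotopic to $\Gamma_M$, and $\tau$ cannot coincide with $T$.

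In $\Gamma_M$ every boundary point has degree one. So there are only $n$ strands, and $T$ is a fixed-point-free involution (compare $T(e)=(14)(25)(36)$ for the star). In $N(e)$ every boundary vertex is the start of one strand and the end of another, so there are $2n$ strands, and $i\mapsto i+(n-1)$ is not an involution.

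What actually happens is a doubling. Each geodesic of $\Gamma_M$ is shadowed by two oppositely oriented strands of $N(e)$, one on each side of it. Around each $w_e$ four strands pass, each cutting one corner, rather than two strands crossing at $w_e$. Moreover, the boundary points of $\Gamma_M$ do not sit at $\bar i$ and $\tilde i$; they interleave with the labels $1,\dots,2n$ of $N(e)$. A strand shadowing the geodesic from $t_a$ to $t_b$ keeps to one side of it, while its direction relative to the circle flips from inward to outward. So it leaves from the label on one side of $t_a$ and arrives at the label on the other side of $t_b$. This boundary effect is exactly Lam's relation $\tau(i)\equiv T(i)-1 \pmod{2n}$ \cite[Proposition 5.17]{L}, which is one of the two ingredients of the paper's proof.

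Your second step inherits the problem. Since $G_n$ is critical and well-connected, the geodesics of $\Gamma_M$ join antipodal boundary points, $T(i)=i+n$; this is the paper's other ingredient, \cite[Corollary 4.9]{K}. So any correct crossing count inside $\Gamma_M$ must give a displacement of $n$, not $n-1$; the missing $-1$ cannot be found in the interior.

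As written, the count is also not yet an argument (``should come out to $n-1$'', ``about $n$ labels''). The induction on $m$ is not well posed either: $G_{n+4}$ has $n+4$ rays and is not $G_n$ with an extra outer ring.

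To repair the proof, make two replacements:
\begin{enumerate}
\item Replace step one by the correct comparison $\tau(i)\equiv T(i)-1$, proved by the doubling and boundary-offset analysis above or cited from Lam.
\item Replace step two by the antipodality $T(G_n)(i)=i+n$, either cited or computed correctly on $\Gamma_M$.
\end{enumerate}
Combining them gives $\tau(i)=i+n-1$, which is precisely the paper's two-line proof.
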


\begin{proof}
   Since $G_n$ is critical, by \cite[Corollary~4.9]{K} we obtain that the corresponding strand permutation  $T(G_n)$ sends $i$ to $i + n$. On the other hand, \cite[Proposition~5.17]{L} gives that $\tau(i) \equiv T(i) - 1 \pmod{2n}$, which completes the proof.
\end{proof}

\subsection{Grassmannian cluster algebra}\label{sec:Scottdef}
In this section, we briefly state a construction from \cite{Sc} of cluster algebra on the coordinate ring of the Grassmannian.

Consider a reduced plabic graph $G$ with $n$ boundary vertices. Include the index $i$ in the label of a face of $G$ if the zig-zag path (see Definition \ref{def: zig-zag path}) starting from the $i$-th node stays to the right of the face; i.e. if the circuit obtained by first traversing the $i$-th zig-zag path and then traversing the boundary of the circle clockwise from $\pi(i)$ to $i$ does not wind around the face. We will refer to this face-labeling rule as {\itshape Scott's rule}.

\begin{remark}
    The medial graph of a reduced plabic graph is called an \emph{alternating wiring arrangement}; see \cite[Section~14]{Pos} and Figure~\ref{fig: Temperley's trick of $G_5$ and its oriented medial graph}.
    In \cite{Sc}, the construction of the cluster algebra on $\mathbb C[Gr(k,n)]$ was described using this object, where it was referred to as {\itshape Postnikov arrangement}. We will use the equivalent language of plabic graphs. 
\end{remark}

We consider a special type of plabic graphs whose strand permutation is the Grassmann permutation, given by:
$$\pi_{k,n}=\left(\begin{matrix}
1 & \ldots & n-k & n-k+1 & \ldots & n \\
k+1 & \ldots & n & 1 & \ldots & k &
\end{matrix}\right).$$

A reduced plabic graph with \(n\) boundary vertices, strand permutation \(\pi_{k,n}\), and face labels obtained by Scott's rule will be called a \(\pi_{k,n}\)-\emph{diagram}.
\begin{proposition} \textup{\cite{Pos}}
Let $\pi_{k,n}$ be the Grassmann permutation, then
\begin{itemize}
    \item The number of faces in a $\pi_{k,n}$-diagram is $k(n-k)+1$.
    \item Each face is labeled by exactly $k$ distinct indices from $[n]$.
    \item Every $k$-subset in $[n]$ occurs as the labeling set of a face in some $\pi_{k,n}$-diagram.
\end{itemize}
%
%
\end{proposition}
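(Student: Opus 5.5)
The proposition is Postnikov's classical result and has three parts. I would prove them in order, using the strand/zig-zag description of a reduced plabic graph $G$ whose strand permutation is $\pi_{k,n}$. The main tools are the properties of reduced graphs from Definition \ref{def:minplabic}: no self-intersections, no closed cycles, no oriented lenses.

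For the face count, I would use the standard dimension formula. The number of faces of a reduced plabic graph equals $\ell(\pi)+1$, where $\ell(\pi)$ is the number of alignments, equivalently the number of crossings of strands in the alternating wiring arrangement. For the Grassmann permutation $\pi_{k,n}$, I would count crossings directly. Reducedness forces every pair of strands whose chords $(i,\pi(i))$ and $(j,\pi(j))$ cross on the circle to intersect exactly once, and every non-crossing pair not to intersect. I would then count the crossing pairs of chords $i\mapsto i+k \pmod n$. A chord from $i$ to $i+k$ crosses the chord from $j$ to $j+k$ exactly when $j$ lies strictly between $i$ and $i+k$ on one side, with the appropriate cyclic condition. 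Summing over $i$ gives $n(k-1)$ ordered incidences, which is not the right count, so the reduced-graph count needs refinement: not every interior crossing yields a face. The cleaner route is Euler's formula on the medial graph. Its interior vertices are the $c$ crossings, each of degree $4$, and there are $n$ strands. This gives $\#\text{faces of }G = c+1$ (plus boundary regions, handled by the convention). Then $c = k(n-k)$, obtained by counting unordered pairs of chords that cross. Verifying $c=k(n-k)$ combinatorially is the step I expect to need care, and I would do it by checking small cases and then inducting on $n$.

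For the second part, I would show that each face label has exactly $k$ elements. Crossing a single strand changes the label by swapping one index for another, so $|\text{label}|$ is constant across all faces. It therefore suffices to compute the label of one boundary face. The face adjacent to the boundary between nodes $n$ and $1$ lies to the right of exactly those strands $i$ whose chord $i\to\pi(i)$ separates it correctly, namely $i\in\{n-k+1,\ldots,n\}$. This gives $k$ indices.

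For the third part, every $k$-subset being realised, I would invoke Postnikov's result that square moves act transitively on reduced plabic graphs with a fixed permutation. I would then argue, as in Scott's construction, that any $k$-subset $I$ appears in the label set of some Le-diagram graph obtained from the rectangular $k\times(n-k)$ Young diagram. The face labels of the Le-graph for the rectangle correspond to subdiagrams of rectangular shape, and square moves produce the remaining subsets. Alternatively, I would use weak separation: $\{I\}$ is trivially weakly separated, so it extends to a maximal weakly separated collection, and by the Oh–Postnikov–Speyer theorem every such collection is the face-label set of a reduced plabic graph for $\pi_{k,n}$. This last step is the real obstacle if done from scratch, so I would cite the Oh–Postnikov–Speyer theorem rather than reprove it.
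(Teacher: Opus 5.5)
The paper gives no proof of this proposition; it simply quotes Postnikov. Judged on its own, your argument for the first bullet has a genuine gap, and the second and third bullets need only minor fixes.

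\textbf{First bullet (genuine gap).} The formula you open with is inverted. For a reduced plabic graph with decorated permutation $\pi$, the number of faces is $\dim S_\pi+1=k(n-k)-A(\pi)+1$, where $A(\pi)$ is the number of alignments. So alignments count \emph{co}dimension, and they are not strand crossings.

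The step ``Euler's formula gives (number of faces of $G$) $=c+1$'' is also false. The regions cut out by the strands correspond to the faces \emph{and} to the interior vertices of $G$ (alternating versus oriented regions). Euler's formula therefore only controls $F+|V_{\mathrm{int}}|$, which is not determined by $c$. For example, a single interior vertex joined to all $n$ boundary vertices is a reduced plabic graph for $\pi_{1,n}$ or $\pi_{n-1,n}$, depending on its color. It has $n=k(n-k)+1$ faces, yet its medial graph has no interior nodes, so both $F=c+1$ and $c=k(n-k)$ fail.

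Your premise that strands with crossing chords meet exactly once does not follow from reducedness either. Postnikov's criterion only excludes \emph{bad} (same-direction) double crossings. Subdividing an interior edge by a white--black pair of bivalent vertices keeps the graph reduced, yet makes the two strands through that edge cross three times. Your own mismatch $n(k-1)\neq k(n-k)$ reflects this, and ``checking small cases and inducting on $n$'' does not close the gap.

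What is missing is an invariant that actually counts faces. One option is Postnikov's theorem that the face weights of a reduced graph, modulo one relation, parametrize the cell $S_\pi$; then $F-1=\dim S_{\pi_{k,n}}=\dim\mathrm{Gr}_{>0}(k,n)=k(n-k)$. The other is move-equivalence of reduced graphs with the same permutation, together with invariance of the face count under moves and one explicit example. The Le-diagram graph of the full $k\times(n-k)$ rectangle works, with one face per box plus one.

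\textbf{Second bullet.} The sentence ``crossing a single strand swaps one index for another'' is not right: crossing one strand toggles a single index. What you need is that adjacent faces of $G$ share an edge carrying two oppositely oriented strands. Passing between them therefore removes one index and adds another, which is exactly how labels are propagated in the proof of Theorem~\ref{thm: cluster algebras coinside}. With that correction, constancy plus your boundary-face computation $\{n-k+1,\ldots,n\}$ gives size $k$.

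\textbf{Third bullet.} The Oh--Postnikov--Speyer route is valid as a citation. A maximal weakly separated collection containing $I$ exists, and their face-labelling convention differs from Scott's rule only by a fixed, graph-independent relabelling. It is, however, a much later and heavier result than the statement itself. The alternative you sketch first, that square moves from the rectangle seed ``produce the remaining subsets'', is asserted without argument.
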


Consider a $\pi_{k,n}$-diagram A. Denote by $\mathbf{x}$ the set of $k(n-k)+1-n$ indeterminates indexed by labeling sets of faces which are not adjacent to the boundary of the plabic graph. Namely,
$$\mathbf{x}=\{x_K|\ K \ \text{is the labeling set of an interior face in}\ A\}.$$
And by $\mathbf{c}$ the $n$ indeterminates corresponding to the faces adjacent to the boundary. Namely,
$$\mathbf{c}=\{c_K|\ K \ \text{is the labeling set of a boundary face in}\ A\}.$$
Consider the field of rational functions in variables $\tilde{\mathbf{x}}:=\mathbf{x}\cup\mathbf{c}$. 

 A $\pi_{k,n}$-diagram $A$ defines a quiver $Q(A)$: we place vertices in faces and connect those vertices located in faces sharing a common edge; the orientation of edges is clockwise around black vertices of $A$ and counter clockwise otherwise. Let $\mathcal{A}_{k,n}$ denote the cluster algebra generated by the seed $\left(\tilde{\mathbf{x}}, Q(A)\right)$ with mutable variables $\mathbf{x}$ and frozen variables $\mathbf{c}$, and with the quiver $Q(A)$.

\begin{theorem}\textup{\cite[Theorems 2 and 3]{Sc}}
    There is an isomorphism $\varphi:\mathcal{A}_{k,n}\longrightarrow \mathbb C[\mathrm{Gr}(k,n)]$ of $\mathbb C[\mathbf{c}]$-algebras with the property that the indeterminate of $A$ corresponding to the $k$-subset $K$ is mapped to $\Delta_K$. This gives the latter the structure of a cluster algebra.
\end{theorem}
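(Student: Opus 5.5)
This theorem is quoted from Scott \cite[Theorems 2 and 3]{Sc}. The plan is to reproduce Scott's argument through the standard ``starting seed plus exchange relations plus codimension two'' strategy.

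\textbf{Step 1: a concrete initial seed and an algebra map.} First I will fix a convenient $\pi_{k,n}$-diagram $A$, namely the rectangular (Le-diagram) one. Its face labels are the Plücker sets
\[
K_{ij}=[i+1,i+j]\cup[n-k+j+1,n]\quad\text{(suitably indexed)},
\]
together with the $n$ cyclic intervals labelling the boundary faces. For this $A$ I will check two things. The first is that the $k(n-k)+1$ Plücker coordinates $\Delta_K$ of the face labels are algebraically independent. The second is that they generate the function field of the affine cone $\widehat{\mathrm{Gr}}(k,n)$. Both follow because on the big cell, matrices of the form $(\mathrm{Id}\,|\,Y)$ up to scaling, these minors are related to the entries of $Y$ by a triangular, invertible birational change of variables, exactly as in the Fomin--Zelevinsky initial minors for the double Bruhat cell.

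This lets me define $\varphi$ on the ambient field by $x_K\mapsto\Delta_K$ and $c_K\mapsto\Delta_K$. The map is injective because $\mathcal F$ is the rational function field in $\tilde{\mathbf x}$.

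\textbf{Step 2: mutations produce Plücker coordinates.} Next I will show that every mutable face of a $\pi_{k,n}$-diagram can be mutated by a square move, and that the exchange relation \eqref{eq: exchange relation} is then exactly a three-term Plücker relation:
\[
\Delta_{Sac}\Delta_{Sbd}=\Delta_{Sab}\Delta_{Scd}+\Delta_{Sad}\Delta_{Sbc}.
\]
Here the square move swaps $Sac$ and $Sbd$, in Postnikov's language of a square face with neighbours labelled $Sab$, $Sbc$, $Scd$, $Sda$. I will verify that the quiver mutation matches the change of $Q(A)$ under the square move. The check is local: only the four neighbouring faces change arrows, and 2-cycles cancel.

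It follows that all diagram seeds lie in $\mathcal S$. Since every $k$-subset labels a face in some $\pi_{k,n}$-diagram, and all such diagrams are connected by moves (Postnikov), every Plücker coordinate is a cluster variable. Hence $\varphi(\mathcal A_{k,n})\supseteq\mathbb C[\mathrm{Gr}(k,n)]$, since the Plücker coordinates generate the coordinate ring.

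\textbf{Step 3: the reverse inclusion $\varphi(\mathcal A)\subseteq\mathbb C[\mathrm{Gr}(k,n)]$.} This is the main obstacle, because there are infinitely many non-Plücker cluster variables in general. I will use the Laurent phenomenon (Theorem \ref{thm: laurent phenomenon}): every cluster variable is a Laurent polynomial in the initial cluster, and also in each adjacent cluster obtained by a single mutation.

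So $\varphi(x)$ is a rational function on the cone whose possible poles lie only along the divisors $\{\Delta_K=0\}$ with $K$ a mutable initial label. After one mutation at $K$, the pole along $\{\Delta_K=0\}$ is excluded, because the new cluster omits $\Delta_K$ and the two clusters' vanishing loci intersect in codimension $\ge2$. This requires checking that the mutable $\Delta_K$ are irreducible and pairwise coprime, and that $\Delta_K$ and $\Delta_{K'}$ share no factor.

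The cone $\widehat{\mathrm{Gr}}(k,n)$ is normal (in fact factorial), so a rational function regular off codimension two is regular. This gives $\varphi(x)\in\mathbb C[\mathrm{Gr}(k,n)]$.

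Frozen variables appear only with nonnegative exponents in exchange relations, so no inversion of $\mathbf c$ is needed. Hence $\varphi$ is a $\mathbb C[\mathbf c]$-algebra isomorphism, and by construction it sends $x_K$ to $\Delta_K$.
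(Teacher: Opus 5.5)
Your overall strategy matches the one the paper relies on. The strategy is: an initial seed whose Plücker coordinates are algebraically independent and generate the function field; square moves realized as three-term Plücker exchanges; then the Laurent phenomenon plus a codimension-two extension on the normal cone. The paper only cites Scott, but Lemma~\ref{lemma: phi is well-defined} imitates exactly this codimension-two argument.

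There is, however, a genuine gap in Steps 2--3. The claim that every mutable face of a $\pi_{k,n}$-diagram admits a square move is false, already for the rectangles seed you fixed in Step 1. For $\mathrm{Gr}(3,6)$, the vertex of the $2\times 2$ rectangle has six neighbours. Its exchange relation has the form $x\,x'=(\text{cubic})+(\text{cubic})$, so $x'$ is one of the two degree-two, non-Plücker cluster variables.

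For Step 2 this does no harm. There you only need that square moves are mutations with Plücker exchange relations, together with Postnikov's connectivity of diagrams under moves. But Step 3 depends on it: you write the mutated variable as $\Delta_{K'}$ and only check that $\Delta_K$ and $\Delta_{K'}$ are coprime. At a non-square face, $\varphi(x'_K)=(M_1+M_2)/\Delta_K$ with $M_1,M_2$ monomials in Plücker coordinates. Unless you know this is a regular function coprime to $\Delta_K$, the Laurent expansion of $\varphi(x)$ in the mutated seed $\mu_K(A)$ tells you nothing along $\{\Delta_K=0\}$. Showing $\varphi(x'_K)\in\mathbb C[\mathrm{Gr}(k,n)]$ is itself an instance of the inclusion you are trying to prove; these are the hypotheses of the starfish lemma. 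So the codimension-two step is unjustified as written.

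One repair avoids non-Plücker variables entirely: run the argument over all $\pi_{k,n}$-diagrams. By Step 2 these lie in $\mathcal S$, and their extended clusters consist only of Plücker coordinates.

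1. For a cluster variable $x$ and every diagram $A$, $\varphi(x)$ is regular on $U_A=\{X:\ \Delta_K(X)\neq 0 \text{ for every mutable label } K \text{ of } A\}$.
2. On the normal cone, the non-regular locus of $\varphi(x)$ is a union of prime divisors.
3. Each $\Delta_K$ is prime, being of degree one in a graded UFD, and distinct Plücker coordinates are non-associate. So any such divisor would be $\{\Delta_J=0\}$ for some $J$ that is a mutable label of every diagram.
4. No such $J$ exists. Mutable labels are never cyclic intervals, so you can choose cyclically ordered $a<b<c<d$ with $a,c\in J$ and $b,d\notin J$, and set $I=(J\setminus\{a,c\})\cup\{b,d\}$.
5. $I$ labels a face of some diagram. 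The labels of a single diagram are pairwise weakly separated, while $I$ and $J$ are not. Hence $J$ does not occur in that diagram.

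Therefore $\varphi(x)$ is regular everywhere. Alternatively, you can keep your single initial seed and verify the starfish-lemma hypotheses directly at its non-square faces.
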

    \begin{corollary} \textup{\cite[Corollary 4]{Sc}} \label{pos-test-gen-sc}
       Let \(A\) be a \(\pi_{k,n}\)-diagram, and let \(\mathcal F(A)\) be the set of \(k\)-subsets labeling the faces of \(A\). If a point \(X\in\mathrm{Gr}(k,n)\) satisfies
\[
    \Delta_K(X)>0 \qquad \text{for every } K\in\mathcal F(A),
\]
then all Plücker coordinates \(\Delta_I(X)\) are positive.
    \end{corollary}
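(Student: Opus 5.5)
The plan is to derive Corollary \ref{pos-test-gen-sc} from the cluster structure of the preceding theorem combined with the Laurent Phenomenon. Fix the $\pi_{k,n}$-diagram $A$ and its extended cluster $\tilde{\mathbf{x}}=\mathbf{x}\cup\mathbf{c}$. Through the isomorphism $\varphi:\mathcal{A}_{k,n}\to\mathbb C[\mathrm{Gr}(k,n)]$, this extended cluster is sent to the Plücker coordinates $\{\Delta_K\}_{K\in\mathcal F(A)}$. The statement to prove is therefore: every Plücker coordinate $\Delta_I$ is a subtraction-free rational expression in the $\Delta_K$, $K\in\mathcal F(A)$. Since the conditions $\Delta_K(X)>0$ are not invariant under rescaling a representative of $X$ by a negative scalar, I will read them, as is standard, as saying that some choice of representative makes all these coordinates positive. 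Homogeneity then takes care of the projective ambiguity.

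\textbf{Step 1: every $\Delta_I$ is a cluster variable or a frozen variable.} By the third item of the Postnikov proposition, every $k$-subset $I$ labels a face of some $\pi_{k,n}$-diagram $A'$. I will use the known fact (Postnikov, Scott) that any two reduced plabic graphs with the same strand permutation are connected by moves. Square moves correspond exactly to seed mutations at the quadrilateral face. The other moves (contraction/uncontraction of same-coloured vertices, removal of degree-two vertices) change neither the face labels nor the quiver. Consequently the seed of $A'$ lies in the mutation class of the seed of $A$, and $\Delta_I=\varphi(x_I)$ is either a cluster variable of $\mathcal{A}_{k,n}$ or, if $I$ labels a boundary face, a frozen variable.

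\textbf{Step 2: subtraction-free expressions.} Along the mutation sequence from $A$ to $A'$, each exchange relation \eqref{eq: exchange relation} writes the new variable as a sum of two monomials divided by the old variable. By induction on the length of the sequence, every variable of every seed reached is a subtraction-free rational function of $\tilde{\mathbf{x}}$. By Theorem \ref{thm: laurent phenomenon} it is even a Laurent polynomial, and positivity of its coefficients is the Lee–Schiffler positivity theorem, although the inductive subtraction-free argument alone suffices here. Now evaluate at $X$ with $\Delta_K(X)>0$ for all $K\in\mathcal F(A)$. Inductively, every exchanged variable evaluates to a positive number: a sum of two positive monomials divided by a positive number. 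Hence $\Delta_I(X)>0$.

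\textbf{Main obstacle.} The delicate point is Step 1: the move-connectivity of reduced plabic graphs with a fixed Grassmann permutation, together with the compatibility of Scott's labeling and of the quiver $Q(A)$ with square moves. The compatibility must be stated precisely, namely that the label of the mutated face changes according to the three-term Plücker relation. I will cite Postnikov's theorem on moves rather than reprove it. The remaining care is that $\varphi$ is an isomorphism, so the rational identities in $\mathcal{A}_{k,n}$ become genuine identities of functions on $\mathrm{Gr}(k,n)$, and can be evaluated at the point $X$ wherever the denominators are nonzero, which the induction guarantees.
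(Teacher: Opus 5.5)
Your argument is correct and is essentially the standard proof of this result: by Postnikov's move-connectivity, every $\Delta_I$ is reached from the face labels of $A$ through square moves, each replacing one label via a three-term Plücker relation with two positive terms, so positivity propagates by induction. The paper gives no proof of its own and simply quotes Scott's Corollary 4, which rests on this same mechanism; the detour through the identification with $\mathcal{A}_{k,n}$ and the Laurent Phenomenon is harmless but not actually needed.
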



\subsection{Alman et al. cluster algebra}\label{sec:LMCM}
The study of positivity tests for circular minors was initiated in \cite[Section 4]{KW space}, \cite[Section 4.5.3]{K}, and continued in \cite[Section 6]{ALT}, \cite[Section 4]{Jian}. 
The general question is to describe minimal subsets of circular minors of a symmetric $n\times n$ matrix with zero row sums, whose positivity implies positivity of all circular minors. This was done in \cite{ALT} in two different ways: as a Laurent Phenomenon algebra $\mathcal{LM}_n$ and as a cluster algebra $\mathcal{CM}_n$.

The term {\itshape Laurent Phenomenon algebra} appeared in \cite{LP} and is essentially a cluster algebra which is not necessarily constructed from a quiver and whose exchange polynomials might be more general than  Laurent polynomials in \eqref{eq: exchange relation}. Laurent Phenomenon algebra $\mathcal{LM}_n$ for circular minors was defined in \cite{ALT}
 and it was proved there that each cluster of $\mathcal{LM}_n$ consisting entirely of circular minors is a positivity test for circular total positivity of a fixed symmetric $n\times n$ matrix $M$ with with zero row sums. 

\begin{definition}
    Let $M$ be an $n\times n$ matrix, whose rows and columns are indexed by two sets $I,J$. We write $M^{i_1i_2\ldots i_m,j_1j_2\ldots j_m}$ for the complementary minor of the matrix $M$ obtained by deleting the rows corresponding to $\{i_1,i_2,\ldots,i_m\}\subset I$ and $\{j_1,j_2,\ldots,j_m\}\subset J$, provided $M$ is square.
\end{definition}

While the meaning of $M^{i_1i_2\ldots i_m,j_1j_2\ldots j_m}$ depends on the underlying sets $I,J$, these sets will always be implicit. Suppose that the row $a$ appears above row $b$ and column $c$ appears to the left of column $d$, then the following Grassmann-Pl\"ucker relation holds:
\begin{equation} \label{eq: Grassmann Plücker}
    M^{a,c}M^{b,d}=M^{a,d}M^{b,c}+M^{ab,cd}M^{\emptyset,\emptyset}.
\end{equation}

Recall that $(P;Q)$ stands for a circular pair, and $D_n$ stands for the set of central circular pairs, see Definition \ref{def:Dn}. Let $\widetilde{P}$ (resp. $\widetilde{Q}$) be the sets $P$ (resp. $Q)$ with the order of indices
reversed, we will call circular pairs $(P;Q)$ and $(\widetilde{Q},\widetilde{P})$ symmetric to each other. Denote by $D_n^{\dagger}$ the union of the set of central circular pairs and those symmetric to them: 
$$D_n^{\dagger} = \{(P;Q)|(P;Q)\in D_n\}\cup \{(\widetilde{Q},\widetilde{P})|(P;Q)\in D_n \}.$$
Denote by $s$ the map $s:D_n^\dagger\to D_n^\dagger$, such that $s(P;Q) = (\widetilde{Q};\widetilde{P})$.

 \subsubsection{Laurent Phenomenon algebra $\mathcal{LM}_n$} 

 Let $n$ be odd. We now describe an undirected graph $Q_{\mathcal{LM}_n}$ that encodes the desired mutation relations among the variables of the initial seed. In the context of $\mathcal{LM}_n$ we identify the central circular pair $(P;Q)\in D_n$ with its symmetric pair $(\widetilde{Q};\widetilde{P})$.
 \begin{remark}\textup{\cite[Section 6.2]{ALT}} \label{rem: Alman condition}
 For each pair $(P;Q)\in D_n$, which is non-maximal, there is a unique equation  of the form \eqref{eq: Grassmann Plücker} such that $(P;Q)$ is one of the indices appearing in the left-hand side, and all indices of the four terms in the right-hand side are in $D_n^\dagger\cup\{(\emptyset;\emptyset)\}$. This is an immediate consequence of the fact that for an odd $n$ there is a unique way to obtain a central circular pair by deleting one index from $P$ and one from $Q$ in a given central circular pair $(P;Q)$.
 \end{remark}

The vertex set of $Q_{\mathcal{LM}_n}$ is labeled by the set $D_n\cup\{\emptyset;\emptyset\}$. Each vertex labeled by an element $(P;Q)\in D_n$ can also be thought of as labeled by $(\widetilde{Q};\widetilde{P})$.
 
 We draw edges from each vertex to the four vertices in $Q_{\mathcal{LM}_n}$ corresponding to the central circular pairs or those symmetric to them appearing on the right-hand side of the relation in Remark \ref{rem: Alman condition}. Since in our conventions vertices are labeled in two different ways ($(P;Q)$ and $(\widetilde{Q};\widetilde{P})$) one can choose the equation corresponding to $(P;Q)$ or, alternatively, the equation corresponding to $(\widetilde{Q};\widetilde{P})$. Note that circular pairs entering into the corresponding equations of the form \eqref{eq: Grassmann Plücker} differ from each other by the map $c$. Thus, the construction is independent of this choice, and each vertex labeled by non-maximal non-empty $(P;Q)\in D_n$ has degree $4$. These constitute all edges in $Q_{\mathcal{LM}_n}$. The vertices corresponding to the maximal central circular pairs and $(\emptyset;\emptyset)$ are taken to be frozen.

 $Q_{\mathcal{LM}_n}$ can be embedded in the plane in a natural way with the central circular pairs of size $k$ lying on the circle of radius $k$ centered at $(\emptyset;\emptyset)$, and all edges either along these circles or radially outwards from $(\emptyset;\emptyset)$, see Fig. \ref{fig: for lm5}. 

Let $\mathcal{LM}_n$ be the Laurent Phenomenon algebra constructed as follows: the initial seed $(\tilde{\mathbf{y}}_{D_n},Q_{\mathcal{LM}_n})$ has variables 
$$\tilde{\mathbf{y}}_{D_n}=\{y_{(P;Q)}|\ (P;Q)\in D_n\}\cup\{y_{(\emptyset;\emptyset)}\}$$
labeled by $D_n\cup\{(\emptyset;\emptyset)\}$. Recall that in our conventions vertices are labeled in two different ways ($(P;Q)$ and $(\widetilde{Q};\widetilde{P})$), the same goes for cluster variables, i.e. we could refer to $y_{(P;Q)}$ as to $y_{(\widetilde{Q};\widetilde{P})}$. The maximal pairs and the pair $(\emptyset;\emptyset)$ are frozen; the set of mutable variables is denoted by $\mathbf{y}_{D_n}$. Let us fix a local orientation around each mutable vertex $(P;Q)$ in $Q_{\mathcal{LM}_n}$, such that the edges around the vertex incident to $(P;Q)$ alternate between in- and out-edges. The exchange polynomial $F_{(P;Q)}$ is given by equation \eqref{eq: exchange relation} w.r.t.  this orientation. In this way it is designed to coincide with \eqref{eq: Grassmann Plücker}.
Note also that there is no global orientation of edges of $Q_{\mathcal{LM}_n}$ for $n\ge5$ such that they alternate between in- and out-edges. Thus, the corresponding structure is indeed a Laurent Phenomenon algebra and not a cluster algebra.

In order to study positivity tests on circular minors of a symmetric $n\times n$ matrix $M$ whose row entries sums are equal to $0$, we interpret the variables $\tilde{\mathbf{y}}_{D_n}$ as central circular minors $\{M_P^Q|\ (P;Q)\in D_n\}$. Note that in this case $M_P^Q=M_{\widetilde{Q}}^{\widetilde{P}}$ as minors symmetric to each other w.r.t. the main diagonal. The main property of the algebra $\mathcal{LM}_n$ is the following result:
\begin{lemma}\textup{\cite[Lemma 6.2.6]{ALT}}
Any cluster of $\mathcal{LM}_n$ consisting entirely of circular pairs is a positivity test.    
\end{lemma}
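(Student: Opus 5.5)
The plan is to push positivity from the given cluster back to the initial seed of $\mathcal{LM}_n$ along mutations, and then to apply the central-minor test of Theorem \ref{thm: central form test}. Throughout, each variable $y_{(P;Q)}$ is identified with the \emph{signed} circular minor $(-1)^{|P|}\det M_P^Q$, and $y_{(\emptyset;\emptyset)}$ with $1$. With this normalization, the condition to be tested is that all these quantities are positive.

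Fix a symmetric matrix $M$ with zero row sums. Let $C$ be a cluster of $\mathcal{LM}_n$ all of whose variables are circular pairs, and suppose every signed circular minor indexed by $C$ (frozen ones included) is positive at $M$.

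\textbf{Step 1: exchange relations.} By construction, and in the form of Remark \ref{rem: Alman condition}, every exchange relation of $\mathcal{LM}_n$ reads $y\,y' = F$. Here $F$ is a polynomial with nonnegative coefficients, namely a sum of two monomials coming from \eqref{eq: Grassmann Plücker}, in the other variables of the current extended cluster. The first thing to check is that, after converting each $\det M_P^Q$ into $(-1)^{|P|}\det M_P^Q$, the relation \eqref{eq: Grassmann Plücker} keeps both terms on the right with a plus sign. The sizes satisfy $|P|+|P|=(|P|-1)+(|P|+1)=(|P|-1)+(|P|-1)+\ldots$, so the parities of the sizes on the two sides should match; this sign bookkeeping is the step I expect to need the most care. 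The Laurent Phenomenon for LP algebras from \cite{LP} guarantees that each cluster variable is a well-defined rational function of the initial variables. Consequently, its value at $M$ is obtained by iterating these exchange relations whenever all denominators met along the way are nonzero.

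\textbf{Step 2: propagation along a mutation path.} Choose a sequence of mutations leading from $C$ to the initial seed $(\tilde{\mathbf y}_{D_n},Q_{\mathcal{LM}_n})$; one exists because mutations are involutive. Argue by induction along this sequence. If all variables of the current cluster take positive values at $M$, then the new variable equals $y'=F/y$. This is a ratio of a positive-coefficient polynomial in positive numbers and a positive number, so it is positive, and in particular no division by zero occurs. The frozen variables are common to all clusters and are positive by hypothesis. The induction therefore shows that all central signed minors $(-1)^k\det M_P^Q$, $(P;Q)\in D_n$, are positive.

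\textbf{Step 3: all circular minors.} Apply Theorem \ref{thm: central form test}: every circular minor is a subtraction-free rational function of the central minors. Tracking signs as in Step 1, every signed circular minor $(-1)^k\det M(P;Q)$ is then positive, so $M$ is circular totally positive. This is exactly the statement that $C$ is a positivity test.

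The main obstacle is the sign verification in Step 1. One also has to confirm that the values of the cluster variables, as rational functions, agree with the honest circular minors indexed by $C$. For this I would rely on the identification of $\mathcal{LM}_n$ cluster variables with circular minors made in \cite{ALT}, or alternatively on the cluster structure established later in this paper.
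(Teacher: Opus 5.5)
You have a genuine gap, but it is not the sign bookkeeping you single out. That part is trivial. If $M^{\emptyset,\emptyset}$ is $(k+1)\times(k+1)$, each of the three products in $M^{a,c}M^{b,d}=M^{a,d}M^{b,c}+M^{ab,cd}M^{\emptyset,\emptyset}$ is a product of minors whose sizes add up to $2k$. So passing to signed minors multiplies every term by $(-1)^{2k}=1$. (The paper gives no proof of its own here; it quotes the lemma from \cite{ALT}.)

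The real problem is Step 1, where you assert that every exchange relation of $\mathcal{LM}_n$ is a positive two-term relation of Grassmann--Pl\"ucker type.

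\textbf{Why Step 1 fails.} The construction, via Remark \ref{rem: Alman condition}, only prescribes the exchange polynomials of the \emph{initial} seed. $\mathcal{LM}_n$ is a genuine LP algebra, not a cluster algebra; the paper stresses that $Q_{\mathcal{LM}_n}$ has no globally alternating orientation. Exchange polynomials of later seeds are therefore produced by the Lam--Pylyavskyy mutation rule of \cite{LP}:
\begin{enumerate}
\item substitute $x_i\leftarrow \hat F_i|_{x_j\leftarrow 0}/x_i'$ into $F_j$;
\item cancel common factors with $\hat F_i|_{x_j\leftarrow 0}$;
\item renormalize by a monomial.
\end{enumerate}
Nothing in this rule guarantees the two-term shape. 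The cancellation step can also destroy nonnegativity of coefficients, since a quotient of positive polynomials need not be positive: $(t^3+1)/(t+1)=t^2-t+1$. A mutation path from $C$ back to the initial seed may pass through arbitrary seeds. So in Step 2 your claim that $y'=\hat F/y$ is positive has no support.

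\textbf{How to repair it.} The missing ingredient is Lemma \ref{lemma: symmetric mutations}, used to move the whole positivity argument into the genuine cluster algebra $\mathcal{CM}_n$.
\begin{enumerate}
\item If $C$ is reached by $\mu_{y_1},\ldots,\mu_{y_r}$, lift it to the cluster $L_1^{\dagger}$ of $\mathcal{CM}_n$ reached by $\mu_{y_1},\mu_{s(y_1)},\ldots,\mu_{y_r},\mu_{s(y_r)}$. Then $p(L_1^{\dagger})=C$.
\item In $\mathcal{CM}_n$ every exchange relation, in every seed, has the form $yy'=\prod_{u\to y}y_u+\prod_{y\to u}y_u$, so it is subtraction-free.
\item Evaluate by composing $p$ with $y_{(P;Q)}\mapsto(-1)^{|P|}\det M_P^Q$. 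This is consistent because $M$ is symmetric, so $M_P^Q=M_{\widetilde Q}^{\widetilde P}$.
\item Regularity at $M$ follows from the Laurent phenomenon with respect to $L_1^{\dagger}$, whose evaluated variables are exactly the positive minors indexed by $C$.
\end{enumerate}
With this, your backward induction goes through inside $\mathcal{CM}_n$ and yields positivity of all central signed minors. Theorem \ref{thm: central form test} then finishes the argument as in your Step 3.
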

Note that these tests are minimal, see \cite[Section 4.5.3]{K}.

\begin{figure}[ht]
  \centering
  \hspace*{0.6cm}
  \includegraphics[width=0.8\textwidth]{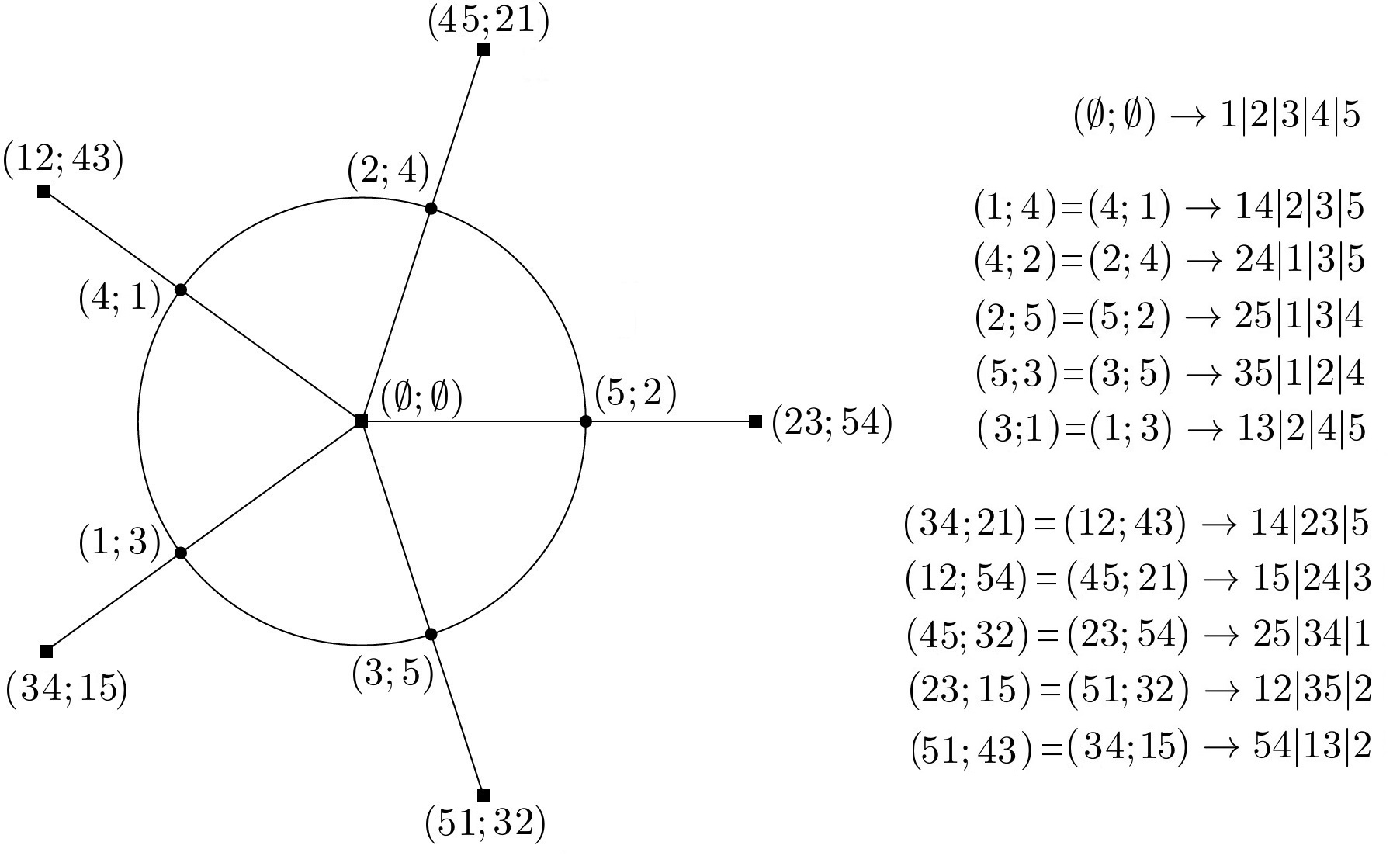}
 \caption{Initial seed of $\mathcal{LM}_5$, labeling of its variables and the non-crossing partitions corresponding to them by Lemma \ref{lemma: bijection}.}
\label{fig: for lm5}
\end{figure}

By abuse of notation, we do not use bars above the indices in the figures from now on.

\subsubsection{Cluster algebra $\mathcal{CM}_n$}  
Define a vertex set of $Q_{\mathcal{CM}_n}$ to be the set $D_n^\dagger\cup\{(\emptyset;\emptyset)\}$. Note that $|D_n^{\dagger}|=2\binom{n}{2}+1$.

Let us now describe the edges of $Q_{\mathcal{CM}_n}$. 
We draw edges from $(P;Q)\in D_n^\dagger$ to the four vertices in $Q_{\mathcal{CM}_n}$ corresponding to the central circular pairs or those symmetric to them from Remark \ref{rem: Alman condition}. 

$Q_{\mathcal{CM}_n}$ can be embedded in the plane in a natural way with the circular pairs of size $k$ lying on the circle of radius $k$ centered at $(\emptyset;\emptyset)$, and all edges either along those circles or radially outwards from $(\emptyset;\emptyset)$.

The variables of the extended cluster in the initial seed are 

$$\tilde{\mathbf{y}}_{D_n^{\dagger}}=\{y_{(P;Q)}|\ (P;Q)\in D_n^\dagger\}\cup\{y_{(\emptyset;\emptyset)}\}$$
labeled by $(P;Q)\in D_n^{\dagger}$ and $(\emptyset;\emptyset)$. The vertices labeled by pairs $(P;Q)$ such that $(P;Q)$ is maximal together with the empty-pair vertex, are frozen; the set of mutable variables is denoted by $\mathbf{y}_{D_n^\dagger}$. The edges of $Q_{\mathcal{CM}_n}$ can be oriented such that they alternate around each non-frozen vertex. Let us choose either of these orientations and let $\mathcal{CM}_n$ be the cluster algebra generated by the initial seed $(\tilde{\mathbf{y}}_{D_n^{\dagger}},Q_{\mathcal{CM}_n})$.

In order to use $\mathcal{CM}_n$ to study circular minors of a symmetric matrix $M$, we can restrict ourselves to the following: whenever we mutate at a cluster variable $v$, we then mutate at $s(v)$ immediately afterwards. We call these mutations {\itshape symmetric}. 

Let $p:D_n^\dagger \to D_n$ be the projection to the set of central circular pairs. Extend it to the $\mathbb C[\tilde{\mathbf{y}}_{D_n}\setminus \mathbf{y}_{D_n}]$-algebra homomorphism $\mathcal{CM}_n\to\mathcal{LM}_n$ by sending both $y_{(P;Q)}, y_{(\widetilde{Q};\widetilde{P})}\mapsto y_{(P;Q)}$ for each $(P;Q)\in D_n$ via the projection $p$. The next lemma provides a relation between $\mathcal{LM}_n$ and $\mathcal{CM}_n$, and thus explains how one can read circular positivity tests from $\mathcal{CM}_n$:

\begin{lemma}\textup{\cite[Lemma 6.2.5]{ALT}} \label{lemma: symmetric mutations}
    Let $L_1^{\dagger}$ be the cluster of $\mathcal{CM}_n$ that results from starting at the initial cluster and performing the sequence of mutations $\mu_{y_1},\mu_{s(y_1)},\mu_{y_2},\mu_{s(y_2)},\ldots,\mu_{y_r},\mu_{s(y_r)}$. Let $L_2$ be the cluster of $\mathcal{LM}_n$ that results from starting at the initial cluster and performing the sequence of mutations $\mu_{y_1},\mu_{y_2},\ldots,\mu_{y_r}$. Then, $p(L_1^{\dagger})=L_2$.
\end{lemma}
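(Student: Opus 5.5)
The plan is to prove the statement by induction on $r$, using a stronger inductive invariant that concerns whole seeds and not only clusters. Let $(\tilde{\mathbf y}^{(r)},Q^{(r)})$ be the seed of $\mathcal{CM}_n$ obtained after the symmetric mutations $\mu_{y_1},\mu_{s(y_1)},\ldots,\mu_{y_r},\mu_{s(y_r)}$. Let $(\tilde{\mathbf z}^{(r)},\{F^{(r)}_v\})$ be the seed of $\mathcal{LM}_n$ obtained after $\mu_{y_1},\ldots,\mu_{y_r}$. The invariant I would carry has three parts.
\begin{enumerate}
\item The involution $s$ extends to an automorphism of $Q^{(r)}$ that fixes the frozen vertices, namely the maximal pairs and $(\emptyset;\emptyset)$ (on maximal pairs $s$ acts as the $\mathcal{LM}_n$ identification $(P;Q)\sim(\widetilde Q;\widetilde P)$). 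Moreover, no arrow of $Q^{(r)}$ joins a mutable $v$ to $s(v)$.
\item The cluster variables satisfy $p(y^{(r)}_v)=p(y^{(r)}_{s(v)})=z^{(r)}_{p(v)}$.
\item For each mutable $v$, the exchange polynomial $F^{(r)}_{p(v)}$ of $\mathcal{LM}_n$ equals the $p$-image of the exchange binomial of $v$ in $Q^{(r)}$, up to the normalization conventions of \cite{LP}.
\end{enumerate}

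For the base case $r=0$, the invariant is read off from the constructions in Section~\ref{sec:LMCM}. The quiver $Q_{\mathcal{CM}_n}$ is a double cover of $Q_{\mathcal{LM}_n}$ away from the frozen vertices. The edges at $(P;Q)$ and at $s(P;Q)$ come from the same Grassmann--Plücker relation \eqref{eq: Grassmann Plücker} of Remark~\ref{rem: Alman condition}, with all indices transformed by $s$. Hence $s$ is a quiver automorphism, provided the alternating orientation is chosen $s$-equivariantly, which I would check on the radial and circular edges. Pairs $(P;Q)$ and $s(P;Q)$ of the same size are never among each other's four neighbours, so there is no arrow $v\to s(v)$. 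Finally, the exchange binomial of $(P;Q)$ projects exactly to $F_{(P;Q)}$, since both are designed to coincide with \eqref{eq: Grassmann Plücker}.

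For the inductive step, I would first use that $v=y_{r+1}$ and $s(v)$ are not adjacent. Therefore $\mu_v$ and $\mu_{s(v)}$ commute, and the exchange polynomial of $s(v)$ is not altered by $\mu_v$. The new variables satisfy $y'_v\,y_v=B_v$ and $y'_{s(v)}\,y_{s(v)}=s(B_v)$, where $B_v$ is the exchange binomial of $v$. Applying $p$ and invariants (2)--(3) gives $p(y'_v)=p(y'_{s(v)})=F_{p(v)}/z_{p(v)}$. This agrees with the $\mathcal{LM}_n$ mutation once one checks that the exchange Laurent polynomial $\hat F_{p(v)}$ of \cite{LP} equals $F_{p(v)}$ here. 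That should hold because $z_{p(v)}$ does not occur in $F_u$ for any neighbour $u$ in a way that produces a common monomial factor.

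The quiver half of invariant (1) for $\mu_{s(v)}\mu_v(Q^{(r)})$ follows from $s$-equivariance of quiver mutation. The extra arrows $i\to j$ created by $\mu_v$ and those created by $\mu_{s(v)}$ are swapped by $s$. A new arrow between some $w$ and $s(w)$ could arise only from a path $w\to v\to s(w)$; this would need $w$ and $s(w)$ to be both in- and out-neighbours of $v$ in an $s$-invariant way, which I expect to rule out using (1) together with the absence of 2-cycles.

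I expect the main obstacle to be invariant (3) after mutation. In $\mathcal{LM}_n$, the neighbours $u$ of $p(v)$ update their exchange polynomials by substituting $z_{p(v)}\mapsto \hat F_{p(v)}/z'_{p(v)}$ and then removing common factors and normalizing. On the $\mathcal{CM}_n$ side, the neighbours of $v$ and of $s(v)$ update by quiver mutation at two vertices at once. For such a neighbour $w$, both $v$ and $s(v)$ may be adjacent to $w$. After projection, a factor $z_{p(v)}^2$ can then appear, and the substitution produces $F_{p(v)}^2$ in the numerator, which must cancel to yield exactly the projected binomial of $\mu_{s(v)}\mu_v(Q)$ at $w$. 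I would handle this case by case according to whether $w$ is adjacent to one or to both of $v,s(v)$. In each case I would verify directly that cancelling the factor $F_{p(v)}$ or $F_{p(v)}^2$ reproduces the binomial obtained from the two mutations. This uses irreducibility of $F_{p(v)}$ and its independence from $z_u$, both of which are part of the LP-seed axioms. Once (3) is re-established the induction closes, and taking the cluster part of invariant (2) gives $p(L_1^\dagger)=L_2$.
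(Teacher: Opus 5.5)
The paper does not prove this lemma; it quotes it from \cite{ALT}. So I can only judge your argument on its own terms. It has a genuine gap, located exactly at the step you wave through.

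First, invariant (1) is false as stated. For odd $n$, $s$ is the central symmetry of $Q_{\mathcal{CM}_n}$, i.e.\ rotation by $n$ steps on each circle of $2n$ vertices. Since $n$ is odd, this interchanges sources and sinks of the alternating orientation: in Fig.~\ref{fig:mu5} one has $R_1\to R_0$ but $R_5\to R_6$.

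So no alternating orientation is $s$-equivariant. Instead $s$ reverses every arrow, $b_{s(i)s(j)}=-b_{ij}$; this is exactly why $Q_{\mathcal{LM}_n}$, which contains an $n$-cycle, has no alternating orientation. Also, $s$ moves every maximal pair rather than fixing it.

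This much is repairable, because exchange binomials and mutation are compatible with global arrow reversal. But it removes the reason non-adjacency would be automatic. For an involutive \emph{automorphism}, $b_{v,s(v)}=b_{s(v),v}=-b_{v,s(v)}$ forces $b_{v,s(v)}=0$; for an anti-automorphism there is no such constraint.

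What you actually need is also stronger than non-adjacency of $v$ and $s(v)$. No mutable $v$ may be adjacent to both $w$ and $s(w)$ for a mutable $w$. Otherwise some projected binomial has a variable in both monomials (e.g.\ $w\to v\to s(w)$ makes $p(B_v)$ divisible by $z_{p(w)}$), and an LP exchange polynomial cannot have that.

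In particular, the case you call the main obstacle (a neighbour of both $v$ and $s(v)$) always produces such a split, either at $v$ or at that neighbour. So it contradicts (3); it is not something to handle by cancelling $F_{p(v)}^2$. Note also that the LP update of a neighbour $u$ substitutes $\hat F_{p(v)}|_{z_u\leftarrow 0}$, not $\hat F_{p(v)}$.

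The real gap is that this local injectivity of $p$ is not preserved by symmetric mutations. Your hope to rule out bad adjacencies ``using (1) together with the absence of 2-cycles'' cannot work: $\mu_v$ may join an in-neighbour $w$ of $v$ to an out-neighbour $u$ while $s(u)$ is already adjacent to $w$.

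This actually happens. For $n=5$ the mutable part of $Q_{\mathcal{CM}_5}$ is the cycle $R_0,\dots,R_9$ with $R_{2i+1}\to R_{2i}$ and $R_{2i+1}\to R_{2i+2}$, and $s(R_i)=R_{i+5}$. Mutate symmetrically at $(R_2,R_7)$, $(R_3,R_8)$, $(R_4,R_9)$, $(R_1,R_6)$, $(R_0,R_5)$. Each pair is non-adjacent when mutated, and all neighbourhoods are $p$-injective before the fifth step. The result has exactly the arrows $R_0\to R_2$, $R_4\to R_2$ and $R_2\to R_9=s(R_4)$ at $R_2$.

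Hence $p(B_{R_2})$ is divisible by the cluster variable $p(y_{R_4})$, so it is not the $\mathcal{LM}_5$ exchange polynomial at $p(R_2)$. Therefore $p(L_1^\dagger)=L_2$ fails at the latest after the next symmetric step $(R_2,R_7)$.

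So for arbitrary sequences no argument can close this step. A correct version must restrict the sequences so that local injectivity persists, for instance to the Grassmann--Plücker mutations at degree-four vertices used in Appendix~\ref{sec:appendix B}. It must then carry that property, together with the orientation-reversing form of (1), as part of the inductive hypothesis.
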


\begin{table}[h!]
\centering
\hspace*{-0.5cm}
\begin{tabular}{|M{1.5cm}|M{8.9cm}|M{6.5cm}|}
\hline
Algebra & Initial seed & Variable labels \\
\hline
$\mathcal{A}_{n-1,2n}$ & Dual graph to the generalized Temperley's trick applied to the electrical network $G_{n}$: $Q_{\mathcal{A}_{n-1,2n}}$, see Fig. \ref{fig: Labeling sets and central minors} and \ref{fig: quivers} & Indices of Plücker coordinates obtained by Scott's rule: $\tilde{\mathbf{x}}:=\mathbf{x}\cup\mathbf{c}$ \\
\hline
$\mathcal{A}'_{n-1,2n}$ &
Obtained from the graph above by freezing and trivializing $n$ central vertices: $Q_{\mathcal{A}_{n-1,2n}}'$ &
$\tilde{\mathbf{x}}\setminus\mathbf{x}_{\mathrm{even}}$\\
\hline
$\mathcal{CM}_{n}$ &
Obtained from $Q_{\mathcal{A}_{n-1,2n}}$ by merging all central vertices and subsequent freezing  the new merged vertex: $Q_{\mathcal{CM}_n},$ see  Fig.  \ref{fig: quivers} (Fig. \ref{fig:quiver C for even n} for even $n$) &
Central circular pairs and symmetric to them / corresponding grove partitions: $\tilde{\mathbf{y}}_{D_n^{\dagger}}$ \\
\hline
$\mathcal{CM}'_{n}$ &
Obtained from $Q_{\mathcal{CM}_n}$ by trivialization of the most central vertex: $Q_{\mathcal{CM}_n}'$&
$\tilde{\mathbf{y}}_{D_n'}\setminus\{y_{(\emptyset;\emptyset)}\}$ \\
\hline
$\mathcal{LM}_{n}$ &
Obtained from $Q_{\mathcal{A}_{n-1,2n}}$ by merging symmetric vertices: $Q_{\mathcal{LM}_n}$ see Fig.  \ref{fig: for lm5}&
Central circular pairs / corresponding grove measurements: $\tilde{\mathbf{y}}_{D_n}$ \\
\hline
\end{tabular}
\vspace{6pt}

\caption{Cluster (LP) algebras, quivers, variable labels and the
corresponding trivialized versions, which coincide due to Theorem \ref{thm: cluster algebras coinside}}
\label{tab:seed-graphs}
\end{table}

\begin{remark}
    We will define cluster algebra $\mathcal{CM}_n$ and Laurent phenomenon algebra $\mathcal{LM}_n$ for even $n$ in Section \ref{sec:even n}. The reason for doing so is that the case of even $n$ is more subtle, in particular there is no critical electrical network such that its generalized Templerey's trick corresponds to the quiver of $\mathcal{A}_{n-1,2n}$.
\end{remark}


\section{Main results} \label{sec: main results}
\subsection{Central circular minors as the cluster seed. The case of odd $n$} \label{Central circular as seed}

\begin{theorem} \label{thm: cluster algebras coinside}
Let $n$ be an odd natural number. There is a seed in the cluster algebra structure $\mathcal{A}_{n-1,2n}$ which, after freezing and subsequent trivialization of $n$ cluster variables, coincides with the initial seed of the cluster algebra structure $\mathcal{CM}_n$ in which one frozen variable is trivialized. The cluster structures generated by these seeds coincide.
\end{theorem}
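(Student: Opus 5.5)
The seed will be the one given by the generalized Temperley's trick $N(G_n)$ of the critical network $G_n$ of Section \ref{sub:spclass}. By Lemma \ref{lemma: minimal}, $N(G_n)$ is reduced. By Lemma \ref{lemma: permutation}, its strand permutation is $i\mapsto i+(n-1)$ modulo $2n$, which is the Grassmann permutation $\pi_{n-1,2n}$ up to the indexing convention. So $N(G_n)$ is a $\pi_{n-1,2n}$-diagram, and its faces labeled by Scott's rule give a seed $(\tilde{\mathbf{x}},Q(N(G_n)))$ of $\mathcal{A}_{n-1,2n}$. The first task is a face count. $N(G_n)$ should have $(n-1)(n+1)+1=n^2$ faces: the $2n$ boundary faces, the $n$ faces nearest the centre, and $n^2-3n$ remaining interior faces. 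The $2n$ frozen boundary faces are labeled by the cyclic intervals. Among these, the $n$ maximal central pairs and their $n$ symmetric images give $2n$ frozen elements of $D_n^\dagger$, so the frozen variables should match exactly. The $n^2-3n=2\binom n2-2n$ mutable non-central faces should match the non-maximal pairs in $D_n^\dagger$.

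The key step is computing the Scott labels explicitly. I would trace the zig-zag paths of $N(G_n)$ through its rotational symmetry. Since $G_n$ is invariant under rotation by $2\pi/n$, the labeling rotates by $i\mapsto i+2$, so it is enough to label one angular sector (one radial column of faces). For each face in that sector I would determine which strands keep it on their right. The expected answer is that the face at depth $k$ from the boundary gets a label of the form \eqref{eq: I for lemma on Lam minors}: an odd-even interval $\widehat P$ of length $2k-1$, flanked by runs of even indices, where $(P;Q)$ is a central pair with $|d_1-d_2|=1$. For odd $n$ the two faces at each depth and angle should correspond to $(P;Q)$ and its symmetric pair $s(P;Q)$. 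The $n$ innermost faces should be labeled by subsets of $\{2,4,\ldots,2n\}$, which by Remark \ref{rem: empty circular pair} all correspond to $(\emptyset;\emptyset)$. Lemma \ref{lemma: bijection} then turns each label into a contiguous circular pair, so that every cluster variable is a central circular pair in the sense of Definition \ref{def:Dn}. This step is the main obstacle. It requires careful bookkeeping of strands in the two residue classes $n\equiv1,3 \pmod 4$, so I would run an induction on the depth $k$ and check it against $G_5$ and $G_7$ from the figures.

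Next I would compare quivers. Adjacency of faces in $N(G_n)$, oriented by the colour rule, gives $Q(N(G_n))$. I would freeze the $n$ innermost vertices, which removes the arrows between them, and then trivialize them, which deletes those vertices. On the other side, I would trivialize $y_{(\emptyset;\emptyset)}$ in $Q_{\mathcal{CM}_n}$. Each remaining non-maximal vertex must then have the same four neighbours with alternating orientation. I would check this locally: the four neighbours of a face of depth $k$ are two faces at depth $k\pm1$ and two at depth $k$ in adjacent sectors. Via Lemma \ref{lemma: bijection}, these correspond to deleting or adding one index of $P$ and one of $Q$, which are exactly the pairs in the unique relation of Remark \ref{rem: Alman condition}. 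For depth-$1$ faces, the neighbour that was $(\emptyset;\emptyset)$ in $\mathcal{CM}_n$, or the innermost faces in $\mathcal{A}_{n-1,2n}$, has been removed on both sides, so the adjacency matches. I would fix the global orientation by comparing a single arrow, since both quivers alternate around every mutable vertex and are connected.

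Finally, once the two trivialized seeds are identified, the cluster structures they generate coincide. This follows because freezing commutes with mutation \cite[Lemma 4.2.2]{FZW Ch4-5} and trivialization commutes with mutation \cite[Lemma 4.3.2]{FZW Ch4-5}, so the mutation classes generated from identical seeds agree.
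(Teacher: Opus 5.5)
You follow the paper's proof step for step: the seed $N(G_n)$, Scott labels computed outward from the centre, Lemma~\ref{lemma: bijection}, centrality, a local comparison with the unique relation of Remark~\ref{rem: Alman condition}, and trivialization. Two of your choices are fine substitutes for what the paper does. Your counting argument (distinct faces have distinct labels, the correspondence of Lemma~\ref{lemma: bijection} is injective, and $n^2-n=|D_n^\dagger|$) replaces the paper's numerology paragraph. Your direct neighbour check replaces the paper's route through Scott's three-term Pl\"ucker relation plus uniqueness, once the labels are known. One correction there: only the inner and outer neighbours arise by deleting or adding one index of each of $P$ and $Q$; the two neighbours on the same circle arise by shifting the block $P$, resp.\ $Q$, by one position. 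The gap is that the step you defer is the whole content of the proof, and your description of its outcome is false in one respect, so it cannot serve as the induction target.

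Concretely, the two faces at a given depth in one rotational sector do not carry $(P;Q)$ and $s(P;Q)$. They share an edge (the segment of the radius of $G_n$ between them). In $Q_{\mathcal{CM}_n}$, however, a non-maximal pair is never adjacent to its symmetric pair: its two same-circle neighbours share its row set $P$ or its column set $Q$, whereas $s(P;Q)$ has row set $Q$ and column set $P$. So your prediction contradicts the theorem itself. For $n=5$, take the sector between the lines of $N(G_5)$ running from the centre to the boundary points $10$ and $2$. There the central face is labeled $\{2,6,8,10\}$, and the two depth-one faces are labeled $\{6,8,9,10\}$ and $\{2,8,9,10\}$, i.e.\ $(\bar 5;\bar 2)$ and $(\bar 5;\bar 3)$. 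The pair $(\bar 2;\bar 5)$ (label $\{2,3,4,6\}$) sits on the face antipodal to the one carrying $(\bar 5;\bar 2)$. Also, $|P|$ equals the distance from the centre, not from the boundary.

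What makes the computation tractable in the paper is the following. Stepping outward across an arc edge changes the label only in the sources of the two zig-zag paths running along that arc. The part of each such path lying away from the centre is a staircase whose number of steps equals the angular distance to its boundary endpoint. This yields the explicit recursion \eqref{eq: right part odd}--\eqref{eq: left part even}, starting from $\{2,4,\ldots,2n\}\setminus\{e_0\}$. The missing even indices form a block around $e_0$, and the odd indices form a block next to $\pi(e_0)$; each block grows by one element per step, alternately on its two sides. Centrality follows because each step adjoins the nearest free odd chord on alternating sides of the chord joining $e_0$ and $\pi(e_0)$. The comparison with Remark~\ref{rem: Alman condition} can then be read off from these explicit labels.
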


\begin{proof}[Proof of the case $n=4m+1$.]

\

\subsubsection{Outline of the proof.} \label{sec: Outline of the proof}
We start with the network $G_{4m+1}$, apply generalized Temperley's trick, and denote the corresponding $\pi_{k,n}$-diagram by $S_{4m+1}$, by Lemma \ref{lemma: permutation} its strands permutation is $\pi_{n-1,2n}$. The plabic graph $S_{4m+1}$ is reduced by Lemma \ref{lemma: minimal}.

Thus, we can apply Scott's construction to make $S_{4m+1}$ an initial seed of the cluster algebra $\mathcal{A}_{n-1,2n}$. Denote by $Q_{\mathcal{A}_{n-1,2n}}$ the corresponding quiver, by $\mathbf{x}$ cluster variables, and by $\mathbf{c}$ frozen variables; $\tilde{\mathbf{x}}:=\mathbf{x}\cup\mathbf{c}$ is the extended cluster. We analyze labeling sets of faces and prove that they satisfy the conditions of Lemma \ref{lemma: bijection}. That is, a labeling set $I$ of a variable $x_I$ has the form \eqref{eq: I for lemma on Lam minors}. 

This implies that the elements of the extended cluster in the seed $(\tilde{\mathbf{x}};Q_{\mathcal{A}_{n-1,2n}})$, labeled by the labeling sets of faces, can be mapped to circular pairs according to the correspondence from Lemma \ref{lemma: bijection} and Remark \ref{rem: empty circular pair} (see Fig. \ref{fig: Labeling sets and central minors} for an illustration):

\begin{equation} \label{eq: isomorphism}
\varphi:x_I\stackrel{\text{Lemma \ref{lemma: bijection}}}{\longmapsto} y_{(P;Q)},\ \text{where } I \text{ is of the form \eqref{eq: I for lemma on Lam minors}.}
\end{equation}

\begin{figure}[ht]
  \centering
  \includegraphics[width=1.05\textwidth]{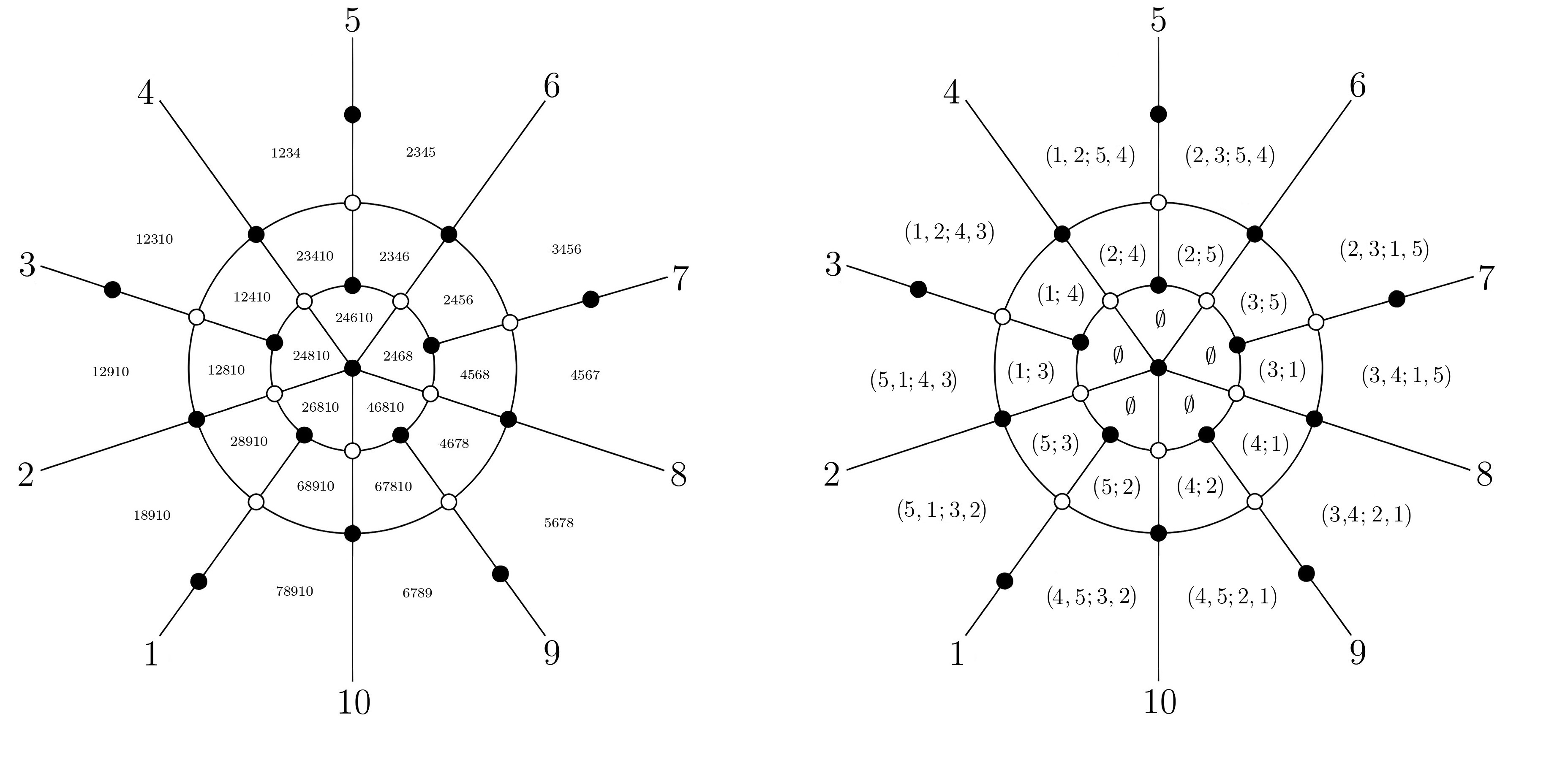}
  \caption{Labeling sets and corresponding central circular minors  }
\label{fig: Labeling sets and central minors}
\end{figure}

Moreover, we prove that these circular pairs are central and that their arrangement in all circles, except for the most central one (the latter corresponds to the case $k=0$, see Remark \ref{rem: empty circular pair}), of the seed $Q_{\mathcal{A}_{n-1,2n}}$ coincides with the arrangement in the initial seed $(\tilde{\mathbf{y}}_{D_n^{\dagger}};Q_{\mathcal{CM}_n})$ of the cluster algebra $\mathcal{CM}_n$. 

\subsubsection{Arrangement of labels.} \label{seq: Arrangement of labels} We now provide an equivalent description of Scott's rule for the labeling sets of faces of $S_{4m+1}$. Consider an angle $i,i-2$ for an even $i\in[2n]$. Throughout this proof all indices are understood modulo $2n$. We give this description by induction on the depth of the face. The base case is the face located in the most central face of the grid. Then we provide recursive formulas for the left and right parts of the angle separately, see Fig. \ref{fig: angle}. 

We denote by $I^{r,i}_{2j-1}$ (resp. $I^{r,i}_{2j}$) the labeling set of the face that belong to the right part of the angle $i,i-2$ and by $I^{l,i}_{2j-1}$ (resp. $I^{l,i}_{2j}$) those that belong to its left part, where $2j-1$ (resp. $2j$) is the distance from the most central face. Here $j$ runs over $\{1,2,\ldots,m\}.$

\begin{figure}[ht]
\centering
\begin{tikzpicture}
    \filldraw[fill=white] (-162:3) node [left] {$i$} ;
    \filldraw[fill=white] (-90:3) node [below] {$i-2$} ;
    
    \draw (-90:0) -- node [left] {}(-90:3);
    \draw (-162:0) -- node [left] {}(-162:3);
    \draw (-126:0.75) -- node [left] {}(-126:3);

    \draw[gray,->] (-144:0.6) -- node [left] {}(-144:1.1);
    \draw[gray,->] (-144:1.35) -- node [left] {}(-144:1.85);
    \draw[gray,->] (-108:0.6) -- node [left] {}(-108:1.1);
    \draw[gray,->] (-108:1.35) -- node [left] {}(-108:1.85);

    \draw (-162:0.75) arc (198:270:0.75);
    \draw (-162:1.5) arc (198:270:1.5);

    \filldraw[fill=black] (0:0) node [right] {} circle (2pt);
    \filldraw[fill=white] (-90:0.75) node [right] {} circle (2pt);
    \filldraw[fill=black] (-90:1.5) node [right] {} circle (2pt);
    \filldraw[fill=white] (-162:0.75) node [right] {} circle (2pt);
    \filldraw[fill=black] (-162:1.5) node [right] {} circle (2pt);
    \filldraw[fill=black] (-126:0.75) node [right] {} circle (2pt);
    \filldraw[fill=white] (-126:1.5) node [right] {} circle (2pt);

    \draw (-144:3) node {left part};
    \draw (-108:3) node {right part};

    
\end{tikzpicture} 
    \caption{Arrangement of labels}
    \label{fig: angle}
\end{figure}

    Define:
    $$e_0^i=e^{r,i}_0=e^{l,i}_0:=i+2m.$$ 
    
    The label of the central face is: 
    \begin{equation} \label{eq: label of central cell}
        I^{i,r}_0=I^{i,l}_0:=\{2j|\ j\in[n]\}\setminus\{e_0^i\}.
    \end{equation}

    With each step from the center toward the boundary we change the labeling set by the following rule: 

    For the right part: 
 \begin{equation} \label{eq: right part odd}
    e^{r,i}_{2j-1}=e^{r,i}_{0}-2j,\ \ \ I^{r,i}_{2j-1}=(I^{r,i}_{2j-2}\setminus \{e^{r,i}_{2j-1}\})\cup \{\pi(e^{r,i}_{0})+1+2(j-1)\},
    \end{equation}

    \begin{equation} \label{right part even}
    e^{r,i}_{2j}=e^{r,i}_{0}+2j,\ \ \ I^{r,i}_{2j}=(I^{r,i}_{2j-1}\setminus \{e^{r,i}_{2j}\})\cup \{\pi(e^{r,i}_{0})-1-  2(j-1)\}.
    \end{equation}
    
    For the left part:
    \begin{equation} \label{eq: left part odd}
    e^{l,i}_{2j-1}=e^{l,i}_{0}+2j,\ \ \ I^{l,i}_{2j-1}=(I^{l,i}_{2j-2}\setminus \{e^{l,i}_{2j-1}\})\cup \{\pi(e^{l,i}_{0})+2-1-2(j-1)\},
    \end{equation}
    
    \begin{equation} \label{eq: left part even}
    e^{l,i}_{2j}=e^{l,i}_{0}-2j,\ \ \ I^{l,i}_{2j}=(I^{l,i}_{2j-1}\setminus \{e^{l,i}_{2j}\})\cup \{\pi(e^{l,i}_{0})+2+1+  2(j-1)\}.
    \end{equation}
    
We now prove the above formulas for the right part. For the left part the argument is similar.

\begin{figure}[ht]
\centering
\begin{tikzpicture}

    
    \draw (-90:0.5) -- node [left] {}(-90:3);
    \draw (-126:0.5) -- node [left] {}(-126:3);

    \draw[gray,->] (-108:1.35) -- node [left] {}(-108:1.85);

    \draw (-126:0.75) arc (234:270:0.75);
    \draw (-126:1.5) arc (234:270:1.5);

    \filldraw[fill=white] (-90:0.75) node [right] {} circle (2pt);
    \filldraw[fill=black] (-90:1.5) node [right] {} circle (2pt);
    \filldraw[fill=black] (-126:0.75) node [right] {} circle (2pt);
    \filldraw[fill=white] (-126:1.5) node [right] {} circle (2pt);


    
\end{tikzpicture} 
    \caption{Step from the center toward boundary}
    \label{fig: Step from center towards boundary}
\end{figure}

During a step from one face to another (see Fig. \ref{fig: Step from center towards boundary}), which is closer to the boundary, we cross two zig-zag paths. Thus, the labeling set remains unchanged except for two elements, which are the sources of these zig-zag paths. These are the two zig-zag paths (in opposite directions) going along the arc edge, which separates these two faces. The labeling sets differ from each other in the sources of these two zig-zag paths. The task is to identify these sources. We will do it for one of the two paths. Namely, for the path which goes from the left to the right along the separating arc edge, see Fig. \ref{fig: stair shape}. 
For the other one the argument is similar.

\begin{figure}[ht]
\centering
\begin{tikzpicture}

    
    \draw (-20:0) -- node [left] {}(-30:4);
    \draw (-60:0) -- node [left] {}(-70:4);
    \draw (-100:0) -- node [left] {}(-110:4);
    \draw (-140:0) -- node [left] {}(-150:4);

    \draw (-50:0.75) -- node [left] {}(-50:4);
    \draw (-90:0.75) -- node [left] {}(-90:4);
    \draw (-130:0.75) -- node [left] {}(-130:4);

    \draw[blue,->] (-151:4) -- node [left] {}(-151:3.1);
    \draw[blue,->] (209:3.1) arc (209:229:3.1);
    \draw[blue,->] (229:3.1) -- node [left] {}(229:2.35);
    \draw[blue,->] (229:2.35) arc (229:249:2.35);
    \draw[blue,->] (249:2.35) -- node [left] {}(249:1.6);
    \draw[blue,->] (249:1.6) arc (249:269:1.6);
    \draw[blue,->] (269:1.6) -- node [left] {}(269:0.85);
    \draw[blue,->] (269:0.85) arc (269:289:0.85);

    \draw[blue,->] (249:4) -- node [left] {}(249:3.1);
    \draw[blue,->] (249:3.1) arc (249:269:3.1);
    \draw[blue,->] (269:3.1) -- node [left] {}(269:2.35);
    \draw[blue,->] (269:2.35) arc (269:289:2.35);

    \draw[green,<-] (291:3.9) -- node [left] {}(291:2.9);
    \draw[green,<-] (291:2.9) arc (291:271:2.9);

    \draw[green,<-] (331:3.9) -- node [left] {}(331:2.9);
    \draw[green,<-] (331:2.9) arc (331:311:2.9);
    \draw[green,<-] (311:2.9) -- node [left] {}(311:2.15);
    \draw[green,<-] (311:2.15) arc (311:291:2.15);
    \draw[green,<-] (291:2.15) -- node [left] {}(291:1.4);
    \draw[green,<-] (291:1.4) arc (291:271:1.4);


    \draw (210:0.75) arc (210:330:0.75);
    \draw (210:1.5) arc (210:330:1.5);
    \draw (210:2.25) arc (210:330:2.25);
    \draw (210:3) arc (210:330:3);

    \filldraw[fill=white] (-150:0.75) node [right] {} circle (2pt);
    \filldraw[fill=black] (-150:1.5) node [right] {} circle (2pt);
    \filldraw[fill=white] (-150:2.25) node [right] {} circle (2pt);
    \filldraw[fill=black] (-150:3) node [right] {} circle (2pt);

    \filldraw[fill=white] (-110:0.75) node [right] {} circle (2pt);
    \filldraw[fill=black] (-110:1.5) node [right] {} circle (2pt);
    \filldraw[fill=white] (-110:2.25) node [right] {} circle (2pt);
    \filldraw[fill=black] (-110:3) node [right] {} circle (2pt);

    \filldraw[fill=white] (-70:0.75) node [right] {} circle (2pt);
    \filldraw[fill=black] (-70:1.5) node [right] {} circle (2pt);
    \filldraw[fill=white] (-70:2.25) node [right] {} circle (2pt);
    \filldraw[fill=black] (-70:3) node [right] {} circle (2pt);

    \filldraw[fill=white] (-30:0.75) node [right] {} circle (2pt);
    \filldraw[fill=black] (-30:1.5) node [right] {} circle (2pt);
    \filldraw[fill=white] (-30:2.25) node [right] {} circle (2pt);
    \filldraw[fill=black] (-30:3) node [right] {} circle (2pt);

    \filldraw[fill=black] (-130:0.75) node [right] {} circle (2pt);
    \filldraw[fill=white] (-130:1.5) node [right] {} circle (2pt);
    \filldraw[fill=black] (-130:2.25) node [right] {} circle (2pt);
    \filldraw[fill=white] (-130:3) node [right] {} circle (2pt);

    \filldraw[fill=black] (-90:0.75) node [right] {} circle (2pt);
    \filldraw[fill=white] (-90:1.5) node [right] {} circle (2pt);
    \filldraw[fill=black] (-90:2.25) node [right] {} circle (2pt);
    \filldraw[fill=white] (-90:3) node [right] {} circle (2pt);

    \filldraw[fill=black] (-50:0.75) node [right] {} circle (2pt);
    \filldraw[fill=white] (-50:1.5) node [right] {} circle (2pt);
    \filldraw[fill=black] (-50:2.25) node [right] {} circle (2pt);
    \filldraw[fill=white] (-50:3) node [right] {} circle (2pt);


    
\end{tikzpicture} 
    \caption{Arrangement of labels}
    \label{fig: stair shape}
\end{figure}

The arc edge splits a zig-zag path into two parts: one part consists of all edges that precede it along the orientation, and the other consists of all edges that follow it. Choose that which does not contain the center vertex of $S_{4m+1}$. These parts have a staircase shape (see Fig. \ref{fig: stair shape}). This is a key property that allows us to identify the sources. Namely, the angular grid distance from the boundary point of a chosen part to the angle equals   the number of arc segments in the stairs, which equals the number of radial segments in the stairs. 

The number of stairs in the chosen part of a zig-zag path on the first step from the center to the boundary (the longest blue path in Fig. \ref{fig: stair shape}), is equal to the number of radial edges in $S_{4m+1}$ going along any radius without the most central edge, which is equal to $2m$. Observing the chosen parts along the whole angle, we note that they interchange with respect to whether they contain the source of a zig-zag path or its sink (blue and green paths on Fig. \ref{fig: stair shape}). Taking this into account, we have the following. 


At step $2j-1$ the source of a segment of a zig-zag path (a blue one) between the faces is 
\begin{equation} \label{eq: odd number of source r}
(i-2)+2m-((2j-1)-1)=i-2j+2m,
\end{equation}
where the first term corresponds to the number on the right side of the angle, the second term corresponds to the number of stairs in the longest stair-shaped part (it is the chosen part at the first step), and the last term corresponds to the number of stairs passed on previous steps.

At step $2j$ the sink of a segment of a zig-zag path (a green one) between the faces is 
$$(i-1)-(2m-1)+((2j)-2),$$
where the first term corresponds to the number on the left side of the angle, the second term corresponds to the number of stairs in the longest stair-shaped part (it is the chosen part at the second step), and the last term corresponds to the number of stairs passed on previous steps.

Thus, the corresponding source is 
\begin{equation} \label{eq: even number of source r}
\pi^{-1}\left((i-1)-(2m-1)+((2j)-2)\right)=
\end{equation}
$$(i-1)-(2m-1)+((2j)-2)-(4m+1-1)=i+2j-6m-2.$$

Note that for $j=1$ the numbers given by \eqref{eq: odd number of source r} and \eqref{eq: even number of source r} are equal to $i+2m-2$ and $i-6m$ respectively, and thus are different from each other by 4 (mod $8m+2$). This proves the formulas for $e_1^{r,i}$ and $e_2^{r,i}$. Then the formulas for $e_{2j-1}^{r,i}$ and $e_{2j}^{r,i}$ follow by induction on $j$ from \eqref{eq: odd number of source r} and \eqref{eq: even number of source r}.

\subsubsection{From labels to pairs: circularity.} \label{sec: From labels to pairs: circularity} Note that, by \eqref{eq: right part odd}--\eqref{eq: left part even}, each face is labeled by an $(n-1)$-element subset of a specific form: it contains a consecutive interval of integers, together with the nearest even integers on either side, chosen so that the resulting set has cardinality $n-1$. Equivalently, the label is of the form \eqref{eq: I for lemma on Lam minors}. Consequently, Lemma~\ref{lemma: bijection} applies, and the face labels are mapped to circular pairs via \eqref{eq: isomorphism}.

\subsubsection{From labels to pairs: centrality.} \label{sec: From labels to pairs: centrality} We now prove that the labeling set located in every face mapped by \eqref{eq: isomorphism} to a central circular pair. We will do it for faces lying in the right parts of the angles; for others the proof is similar. The proof is by induction on the number $2j-1$ (or $2j$) of steps made from the center toward the boundary needed to reach a face. An illustration of this inductive proof is given in Fig. \ref{fig: centrality}.

The labeling set of the most central face is given by \eqref{eq: label of central cell}. The only non-crossing partition which is  concordant with this label $I_0^{r,i}$ is the uncrossing partition, see Example \ref{bulcon-ex}. Note also that the chord connecting $e_0^{r,i}$ and $\pi(e_0^{r,i})$ is the most central one in the sense of Section \ref{sec: Central circular minors}. Recall that $I_0^{r,i}$ contains all even numbers from $[2n]$ except $e_0^{r,i}$.

In the first step, by the formula \eqref{eq: right part odd}, we remove from the labeling set $I_0^{r,i}$ an even number and add an odd. Thus, the labeling set consists of all even numbers in $[2n]$, except for two of them, together with one odd number. This means that the non-crossing partition $(\sigma|\widetilde{\sigma})$ which is concordant with $I_1^{r,i}$ consists of two even parts and one odd chord separating them. In remains to determine the location of this chord. The closest odd number from the left to $\pi(e_0^{r,i})$ is equal to $\pi(e_0^{r,i})+1=\pi(e_0^{r,i})+1+2(1-1)$, and clearly belongs to this odd chord since it is the only odd element in the labeling set $I_1^{r,i}$ as follows from \eqref{eq: right part odd}. The other element of this odd chord should separate two even numbers ($e_0^{r,i}$ and $e_0^{r,i}+2$) which do not belong to $I_1^{r,i}$, so it is the closest from the left odd number to $e_0^{r,i}$. Taking all together, we conclude that the unique odd chord is the closest possible from the left side chord to the even chord connecting $e_0^{r,i}$ and $\pi(e_0^{r,i})$, and thus it is also the most central one. This odd chord represents the circular pair which is the image of $\Delta_{I_1^{r,i}}$ under the bijection from Lemma \ref{lemma: bijection}. Thus, we proved that the corresponding circular pair is central. This completes the base case.

For the induction step, the argument is analogous. Proceeding from the center toward the boundary, at each odd step we modify the corresponding non-crossing partition by adjoining the nearest available odd chord on the left to the collection of odd chords chosen in the previous steps; at each even step we instead adjoin the nearest available odd chord on the right. This alternation ensures that after every step the set of chosen odd chords remains as central as possible. Consequently, the associated circular pair is central.

\begin{figure}[ht]
\centering
\begin{tikzpicture}
    \draw (0,0) circle (2);

    \draw (144:2) -- node [left] {}(-144:2);
    \draw (144:2) -- node [left] {}(72:2);
    \draw (-144:2) -- node [left] {}(-72:2);
    \draw (72:2) -- node [left] {}(0:2);
    \draw (-72:2) -- node [left] {}(0:2);

    \filldraw[fill=black] (180:2) node [left=2.3pt] {$1$} circle (2pt);
    \filldraw[fill=white] (144:2) node [left=2.3pt] {$2$} circle (2pt);
    \filldraw[fill=black] (108:2) node [above=2.3pt] {$3$} circle (2pt);
    \filldraw[fill=white] (72:2) node [above=2.3pt] {$4$} circle (2pt);
    \filldraw[fill=black] (36:2) node [right=2.3pt] {$5$} circle (2pt);
    \filldraw[fill=white] (0:2) node [right=2.3pt] {$6$} circle (2pt);
    \filldraw[fill=black] (-36:2) node [right=2.3pt] {$7$} circle (2pt);
    \filldraw[fill=white] (-72:2) node [below=2.3pt] {$8$} circle (2pt);
    \filldraw[color=blue, fill=blue] (-108:2) node [below=2.3pt] {$9$} circle (2pt);
    \filldraw[fill=white] (-144:2) node [left=2.3pt] {$10$} circle (2pt);

    \draw (0:2) +(-0.15,-0.15) rectangle +(0.15,0.15);
    \draw (-72:2) +(-0.15,-0.15) rectangle +(0.15,0.15);
    \draw (144:2) +(-0.15,-0.15) rectangle +(0.15,0.15);
    \draw (-144:2) +(-0.15,-0.15) rectangle +(0.15,0.15);

    \draw (0,-2.8) node {$I^{r,2}_0$};
    \draw[color=white] (0,-3.4) node {Central $M(\bar2,\bar5)$};
\end{tikzpicture} \begin{tikzpicture}
    \draw (0,0) circle (2);

    \draw (-144:2) -- node [left] {}(144:2);
    \draw (-108:2) -- node [left] {}(108:2);
    \draw (72:2) -- node [left] {}(-72:2);
    \draw (72:2) -- node [left] {}(0:2);
    \draw (-72:2) -- node [left] {}(0:2);

    \filldraw[fill=black] (180:2) node [left=2.3pt] {$1$} circle (2pt);
    \filldraw[fill=white] (144:2) node [left=2.3pt] {$2$} circle (2pt);
    \filldraw[fill=black] (108:2) node [above=2.3pt] {$3$} circle (2pt);
    \filldraw[fill=white] (72:2) node [above=2.3pt] {$4$} circle (2pt);
    \filldraw[fill=black] (36:2) node [right=2.3pt] {$5$} circle (2pt);
    \filldraw[fill=white] (0:2) node [right=2.3pt] {$6$} circle (2pt);
    \filldraw[color=blue, fill=blue] (-36:2) node [right=2.3pt] {$7$} circle (2pt);
    \filldraw[fill=white] (-72:2) node [below=2.3pt] {$8$} circle (2pt);
    \filldraw[fill=black] (-108:2) node [below=2.3pt] {$9$} circle (2pt);
    \filldraw[fill=white] (-144:2) node [left=2.3pt] {$10$} circle (2pt);
    
    \draw (0:2) +(-0.15,-0.15) rectangle +(0.15,0.15);
    \draw (-72:2) +(-0.15,-0.15) rectangle +(0.15,0.15);
    \draw (-144:2) +(-0.15,-0.15) rectangle +(0.15,0.15);
    \draw (-108:2) +(-0.15,-0.15) rectangle +(0.15,0.15);
    \draw (0,-2.8) node {$I^{r,2}_1=I^{r,2}_0\setminus2\cup7$};
    \draw (0,-3.4) node {Central $M(\bar2,\bar5)$};
\end{tikzpicture} \begin{tikzpicture}
    \draw (0,0) circle (2);

    \draw (-144:2) -- node [left] {}(144:2);
    \draw (-108:2) -- node [left] {}(108:2);
    \draw (72:2) -- node [left] {}(-72:2);
    \draw (-36:2) -- node [left] {}(36:2);

    \filldraw[fill=black] (180:2) node [left=2.3pt] {$1$} circle (2pt);
    \filldraw[fill=white] (144:2) node [left=2.3pt] {$2$} circle (2pt);
    \filldraw[fill=black] (108:2) node [above=2.3pt] {$3$} circle (2pt);
    \filldraw[fill=white] (72:2) node [above=2.3pt] {$4$} circle (2pt);
    \filldraw[fill=black] (36:2) node [right=2.3pt] {$5$} circle (2pt);
    \filldraw[fill=white] (0:2) node [right=2.3pt] {$6$} circle (2pt);
    \filldraw[fill=black] (-36:2) node [right=2.3pt] {$7$} circle (2pt);
    \filldraw[fill=white] (-72:2) node [below=2.3pt] {$8$} circle (2pt);
    \filldraw[fill=black] (-108:2) node [below=2.3pt] {$9$} circle (2pt);
    \filldraw[fill=white] (-144:2) node [left=2.3pt] {$10$} circle (2pt);

    \draw (-144:2) +(-0.15,-0.15) rectangle +(0.15,0.15);
    \draw (-108:2) +(-0.15,-0.15) rectangle +(0.15,0.15);
    \draw (-72:2) +(-0.15,-0.15) rectangle +(0.15,0.15);
    \draw (-36:2) +(-0.15,-0.15) rectangle +(0.15,0.15);

    \draw (0,-2.8) node {$I^{r,2}_2=I^{r,2}_1\setminus6\cup7$};
    \draw (0,-3.4) node {Central $M(\bar2\bar3,\bar5\bar4)$};
\end{tikzpicture} 

    \caption{Step by step changes in the non-crossing partitions while moving from the center towards the boundary illustrates centrality.}
    \label{fig: centrality}
\end{figure}

\subsubsection{Numerology.} \label{sec: Numerology} So far we have proved that all labeling sets in our seed are mapped by \eqref{eq: isomorphism} to central circular pairs. Note also that by a symmetry of $S_{4m+1}$ (or, equivalently, of the quiver $Q_{\mathcal{A}_{n-1,2n}}$) and by the fact that the formulas \eqref{eq: right part odd} -- \eqref{eq: left part even} depend linearly on the number of the angle, all central circular pairs will appear in such a way. By Lemma \ref{lemma: bijection} the number of odd elements in the labeling set is equal to the size of the corresponding circular minor. Thus, the sizes of central circular pairs in $S_{4m+1}$ (or, equivalently, of $Q_{\mathcal{A}_{n-1,2n}}$) will increase by one on each step from the center toward the boundary, as follows from \eqref{eq: right part odd} -- \eqref{eq: left part even}.

\begin{remark}
  The obtained rule of labeling sets  has important practical applications, as it can be used to solve the black box problem (the discrete Calderón problem) using the generalized chamber ansatz, see \cite[Example 3.7]{Kaz}.
\end{remark}

\begin{figure}[ht]
  \centering
  \hspace*{-1cm}
  \includegraphics[width=1.1\textwidth]{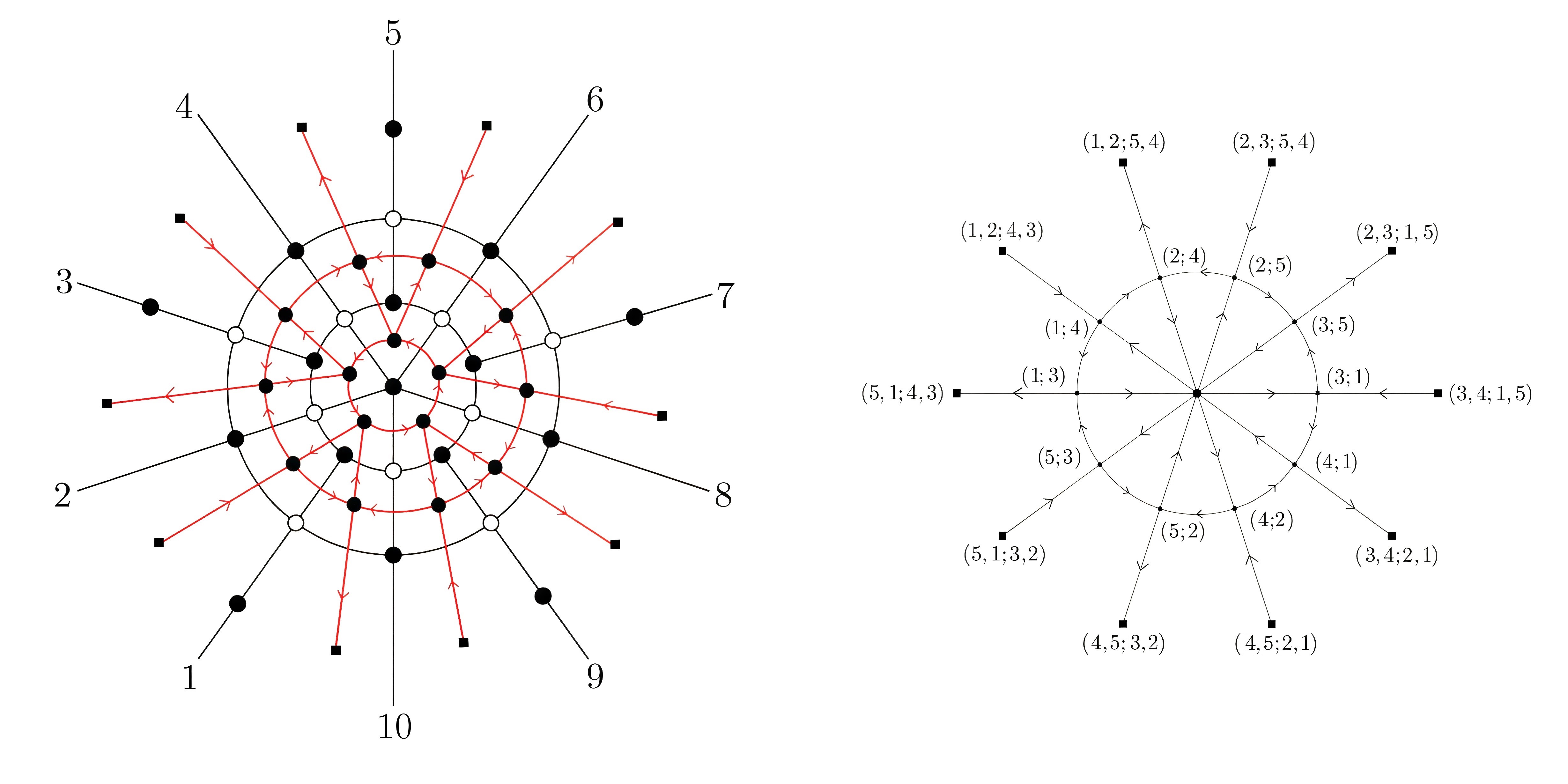}
 \caption{Quivers $Q'_{\mathcal{A}_{4,10}}$ and $Q'_{\mathcal{CM}_5}$}
\label{fig: quivers}
\end{figure}

\subsubsection{Relation to Alman et al. cluster algebra.} \label{sec: Relation to Alman et al cluster algebra} By \eqref{eq: label of central cell} the $n$ most  central variables have the form $x_I$, where $I\subset\{2,4,\ldots,2n\},\ |I|=n-1$. 
Denote by $Q_{\mathcal{A}_{n-1,2n}}'$ the quiver that is obtained by freezing and subsequently trivializing these $n$ variables.

We now prove that the quiver $Q_{\mathcal{A}_{n-1,2n}}'$ coincides with the quiver $Q_{\mathcal{CM}_{n}}'$, which is obtained from the quiver $Q_{\mathcal{CM}_{n}}$ of the initial seed of $\mathcal{CM}_n$ by trivializing the most central variable $(\emptyset;\emptyset)$. For an illustration, see Fig. \ref{fig: quivers}. Note first that they coincide as graphs. Next, by Section \ref{sec: Numerology} we know that central circular pairs located at any circle in $Q_{\mathcal{CM}_{n}}'$ coincide as a set with central circular minors located at the same circle in the quiver $Q_{\mathcal{A}_{n-1,2n}}'$. We now prove that their arrangements within each circle also coincide. In order to do so, we compare the rules for the arrangement of labels in all circles of $Q_{\mathcal{A}_{n-1,2n}}$ except the most central one with the rule for the arrangement of labels in all circles of $Q_{\mathcal{CM}_{n}}$.

We prove that in $Q_{\mathcal{A}_{n-1,2n}}$ for each except the most central vertex labeled by $(P;Q)$ for four vertices connected to this vertex the condition \eqref{eq: Grassmann Plücker} holds. Namely, it is satisfied with the pair $(P;Q)$ appearing on the left-hand side, and four other central circular pairs on the right-hand side. Since by Remark \ref{rem: Alman condition} such a substitution is unique, it follows that the arrangements of the pairs coincide. 

Denote the participating variable by $y_C$ and four its neighbors by $y_U$, $y_D$, $y_L$, $y_R$ as on Fig. \ref{fig: Arrangement of minors is in accordance with Grassmann-Plücker relation}. By Section \ref{sec: From labels to pairs: centrality} we have that $U\subset C\subset D$ and $|U|+2=|C|+1=|D|$. From these conditions we can determine $a,\ b,\ c,\ d$, which define the equation \eqref{eq: Grassmann Plücker}, uniquely. Recall that these five variables $y_C$, $y_U$, $y_D$, $y_L$, $y_R$ correspond to some variables in Scott's construction, namely, to their preimages under the map given by \eqref{eq: isomorphism}. These five variables satisfy the short Plücker relation, see \cite[Figure 7 and proof of Claim 1]{Sc}. Suppose that $y_C$, $y_U$, $y_D$, $y_L$, $y_R$ are not formal variables labeled by central circular pairs, but are central circular minors of some response matrix: $M_C$, $M_U$, $M_D$, $M_L$, $M_R$. Then we can rewrite the Plücker relation from above using Proposition \ref{circular minors via Plücker coordinates 0}, and obtain some relation on minors. This relation has the form binom $=$ binom $+$ binom and has the property that $M_C$ appears on the left-hand side and all terms on the right-hand side are central circular minors. However, by Remark \ref{rem: Alman condition} the substitution in \eqref{eq: Grassmann Plücker} is unique; thus, induced relation which we have obtained coincides with the one which defines pairs arrangement in the $Q_{\mathcal{CM}_n}$.

\begin{figure}[ht]
\centering\begin{tikzpicture}[scale=1.5]


    
    \draw (-72:0.7) -- node [left] {}(-72:3.3);
    \draw (-108:0.7) -- node [left] {}(-108:3.3);
    \draw (-140:1.8) arc (-140:-40:1.8);
    \draw (-130:2.7) arc (-130:-50:2.7);
     \draw[white] (0,-3.3) node {$n=9,\ i=2$};
     \draw[white] (0,-3.8) node {$a=6\ b=9\ c=5\ d=2$};
    \draw (0,-1.2) node {$M_U$};
    \draw (0,-2.1) node {$M_C$};
    \draw (0,-3.1) node {$M_D$};
    \draw (-1.2,-1.9) node {$M_L$};
    \draw (1.2,-1.9) node {$M_R$};
\end{tikzpicture} \phantom{aaaaaaaaaa}
\begin{tikzpicture}[scale=1.5] 

    
    \draw (-65:0.7) -- node [left] {}(-65:3);
    \draw (-115:0.7) -- node [left] {}(-115:3);
    \draw (-140:1.4) arc (-140:-40:1.4);
    \draw (-130:2.3) arc (-130:-50:2.3);
     \draw (0,-3.3) node {$n=9,\ i=2$};
     \draw (0,-3.8) node {$a=6\ b=9\ c=5\ d=2$};
    \draw (0,-1) node {\tiny{$78,34$}};
    \draw (0,-1.7) node  {\tiny{$ 789,432$}};
    \draw (0,-2.6) node {\tiny{$6789,5432$}};
    \draw (-1.6,-1.5) node {\tiny{$789,543$}};
    \draw (1.6,-1.5) node {\tiny{$678,543$}};
\end{tikzpicture} 
    \caption{Arrangement of minors is in accordance with Grassmann-Plücker relation}
    \label{fig: Arrangement of minors is in accordance with Grassmann-Plücker relation}
\end{figure}

This completes the proof of the first assertion of the theorem. For the second assertion note that the map \eqref{eq: isomorphism} is an isomorphism since it maps the variables of the initial seed to the variables of the initial seed, and these seeds coincide as quivers. So we proved that the cluster algebra structures $\mathcal{A}_{n-1,2n}'$ and $\mathcal{CM}_n'$ coincide, where $\mathcal{A}_{n-1,2n}'$ and $\mathcal{CM}_n'$ are defined by the initial seeds $(\tilde{\mathbf{x}}\setminus\mathbf{x}_{\text{even}},Q_{\mathcal{A}_{n-1,2n}}')$ and $(\tilde{\mathbf{y}}_{D_n^\dagger}\setminus \{y_{(\emptyset;\emptyset)}\},Q_{\mathcal{CM}_n}')$ respectively, and $\mathbf{x}_{\mathrm{even}}:=\{x_I|\ I\subset\{2,4,\ldots,2n\},|I|=n-1\}$.
\end{proof}

\begin{proof}[Proof of the case $n=4m+3$.]
    The proof of this case is similar to the proof of the case $n=4m+1$ with the only difference in the formula for the missing even number in the most central faces:
    $$e_0^i=e^{r,i}_0=e^{l,i}_0:=i+2m+1.$$ 
\end{proof}

\subsection{Central circular minors as the cluster seed. The case of even $n$} \label{sec:even n}

In the case of even $n$, it is unknown whether there exists a well-connected electrical network such that its generalized Temperley's trick corresponds to a seed in $\mathcal{A}_{n-1,2n}$, consisting entirely of circular minors. Moreover, for $n=4$ there are only two well-connected electrical networks, namely the standard one and its dual (see Fig. \ref{fig:standart}), so it is easy to verify that neither is suitable. Nevertheless, Theorem \ref{thm: even case} states that there is a cluster in the cluster algebra $\mathcal{A}_{n-1,2n}$ which consists entirely of Plücker coordinates corresponding, by Lemma \ref{lemma: bijection}, to central circular pairs, including the empty one.

In the case of even $n$ we define the plabic graph $S_n$ by Fig. \ref{pic:quiver for even n} and the quiver $Q_{\mathcal{CM}_n}^{e}$ of the initial seed of the cluster algebra $\mathcal{CM}_n$ by Fig. \ref{fig:quiver C for even n}.

\begin{figure}[ht]

\begin{minipage}{0.48\textwidth}
\centering

\begin{tikzpicture}[
	line cap=round,
	line join=round,
	thick,
	blackv/.style={circle,draw=black,fill=black,inner sep=0pt,minimum size=4.2pt},
	whitev/.style={circle,draw=black,fill=white,inner sep=0pt,minimum size=4.2pt},
	scale=0.5
	]
	
	\def\R{6.00}   
	\def\Ra{5.20}
	\def\Rb{4.25}
	\def\Rc{3.15}
	\def\Rd{2.05}
	\def\Lab{6.8}
	
	\foreach \i in {1,...,16}{
		\pgfmathsetmacro{\ang}{-90 - 22.5*(\i-1)}
		\coordinate (B\i) at (\ang:\R);
		\coordinate (A\i) at (\ang:\Ra);
		\coordinate (C\i) at (\ang:\Rb);
		\coordinate (D\i) at (\ang:\Rc);
		\coordinate (E\i) at (\ang:\Rd);
	}
	
	
	\draw (A1)  -- (A2);
	\draw (A3)  -- (A4);
	\draw (A5)  -- (A6);
	\draw (A7)  -- (A8);
	\draw (A9)  -- (A10);
	\draw (A11) -- (A12);
	\draw (A13) -- (A14);
	\draw (A15) -- (A16);
	
	\foreach \i in {1,...,16}{
		\pgfmathtruncatemacro{\j}{mod(\i,16)+1}
		\draw (C\i) -- (C\j);
		\draw (D\i) -- (D\j);
		\draw (E\i) -- (E\j);
	}
	
	\foreach \i in {1,...,16}{
		\draw (B\i) -- (A\i);
		\draw (A\i) -- (C\i);
		\draw (C\i) -- (D\i);
		\draw (D\i) -- (E\i);
	}
	
	\node[blackv] (O) at (0,0) {};
	\foreach \i in {1,3,5,7,9,11,13,15}{
		\draw (O) -- (E\i);
	}
	
	\foreach \i in {1,...,16}{
		\ifodd\i
			\node[blackv] at (A\i) {};
			\node[whitev] at (C\i) {};
			\node[blackv] at (D\i) {};
			\node[whitev] at (E\i) {};
		\else
			\node[whitev] at (A\i) {};
			\node[blackv] at (C\i) {};
			\node[whitev] at (D\i) {};
			\node[blackv] at (E\i) {};
		\fi
	}
	
	\foreach \i in {1,...,16}{
		\node[font=\small] at ($(0,0)!{\Lab/\R}!(B\i)$) {\i};
	}
	
\end{tikzpicture}

    \end{minipage}
    \hfill
\begin{minipage}{0.48\textwidth}
\centering

    \begin{tikzpicture}[
		line cap=round,
		line join=round,
		thick,
		blackv/.style={circle,draw=black,fill=black,inner sep=0pt,minimum size=4.2pt},
		whitev/.style={circle,draw=black,fill=white,inner sep=0pt,minimum size=4.2pt},
        scale=0.5
		]
		
		\def\R{6.00}   
		\def\Ra{5.29}  
		\def\Rb{4.56}
		\def\Rc{3.60}
		\def\Rd{2.64}
		\def\Re{1.68}
		\def\Lab{6.8}
		
		\foreach \i in {1,...,20}{
			\pgfmathsetmacro{\ang}{-90 - 18*(\i-1)}
			\coordinate (B\i) at (\ang:\R);
			\coordinate (A\i) at (\ang:\Ra);
			\coordinate (C\i) at (\ang:\Rb);
			\coordinate (D\i) at (\ang:\Rc);
			\coordinate (E\i) at (\ang:\Rd);
			\coordinate (F\i) at (\ang:\Re);
		}
		
		
		\draw (A1)  -- (A2);
		\draw (A3)  -- (A4);
		\draw (A5)  -- (A6);
		\draw (A7)  -- (A8);
		\draw (A9)  -- (A10);
		\draw (A11) -- (A12);
		\draw (A13) -- (A14);
		\draw (A15) -- (A16);
		\draw (A17) -- (A18);
		\draw (A19) -- (A20);
		
		\foreach \i in {1,...,20}{
			\pgfmathtruncatemacro{\j}{mod(\i,20)+1}
			\draw (C\i) -- (C\j);
			\draw (D\i) -- (D\j);
			\draw (E\i) -- (E\j);
			\draw (F\i) -- (F\j);
		}
		
		\foreach \i in {1,...,20}{
			\draw (B\i) -- (A\i);
			\draw (A\i) -- (C\i);
			\draw (C\i) -- (D\i);
			\draw (D\i) -- (E\i);
			\draw (E\i) -- (F\i);
		}
		
		\node[blackv] (O) at (0,0) {};
		\foreach \i in {1,3,5,7,9,11,13,15,17,19}{
			\draw (O) -- (F\i);
		}
		
		\foreach \i in {1,...,20}{
			\ifodd\i
			\node[whitev] at (A\i) {};
			\node[blackv] at (C\i) {};
			\node[whitev] at (D\i) {};
			\node[blackv] at (E\i) {};
			\node[whitev] at (F\i) {};
			\else
			\node[blackv] at (A\i) {};
			\node[whitev] at (C\i) {};
			\node[blackv] at (D\i) {};
			\node[whitev] at (E\i) {};
			\node[blackv] at (F\i) {};
			\fi
		}
		
		\foreach \i in {1,...,20}{
			\node[font=\small] at ($(0,0)!{\Lab/\R}!(B\i)$) {\i};
		}
		
	\end{tikzpicture}
    \end{minipage}
    
    \caption{Quivers of $\mathcal{A}_{n-1,2n}$ for $n=8$ and $n=10$.}
\label{pic:quiver for even n}
\end{figure}

\begin{figure}[ht]

    \begin{tikzpicture}[
  >={Stealth[length=4pt,width=5pt]},
  edge/.style={draw=black,line width=1.05pt,-{Stealth[length=4pt,width=5pt]},shorten >=2pt,shorten <=2pt},
  redv/.style={circle,draw=black,line width=1pt,fill=black,inner sep=0pt,minimum size=1mm},
  bluev/.style={circle,draw=black,line width=1pt,fill=cyan!60,inner sep=0pt,minimum size=1mm},scale=0.5
]
\def\rA{2.25}
\def\rB{4.50}
\def\rC{6.75}
\def\rS{8.70}
\node[bluev] (v1) at (0,0) {};
\node[redv] (v2) at ({101.25-22.5*0}:\rA) {};
\node[redv] (v32) at ({101.25-22.5*1}:\rA) {};
\node[redv] (v34) at ({101.25-22.5*2}:\rA) {};
\node[redv] (v3) at ({101.25-22.5*3}:\rA) {};
\node[redv] (v4) at ({101.25-22.5*4}:\rA) {};
\node[redv] (v5) at ({101.25-22.5*5}:\rA) {};
\node[redv] (v6) at ({101.25-22.5*6}:\rA) {};
\node[redv] (v35) at ({101.25-22.5*7}:\rA) {};
\node[redv] (v33) at ({101.25-22.5*8}:\rA) {};
\node[redv] (v7) at ({101.25-22.5*9}:\rA) {};
\node[redv] (v8) at ({101.25-22.5*10}:\rA) {};
\node[redv] (v9) at ({101.25-22.5*11}:\rA) {};
\node[redv] (v10) at ({101.25-22.5*12}:\rA) {};
\node[redv] (v11) at ({101.25-22.5*13}:\rA) {};
\node[redv] (v12) at ({101.25-22.5*14}:\rA) {};
\node[redv] (v13) at ({101.25-22.5*15}:\rA) {};
\node[redv] (v14) at ({101.25-22.5*0}:\rB) {};
\node[redv] (v36) at ({101.25-22.5*1}:\rB) {};
\node[redv] (v37) at ({101.25-22.5*2}:\rB) {};
\node[redv] (v15) at ({101.25-22.5*3}:\rB) {};
\node[redv] (v16) at ({101.25-22.5*4}:\rB) {};
\node[redv] (v17) at ({101.25-22.5*5}:\rB) {};
\node[redv] (v18) at ({101.25-22.5*6}:\rB) {};
\node[redv] (v38) at ({101.25-22.5*7}:\rB) {};
\node[redv] (v39) at ({101.25-22.5*8}:\rB) {};
\node[redv] (v19) at ({101.25-22.5*9}:\rB) {};
\node[redv] (v20) at ({101.25-22.5*10}:\rB) {};
\node[redv] (v21) at ({101.25-22.5*11}:\rB) {};
\node[redv] (v22) at ({101.25-22.5*12}:\rB) {};
\node[redv] (v23) at ({101.25-22.5*13}:\rB) {};
\node[redv] (v24) at ({101.25-22.5*14}:\rB) {};
\node[redv] (v25) at ({101.25-22.5*15}:\rB) {};
\node[redv] (v40) at ({101.25-22.5*0}:\rC) {};
\node[bluev] (v41) at ({101.25-22.5*1}:\rC) {};
\node[redv] (v42) at ({101.25-22.5*2}:\rC) {};
\node[bluev] (v26) at ({101.25-22.5*3}:\rC) {};
\node[redv] (v43) at ({101.25-22.5*4}:\rC) {};
\node[bluev] (v31) at ({101.25-22.5*5}:\rC) {};
\node[redv] (v44) at ({101.25-22.5*6}:\rC) {};
\node[bluev] (v45) at ({101.25-22.5*7}:\rC) {};
\node[redv] (v46) at ({101.25-22.5*8}:\rC) {};
\node[bluev] (v30) at ({101.25-22.5*9}:\rC) {};
\node[redv] (v47) at ({101.25-22.5*10}:\rC) {};
\node[bluev] (v29) at ({101.25-22.5*11}:\rC) {};
\node[redv] (v48) at ({101.25-22.5*12}:\rC) {};
\node[bluev] (v28) at ({101.25-22.5*13}:\rC) {};
\node[redv] (v49) at ({101.25-22.5*14}:\rC) {};
\node[bluev] (v27) at ({101.25-22.5*15}:\rC) {};
\node[bluev] (v57) at ({101.25-22.5*0}:\rS) {};
\node[bluev] (v56) at ({101.25-22.5*2}:\rS) {};
\node[bluev] (v55) at ({101.25-22.5*4}:\rS) {};
\node[bluev] (v54) at ({101.25-22.5*6}:\rS) {};
\node[bluev] (v53) at ({101.25-22.5*8}:\rS) {};
\node[bluev] (v52) at ({101.25-22.5*10}:\rS) {};
\node[bluev] (v51) at ({101.25-22.5*12}:\rS) {};
\node[bluev] (v50) at ({101.25-22.5*14}:\rS) {};

\draw[edge] (v2) -- (v1);
\draw[edge] (v1) -- (v3);
\draw[edge] (v4) -- (v1);
\draw[edge] (v1) -- (v5);
\draw[edge] (v6) -- (v1);
\draw[edge] (v7) -- (v1);
\draw[edge] (v1) -- (v8);
\draw[edge] (v9) -- (v1);
\draw[edge] (v1) -- (v10);
\draw[edge] (v11) -- (v1);
\draw[edge] (v1) -- (v12);
\draw[edge] (v1) -- (v32);
\draw[edge] (v1) -- (v33);
\draw[edge] (v34) -- (v1);
\draw[edge] (v13) -- (v2);
\draw[edge] (v2) -- (v14);
\draw[edge] (v32) -- (v2);
\draw[edge] (v3) -- (v4);
\draw[edge] (v15) -- (v3);
\draw[edge] (v3) -- (v34);
\draw[edge] (v5) -- (v4);
\draw[edge] (v4) -- (v16);
\draw[edge] (v5) -- (v6);
\draw[edge] (v17) -- (v5);
\draw[edge] (v6) -- (v18);
\draw[edge] (v35) -- (v6);
\draw[edge] (v8) -- (v7);
\draw[edge] (v7) -- (v19);
\draw[edge] (v33) -- (v7);
\draw[edge] (v8) -- (v9);
\draw[edge] (v20) -- (v8);
\draw[edge] (v10) -- (v9);
\draw[edge] (v9) -- (v21);
\draw[edge] (v10) -- (v11);
\draw[edge] (v22) -- (v10);
\draw[edge] (v12) -- (v11);
\draw[edge] (v11) -- (v23);
\draw[edge] (v12) -- (v13);
\draw[edge] (v24) -- (v12);
\draw[edge] (v14) -- (v13);
\draw[edge] (v13) -- (v25);
\draw[edge] (v25) -- (v14);
\draw[edge] (v14) -- (v27);
\draw[edge] (v14) -- (v36);
\draw[edge] (v40) -- (v14);
\draw[edge] (v16) -- (v15);
\draw[edge] (v15) -- (v26);
\draw[edge] (v37) -- (v15);
\draw[edge] (v16) -- (v17);
\draw[edge] (v43) -- (v16);
\draw[edge] (v18) -- (v17);
\draw[edge] (v17) -- (v31);
\draw[edge] (v18) -- (v38);
\draw[edge] (v44) -- (v18);
\draw[edge] (v19) -- (v20);
\draw[edge] (v30) -- (v19);
\draw[edge] (v19) -- (v39);
\draw[edge] (v21) -- (v20);
\draw[edge] (v20) -- (v47);
\draw[edge] (v21) -- (v22);
\draw[edge] (v29) -- (v21);
\draw[edge] (v23) -- (v22);
\draw[edge] (v22) -- (v48);
\draw[edge] (v23) -- (v24);
\draw[edge] (v28) -- (v23);
\draw[edge] (v25) -- (v24);
\draw[edge] (v24) -- (v49);
\draw[edge] (v27) -- (v25);
\draw[edge] (v26) -- (v42);
\draw[edge] (v26) -- (v43);
\draw[edge] (v27) -- (v40);
\draw[edge] (v49) -- (v27);
\draw[edge] (v48) -- (v28);
\draw[edge] (v49) -- (v28);
\draw[edge] (v47) -- (v29);
\draw[edge] (v48) -- (v29);
\draw[edge] (v46) -- (v30);
\draw[edge] (v47) -- (v30);
\draw[edge] (v31) -- (v43);
\draw[edge] (v31) -- (v44);
\draw[edge] (v32) -- (v34);
\draw[edge] (v36) -- (v32);
\draw[edge] (v33) -- (v35);
\draw[edge] (v39) -- (v33);
\draw[edge] (v34) -- (v37);
\draw[edge] (v38) -- (v35);
\draw[edge] (v35) -- (v39);
\draw[edge] (v37) -- (v36);
\draw[edge] (v36) -- (v41);
\draw[edge] (v42) -- (v37);
\draw[edge] (v39) -- (v38);
\draw[edge] (v38) -- (v45);
\draw[edge] (v45) -- (v39);
\draw[edge] (v39) -- (v46);
\draw[edge] (v41) -- (v40);
\draw[edge] (v40) -- (v57);
\draw[edge] (v41) -- (v42);
\draw[edge] (v42) -- (v56);
\draw[edge] (v43) -- (v55);
\draw[edge] (v45) -- (v44);
\draw[edge] (v44) -- (v54);
\draw[edge] (v46) -- (v45);
\draw[edge] (v53) -- (v46);
\draw[edge] (v52) -- (v47);
\draw[edge] (v51) -- (v48);
\draw[edge] (v50) -- (v49);
\end{tikzpicture}

    \caption{Quiver $Q_{\mathcal{CM}_n}^{e}$ of $\mathcal{CM}_n$ for even $n$, consisting entirely of central circular pairs.}
    \label{fig:quiver C for even n}
\end{figure}

Denote
$$\mathbf{x}_{Q_n^\dagger}:=\{x_I, \mathrm{ where }\ I\in\binom{[2n]}{n-1}, I=\varphi^{-1}(P;Q)\ \mathrm{(see}\ \eqref{eq: isomorphism})|\ (P;Q)\in D_n^\dagger\}.$$
Recall
$$\mathbf{x}_{\mathrm{even}}:=\{x_I|\ I\subset\{2,4,\ldots, 2n\}, |I|=n-1\}.$$

\begin{theorem} \label{thm: even case}
The pair $(\mathbf{x}_{Q_n^\dagger}\cup\mathbf{x}_{\mathrm{even}}, Q_{\mathcal{CM}_n}^{e})$ is a seed in $\mathcal{A}_{n-1,2n}$ for even $n$.
\end{theorem}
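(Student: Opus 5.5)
The plan is to realize the claimed seed as a Scott seed of a concrete reduced plabic graph, namely the graph $S_n$ of Fig.~\ref{pic:quiver for even n}. The first step is to check that $S_n$ is a $\pi_{n-1,2n}$-diagram. For reducedness we verify directly the conditions of Definition~\ref{def:minplabic}; there is no electrical network available for even $n$, so Lemma~\ref{lemma: minimal} cannot be invoked. Concretely, we trace the strands of the oriented medial graph of $S_n$. The graph consists of concentric rings of alternating colours joined by radial edges, with a central black vertex joined to every other vertex of the innermost ring and an outer ring in which only the pairs $(2t-1,2t)$ are joined. Because of this structure, each strand is a staircase of the type in Fig.~\ref{fig: stair shape}, alternating one radial and one arc step, and this makes self-intersections, closed cycles and lenses easy to exclude. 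Counting the stairs as in the proof of Theorem~\ref{thm: cluster algebras coinside} should show that the strand starting at $i$ ends at $i+n-1$, giving the permutation $\pi_{n-1,2n}$. As a consistency check, the number of faces should be $(n-1)(n+1)+1=n^2$.

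The second step is to compute Scott's labels of all faces. We use the same inductive scheme as in \S\ref{seq: Arrangement of labels}. Passing from one face to its outward neighbour crosses two zig-zag paths, so exactly one element of the label is removed and one is added. The central faces, adjacent to the central black vertex, are crossed by no odd-sourced strand, so they receive labels $\{2,4,\ldots,2n\}\setminus\{e\}$; these give exactly the variables $\mathbf{x}_{\mathrm{even}}$. We then derive removal/addition formulas analogous to \eqref{eq: right part odd}--\eqref{eq: left part even}. The parity of $n$ changes the offset $e_0^i$, and the formulas now differ between the two kinds of angles, since the inner ring has $2n$ vertices but only $n$ spokes to the centre. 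With these formulas we check that every non-central label has the form \eqref{eq: I for lemma on Lam minors}. Lemma~\ref{lemma: bijection} then associates to it a contiguous circular pair, and the chord-adjoining argument of \S\ref{sec: From labels to pairs: centrality} (nearest available odd chord, alternating sides) shows the pair lies in $D_n^\dagger$. Here, by Remark~\ref{remark: central minors}, both off-centre-by-one pairs $CM_{x,y}$ and $CM_{x+n,y}$ must appear along diagonals that avoid nodes.

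The third step is to match the combinatorics. We compute the dual quiver of $S_n$ with Scott's orientation rule (clockwise around black vertices) and compare it, vertex by vertex under $\varphi$ of \eqref{eq: isomorphism}, with $Q^{e}_{\mathcal{CM}_n}$ of Fig.~\ref{fig:quiver C for even n}. A counting check is that $|D_n^\dagger|$ plus the $n$ even labels, together with the boundary faces, should account for all $n^2$ faces. Once the quiver and labels agree, Scott's theorem shows that the extended cluster $\mathbf{x}_{Q_n^\dagger}\cup\mathbf{x}_{\mathrm{even}}$ with quiver $Q^{e}_{\mathcal{CM}_n}$ is a seed of $\mathcal{A}_{n-1,2n}$. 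If $S_n$ turns out not to be literally the right graph, we would instead reach the seed from it by a few square moves near the boundary, where the blue frozen-like vertices of Fig.~\ref{fig:quiver C for even n} sit.

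The main obstacle is the second step. For odd $n$ the formula for the source at step $j$ came from a uniform stair count, but for even $n$ the staircases in adjacent angles have different lengths near the centre and near the outer ring. So the label recursion splits into several cases modulo $2$ and modulo the angle type, and verifying centrality (rather than merely contiguity) in each case is delicate. We would first work out $n=4$ and $n=6$ explicitly to guess the case-split formulas, and then prove them by the same induction on depth.
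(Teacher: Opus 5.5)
\textbf{There is a genuine gap in Steps 2 and 3.} Your Step 1 (a direct check, by counting stairs and excluding lenses, that $S_n$ is reduced with strand permutation $\pi_{n-1,2n}$) agrees with the paper. Steps 2 and 3, however, rest on a claim that is false for even $n$.

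\emph{The Scott labels of $S_n$ are not all central.} Via Lemma~\ref{lemma: bijection} they correspond to pairs in $CM_n^\dagger$. Along a central diagonal that avoids the nodes, the ``nearest available chord'' procedure lands on one of the two pairs that are off-center by one. By Remark~\ref{remark: central minors}, only one of these two lies in $D_n$. The other, e.g.\ $CM_{x+n,y}$, is not in $D_n^\dagger$ either: every pair of $D_n$ with $|D|=2$ has $D=-2$, $CM_{x+n,y}$ also has $D=-2$, and $s$ flips the sign of $D$.

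You yourself note that both $CM_{x,y}$ and $CM_{x+n,y}$ must appear. That is incompatible with ``the pair lies in $D_n^\dagger$''. Your face count cannot detect the problem, since $S_n$ has $n^2$ distinct labels either way.

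Consequently Step 3 fails too. The dual quiver of $S_n$ is a plain grid. The quiver $Q^{e}_{\mathcal{CM}_n}$ of Fig.~\ref{fig:quiver C for even n} instead has diagonal arrows between faces of $S_n$ that share only a vertex, and it lacks two of the arrows at the center. So the seed in the statement is not the Scott seed of $S_n$. Your fallback (``a few square moves near the boundary, where the blue vertices sit'') does not repair this. The blue vertices are frozen, and the faces that must change are interior faces spread over half of every ring.

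\textbf{The missing idea is an explicit mutation step, as in the paper.}
\begin{enumerate}
\item Show that the $D$-statistic of the labels of $S_n$ alternates between $0$ and $\pm 2$ in a chessboard pattern, both along each ring and along each left or right part of an angle. Then every face labeled by an element of $(CM_n\setminus D_n)^\dagger$ has $|D|=2$, and its four neighbours have $D=0$, hence lie in $D_n^\dagger$. In particular these faces are pairwise non-adjacent, so you can mutate at all of them.
\item At each such face the exchange relation is Scott's short Pl\"ucker relation. After identifying labels with circular pairs, this is the three-term Grassmann--Pl\"ucker relation on circular minors. In the statistics it exchanges $(D-2,T,k)$ with $(D+2,T,k)$. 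So it replaces the non-central off-center pair by the central pair off-center by one on the other side of the same diagonal.
\item Prove the global statement that these faces fill a semicircle of each ring, consistently across rings. Then the mutated quiver differs from that of $S_n$ only in two ways: a pair of centrally symmetric parts of angles is filled with diagonals, and two symmetric arrows at the center disappear. This gives exactly $Q^{e}_{\mathcal{CM}_n}$, with $(P;Q)$ and $(\widetilde{Q};\widetilde{P})$ at antipodal faces.
\end{enumerate}
Without this step your plan produces a seed of $\mathcal{A}_{n-1,2n}$ labeled by $CM_n^\dagger$, not the one in the statement.
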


\begin{proof}[Proof of the case $n=4m+2$.]

We first check that the strand permutation of $S_n$ is indeed $\pi_{n-1,2n}$ and that $S_n$ is reduced. There are two types of zig-zag paths on $S_n$: starting at odd and even boundary vertices. Those starting at odd vertices go clockwise and have a middle vertex on the most central circle. The part before the middle vertex consists of the first stair and then as many stairs as there are circles in $S_n$. So the total number of stairs in the first part is $2m+1$. Each stair contributes one to $\pi(i)-i$. The second part consists of as many stairs as there are circles in $S_n$. So the total number of stairs in the second part is $2m$. Thus, we get that $\pi:i\mapsto i+(2m+(2m+1))=i+(n-1)$ for an odd $i$. The case of zig-zag paths which start at even boundary vertex is similar with the only difference that they go counter clockwise and their middle vertex is the center of $S_n$. We now prove that there are no oriented lenses. Zig-zag paths of the same type cannot intersect by the same (first or second) parts more than once since they are differ by an even shift; intersections by different parts do not create oriented lenses since if two zig-zag paths of the same type intersect by different parts then these parts go in opposite directions. Zig-zag paths of different types go in opposite directions, so they also do not create oriented lenses. Thus, $S_n$ is reduced.

Similarly to Section \ref{seq: Arrangement of labels} the labeling sets of the faces of $S_{4m+2}$ are given by the formulas \eqref{eq: label of central cell-2}--\eqref{eq: left part even 4m+2} below. Consider an angle $i,i-2$ for an odd $i\in[2n]$. Throughout this proof all indices are understood modulo $2n$, with representatives in $[2n]$.

Define:
    $$e_0^i=e^{r,i}_0=e^{l,i}_0:=i+2m+1.$$ 
    
    The labeling set corresponding to the most central face is the following: 
    \begin{equation} \label{eq: label of central cell-2}
        I^{i,r}_0=I^{i,l}_0:=\{2j|\ j\in[n]\}\setminus\{e_0^i\}.
    \end{equation}

    With each step from the center to the boundary we change the labeling set by the following rule: 

    For the right part:  
 \begin{equation} \label{eq: right part odd 4m+2}
    e^{r,i}_{2j-1}=e^{r,i}_{0}-2j,\ \ \ I^{r,i}_{2j-1}=(I^{r,i}_{2j-2}\setminus \{e^{r,i}_{2j-1}\})\cup \{\pi(e^{r,i}_{0})+2(j-1)\},
    \end{equation}

    \begin{equation} \label{eq: right part even 4m+2}
    e^{r,i}_{2j}=e^{r,i}_{0}+2j,\ \ \ I^{r,i}_{2j}=(I^{r,i}_{2j-1}\setminus \{e^{r,i}_{2j}\})\cup \{\pi(e^{r,i}_{0})-2j\}.
    \end{equation}
    
    For the left part:
    \begin{equation} \label{eq: left part odd 4m+2}
    e^{l,i}_{2j-1}=e^{l,i}_{0}+2j,\ \ \ I^{l,i}_{2j-1}=(I^{l,i}_{2j-2}\setminus \{e^{l,i}_{2j-1}\})\cup \{\pi(e^{l,i}_{0})-2(j-1)\},
    \end{equation}
    
    \begin{equation} \label{eq: left part even 4m+2}
    e^{l,i}_{2j}=e^{l,i}_{0}-2j,\ \ \ I^{l,i}_{2j}=(I^{l,i}_{2j-1}\setminus \{e^{l,i}_{2j}\})\cup \{\pi(e^{l,i}_{0})+  2j\}.
    \end{equation}


    Similarly to Sections \ref{sec: From labels to pairs: circularity} and \ref{sec: From labels to pairs: centrality} it can be proven that all labeling sets of the faces of $S_{4m+2}$ correspond by \eqref{eq: isomorphism} to the circular pairs $(P;Q)\in CM_n^\dagger$. The argument from Section \ref{sec: Relation to Alman et al cluster algebra} proves that circular pairs assigned to a face and four its neighbors are related by the equation \eqref{eq: Grassmann Plücker}.

    We now prove that along the second central circle labeling sets assigned to faces interlace among those whose $D$-statistic equals to $0$ and to $2$. Consider such a face lying on the left part of the angle $i,i-2$ for an odd $i$, and the two its neighbors; one lies on the right part of the same angle and the other on the right part of the angle $i+2,i$, see Fig. \ref{fig: D interlace}.

    \begin{figure}[ht]
\centering
\begin{tikzpicture}
    \filldraw[fill=white] (-162:3) node [left] {$i$} ;
    \filldraw[fill=white] (-90:3) node [below] {$i-2$};
    \filldraw[fill=white] (-198:3) node [left] {$i+1$};
    
    \draw (-90:0) -- node [left] {}(-90:3);
    \draw (-162:0) -- node [left] {}(-162:3);
    \draw (-126:0.75) -- node [left] {}(-126:3);
    \draw (-198:0.75) -- node [left] {}(-198:3);


    \draw (162:0.75) arc (162:270:0.75);
    \draw (162:1.8) arc (162:270:1.8);

     \filldraw[fill=white] (162:1.8) node [right] {} circle (2pt);
      \filldraw[fill=black] (162:0.75) node [right] {} circle (2pt);
    \filldraw[fill=black] (0:0) node [right] {} circle (2pt);
    \filldraw[fill=white] (-90:0.75) node [right] {} circle (2pt);
    \filldraw[fill=black] (-90:1.8) node [right] {} circle (2pt);
    \filldraw[fill=white] (-162:0.75) node [right] {} circle (2pt);
    \filldraw[fill=black] (-162:1.8) node [right] {} circle (2pt);
    \filldraw[fill=black] (-126:0.75) node [right] {} circle (2pt);
    \filldraw[fill=white] (-126:1.8) node [right] {} circle (2pt);

    \draw (-144:3) node {left part};
    \draw (-108:3) node {right part};

    \filldraw[fill=white] (-144:1.25) node [] {$I_1^{l,i}$};
    \filldraw[fill=white] (-108:1.25) node [] {$I_1^{r,i}$};
    \filldraw[fill=white] (-180:1.25) node [] {$I_1^{r,i+2}$};

    
\end{tikzpicture} 
    \caption{$D(P;Q)=0$ and $D(P;Q)=2$ interlace among the 2'nd most central circle}
    \label{fig: D interlace}
\end{figure}

Using formulas \eqref{eq: right part odd 4m+2}--\eqref{eq: left part even 4m+2} we write down the corresponding labeling sets explicitly:
$$I_1^{l,i}=([2n]\setminus\{i+2m+1,i+2m+3\})\cup\{\pi(i+2m+1)\},$$
$$I_1^{r,i}=([2n]\setminus\{i+2m+1,i+2m-1\})\cup\{\pi(i+2m+1)\},$$
$$I_1^{r,i+2}=([2n]\setminus\{i+2m+3,i+2m+1\})\cup\{\pi(i+2m+3)\}.$$

By  Lemma \ref{lemma: bijection} we have three corresponding circular pairs.
It is easy to check that $D$-statistic changes while moving from one circular pair to another.

Next, note that the argument from Section \ref{sec: From labels to pairs: centrality} actually shows that $D$-statistic changes as we move along any fixed left (or right) part of any angle. This finishes the proof of the fact that $D$ - statistic changes in a chessboard type manner among the faces of $S_{4m+2}$.

Let $(P;Q)$ be a circular pair assigned to some face such that $(P;Q)\in(CM_n\setminus D_n)^\dagger$. Since $(P;Q)\in(CM_n\setminus D_n)^\dagger$ we have $D(P;Q)=2$. Thus, by the chessboard type manner of $D$-statistic, we conclude that its four neighbors have $D$-statistic equals $0$. If a circular pair $(P';Q')\in CM_n^\dagger$ is such that $D(P';Q')=0$, then $(P';Q')\in D_n^\dagger$. Taking all together we conclude that all four neighbors of any face with $(P;Q)\in(CM_n\setminus D_n)^\dagger$ belong to $D_n^\dagger$. 

We next prove that a chessboard structure is not only local but also global. Namely, that there is a semicircle in $S_{4m+2}$ such that all appearing circular pairs from $(CM_n\setminus D_n)^\dagger$ are concentrated there, and they are arranged there in a chessboard type manner. First, note that from a cyclic symmetry of the formulas \eqref{eq: central minors} and \eqref{eq: right part odd 4m+2}--\eqref{eq: left part even 4m+2} we conclude that inside the $k$-th circle of $S_{4m+2}$ we have a semicircle that contains all appearing circular pairs of size $k-1$ from $(CM_n\setminus D_n)^\dagger$. Consider this semicircle and the complement one. In one of them, circular pairs from $D_n^\dagger$ with $D$-statistic $0$ interlace with circular pairs from $(CM_n\setminus D_n)^\dagger$; and in the other one, circular pairs from $D_n^\dagger$ with $D$-statistic $0$ interlace with circular pairs from $D_n^\dagger$ with $D$-statistic $2$. It remains to prove why these semicircles are attached to each other for different $k$. 

First, note that from Section \ref{sec: From labels to pairs: centrality} it follows that all circular pairs appearing along any part (left or right) of any angle correspond to the same central diagonal. Moreover, due to cyclicity of the formulas \eqref{eq: right part odd 4m+2}--\eqref{eq: right part even 4m+2} for any part of any angle the centrally symmetric part of the angle in $S_{4m+2}$ contains circular pairs with the same central diagonal. It then follows that semicircles corresponding to $(CM_n\setminus D_n)^\dagger$ are attached either to each other or to the complements, i.e. they can not be attached by nontrivial intersection which is less than a half of the circle. We next prove that actually the first alternative holds. In order to do that we prove that the part of the angle containing $CM_{n+2,1}$ consists of circular pairs symmetric to the circular pairs $\{CM_{2,y}\}_{1\le y\le n/2}$. Let us compare \eqref{eq: central minors} written for $x=2$ and some $y, y+1$.

For $y$ we get:
$$a_y:=\left\lfloor\frac{2-y}{2}\right\rfloor\ \ \ \ \ b_y:=\left\lfloor\frac{-y+n+1}{2}\right\rfloor,$$

for $y+1$ we get:
$$a_{y+1}:=\left\lfloor\frac{-y+1}{2}\right\rfloor\ \ \ \ \ b_{y+1}:=\left\lfloor\frac{-y+n}{2}\right\rfloor,$$
that is $a_{y+1}=a_y-1,\ b_{y+1}=b_y$ for an even $y$; and $a_{y+1}=a_y,\ b_{y+1}=b_y-1$ for an odd $y$. This rule precisely corresponds to the rule for the change of labeling sets appearing along the angle which is given by formulas \eqref{eq: right part odd 4m+2}--\eqref{eq: right part even 4m+2}, see Section \eqref{sec: From labels to pairs: centrality} for further details. Thus the part of the angle containing $CM_{n+2,1}$ does not contain any circular pairs from $(CM_n\setminus D_n)^\dagger$, that proves the assertion on how the semicircles attachment looks like.


Finally, we explain how to construct an initial seed whose variables are all labeled by $D_n^\dagger\cup(\emptyset;\emptyset)$. The starting point is a seed in $\mathcal{A}_{n-1,2n}$ given by the plabic graph $S_{4m+2}$ whose faces are labeled by elements of $CM_n^\dagger$ using \eqref{eq: isomorphism}. We then mutate in all faces which are labeled by elements of $(CM_n\setminus D_n)^\dagger$. It is straightforward from \eqref{eq: Grassmann Plücker} that these mutations replace elements of $(CM_n\setminus D_n)^\dagger$, which are off-center by one from a central diagonal, 
by a central pair off-center by one in the other direction (recall Remark \ref{remark: central minors} and that one of those is necessarily central). The resulting graph $Q_{\mathcal{CM}_n}^{e}$ differs from $S_{4m+2}$ as follows: two symmetric edges adjacent to the center disappear, and a pair of symmetric parts of symmetric angles becomes filled with diagonals, see Fig. \ref{fig:quiver C for even n} for an illustration. Moreover, in the resulting graph the central pairs $(P;Q)$ and $(\widetilde{Q};\widetilde{P})$ are assigned to centrally symmetric faces. This follows from the cyclic symmetry of formulas \eqref{eq: central minors} and the fact that a mutation of the form \eqref{eq: Grassmann Plücker} flips two pairs parallel to the same central direction but does not flip rows with columns.
\end{proof}

\begin{proof}[Proof of the case $n=4m$.]
    The proof of this case is similar to the proof of the case $n=4m+2$ with the only difference in the formulas for labeling sets of faces of $S_{4m}$. The labeling sets of the faces of $S_{4m+2}$ are given by the formulas \eqref{eq: label of central cell-2 4m}--\eqref{eq: left part even 4m} below. Consider an angle $i,i-2$ for an odd $i\in[2n]$.

Define:
    $$e_0^i=e^{r,i}_0=e^{l,i}_0:=i+2m-1.$$ 
    
    The labeling set corresponding to the most central face is the following: 
    \begin{equation} \label{eq: label of central cell-2 4m}
        I^{i,r}_0=I^{i,l}_0:=\{2j|\ j\in[n]\}\setminus\{e_0^i\}.
    \end{equation}

    With each step from the center to the boundary we change the labeling set by the following rule: 

    For the right part:  
 \begin{equation} \label{eq: right part odd 4m}
    e^{r,i}_{2j-1}=e^{r,i}_{0}-2j,\ \ \ I^{r,i}_{2j-1}=(I^{r,i}_{2j-2}\setminus \{e^{r,i}_{2j-1}\})\cup \{\pi(e^{r,i}_{0})+2j\},
    \end{equation}

    \begin{equation} \label{eq: right part even 4m}
    e^{r,i}_{2j}=e^{r,i}_{0}+2j,\ \ \ I^{r,i}_{2j}=(I^{r,i}_{2j-1}\setminus \{e^{r,i}_{2j}\})\cup \{\pi(e^{r,i}_{0})-2(j-1)\}.
    \end{equation}
    
    For the left part:
    \begin{equation} \label{eq: left part odd 4m}
    e^{l,i}_{2j-1}=e^{l,i}_{0}+2j,\ \ \ I^{l,i}_{2j-1}=(I^{l,i}_{2j-2}\setminus \{e^{l,i}_{2j-1}\})\cup \{\pi(e^{l,i}_{0})+2-2(j-1)\},
    \end{equation}
    
    \begin{equation} \label{eq: left part even 4m}
    e^{l,i}_{2j}=e^{l,i}_{0}-2j,\ \ \ I^{l,i}_{2j}=(I^{l,i}_{2j-1}\setminus \{e^{l,i}_{2j}\})\cup \{\pi(e^{l,i}_{0})+2+  2j\}.
    \end{equation}
\end{proof}

In \cite[Definition 6.1.4]{ALT} a different set of basis circular pairs was chosen, those are called there {\itshape diametric solid minors}, and we  denote them $D_n^\bullet$. A circular pair whose $D$-statistic is $0$ is a diametric solid pair iff it is central. However, for circular pairs with $D$-statistic equal to $2$ these notions do not coincide. The difference arises since along some central diagonals there are two pairs in $CM_n$. However, a mutation in such a pair on the quiver given by $S_{n}$ gives the other pair. Thus, we get the corollary below.

Denote
$$\mathbf{x}_{(Q^\bullet_n)^\dagger}:=\{x_I, \mathrm{ where }\ I\in\binom{[2n]}{n-1}, I=\varphi^{-1}(P;Q)\ \mathrm{(see}\ \eqref{eq: isomorphism})|\ (P;Q)\in (D_n^\bullet)^\dagger\}.$$

\begin{corollary} \label{cor: Alman}
    Let $n$ be even. The set $\mathbf{x}_{(Q_n^\bullet)^\dagger}\cup\mathbf{x}_{\mathrm{even}}$ is a cluster in $\mathcal{A}_{n-1,2n}$. 
\end{corollary}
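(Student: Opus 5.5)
We start from the seed of $\mathcal{A}_{n-1,2n}$ given by the plabic graph $S_n$, as in the proof of Theorem \ref{thm: even case}. There we showed that its faces are labeled, via \eqref{eq: isomorphism}, by the pairs of $CM_n^\dagger$ together with the even labels $\mathbf{x}_{\mathrm{even}}$. We also showed that the pairs in $(CM_n\setminus D_n)^\dagger$ sit in a chessboard pattern, so all four neighbours of each such face lie in $D_n^\dagger$, and hence in $(D_n^\bullet)^\dagger$, since pairs with $D$-statistic $0$ are simultaneously central and diametric solid. The plan is to compare $D_n^\bullet$ with $CM_n$ directly instead of passing through $D_n$. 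For each central diagonal not passing through a node there are exactly two pairs in $CM_n$, off-center by one in opposite directions; $D_n$ picks one of them by \eqref{eq: central minors}, and $D_n^\bullet$ picks one by \cite[Definition 6.1.4]{ALT}. For the diagonals through nodes there is only one pair, with $D$-statistic $0$, and all three sets agree on it.

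The first step is to check that $(D_n^\bullet)^\dagger\subset CM_n^\dagger$. A diametric solid pair with $D$-statistic $2$ is a contiguous pair off-center by one along a central diagonal (cf.\ Remark \ref{remark: central minors}), so it is one of the two members of $CM_n$ on that diagonal. It follows that every element of $(D_n^\bullet)^\dagger$ labels some face of $S_n$, and the only ones not already present are the partners of the pairs in $CM_n^\dagger\setminus(D_n^\bullet)^\dagger$.

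Next, let $F$ be a face of $S_n$ whose pair lies in $CM_n^\dagger\setminus(D_n^\bullet)^\dagger$. Such a pair has $D$-statistic $2$, so by the chessboard structure its four neighbours have $D$-statistic $0$. The faces with $D$-statistic $2$ are pairwise non-adjacent, so mutations at them commute and can be carried out simultaneously in any order. The exchange relation at $F$ is the short Plücker relation of \cite{Sc}. By Proposition \ref{circular minors via Plücker coordinates 0} it becomes the relation \eqref{eq: Grassmann Plücker}. As argued at the end of the proof of Theorem \ref{thm: even case}, this relation replaces the pair off-center by one on its diagonal with the pair off-center by one in the other direction, namely the other element of $CM_n$ on the same diagonal. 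This is exactly the diametric solid one. Doing this at every such face yields a seed whose cluster is $\mathbf{x}_{(Q_n^\bullet)^\dagger}\cup\mathbf{x}_{\mathrm{even}}$, and the same bookkeeping applies with $\dagger$ since $s$ commutes with the construction.

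The main obstacle is verifying that the mutated variable is a Plücker coordinate of the form \eqref{eq: I for lemma on Lam minors} corresponding to the partner pair, and not some other cluster variable. I would settle this by reading off the new label from Scott's square move: the new face label is $(I\setminus\{a\})\cup\{b\}$, where $a,b$ come from the two neighbours. Using the explicit formulas \eqref{eq: right part odd 4m+2}--\eqref{eq: left part even 4m+2} (and their $4m$ analogues), I would then check that this set corresponds under Lemma \ref{lemma: bijection} to the reflected contiguous pair. A second point needing care is whether $F$ is a quadrilateral face, which a square move requires. Since $F$ has exactly four neighbours, all in $D_n^\dagger$, I expect this to follow from the graph structure of $S_n$ shown in Fig.\ \ref{pic:quiver for even n}.
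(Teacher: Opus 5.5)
You follow the paper's route: start from the seed of $S_n$ and mutate at the pairwise non-adjacent off-center faces whose pair is not diametric solid. However, the step that identifies the new variable is wrong, and the corollary rests on that step.

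The non-even faces of $S_n$ number $n^2-n=|CM_n|$. So they cannot carry all of $CM_n^\dagger$, which is strictly larger: the transpose $s(P;Q)$ of an off-center element of $CM_n$ is not in $CM_n$. On every off-center diagonal, \emph{both} members of $CM_n$ of a given size already label faces of $S_n$, one in each of the two semicircles described in the proof of Theorem \ref{thm: even case}.

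Hence the mutation at $F$ cannot produce ``the other element of $CM_n$ on the same diagonal'': that variable is already in the cluster, and a mutation never returns a variable of the current cluster. What it produces is the transpose of that element. This has the same non-crossing partition with rows and columns exchanged, and a different Plücker label.

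Concretely, for $n=4$ the formulas \eqref{eq: right part odd 4m} give the faces $\{1,6,8\}$ and $\{2,4,5\}$ of $S_4$, carrying $(\overline{1};\overline{2})$ and $(\overline{3};\overline{4})$. The square move at $\{2,4,5\}$ has neighbours $\{2,4,6\},\{4,5,6\},\{3,4,5\},\{2,3,4\}$, so $S=\{4\}$. It gives $\{3,4,6\}$, which corresponds under Lemma \ref{lemma: bijection} to $(\overline{2};\overline{1})$, not to $(\overline{1};\overline{2})$.

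This is what the paper means by a mutation that ``does not flip rows with columns''. It is also exactly why the final cluster is $s$-closed: each selected off-center partition ends up realized by both $(P;Q)$ and $(\widetilde{Q};\widetilde{P})$. For the same reason, your intermediate claim that every element of $(D_n^\bullet)^\dagger$ already labels a face of $S_n$ is false. The missing transposes are precisely what the mutations create.

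Routing the identification through Proposition \ref{circular minors via Plücker coordinates 0} cannot detect this. On the image of Lam's embedding, $x_{\varphi^{-1}(P;Q)}$ and $x_{\varphi^{-1}(\widetilde{Q};\widetilde{P})}$ restrict to the same minor, so the induced relation \eqref{eq: Grassmann Plücker} only pins down the partition, not the Plücker label.

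The square-move label computation you propose at the end is the right tool, and done correctly it gives the transposed partner. Replace ``the other element of $CM_n$'' and the vague ``same bookkeeping with $\dagger$'' by this explicit identification. With that change, the simultaneous mutations do yield the cluster $\mathbf{x}_{(Q_n^\bullet)^\dagger}\cup\mathbf{x}_{\mathrm{even}}$.
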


The natural question arises whether the quiver corresponding to the cluster in Corollary \ref{cor: Alman} coincides with the one given in \cite[Section 6.2]{ALT}. However, the construction of the cluster algebra $\mathcal{CM}_n$ in \cite[Section 6.2]{ALT} contains a typo in the case of even $n$. Namely, \cite[Definition 6.2.1]{ALT} produces vertices of odd degree. However, \cite[Fig. 5]{ALT} corresponding to $n=8$ is drawn correctly (i.e., in particular, it does not agree with \cite[Definition 6.2.1]{ALT}), and we have verified that in this case the two quivers: one obtained from $S_{n}$ by mutations in $(CM_n\setminus D_n^\bullet)^\dagger$ and the other given by \cite[Fig. 5]{ALT} coincide.

\begin{remark}
The proof of Theorem \ref{thm: even case} provides a construction of the initial seed of the cluster structure $\mathcal{CM}_n$ on central circular minors for even $n$, see Fig. \ref{fig:quiver C for even n}.

Following the lines of Section \ref{sec:LMCM} we define initial seed of a Laurent phenomenon algebra $\mathcal{LM}_n$ for even $n$ as a plabic graph obtained by gluing antipodal vertices of $S_n$, or equivalently, as a plabic graph obtained by holding $S_n$ along a diameter (see Fig. \ref{fig:quiver C for even n}).
\end{remark}

\subsection{Positivity tests for circular minors} \label{sec: Positivity tests for circular minors}

\begin{definition} 
Let $M=(x_{ij})_{i,j\in[n]}$ be a symmetric matrix whose row sums are zero. Define the following $(n\times 2n)$-matrix:
\begin{equation} \label{phi_n}
\Phi_n(M)=\left(\begin{matrix}
x_{11} & 1 & -x_{12} & 0 & x_{13} & 0 & \ldots & (-1)^n \\
-x_{21} & 1 & x_{22} & 1 & -x_{23} & 0 & \ldots & 0 \\
x_{31} & 0 & -x_{32} & 1 & x_{33} & 1 & \ldots & 0 \\
\vdots & \vdots & \vdots & \vdots & \vdots & \vdots & \ddots & \vdots &  
\end{matrix}\right).
\end{equation}
Denote by $\Phi'_n$ the matrix obtained from   $\Phi_n$ by deleting the first row.
\end{definition}

\begin{lemma} \label{lem:cir-min-gen}
Consider a point $\Phi_n\in\mathrm{Gr}(n-1,2n) $ of the form \eqref{phi_n}, then for any contiguous circular  pair $(P; Q), \ |P|=k$ the following holds:
    $$(-1)^k\det M(P;Q)=\Delta_{I}(\Phi'_n),$$
    here $I$ has the form \eqref{eq: I for lemma on Lam minors}.
\end{lemma}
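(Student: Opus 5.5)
Since $\Phi_n(M)$ is literally the matrix $\Omega_n(e)$ of Theorem \ref{th: main_gr}, with the response matrix entries replaced by the entries of $M$, the plan is to split into two cases. First we treat $M$ a response matrix of a well-connected network and transfer the identity from Proposition \ref{circular minors via Plücker coordinates 0}. Then we extend to arbitrary symmetric $M$ with zero row sums by a polynomial identity argument.

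Concretely, the key steps are:
\begin{enumerate}
\item Suppose $M=M_R(e)$ for some $e\in E_n$. Then $\Phi'_n(M)=\Omega'_n(e)$. Proposition \ref{circular minors via Plücker coordinates 0} gives $(-1)^k\det M(P;Q)=\Delta_I(\Omega'_n(e))$ with $I$ as in \eqref{eq: I for lemma on Lam minors}, which is exactly the claim in this case.
\item Both sides of the claimed identity are polynomials in the entries $x_{ij}$. On the left, $\det M(P;Q)$ is a minor. On the right, $\Delta_I(\Phi'_n)$ is an $(n-1)\times(n-1)$ minor of a matrix whose entries are $\pm x_{ij}$ or constants.
\item Parametrize symmetric matrices with zero row sums by the off-diagonal entries $x_{ij}$, $i<j$, with $x_{ii}=-\sum_{j\ne i}x_{ij}$. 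This identifies the space with $\mathbb R^{\binom n2}$, and both sides become polynomials on it.
\item By Theorem \ref{Set of response matrices all network well conected} (together with Theorem \ref{gen-theorem}), the response matrices of well-connected networks are exactly the symmetric zero-row-sum matrices that are circular totally positive, i.e. satisfy the strict inequalities.
\item This set is nonempty, since the standard network provides an example. It is also open, because it is cut out by finitely many strict polynomial inequalities.
\item Two polynomials agreeing on a nonempty open subset of $\mathbb R^{\binom n2}$ are equal, so the identity holds for every symmetric $M$ with zero row sums.
\end{enumerate}

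The main obstacle is checking that the sign and normalisation conventions match exactly. One must confirm that $\Delta_I(\Phi'_n)$ means the minor of $\Phi'_n$ on the columns $I$ in increasing order, as in Theorem \ref{th: main_gr}. One must also confirm that the normalisation $\Delta_I/L_{unc}$ in Theorem \ref{Emb} is consistent with that choice, i.e. that the all-even minors of $\Omega'_n$ equal $1$ (cf. Remark \ref{rem:Lunc}). I would verify this directly: deleting the first row and taking the even columns $2,4,\dots,2n-2$ leaves an upper unitriangular block of ones. This confirms the normalisation, so no extra scalar or sign appears.

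If one prefers to avoid the density argument, an alternative is to prove the lemma directly. This would go by Laplace expansion of $\Delta_I(\Phi'_n)$ along the even columns in $J_l\cup J_r$ and inside $\widehat P$, reducing to the odd columns of $\widehat P$ and the rows indexed by $Q$. However, that bookkeeping is heavier, so the density argument is the intended route.
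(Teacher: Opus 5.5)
Your proposal is correct and follows the paper's proof. Both arguments note that the two sides are polynomials in the $x_{ij}$, get the identity on the open set of well-connected response matrices from Proposition~\ref{circular minors via Plücker coordinates 0}, and extend it to all symmetric zero-row-sum $M$. The only differences are cosmetic: you justify openness via the strict-inequality characterization in Theorem~\ref{Set of response matrices all network well conected} rather than via Theorem~\ref{Image of the Lam's embedding surjectivity}, and you add a check that $\Delta_{246\ldots 2n-2}(\Phi'_n)=1$. That check is harmless but already guaranteed by Theorem~\ref{Emb}.
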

\begin{proof}
  Since $\det M(P;Q)$ and $\Delta_{I}(\Phi'_n)$ are polynomial functions in variables $\{ x_{ij} \}$, it is sufficient to prove this identity only when  $\{ x_{ij} \}$ belong to some open set. Note that, according to Theorem \ref{Image of the Lam's embedding surjectivity}, the set of response matrices of well-connected networks defines an open set in   $\{ x_{ij} \}$, and due to Proposition \ref{circular minors via  Plücker coordinates 0} the identity holds. That ends the proof. 
\end{proof} 

The following corollary is the circular analogue of the relation between total positivity of matrices and positivity in the Grassmannian, see Section \ref{sec: Motivation}:

\begin{corollary} \label{cor: positivity tests}
The seed $(\tilde{\mathbf{y}}_{D_n^{\dagger}};Q_{\mathcal{CM}_n})$ of the cluster algebra $\mathcal{CM}_n$ gives a test for circular total positivity of $M$ which coincides with the test for positivity of $\Phi_n(M)$ given by the initial seed $(\tilde{\mathbf{x}};Q_{\mathcal{A}_{n-1,2n}})$ in the cluster algebra $\mathcal{A}_{n-1,2n}$.
\end{corollary}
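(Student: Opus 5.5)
The plan is to transport Scott's positivity test (Corollary \ref{pos-test-gen-sc}) to circular minors using the identification of Theorem \ref{thm: cluster algebras coinside} together with Lemma \ref{lem:cir-min-gen}. First I will fix the initial seed $(\tilde{\mathbf{x}};Q_{\mathcal{A}_{n-1,2n}})$ given by the $\pi_{n-1,2n}$-diagram $S_n$ (generalized Temperley's trick of $G_n$ for odd $n$, the graph of Fig. \ref{pic:quiver for even n} for even $n$). Its face labels are the subsets $I$ of the form \eqref{eq: I for lemma on Lam minors}, together with the $n$ central labels $I\subset\{2,4,\ldots,2n\}$. By the proof of Theorem \ref{thm: cluster algebras coinside}, resp. Theorem \ref{thm: even case}, the map $\varphi$ of \eqref{eq: isomorphism} sends the non-even labels bijectively onto $D_n^\dagger$, and the even labels all go to $(\emptyset;\emptyset)$.

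Next I evaluate both sides on $\Phi_n(M)$, or rather on its row space, which is spanned by the rows of $\Phi'_n(M)$. For each non-even face label $I$ with $\varphi(x_I)=y_{(P;Q)}$, Lemma \ref{lem:cir-min-gen} gives $\Delta_I(\Phi'_n)=(-1)^k\det M(P;Q)$. For an even label $I$, the submatrix of $\Phi'_n$ on the even columns is the identity-like band of $1$'s. I will check directly that $\Delta_I(\Phi'_n)=1$, matching $\Delta_I/L_{unc}=1$ from Remark \ref{rem:Lunc}. This value is the trivialized variable $y_{(\emptyset;\emptyset)}=1$. I would prove it by the same density argument as Lemma \ref{lem:cir-min-gen}: the identity is polynomial in the $x_{ij}$, and on response matrices of well-connected networks it follows from Theorem \ref{Emb} and Example \ref{bulcon-ex}. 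Hence the set of values $\{\Delta_K(\Phi'_n(M))\}_{K\in\mathcal F(S_n)}$ is exactly $\{1\}\cup\{(-1)^k\det M(P;Q):(P;Q)\in D_n^\dagger\}$. Since $M$ is symmetric, $\det M(P;Q)=\det M(\widetilde Q;\widetilde P)$, so this set coincides with the variables of $\tilde{\mathbf{y}}_{D_n^\dagger}$ specialized at $M$.

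Finally I compare the two tests. If the signed central minors are positive, then every $\Delta_K$, $K\in\mathcal F(S_n)$, is positive. Corollary \ref{pos-test-gen-sc} then puts the row space of $\Phi'_n(M)$ in $\mathrm{Gr}_{>0}(n-1,2n)$. I also need it in $\mathbb PH$, which I expect to hold because $M$ ranges over symmetric zero-row-sum matrices; by density from $\mathcal{L}(E_n)$ one has $\mathrm{rowspace}(\Phi'_n(M))\in\mathrm{Gr}(n-1,2n)\cap\mathbb PH$. Theorem \ref{Image of the Lam's embedding surjectivity} then makes $M$ the response matrix of a well-connected network, and Theorem \ref{Set of response matrices all network well conected} gives circular total positivity of $M$. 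Conversely, positivity of the Plücker coordinates in the seed of $\mathcal A_{n-1,2n}$ is literally positivity of the central minors, by the previous paragraph. So the two conditions are the same set of inequalities, and by Theorem \ref{thm: cluster algebras coinside} their exchange structures agree as well.

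The main obstacle is the step placing $\Phi'_n(M)$ in $\mathbb PH$ (equivalently in Lam's image) for an arbitrary symmetric zero-row-sum $M$, and making sure its rank is $n-1$ so that the Plücker vector is nonzero. I would handle it by a Zariski-closure argument. Membership in the linear space $H$ is a closed polynomial condition in the $x_{ij}$. It holds on the open set of well-connected response matrices, which is open in the $\binom n2$-dimensional space of symmetric zero-row-sum matrices. Hence it holds identically. The rank condition follows from $\Delta_{246\ldots2n-2}(\Phi'_n)=1\neq 0$.
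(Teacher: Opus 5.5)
Your proof is correct for odd $n$, which is the setting of Theorem \ref{thm: cluster algebras coinside} that the statement rests on. For the key implication, however, you take a different route from the paper. The identification step is the same as the paper's. Labels of the form \eqref{eq: I for lemma on Lam minors} give signed central minors by Lemma \ref{lem:cir-min-gen}, and purely even labels give $1$.

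You differ in how you show that positivity of the central minors forces circular total positivity. The paper simply invokes Theorem \ref{thm: central form test}: by Kenyon--Wilson, every circular minor is a subtraction-free rational function of the central ones. You instead go through the Grassmannian:
\begin{itemize}
\item Corollary \ref{pos-test-gen-sc} puts $\mathrm{row}\,\Phi_n(M)$ in $\mathrm{Gr}_{>0}(n-1,2n)$.
\item Zariski density puts it in $\mathbb PH$.
\item Theorem \ref{Image of the Lam's embedding surjectivity} realizes it as $\mathcal L(e)$ for a well-connected $e$.
\item Theorem \ref{Set of response matrices all network well conected} then gives circular total positivity.
\end{itemize}
This is essentially the argument of the paper's later remark proving that $M$ is circular totally positive iff $\Phi_n(M)\in\mathrm{Gr}_{>0}(n-1,2n)$.

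The paper's route is two lines once Kenyon--Wilson is available. Yours dispenses with Kenyon--Wilson and shows that the circular test is genuinely induced by Scott's test; it also yields the equivalence with Grassmannian positivity. The price is heavier inputs:
\begin{itemize}
\item Lam's description of the image;
\item $\mathbb PH$-membership of $\Phi_n(M)$ for arbitrary $M$, for which the paper cites [BGKT];
\item the Curtis--Ingerman--Morrow characterization, used both for openness in your density argument and in the last step. The last-step use can be replaced by Theorem \ref{th: the_first_ph} together with \eqref{form:gen-min-grove}.
\end{itemize}

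Two points need tightening.

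First, from $\mathrm{row}\,\Phi_n(M)=\mathcal L(e)=\mathrm{row}\,\Phi_n(M_R(e))$ you pass to $M=M_R(e)$ without comment. This needs $M$ to be recoverable from the point. It follows from Lemma \ref{lem:cir-min-gen} applied to $1\times1$ pairs (every $(\bar p;\bar q)$ with $p\neq q$ is contiguous), which gives $x_{pq}=-\Delta_I/\Delta_{246\ldots 2n-2}$.

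Second, your parenthetical extension to even $n$ is inaccurate. The faces of the plabic graph $S_n$ in Fig.~\ref{pic:quiver for even n} are labeled by $CM_n^\dagger$, not $D_n^\dagger$. With $S_n$ itself, the two sets of inequalities would therefore not coincide. For even $n$ you must instead use the mutated seed of Theorem \ref{thm: even case}. Scott's test still applies there, since every seed of $\mathcal A_{n-1,2n}$ is a positivity test: its exchange relations are subtraction-free.
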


\begin{proof}

    By Theorem \ref{thm: cluster algebras coinside} all labeling sets $I$ in the seed $(\tilde{\mathbf{x}};Q_{\mathcal{A}_{n-1,2n}})$ have either the form \eqref{eq: I for lemma on Lam minors} or $I\subset\{2,4,\ldots,2n-2\}$. For $I$ of the first type by Lemma \ref{lem:cir-min-gen} we have $\Delta_I(\Phi_n')=(-1)^k\det M(P;Q)$. For $I$ of the second type we have $\Delta_I(\Phi_n')=1$, so information about their positivity is excessive. Thus, all Pl\"ucker coordinates 
    $$\{\Delta_I(\Phi_n')|\ I \text{ runs over labeling sets of variables } \tilde{\mathbf{x}}\}$$
    are positive if and only if all $\{(-1)^k\det M(P;Q)|\ (P;Q)\in D_n^\dagger\}$ are positive. The latter set forms a positivity test for all circular minors of $M$ by Theorem \ref{thm: central form test}. Moreover, the set $D_n^\dagger\cup\{(\emptyset;\emptyset)\}$ indexes the extended cluster $\tilde{\mathbf{y}}_{D_n^\dagger}$. The variable $(\emptyset;\emptyset)$ has to be evaluated at $1$.  

\end{proof}

\begin{remark}
    Note also that we obtain that any seed of the cluster algebra $\mathcal{A}'_{n-1,2n}$ provides a test for circular total positivity for $M$. And vice versa, any seed of the cluster algebra $\mathcal{CM}'_{n}$ provides a test for positivity for $\Phi_n(M)$. It is an immediate corollary of the fact that the exchange relations in the cluster algebras are subtraction free and the fact that the trivialization of a variable can be interpreted as the evaluation of it at $1$. 
\end{remark}
\begin{remark}
    Let us restrict ourselves only to the Pl\"ucker mutations \cite[Fugure 7]{Sc} in the algebra $\mathcal{A}_{n-1,2n}$ and to the mutations of the form \eqref{eq: Grassmann Plücker} in the algebra $\mathcal{CM}_{n}$. Then, the seeds obtained by a sequence of mutations from the seed $(\tilde{\mathbf{x}};Q_{\mathcal{A}_{n-1,2n}})$ not mutating in the most $n$ center vertices whose cluster consists entirely of variables labeled by the sets of the form \eqref{eq: I for lemma on Lam minors} correspond to the {\itshape solid seeds} in $\mathcal{CM}_n$ (see \cite[Definition 6.2.12]{ALT}). This correspondence is given by repeating the same mutations starting from the seed $(\tilde{\mathbf{y}}_{D_n^\dagger};Q_{\mathcal{CM}_n})$. This follows from the fact that under the isomorphism from Theorem \ref{thm: cluster algebras coinside} the Pl\"ucker mutations are mapped to the mutations of the form \eqref{eq: Grassmann Plücker} and \cite[Lemma 6.2.14]{ALT}.
\end{remark}


\begin{remark} \label{rem: Phi(M) is positive}
    For a circular totally positive symmetric matrix $M$ whose row entries sums equal to $0$, we can regard it as a response matrix of the graph $G_n$ with some appropriate choice of weights $\omega$ (see \cite{CM}), by means of which each initial seed $(\tilde{\mathbf{x}};Q_{\mathcal{A}_{n-1,2n}})$ was constructed; and in this case $\Phi_n(M)=\Omega_n(e), \ e=e(\omega,G_n)$. 
\end{remark}

\begin{remark}
 The statement that an  $n\times n$ symmetric matrix $M$ with zero row  sums  is circular totally positive if and only if $\Phi_n(M)\in \mathrm{Gr}_{> 0}(n-1,2n)$ is well-known and is an immediate corollary of Theorem \ref{Set of response matrices all network well conected}, Theorem \ref{Image of the Lam's embedding surjectivity} and Theorem \ref{th: main_gr}. We suggest here a different proof of this fact, which is based only on the results on cluster algebra $\mathcal{A}_{n-1,2n}$ and Lam's embedding. In particular, it does not involve the characterization of response matrices (Theorem \ref{Set of response matrices all network well conected}).

 Suppose $M$ is a circular totally positive matrix. It means that all variables in the seed $(\tilde{\mathbf{y}}_{D_n^\dagger};Q_{\mathcal{CM}_n})$ are positive and so are all variables in the seed $(\tilde{\mathbf{x}}; Q_{\mathcal{A}_{n-1, 2n}})$ by Lemma \ref{lem:cir-min-gen}. Applying Corollary \ref{pos-test-gen-sc} we obtain that all other Pl\"ucker coordinates are positive.

Assume that $\Phi_n(M) \in \mathrm{Gr}_{> 0}(n-1, 2n)$. By \cite[Section 4]{BGKT}, \cite[Lemma 5.8]{BGKT} we have that  $\Phi_n(M) \in \mathrm{Gr}_{>0}(n-1,2n)\cap \mathbb PH$. Then by Theorem  \ref{th: the_first_ph}:
$$\Phi_n(M)=\sum _{\sigma \in \mathcal{NC}_n}L_{\sigma}w_\sigma,$$
where all  $L_{\sigma}>0$.  Therefore, formula \eqref{form:gen-min-grove} implies that the matrix $M$ is circular totally positive.   
\end{remark}

\subsection{Laurent phenomenon for circular minors} \label{sec: Laurent phenomenon for circular minors}
Kenyon and Wilson (see \cite[Corollary 8]{KW space}) have proved the following theorem:
\begin{theorem} \label{loran-ph-circ}
    Consider a well-connected electrical network, then each semicontiguous $($see Proposition \ref{circ-con-gro1}$)$ circular minor of its response matrix multiplied by $(-1)^k$, where $k$ is the size of the minor, can be expressed as a Laurent polynomial in circular minors with positive coefficients.
\end{theorem}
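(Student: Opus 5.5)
We would derive Theorem \ref{loran-ph-circ} from the Laurent Phenomenon in the Grassmannian cluster algebra (Theorem \ref{thm: laurent phenomenon}), using the identifications established above. Fix a well-connected network $e$ with response matrix $M$ and consider the point $\mathcal{L}(e)$, represented by $\Omega_n'(e)=\Phi_n'(M)$. The first step is to translate the target: by Proposition \ref{circ-con-gro1}, each semicontiguous minor satisfies $(-1)^k\det M_P^Q=\Delta_I/L_{unc}=\Delta_I(\Phi_n')$ for an explicit $I\subset[2n]$ with $|I|=n-1$. For odd $n$ we take the initial seed of $\mathcal{A}_{n-1,2n}$ given by the $\pi_{n-1,2n}$-diagram $S_n$ from the proof of Theorem \ref{thm: cluster algebras coinside}. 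For even $n$ we take the seed of Theorem \ref{thm: even case}. In either case the extended cluster consists of the Plücker coordinates $\Delta_I$ with $I$ of the form \eqref{eq: I for lemma on Lam minors}, corresponding to central circular pairs, together with the coordinates in $\mathbf{x}_{\mathrm{even}}$.

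The second step uses the Scott isomorphism together with the fact that every Plücker coordinate is a cluster variable of $\mathcal{A}_{k,n}$ (it labels a face of some $\pi_{k,n}$-diagram). Theorem \ref{thm: laurent phenomenon} then expresses $\Delta_I$ as a Laurent polynomial in the initial extended cluster. For positivity of the coefficients we do not need the general positivity theorem. The initial seed is a plabic-graph seed, and since $\mathcal{L}(e)$ lies in $\mathrm{Gr}_{>0}$, Postnikov's boundary measurement parametrization applies. Via the face (chamber ansatz) variables, each Plücker coordinate becomes a sum over perfect matchings or flows of monomials in the face labels, with positive coefficients, divided by monomials. Equivalently, we invoke the Lee--Schiffler/GHKK positivity for skew-symmetric cluster algebras. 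Either route gives a subtraction-free Laurent expression.

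The third step is the specialization. We evaluate at $\mathcal{L}(e)$ in the normalization $\Phi_n'$. Every $\Delta_J$ with $J\subset\{2,4,\ldots,2n\}$ equals $L_{unc}/L_{unc}=1$ by Example \ref{bulcon-ex}, so the variables in $\mathbf{x}_{\mathrm{even}}$ are trivialized. Each remaining initial variable equals $(-1)^{|P|}\det M(P;Q)$ for a central pair, by Proposition \ref{circular minors via Plücker coordinates 0}. Substituting gives $(-1)^k\det M_P^Q$ as a Laurent polynomial with positive coefficients in central circular minors. For the symmetric pairs we use that $M$ is symmetric, so $\det M(P;Q)=\det M(\widetilde Q;\widetilde P)$.

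The main obstacle is the positivity of coefficients after trivialization. Specializing the frozen and even variables to $1$ is harmless for positivity, because the coefficients of the Laurent polynomial do not change, and denominators stay monomials in central minors. The real content is therefore just the cluster positivity theorem or the explicit matching formula, which we would cite. The only delicate point is the even-$n$ seed, which is not directly a plabic seed. There we would either use general cluster positivity or first apply the plabic formula on $S_n$ and then perform the explicit Grassmann--Plücker mutations of Theorem \ref{thm: even case}. The second option may introduce a non-monomial denominator, so the general positivity theorem is the safer choice.
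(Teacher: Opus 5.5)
Your proposal is correct and follows the paper's own proof: rewrite the semicontiguous minor as $\Delta_I(\Phi_n')$ via Proposition~\ref{circ-con-gro1}, expand $\Delta_I$ in the central-minor seed of $\mathcal{A}_{n-1,2n}$ (Theorem~\ref{thm: cluster algebras coinside}, resp.\ Theorem~\ref{thm: even case}), and evaluate with the coordinates in $\mathbf{x}_{\mathrm{even}}$ equal to $1$. Your explicit appeal to Lee--Schiffler/GHKK positivity is the right way to get positive coefficients, since the paper attributes them to Theorem~\ref{thm: laurent phenomenon}, which only gives Laurentness; rely on it rather than on your plabic sketch, because recovering face weights from face-label Plücker coordinates generally requires the Muller--Speyer twist, and a Laurent polynomial that is subtraction-free as a rational expression need not have positive coefficients.
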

  This result was derived   using matrix condensation identities.  In \cite[Section 4.5.3]{K} it  has been suggested that Theorem \ref{loran-ph-circ} might  be related to the general Laurent Phenomenon for cluster algebras. This relation was established in \cite{ALT} by constructing a new cluster algebra $\mathcal{CM}_n$. Our Theorem \ref{thm: cluster algebras coinside} and Theorem \ref{thm: even case} allow us to relate Theorem \ref{loran-ph-circ} to the Laurent Phenomenon in the classical cluster algebra $\mathcal{A}_{n-1,2n}$.

\begin{proof}[Proof of Theorem \ref{loran-ph-circ}]

\ 

Let us consider a well-connected electrical network $e$ and a  semicontiguous circular minor $M^Q_P$ of its response matrix, then by Proposition \ref{circ-con-gro1} there is a  Plücker coordinate $\Delta_I$ of the point $\mathcal{L}(e)$ such that 

$$(-1)^k\det M_P^Q=\Delta_I\bigr(\Omega'_n(e)\bigl), \ \Delta_{246\dots }\bigr(\Omega'_n(e)\bigl)=\Delta_{468\dots }\bigr(\Omega'_n(e)\bigl)=\dots=1. $$ 

On the other hand, by the Laurent Phenomenon, Theorem \ref{thm: laurent phenomenon} applied to the case of $\mathcal{A}_{n-1,2n}$, we have that $\Delta_I$ can be expressed as a homogeneous  Laurent polynomial with positive coefficients in the cluster variables of the initial seed. These variables by Proposition \ref{circ-con-gro1} and Example \ref{bulcon-ex} are equal either to the central circular minors or to the Plücker coordinates, which are equal to $1$. 
\end{proof}

\subsection{Laurent phenomenon structure on the grove algebra} \label{sec: Laurent Phenomenon structure on the grove algebra}
The homogeneous  coordinate ring $\mathbb C[\mathrm{LG}(n-1,2n-2)]$ due to Theorem \ref{theorem:to def of x} is isomorphic to the homogeneous coordinate ring $\mathbb C[\mathrm{Gr}(n-1,2n)\cap\mathbb PH]$, which in turn   is naturally isomorphic to the {\itshape grove algebra} $\mathcal{G}$ introduced and studied in \cite{GLX}. Here is the main structural result.
\begin{definition}
    The grove algebra
    $\mathcal{G}=\mathbb{C}[L_{\sigma} ] / \mathcal{I}_n,$
   is a polynomial algebra over $\mathbb{C}$ in variables $\{L_{\sigma}|\ \sigma\in\mathcal{NC}_n\}$ modulo the ideal $\mathcal{I}_n$ generated by the electrical  Plücker relations:
     \begin{equation} \label{pl-el}
		\sum \limits_{\sigma \in E(I), \kappa  \in E(J) }L_{\sigma}L_{\kappa}=\sum_{I', J'}(-1)^{a(I', J')}\sum \limits_{\sigma \in E(I'), \kappa  \in E(J') }L_{\sigma}L_{\kappa},
	\end{equation}
 where $E(T)$ is the set of all non-crossing partitions that are concordant with $T$; and on the right-hand side, the summation goes over all $I'$ and  $J'$ which are obtained from $I$ and  $J$ by swapping the first index of $J$ with any index of $I;$ 
   $a(I', J')$ is the number of swaps needed to put $I'$ and  $J'$ in the correct order. 
\end{definition}

Equation \eqref{pl-el} was obtained in \cite[Proposition 5.35]{L} from equation \eqref{eq:Grovesum} and Plücker relations. 
\begin{theorem} \textup{\cite[Theorem 1.2]{GLX}}\label{th:LamGrove}
    The  homogeneous  coordinate ring $\mathbb C[\mathrm{Gr}(n-1,2n)\cap\mathbb PH]$ is isomorphic to the grove algebra
    $\mathcal{G}.$
\end{theorem}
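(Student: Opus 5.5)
The plan is to realize both rings as quotients of the same polynomial ring $\mathbb C[L_\sigma\mid \sigma\in\mathcal{NC}_n]$ and to show that the two defining ideals coincide.

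First I will fix coordinates on $\mathbb PH$. The vectors $w_\sigma=\sum_I a_{I\sigma}e_I$ form a basis of $H$. So the linear functionals $L_\sigma$, namely the coefficients of a vector of $H$ in this basis, give linear coordinates on $H$. Hence the homogeneous coordinate ring of $\mathbb PH$ is the polynomial ring $\mathbb C[L_\sigma]$. By Theorem \ref{th: the_first_ph}, every point $X\in\mathrm{Gr}(n-1,2n)\cap\mathbb PH$ is written as $X=\sum_\sigma L_\sigma w_\sigma$. Its Plücker coordinates are therefore
\[
\Delta_I=\sum_{\sigma\in E(I)}L_\sigma ,
\]
which is exactly \eqref{form:pl-gr}. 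The restriction map $\mathbb C[\Delta_I]\to\mathbb C[L_\sigma]$ is the surjection dual to the inclusion $H\hookrightarrow\bigwedge^{n-1}\mathbb R^{2n}$. The ideal of $\mathrm{Gr}(n-1,2n)$ is generated by the quadratic Plücker relations. Substituting $\Delta_I=\sum_{E(I)}L_\sigma$ into them gives precisely the electrical Plücker relations \eqref{pl-el}, as in \cite[Proposition 5.35]{L}. So the scheme-theoretic intersection $\mathrm{Gr}(n-1,2n)\cap\mathbb PH$ has homogeneous ring $\mathbb C[L_\sigma]/\mathcal I_n=\mathcal G$. This yields a natural surjection $\mathcal G\twoheadrightarrow\mathbb C[\mathrm{Gr}(n-1,2n)\cap\mathbb PH]$ onto the reduced coordinate ring.

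The remaining and main step is to show that this surjection is injective, i.e.\ that $\mathcal I_n$ is a radical, in fact prime, ideal. I would argue via Theorem \ref{theorem:to def of x}, which identifies the variety with $\mathrm{LG}(n-1,2n-2)$. That variety is irreducible and smooth of dimension $\binom{n}{2}$, so the reduced ring is a domain whose Hilbert function is that of $\mathrm{LG}(n-1,2n-2)$ in the embedding by $\mathcal O(1)$. The degree-one part has dimension $\dim H=C_n$, which matches the dimension of the fundamental spin-type (symplectic) representation. It then suffices to show $\dim\mathcal G_d\le \dim \mathbb C[\mathrm{LG}]_d$ for every $d$.

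To get this bound I would construct a straightening algorithm. Using the relations \eqref{pl-el}, every monomial in the $L_\sigma$ should be rewritten as a combination of ``standard'' monomials $L_{\sigma_1}\cdots L_{\sigma_d}$ with $\sigma_1\le\dots\le\sigma_d$ in a suitable poset on $\mathcal{NC}_n$. The natural candidates are the Kreweras/refinement order, or the order induced via concordance from the Bruhat order on the sets $I$ of Lemma \ref{lemma: bijection}. I would then compare the number of such chains with the standard monomial basis of De Concini for the Lagrangian Grassmannian (admissible pairs of symplectic Young tableaux).

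I expect this combinatorial matching, i.e.\ showing that the electrical Plücker relations straighten every non-standard monomial, to be the hard part. If it resists, the fallback is to prove that $\mathcal I_n$ coincides with the ideal generated by the Plücker relations of $\mathrm{Gr}$ together with the linear equations cutting out $H$. That linear section is the classical presentation of $\mathbb C[\mathrm{LG}]$ as a quotient of the Grassmannian's coordinate ring by the symplectic-contraction linear forms, which is known to be reduced by standard monomial theory. Then transport that reducedness through the isomorphism of Theorem \ref{theorem:to def of x}, checking that this isomorphism is induced by a linear change of coordinates identifying $H$ with the span of the Lagrangian Plücker vectors.
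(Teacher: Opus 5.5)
The paper gives no proof of this statement; it is quoted from \cite[Theorem 1.2]{GLX}, so your proposal has to stand on its own.

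Your reduction is correct. Because the $w_\sigma$ form a basis of $H$, restricting $\Delta_I=\sum_{\sigma\in E(I)}L_\sigma$ turns the exchange form of the Grassmann--Plücker relations (which generates the Plücker ideal) into exactly \eqref{pl-el}. Hence $\mathcal G=\mathbb C[\Delta_I]/(I_{\mathrm{Gr}(n-1,2n)}+I_H)$, and the theorem is equivalent to this ideal being the full homogeneous vanishing ideal of the variety $\mathrm{Gr}(n-1,2n)\cap\mathbb PH$.

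That equivalence, however, is the entire content of the statement, and you do not establish it. The straightening route is only a programme. It fixes no order on $\mathcal{NC}_n$, derives no straightening law from \eqref{pl-el}, and matches no count against symplectic tableaux. So it yields no bound on $\dim\mathcal G_d$.

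The fallback is the right idea, but its transfer step does not work as you state it. Theorem \ref{theorem:to def of x} is only an isomorphism of varieties. A bare linear identification of $H$ with the span of $\mathrm{LG}(n-1,2n-2)$ does not carry $\mathcal I_n$ to the ideal in the classical presentation:
\begin{itemize}
\item $\mathcal I_n$ is the restriction to $H$ of the Plücker ideal of $\mathrm{Gr}(n-1,2n)$;
\item the classical ideal is the restriction of the Plücker ideal of $\mathrm{Gr}(n-1,2n-2)$.
\end{itemize}
Two ideals with the same zero set need not coincide.

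The missing link is the subspace $V\subset\mathbb C^{2n}$ of \eqref{Basis of V}, for which $H\subset\bigwedge^{n-1}V$ by \cite[Lemma 4.2]{BGKT}. The argument then runs in three steps.
\begin{enumerate}
\item Take a basis of $\mathbb C^{2n}$ extending a basis of $V$. Each Grassmann--Plücker relation of $\mathrm{Gr}(n-1,2n)$ then either restricts to $0$ on $\bigwedge^{n-1}V$ or is a Plücker relation of $\mathrm{Gr}(n-1,V)$. So the restricted ideal is exactly $I_{\mathrm{Gr}(n-1,V)}$.
\item $H$ equals the kernel of contraction with the symplectic form of Theorem \ref{lem: map_bij}. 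Indeed, $\mathrm{LG}(n-1,V)=\mathrm{Gr}(n-1,2n)\cap\mathbb PH$ spans that kernel, whose dimension $\binom{2n-2}{n-1}-\binom{2n-2}{n-3}=C_n$ equals $\dim H$.
\item Therefore $\mathcal G\cong\mathbb C[\bigwedge^{n-1}V]/(I_{\mathrm{Gr}(n-1,V)}+\text{contraction forms})$. Only now does the classical result apply. It must be used in its strong form, from De Concini's symplectic standard monomial theory: this ideal is the whole homogeneous ideal of $\mathrm{LG}(n-1,V)$, radical and saturated. It is not enough that the ideal merely defines a reduced scheme.
\end{enumerate}
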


 The variables of the initial seed of $\mathcal{LM}_n$ are labeled by central circular pairs and by $(\emptyset;\emptyset)$. Thus, using the bijection from Lemma \ref{lemma: bijection} and Remark \ref{rem: empty circular pair}, we get that the variables of the initial seed of $\mathcal{LM}_n$ are indexed by  non-crossing partitions $\sigma,$ which correspond either  to a   central circular minor or  to the partition $(\overline{1}|\overline{2}|\ldots|\overline{n})$. It follows that there is a natural map 
 \begin{equation} \label{eq: LP grove isom}
 \phi: y_{(P;Q)} \mapsto L_{\sigma} 
  \end{equation}
 indexing the variables of the initial seed $\mathcal{LM}_n$ by groves, see Fig. \ref{fig: for lm5}. 
 We extend $\phi$ to the whole $\mathcal{LM}_n$ via mutations. In order to prove that $\phi$ induces a ring isomorphism, it suffices to verify that:
 \begin{itemize}
     \item Any function   within the image of $\phi$  is regular;
     \item Any regular function lies in the image of $\phi$.  
 \end{itemize}
The first statement can be established by adapting the  proof of \cite[Proposition 7]{Sc}:
 \begin{lemma} \label{lemma: phi is well-defined}
 Consider any cluster variable from an arbitrary seed of $\mathcal{LM}_n$. Then, a function corresponding to this variable via the map $\phi$ is regular in  $\mathbb C[\mathrm{Gr}(n-1,2n)\cap\mathbb P H]$.
 \end{lemma}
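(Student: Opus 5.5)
To prove Lemma \ref{lemma: phi is well-defined}, I would mimic the argument of \cite[Proposition 7]{Sc}: first lift to the cluster algebra, then use the Laurent phenomenon to control denominators, and finally use coprimality to show that no denominator survives.

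The first step is to transport the problem to $\mathcal{A}_{n-1,2n}$. By Lemma \ref{lemma: symmetric mutations}, every cluster variable of $\mathcal{LM}_n$ is the image under the projection $p$ of a cluster variable of $\mathcal{CM}_n$, reached by symmetric mutations. By Theorem \ref{thm: cluster algebras coinside}, such a variable corresponds to a cluster variable of $\mathcal{A}'_{n-1,2n}$ (for odd $n$; for even $n$ I would use Theorem \ref{thm: even case}). That variable in turn comes from a cluster variable $x$ of $\mathcal{A}_{n-1,2n}$ in which the $n$ central even variables have been frozen and then set to a common value. Scott's theorem identifies $x$ with a regular function $f\in\mathbb C[\mathrm{Gr}(n-1,2n)]$. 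The function $\phi(y)$ should then be the restriction of $f$ to $\mathrm{Gr}(n-1,2n)\cap\mathbb PH$. Here Remark \ref{rem:Lunc} is essential: on $\mathbb PH$ all $\Delta_I$ with $I\subset\{2,4,\ldots,2n\}$ equal $L_{unc}$, which is precisely the image of $y_{(\emptyset;\emptyset)}$ under $\phi$. Proposition \ref{circular minors via Plücker coordinates 0} gives $\Delta_I=L_\sigma$ for the initial contiguous labels.

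Next I check that $\phi$ is compatible with mutation. On the initial seed, $\phi$ agrees with restriction. The exchange relations of $\mathcal{LM}_n$ are the images of those of $\mathcal{CM}_n$ under $p$, and these in turn are the images of Scott's exchange relations with the central variables frozen. Restriction to $\mathbb PH$ is a ring homomorphism and respects the identification $y_{(P;Q)},y_{(\widetilde Q;\widetilde P)}\mapsto y_{(P;Q)}$, since on $\mathbb PH$ we have $\Delta_I=\Delta_{I'}$ for symmetric labels (both equal the same $L_\sigma$). An induction on the length of the mutation sequence therefore shows that $\phi(y)$ equals the restriction of a Plücker polynomial, hence is regular.

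The main obstacle is making this induction honest when a restriction vanishes identically or when rational expressions collide. If a denominator $\phi(y_k)$ restricts to zero on $\mathbb PH$, the mutation formula gives no information. To handle this, I would instead argue as Scott does. By the Laurent phenomenon for $\mathcal{LM}_n$, $\phi(y)$ is a Laurent polynomial in the initial grove variables $L_\sigma$, and the same holds in every adjacent seed. The initial $L_\sigma$ are irreducible and pairwise coprime in the grove algebra $\mathcal{G}$ of Theorem \ref{th:LamGrove}, which I would check via the isomorphism with $\mathbb C[\mathrm{LG}(n-1,2n-2)]$ of Theorem \ref{theorem:to def of x}, a UFD. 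A function that is a Laurent polynomial with respect to two seeds whose denominators are coprime is then regular; the coprimality is needed because $\mathcal{G}$ is normal.
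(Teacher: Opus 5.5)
Your final argument is essentially the paper's: both adapt Scott's Proposition~7, combining the Laurent phenomenon for $\mathcal{LM}_n$ in the initial seed and in mutated seeds with an extension argument. The paper phrases the extension as the complement of $U\cup U'$ having codimension at least $2$; you phrase it as coprimality of grove coordinates in the factorial ring $\mathcal{G}$. The detour through $\mathcal{A}_{n-1,2n}$ in your first two paragraphs is therefore not needed. In either formulation you must use every adjacent seed, as you do (a single mutation leaves the divisors $\{L_\sigma=0\}$ of the unmutated variables in the complement). You also need each initial $L_\sigma$ to be coprime to its own mutation $\phi(y'_\sigma)$, not only to the other initial variables; this is exactly what makes $\{L_\sigma=0\}\cap\{\phi(y'_\sigma)=0\}$ have codimension $2$.
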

 \begin{proof}
  Consider the open subset consisting of all points in $\mathrm{Gr}(n-1,2n)\cap\mathbb PH$ whose grove coordinates, corresponding to the cluster variables in the seed $(\tilde{\mathbf{y}}_{D_n},Q_{\mathcal{LM}_n})$ via $\phi$, are nonzero:
 $$U=\{X\in \text{Gr}(n-1,2n)|\ L_\sigma(X)\ne0 \text{ for every } \sigma\in\phi(\mathbf{y}_{D_n})\}.$$ 
 Make a mutation of the form \eqref{eq: Grassmann Plücker} in $(\tilde{\mathbf{y}}_{D_n},Q_{\mathcal{LM}_n})$ to obtain a new seed $\mu(\tilde{\mathbf{y}}_{D_n},Q_{\mathcal{LM}_n})$ and consider an open subset $U'$ associated with $\mu(\tilde{\mathbf{y}}_{D_n},Q_{\mathcal{LM}_n})$ in a way similar to what was done above:
 $$U'=\{X\in \text{Gr}(n-1,2n)|\ L_\sigma(X)\ne0 \text{ for every } \sigma\in\phi(\mu(\mathbf{y}_{D_n}))\}.$$ 
 
 Any cluster variable $y$ we can by Theorem \ref{thm: laurent phenomenon} represent as a Laurent polynomial in the variables $\tilde{\mathbf{y}}_{D_n}$ with a denominator in $\mathbf{y}_{D_n}$, so $\phi(y)$ is regular on $U$. Similarly, we can represent $\phi(y)$ as a Laurent polynomial in the variables $\mu(\tilde{\mathbf{y}}_{D_n})$ with a denominator in $\mu(\mathbf{y}_{D_n})$, so $\phi(y)$ is regular on $U'$. Thus, $\phi(y)$ is regular on $U\cup U'$. Since the codimension of the complement of $U\cup U'$ is $2$ or greater, $\phi(y)$ is regular on the whole $\mathbb C[\mathrm{Gr}(n-1,2n)\cap \mathbb PH]$.   
 \end{proof}

Now, we prove the surjectivity of $\phi$ on a  localization of the grove algebra. 
Consider the open projective subvariety 
$$\mathcal{X}_{\mathcal{E}}:=\{X\in\mathrm{Gr}(n-1,2n)\cap\mathbb P H| \ L_{unc}\neq 0\}.$$ 

Note that by Remark \ref{rem:Lunc}, $L_{unc} = \Delta_I$ for  $I\in\binom{[2n]}{n-1}$ such that $I\subset\{2,4,\ldots,2n\}$. 
 The variety $X_\mathcal{E}$ is the noncompactified space of electrical networks, see Remark \ref{rem:Lunc2}. Thus, the ring $\mathbb C[\mathcal X_{\mathcal{E}}]$ is the coordinate ring of {\itshape the noncompactified space of electrical networks}. Note that by Theorem \ref{th:LamGrove} the homogeneous  coordinate ring $\mathbb C[\mathcal{X}_{\mathcal{E}}]$ is isomorphic to the localization $\mathcal{G}[L_{unc}]$.
 
 \begin{lemma} \label{Parametrization of the top-cell}
Any point $X\in \mathcal{X}_{\mathcal{E}}$ can be represented in the form \eqref{phi_n}.
Moreover, every matrix $\Phi_n$ of the form \eqref{phi_n} belongs to $\mathcal{X}_{\mathcal{E}}$.

In the representation above, each $x_{ij}, i\neq j$ is related to the grove coordinates of $X$ as follows:
\begin{equation*}
x_{ij}=-\dfrac{L_{ij|*|*|\dots}}{L_{unc}}.
\end{equation*}
 \end{lemma}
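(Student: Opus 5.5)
To prove Lemma \ref{Parametrization of the top-cell}, I would pass to the affine chart $\{\Delta_{I_0}\neq0\}$ with $I_0=\{4,6,\ldots,2n\}$. By Remark \ref{rem:Lunc}, this chart coincides with $\mathcal X_{\mathcal E}$ inside $\mathrm{Gr}(n-1,2n)\cap\mathbb PH$.

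\textbf{Step 1: normal form in the chart.} On this chart, $X$ has a unique row-reduced representative: an $(n-1)\times 2n$ matrix whose columns $4,6,\ldots,2n$ form the identity. The matrix $\Phi'_n$ obtained from \eqref{phi_n} has the even-column block $2,4,\ldots,2n$ equal to a bidiagonal pattern of $1$'s. Column operations are not allowed, but row operations are, and they convert the even block into the identity-plus-one-column shape. So up to an explicit triangular row change, $\Phi'_n$ is such a normal form. The free parameters of $\Phi'_n$ are then the odd columns, which are $\pm x_{ij}$ for $i\ge 2$.

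\textbf{Step 2: show the $H$-condition forces the $\Phi_n$ shape.} I would show that the condition $X\in\mathbb PH$ (equivalently, Lagrangianity, by Theorem \ref{theorem:to def of x}) cuts the normal form down to exactly the shape of $\Phi'_n$ with $M$ symmetric with zero row sums. Dimensions match: $\dim \mathrm{LG}(n-1,2n-2)=\binom n2$, which is the number of free off-diagonal $x_{ij}$. The cleanest route avoids solving linear equations on $H$ directly. Instead:
\begin{enumerate}
\item Show that every $\Phi_n(M)$ lies in $\mathcal X_{\mathcal E}$. The rank and the $\mathbb PH$-membership are polynomial (Zariski-closed) conditions in the $x_{ij}$. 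They hold on the open set of response matrices of well-connected networks, by Theorem \ref{th: main_gr} together with Theorem \ref{Image of the Lam's embedding surjectivity}. Hence they hold identically, by the same density argument as in Lemma \ref{lem:cir-min-gen}. Moreover, $\Delta_{I_0}(\Phi'_n)=1\ne 0$ by direct computation (the even columns are unitriangular).
\item The resulting map $M\mapsto\Phi_n(M)$ from the $\binom n2$-dimensional space of symmetric zero-row-sum matrices into the irreducible variety $\mathcal X_{\mathcal E}$ is injective. Its image is closed in $\mathcal X_{\mathcal E}$, because the entries of $M$ are recovered as ratios of Plücker coordinates, $x_{ij}=\pm\Delta_{J}/\Delta_{I_0}$ for suitable $J$ (one odd index swapped in). These ratios are regular on $\mathcal X_{\mathcal E}$.
\item Since $\mathcal X_{\mathcal E}$ is irreducible of dimension $\binom n2$ and the image is closed of the same dimension, the map is surjective.
\end{enumerate}
This step is where I expect the main obstacle. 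Closedness of the image needs care: I must check that the regular functions $\Delta_J/\Delta_{I_0}$ actually invert the map on all of $\mathcal X_{\mathcal E}$, i.e. that the point rebuilt from them lies on the original point. To settle this, I would use the explicit grove parametrization \eqref{eq:Grovesum}: both points lie in $\mathbb PH$ and share the coordinates $L_{unc}$ and all $L_{ij|\ast}$. I would then argue that the Lagrangian chart is the graph of a symmetric matrix determined by exactly these coordinates.

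\textbf{Step 3: the formula for $x_{ij}$.} For the final formula, take $k=1$ in Proposition \ref{circ-con-gro1}, formula \eqref{form:gen-min-grove}, with $P=\{\bar i\}$ and $Q=\{\bar j\}$. This gives $-x_{ij}=L[_{\bar i}^{\bar j}|\ast]/L_{unc}$. That identity holds for response matrices, hence identically by density. Combined with the surjectivity of Step 2, it holds at every point of $\mathcal X_{\mathcal E}$.
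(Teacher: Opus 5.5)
Your argument is correct, but it takes a different route from the paper.

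\textbf{The paper's route.} The paper proves the first part constructively in the Lagrangian model. Since $X\subset V$ and $\Delta^{\bullet}_{246\ldots 2n-2}=\Delta_{246\ldots 2n-2}=L_{unc}\neq0$, it inverts the even-column block of $\widetilde X=XB_n^{-1}$ and adjoins the alternating sum of the rows as a new first row. It then multiplies back by $B_n$ and reads off the zero row sums from $X\subset V$. Symmetry $x_{ij}=x_{ji}$ comes from $\widetilde M\Lambda_{2n-2}\widetilde M^{T}=0$ via \cite[Lemma 4.6]{BGKT}. The inclusion $\Phi_n(M)\in\mathcal X_{\mathcal E}$ is simply quoted from \cite{BGKT}.

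\textbf{Your route.} You prove that inclusion yourself, by Zariski density from well-connected response matrices together with $\Delta_{I_0}(\Phi_n')=1$. You then get surjectivity from a dimension count in the irreducible $\binom n2$-dimensional variety $\mathcal X_{\mathcal E}$. This avoids $V$, $B_n$, $\Lambda_{2n-2}$ and the BGKT lemmas entirely. The price is that it is non-constructive and leans on Theorem \ref{theorem:to def of x} (irreducibility and dimension) and Theorem \ref{Set of response matrices all network well conected} (openness of the well-connected locus). It also gives slightly more: $M\mapsto\Phi_n(M)$ is an isomorphism $\mathbb A^{\binom n2}\cong\mathcal X_{\mathcal E}$ with inverse coordinates $-L_{ij|*|*|\dots}/L_{unc}$, which yields Corollary \ref{col-gr-ij} at once.

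\textbf{The obstacle in Step 2 is not real; drop your proposed fix.} For closedness you only need a left inverse $r$ of $M\mapsto\Phi_n(M)$:
\[
r(X)=\bigl(-\Delta_{I_{ij}}(X)/\Delta_{I_0}(X)\bigr)_{i<j},
\]
where $I_{ij}$ is the set attached to the $1\times1$ pair $(\bar i;\bar j)$ by Lemma \ref{lemma: bijection}. The identity $r(\Phi_n(M))=M$ is Lemma \ref{lem:cir-min-gen} with $k=1$. In general $I_{ij}$ differs from $I_0$ in two positions, not one as you wrote, but any such ratio is regular on $\mathcal X_{\mathcal E}$, so this is harmless. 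The image is then $\{X\in\mathcal X_{\mathcal E}:\ \Phi_n(r(X))=X\}$, which is closed because the Grassmannian is separated.

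By contrast, checking that the rebuilt point equals $X$ for every $X\in\mathcal X_{\mathcal E}$ is equivalent to the surjectivity you are trying to prove. The justification you sketch for it (that a point of the Lagrangian chart is determined by $L_{unc}$ and the $L_{ij|*|*|\dots}$) is essentially the lemma itself, i.e.\ what the paper's explicit computation establishes.

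\textbf{Step 3 needs no density.} On $\mathbb PH$ one has $\Delta_{I_{ij}}=L_{ij|*|*|\dots}$ by Lemma \ref{lemma: bijection} and $\Delta_{I_0}=L_{unc}$ by Remark \ref{rem:Lunc}.
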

    The first part of the statement is proved in \hyperref[gen-form-ap]{Appendix A}.
    The second part follows from \cite[Section 4]{BGKT} and \cite[Lemma 5.8]{BGKT}. The third part follows from  direct computations and \cite[Lemma $2.27$]{BGKT}.
\begin{corollary} \label{col-gr-ij}
    Consider any point $X\in \mathcal{X}_{\mathcal{E}}$, then
    each of its grove coordinates $L_{\sigma}$ can be represented as a polynomial in $\{L_{ij|*|*|\dots}|\ 1\le i,j\le n,\ i\ne j\}\cup\{L^{\pm}_{unc}\}.$
\end{corollary}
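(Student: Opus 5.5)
By Lemma \ref{Parametrization of the top-cell}, every point $X\in\mathcal{X}_{\mathcal{E}}$ can be written as $\Phi_n(M)$ for a symmetric matrix $M=(x_{ij})$ with zero row sums. Moreover, its off-diagonal entries satisfy $x_{ij}=-L_{ij|*|*|\dots}/L_{unc}$. Zero row sums then give each diagonal entry as $x_{ii}=-\sum_{j\ne i}x_{ij}$. So every entry of $M$ is a polynomial in the pair-grove coordinates $L_{ij|*|\dots}$ and $L_{unc}^{-1}$.

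The plan is to express each $L_\sigma$ through the Plücker coordinates of this representative. By Theorem \ref{th: the_first_ph}, $X=\sum_\sigma L_\sigma w_\sigma$, and by Remark \ref{rem:Lunc} the Plücker coordinates $\Delta_I$ with $I\subset\{2,4,\ldots,2n\}$ equal $L_{unc}$. On the other hand, the representative $\Phi'_n$ (deleting the first row, which is dependent, cf. Theorem \ref{th: main_gr}) has $\Delta_I(\Phi'_n)=1$ for such $I$. Hence the projective normalization gives
\[
\Delta_I(X)=L_{unc}\,\Delta_I(\Phi'_n(M))
\]
for all $I$. Each $\Delta_I(\Phi'_n(M))$ is an $(n-1)\times(n-1)$ minor of a matrix whose entries are $0,\pm1$ or $\pm x_{ij}$. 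It is therefore a polynomial in the $x_{ij}$, hence in the $L_{ij|*|\dots}$ and $L_{unc}^{\pm1}$. So every Plücker coordinate of $X$ lies in the ring $\mathbb C[L_{ij|*|\dots},L_{unc}^{\pm1}]$.

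It remains to recover each $L_\sigma$ from the $\Delta_I$ by a linear expression. Since $\Delta_I=\sum_\sigma a_{I\sigma}L_\sigma$ and the matrix $A_n$ has full column rank $C_n$ ($\dim H=C_n$), some left inverse exists. The question is only whether its coefficients are constants; they are, because it is a fixed linear map independent of $X$. Thus each $L_\sigma$ is a $\mathbb C$-linear combination of the $\Delta_I(X)$, which gives the claim.

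A cleaner alternative avoids inverting $A_n$. For each $\sigma$, one would exhibit an $I$ concordant only with $\sigma$, as in Lemma \ref{lemma: bijection} and Proposition \ref{circ-con-gro1}, or use formula \eqref{form:gen-min-grove} expressing the circular minors of $M$ as $L_\sigma/L_{unc}$. Applied to the pairing partitions, this directly gives polynomials in the $x_{ij}$. Every non-crossing partition, however, not just pairings, must be reached, and for this I would rely on the linear-algebra argument above.

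The main obstacle is justifying the identification $\Delta_I(X)=L_{unc}\Delta_I(\Phi'_n)$ uniformly on all of $\mathcal{X}_{\mathcal{E}}$, not only on the positive part. I would get it by the same density argument as in Lemma \ref{lem:cir-min-gen}: both sides are polynomial identities in the $x_{ij}$, and they agree on the open set of well-connected response matrices by Theorem \ref{Emb}. Together with Lemma \ref{Parametrization of the top-cell}, this extends the identity to every point with $L_{unc}\neq0$.
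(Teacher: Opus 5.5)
Your proof is correct and follows the paper's own argument. Both represent $X$ by $\Phi_n$ via Lemma~\ref{Parametrization of the top-cell}, use $\Delta_J(X)=L_{unc}\Delta_J(\Phi'_n)$ with $\Delta_J(\Phi'_n)$ a polynomial in the $L_{ij|*|\dots}/L_{unc}$, and invert the constant linear relation $\Delta_I=\sum_\sigma a_{I\sigma}L_\sigma$; the paper cites Lam's Proposition 5.19 for this last step, which is exactly your left inverse of $A_n$, and the density argument in your final paragraph is unnecessary because your normalization by the all-even Pl\"ucker coordinates already gives the identity at every point of $\mathcal{X}_{\mathcal{E}}$.
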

 \begin{proof}
     By  \cite[Proposition 5.19]{L},  for an arbitrary $\sigma$ we have:
$$L_{\sigma}=\sum \limits_{J } a_{J, \sigma}\Delta_{J}(X).$$
By Lemma \ref{Parametrization of the top-cell} there exists a representative of $X$ defined by the matrix $\Phi_n$ in the form \eqref{phi_n}. Since $\Delta_{J}(X)=\Delta_J(\Phi_n')L_{unc}$ and by Lemma \ref{Parametrization of the top-cell} $\Delta_{J}(\Phi_n')$ is a polynomial in $\frac{L_{ij|*|*|\dots}}{L_{unc}}$, we obtain the statement.
 \end{proof}
 Following the correspondence from Remark \ref{rem: empty circular pair}, we denote $y_{(\emptyset;\emptyset)}:=y_{unc}$.
\begin{theorem} \label{thm: LP grove algebra}
     The localization
    $\mathcal{G}[L_{unc}]$ of the grove algebra $\mathcal{G}$ is isomorphic to the Laurent Phenomenon algebra $\mathcal{LM}_n$ considered as the $\mathbb{C}[y^{\pm1}_{unc}]$-algebra. 
\end{theorem}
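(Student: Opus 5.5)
The map $\phi$ from \eqref{eq: LP grove isom}, extended to all of $\mathcal{LM}_n$ via mutations, should give the isomorphism. Concretely, I would show that $\phi$ is a well-defined, injective and surjective homomorphism of $\mathbb C[y^{\pm1}_{unc}]$-algebras from $\mathcal{LM}_n$ onto $\mathcal G[L_{unc}]\cong\mathbb C[\mathcal X_{\mathcal E}]$, with $y_{unc}\mapsto L_{unc}$.

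\emph{Well-definedness and landing in the right ring.} First, the exchange polynomials of the initial seed are Grassmann--Plücker relations \eqref{eq: Grassmann Plücker} among central circular minors (Remark \ref{rem: Alman condition}). By Proposition \ref{circ-con-gro1} and Lemma \ref{lem:cir-min-gen}, these become genuine relations among the grove coordinates $L_\sigma$ after dividing by suitable powers of $L_{unc}$. Hence every mutated variable $\phi(y)$ is an honest rational function on $\mathrm{Gr}(n-1,2n)\cap\mathbb PH$. Lemma \ref{lemma: phi is well-defined} then shows that $\phi(y)$ is regular, so $\phi(\mathcal{LM}_n)\subset\mathcal G[L_{unc}]$; the image of $y_{unc}^{-1}$ lies there by construction. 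Multiplicativity is automatic, since $\phi$ is the restriction of a field map. That field map is defined because the initial variables are algebraically independent and $\dim(\mathrm{Gr}(n-1,2n)\cap\mathbb PH)=\dim \mathrm{LG}(n-1,2n-2)=\binom n2$ (Theorem \ref{theorem:to def of x}). This matches the number of initial variables, $\binom n2$ central pairs plus $y_{unc}$, up to the projective scaling. Injectivity follows from the same transcendence-degree count: the initial grove coordinates generate the function field by Corollary \ref{col-gr-ij} together with the next step.

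\emph{Surjectivity.} By Corollary \ref{col-gr-ij}, every $L_\sigma$ on $\mathcal X_{\mathcal E}$ is a polynomial in the $L_{ij|*|\dots}$ and $L_{unc}^{\pm1}$. By Lemma \ref{Parametrization of the top-cell}, $L_{ij|*|\dots}=-x_{ij}L_{unc}$, and $-x_{ij}$ is a $1\times1$ circular minor $(-1)^1\det M(\bar i;\bar j)$. So it suffices to show that each entry $x_{ij}$ ($i\neq j$) times a power of $L_{unc}$ lies in the image of $\phi$; the diagonal entries are determined by the zero row sums. A $1\times 1$ pair $(\bar i;\bar j)$ is contiguous. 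By Theorem \ref{thm: cluster algebras coinside} and the correspondence of Lemma \ref{lemma: symmetric mutations}, the Plücker coordinate $\Delta_I$ attached to it via Lemma \ref{lemma: bijection} is obtained by a sequence of mutations in $\mathcal A_{n-1,2n}$ avoiding the $n$ central vertices. That sequence transports to symmetric mutations in $\mathcal{CM}_n$, hence to mutations in $\mathcal{LM}_n$, producing a cluster variable $y$ with $\phi(y)=L_{ij|*|\dots}$. This reachability of every contiguous circular pair in $\mathcal{CM}_n$ is exactly the technical lemma the paper postpones to Appendix~B, and I would invoke it here. For even $n$, the same argument runs through Theorem \ref{thm: even case}.

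\emph{Main obstacle.} The delicate step is reachability: we need every contiguous pair, at minimum all $1\times1$ pairs $(\bar i;\bar j)$, to appear as a cluster variable of $\mathcal{LM}_n$ and not merely as a Laurent polynomial in cluster variables. I would attempt it by explicit Plücker mutations along a radial angle, using the labeling formulas \eqref{eq: right part odd}--\eqref{eq: left part even}. Each square move replaces a central pair by the contiguous pair off-center by one more step. Iterating this moves $(\bar i;\bar j)$ outward from the center until it becomes a frozen maximal pair. Alongside this, I would track that the central frozen variables are never mutated, so that trivialization and the passage $\mathcal{CM}_n\to\mathcal{LM}_n$ remain compatible.
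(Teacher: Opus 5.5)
Your architecture matches the paper's proof. Regularity of $\phi$ comes from Lemma~\ref{lemma: phi is well-defined}. Corollary~\ref{col-gr-ij} reduces surjectivity to putting each $L_{ij|*|\dots}$ in the image. Reachability of the $1\times 1$ pairs comes from Appendix~B.

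Your injectivity argument is a real addition, since the paper never spells injectivity out, but the steps are in the wrong order. Equal dimensions alone do not make the images of the $\binom n2+1$ initial variables algebraically independent. Well-definedness needs no field map: by the Laurent phenomenon, $\phi$ is just evaluation of Laurent polynomials at the nonvanishing $L_\sigma$. Injectivity then follows once you know that $L_{unc}$ and the central $L_\sigma$ generate the function field, for instance from Theorem~\ref{thm: central form test}, Lemma~\ref{Parametrization of the top-cell} and Corollary~\ref{col-gr-ij}. The transcendence-degree count then forces algebraic independence.

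The step that does not hold up is how you identify the reached variable with $L_{ij|*|\dots}$. You assert that $\Delta_I$ is obtained in $\mathcal{A}_{n-1,2n}$ by mutations avoiding the $n$ central vertices, and that this sequence transports to $\mathcal{CM}_n$ and then to $\mathcal{LM}_n$. Neither Theorem~\ref{thm: cluster algebras coinside} nor Lemma~\ref{lemma: symmetric mutations} gives this.

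\begin{itemize}
\item Theorem~\ref{thm: cluster algebras coinside} matches the two algebras only after the central variables are trivialized. If they are frozen but kept as $n$ separate variables, the dynamics differ from $\mathcal{CM}_n$, where they are merged into $y_{unc}$.
\item For example, in the seed $S_{4m+1}$, mutate a first-circle face and then its neighbour attached to a different central face. In $\mathcal{A}_{n-1,2n}$ the second exchange relation has a central factor in both monomials, so it is cubic and produces a degree-$2$ non-Plücker variable. In $\mathcal{CM}_n$ the two central arrows cancel instead.
\item Lemma~\ref{lemma: symmetric mutations} also only covers symmetrized sequences.
\end{itemize}

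Appendix~B gives reachability in $\mathcal{CM}_n$ directly, so drop the detour and argue as the paper does. Take the Laurent expansion $f$ of $y_{(\{\bar i\};\{\bar j\})}$. The evaluation $\psi$ at circular minors sends $f$ to $-x_{ij}$, because every exchange relation used is a Grassmann--Plücker identity for actual minors. Homogeneity of $f$ together with Lemma~\ref{Parametrization of the top-cell} then gives $\phi(f)=L_{ij|*|\dots}$.

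Your fallback sketch for proving reachability yourself would also fail.
\begin{itemize}
\item By \eqref{eq: Grassmann Plucker in statistics}, a mutation preserves $k$ and $T$ and shifts $D$ by $4$.
\item Mutating the same vertex again simply undoes it.
\item A $1\times 1$ pair can never become a maximal pair, since maximal pairs have $k=(n-1)/2$.
\end{itemize}
Getting $|D|$ beyond $3$ requires the recursive pyramid sequences $\mu_{n,v}$, which mutate the neighbours of $v$ first and only then return to $v$.
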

\begin{proof}
     By Lemma \ref{lemma: phi is well-defined} $\phi:\mathcal{LM}_n\to\mathcal{G}[L_{unc}]$ is a well-defined homomorphism of $\mathbb C[y_{unc}^{\pm 1}]$-algebras.
     
     It is claimed in \cite[Theorem 6.2.16]{ALT} 
     that any cluster variable of the form $y_{(\{\bar i\};\{\bar j\})}$ can be reached in $\mathcal{LM}_n$ by mutations of the form \eqref{eq: Grassmann Plücker} from the initial seed (in \hyperref[sec:appendix B]{Appendix B}\ we provide a proof of this statement). Consider the homomorphism $\psi:\mathcal{LM}_n\to\mathbb C[\{x_{ij}|\ i\ne j\}]$ given by $y_{(P;Q)}\mapsto (-1)^{|P|}M(P;Q)$ and $y_{(\emptyset;\emptyset)}\mapsto 1$ \footnote{$\psi$ is an isomorphism of $\mathbb C$-algebras as was proved in \cite[Theorem 6.2.16]{ALT}.}, here $M =(x_{ij})_{i,j\in[n]}$. By the Laurent Phenomenon \cite[Theorem 5.1]{LP} applied to $\mathcal{LM}_n$, we can express $y_{(\{\bar i\};\{\bar j\})}$ as a homogeneous Laurent polynomial $f$ in $\tilde{\mathbf y}_{D_n^\dagger}$. Then, $\psi(f)=x_{ij}$.
     
     By homogeneity of $f$ and Lemma \ref{Parametrization of the top-cell} we get that $\phi(f)=L_{ij}$. In other words, all $L_{ij}|\ 1\le i,j\le n,\ i\ne j$ lie in the image of $\phi$. On the other hand, due to Corollary \ref{col-gr-ij}, each $L_{\sigma},\sigma\in\mathcal{NC}_n$ can be expressed as a  polynomial in $\{L_{ij|*|*|\dots}|\ 1\le i,j\le n,\ i\ne j\}\cup\{L^{\pm}_{unc}\}$, and therefore also lies in the image of $\phi$. Thus, it is true for any regular function in $\{L_{\sigma}|\ \sigma\in \mathcal{NC}_n\}$, which completes the proof.
\end{proof}

By analogy with the constructions of the  cluster algebras  on the ring of regular functions on the  Grassmannian and on its top positroid cell, we expect that the ring of regular functions on the  Lagrangian Grassmannian also admits a structure of a Laurent Phenomenon algebra.
\begin{problem}
Extend the result of Theorem \ref{thm: LP grove algebra} to the whole    grove algebra $\mathcal{G}$.  
\end{problem}

 \section{Discussion on cells of lower dimensions}
\label{sec:cells}

The construction of the poset $EP_n$ on electrical networks was originated in \cite{CM, CGV} and was systematically examined   in \cite[Section 3]{ALT}. In our work we  study positivity tests for the circular total positivity of symmetric $n\times n$ matrices with row entry sums equal to $0$. Such circular totally positive matrices are the response matrices of well-connected electrical networks, which correspond to the top cell of the poset $EP_n$. 
In this section, we discuss the direction for obtaining  tests for circular total positivity of response matrices of electrical networks that belong to lower-dimensional cells of the poset $EP_n$. This question was addressed previously in the following conjecture:

\begin{conjecture} \textup{\cite[Conjecture 2]{KW}} \label{conj:positroidpositivity}
   A matrix $M \in \mathrm{Mat}_{n \times n}(\mathbb{R})$ is the response matrix of a circular electrical network belonging to   a cell of co-dimension $r$ in the poset $EP_n$ if and only if:
   \begin{itemize}
       \item The first two conditions from  Theorem \ref{gen-theorem} are satisfied;
       \item There exist two sets $(S_1, S_2)$ such that $|S_1|=\frac{n(n-1)}{2}-r,  |S_2|=r$ and the circular positivity of all circular minors from $S_1$ and the vanishing of all circular minors from $S_2$ imply circular non-negativity of all other circular minors.
   \end{itemize}
\end{conjecture}

A cluster algebra structure on positroid varieties, whose initial seeds are also given by plabic graphs, was constructed in \cite{GL}. We suggest an analogous approach to what was done in the previous sections of this paper for the top positroid variety: to translate positivity tests on positroid varieties to positivity tests on cells of $E_n$. As for the top cell, consider an electrical network $e$ (for simplicity, we assume that $e$ contains no isolated boundary nodes).
Then we 
construct a reduced plabic graph associated with $e$ using generalized Temperley's trick and label its faces by Plücker coordinates following Scott's rules.
    The obtained object is an initial seed of the cluster algebra on an appropriate positroid cell of $\mathrm{Gr}_{\geq 0}(n-1, 2n)$, see \cite{GL, PaS}. The cluster of this seed defines a non-negativity test for a point $\Omega_n(e) \in \mathrm{Gr}_{\geq 0}(n-1, 2n)$ defined by $e$. That is, the positivity of all Plücker coordinates in the set guaranties the non-negativity of all other Plücker coordinates. 

A natural candidate for an electrical network to start with is one given by $G_n$ with some edges contracted, so that the obtained electrical network belongs to the relevant cell. However, as the following example shows, the main difference from the top-cell case is that we do not have a cluster in the sleeve for which appearing Plücker coordinates correspond to a single circular pair.
\begin{example}
    Consider an electrical network on the graph $G_5$ with one contracted spike, see Fig. \ref{lowdem}. In this case, any extended cluster, corresponding to the codimension $1$  cell  of $EP_5$, contains the frozen variable labeled by $5789$, which does not correspond to any circular pair.
\end{example}

\begin{figure}[ht]
  \centering
  \includegraphics[width=1.0\textwidth]{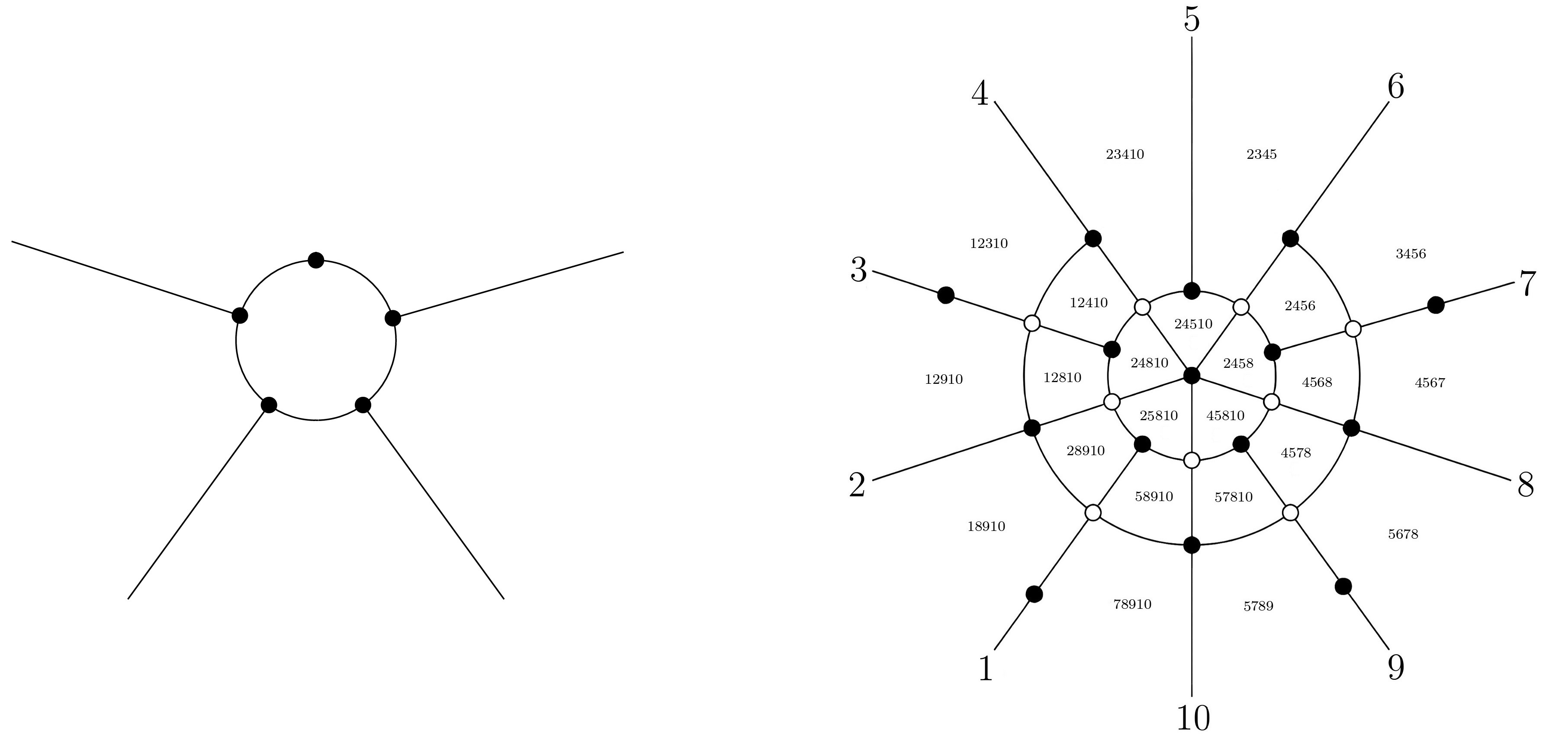}
 \caption{The codimension $1$ cell of $EP_5$.}
\label{lowdem}
\end{figure}

 However, for any electrical network $e$, every Plücker coordinate of $\Omega_n(e)$ can be expressed as a linear combination of minors of the response matrix $M_R(e)$ (due to the Binet--Cauchy formula applied to the decomposition \cite[Lemma 4.7]{BGKT}). Thus, we can obtain (non-minimal) tests in terms of the positivity of these linear combinations. 

Other positivity tests for the cells of codimension 1 and 2 were found in \cite[Section 4]{Jian}, it is also worth to compare these constructions.



\appendix

\section{Parametrization of \texorpdfstring{$\mathrm{Gr}(n-1,2n) \cap\mathbb P H$}{Gr}} 
\label{gen-form-ap}
In this appendix, we prove Lemma \ref{Parametrization of the top-cell}. For this purpose, let us define the subspace $V$ and  its fixed  basis: 
\begin{gather} \label{Basis of V}
 V=\{v\in \mathbb R^{2n}|\sum\limits_{i=1}^{n}(-1)^iv_{2i}=0,\ \sum\limits_{i=1}^n(-1)^iv_{2i-1}=0\}, \\\nonumber
 v_1=(1,0,1,0,\ldots,0,0,0),\ v_2=(0,1,0,1,\ldots,0,0,0),\ldots,v_{2n-2}=(0, 0,0,0,\ldots,1,0,1).
\end{gather}

By \cite[Lemma 4.2]{BGKT}, there is a natural map $\mathcal G: \mathrm{Gr}(n-1,2n)\cap\mathbb P H \to \mathrm{Gr}(n-1, V).$ Let $M$ be a matrix representing a point  $X \in \mathrm{Gr}(n-1,2n)\cap\mathbb P H $. Expanding its rows in the basis $v_1, \dots, v_{2n-2}$, we obtain the matrix $\widetilde{M}=MB_n^{-1}$ representing the point $\widetilde{X}=\mathcal G(X) \in \mathrm{Gr}(n-1, V) $. Here $B_n$ is the matrix whose rows are vectors $v_i$ and  $B_n^{-1}$ is the following matrix:

\begin{equation*}
\label{eq:Binverse}
B_n^{-1} = \left(
\begin{array}{cccccc}
1& 0  & -1  & 0 & \dots&0  \\
0& 1  & 0  & -1 & \dots&(-1)^{n+1}\\
0& 0  & 1  & 0 & \dots&0\\
0&0&0&1&\dots&(-1)^n\\
\vdots&\vdots&\vdots&\vdots&\ddots&\vdots\\
0& 0  & 0  & 0 & \dots&1\\
0& 0  & 0  & 0 & \dots&0\\
0& 0  & 0  & 0 & \dots&0
\end{array}
\right).
\end{equation*} 
\begin{theorem}  \textup{\cite[Theorem 5.9]{BGKT},\cite[Theorem 1.5]{CGS}} \label{lem: map_bij} 
The map $\mathcal G$ provides a bijection between $\mathrm{Gr}(n-1,2n)\cap\mathbb P H$ and \textit{Lagrangian Grassmannian}  $\mathrm{LG}(n-1, V)$ with respect to the non-degenerate  skew-symmetric form $\Lambda_{2n-2}:$
\begin{equation*} \label{8}
\Lambda_{2n-2} = \left(\begin{array}{cccccc}
\phantom{\vdots}0 & 1 & 0 & \cdots &  \cdots & 0   \\
\phantom{\vdots}-1 & 0 & -1 & 0 & & \vdots   \\
\phantom{\vdots}0 & 1 & 0 & 1 & \ddots & \vdots  \\
\vdots & \ddots  & \ddots  & \ddots & \ddots & 0   \\
 \vdots &    & \ddots &  1 & 0 & 1  \\
\phantom{\vdots}0 &  \cdots & \cdots  & 0 &  -1 & 0
\end{array}\right).  
\end{equation*}
In particular, we have:  
$$\widetilde{M}\Lambda_{2n-2}\widetilde{M}^T=0.$$
\end{theorem}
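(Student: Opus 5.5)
The plan is to reduce the statement to linear algebra in $\bigwedge^{n-1}\mathbb R^{2n}$ and to check two families of linear conditions on a spanning set of $H$, namely the images of well-connected networks.

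\textbf{Step 1: translate the conditions into linear conditions.} Let $\alpha_{\mathrm{ev}}=\sum_i(-1)^ie^*_{2i}$ and $\alpha_{\mathrm{odd}}=\sum_i(-1)^ie^*_{2i-1}$, so that $V=\ker\alpha_{\mathrm{ev}}\cap\ker\alpha_{\mathrm{odd}}$. For a decomposable vector $X=u_1\wedge\dots\wedge u_{n-1}$, the row space lies in $V$ if and only if the contractions vanish: $\iota_{\alpha_{\mathrm{ev}}}X=\iota_{\alpha_{\mathrm{odd}}}X=0$. Since $B_n$ is invertible on $V$, the map $\mathcal G$ is just a linear change of coordinates on $\mathrm{Gr}(n-1,V)$. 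In these coordinates, isotropy of $\mathrm{span}(u_i)$ for $\Lambda_{2n-2}$ is equivalent to a further linear condition, namely that the contraction $\iota_{\beta}X\in\bigwedge^{n-3}V$ vanishes, where $\beta$ is the symplectic $2$-form. So the target $\mathrm{LG}(n-1,V)$ is $\mathrm{Gr}(n-1,2n)\cap\mathbb P K$, where $K\subset\bigwedge^{n-1}V$ is the kernel of $\iota_\beta$. This $K$ is the fundamental $\mathrm{Sp}_{2n-2}$-representation, of dimension $\binom{2n-2}{n-1}-\binom{2n-2}{n-3}=C_n$.

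\textbf{Step 2: show $H\subseteq K$ using networks.} Here I will use the paper's earlier results rather than a combinatorial check on each $w_\sigma$. By Theorem \ref{Image of the Lam's embedding surjectivity} and Theorem \ref{th: main_gr}, every well-connected network $e$ gives the point $\mathcal L(e)$, represented by the rows of $\Omega'_n(e)$. Since $M_R(e)$ is symmetric with zero row sums, a direct computation should show two things:
\begin{itemize}
\item each row of $\Omega'_n(e)$ lies in $V$, because the alternating sums of odd entries and of even entries vanish by the zero row sums and the $1,1$ pattern;
\item $\widetilde M\Lambda_{2n-2}\widetilde M^T=0$, because the pairing of rows $i$ and $j$ reduces to $x_{ij}-x_{ji}$ plus telescoping terms.
\end{itemize}
Hence $\mathcal L(e)\in K$ for every well-connected $e$. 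By Theorem \ref{th: the_first_ph}, $\mathcal L(e)=\sum_\sigma L_\sigma w_\sigma$. As the conductances vary over an open set of the $\binom n2$-dimensional top cell, these vectors span $H$. To see this, note that the $C_n$ functions $L_\sigma$ are linearly independent polynomials in the weights (distinct boundary partitions give distinct grove monomials on a suitable graph), so no nonzero linear functional vanishes on all the points. Therefore $H\subseteq K$.

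\textbf{Step 3: conclude.} Since $\dim H=C_n=\dim K$, we get $H=K$, so $\mathrm{Gr}(n-1,2n)\cap\mathbb PH=\mathrm{Gr}(n-1,V)\cap\mathbb PK$. The map $\mathcal G$, being induced by the linear isomorphism $V\cong\mathbb R^{2n-2}$, carries this set bijectively onto $\mathrm{LG}(n-1,V)$. The identity $\widetilde M\Lambda_{2n-2}\widetilde M^T=0$ then holds for every point.

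\textbf{Expected obstacle.} The main difficulty is the spanning claim in Step 2, i.e.\ that the $L_\sigma$ are linearly independent as functions on well-connected networks. If this turns out to be awkward, the fallback is to verify $\iota_{\alpha}w_\sigma=0$ and $\iota_\beta w_\sigma=0$ directly from concordance. The idea there is to pair up subsets $I\cup\{j\}$ that are concordant with $\sigma$ via moving the missing element within a block, and to show these contributions cancel with alternating signs.
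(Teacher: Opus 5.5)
Your proof is correct. Note that the paper does not prove this statement: it quotes it from \cite{BGKT} and \cite{CGS}. In Appendix~\ref{gen-form-ap} it uses only the converse implication, that isotropy forces $x_{ij}=x_{ji}$ via \cite[Lemma 4.6]{BGKT}. So you are supplying an argument where the paper relies on citation, and it goes through:
\begin{itemize}
\item $\mathrm{LG}(n-1,V)=\mathrm{Gr}(n-1,2n)\cap\mathbb PK$ with $K=\ker\bigl(\iota_\beta\colon\bigwedge^{n-1}V\to\bigwedge^{n-3}V\bigr)$, because contractions of a nonzero decomposable vector vanish iff the corresponding conditions hold on its span;
\item $\dim K=\binom{2n-2}{n-1}-\binom{2n-2}{n-3}=C_n$, by surjectivity of $\iota_\beta$ in degree $n-1=\frac12\dim V$;
\item $H\subseteq K$ together with $\dim H=C_n$ gives $H=K$.
\end{itemize}

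The computation you left as ``should show'' does hold. In the basis $v_1,\ldots,v_{2n-2}$, row $i\ge 2$ of $\Omega_n(e)$ has coordinates $\tilde u_{2l}=\delta_{l,i-1}$ and $\tilde u_{2l-1}=(-1)^{i+l}\sum_{j\le l}x_{ij}$. The vanishing of the would-be coordinate $\tilde u_{2n-1}$ is exactly the zero row sum. The $\Lambda_{2n-2}$-pairing of rows $i$ and $i'$ then comes out to $(-1)^{i+i'}(x_{ii'}-x_{i'i})$.

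Three points in Step~2 should be made explicit.
\begin{itemize}
\item The spanning claim needs every $L_\sigma$ to be a nonzero polynomial on the fixed critical graph whose weights you vary. This is where well-connectedness enters, via Theorem~\ref{th: the_first_ph} ($L_\sigma>0$ on $\mathrm{Gr}_{>0}(n-1,2n)\cap\mathbb PH$).
\item Disjointness of supports holds because a grove is determined by its edge set, so grove monomials for different partitions are distinct squarefree monomials.
\item Since $H$ and $K$ are defined over $\mathbb Q$, the real inclusion you prove gives the complex one needed for the complex variety.
\end{itemize}

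As for what each approach buys: yours is self-contained modulo Theorem~\ref{th: main_gr}, the count $\dim H=C_n$, and standard symplectic linear algebra. It also makes visible that the symmetry of $M_R(e)$ is precisely the isotropy condition. The citation is shorter and the cited sources establish more (for instance an explicit $\mathrm{Sp}_{2n-2}$-structure), but within this paper the statement remains a black box.
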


By $\Delta_I$ denote the set of  Plücker coordinates of a point $X\in \mathrm{Gr}(n-1,2n)\cap\mathbb P H$ and by $\Delta_I^{\bullet}$ the set of  Plücker coordinates of the point $\widetilde{X}:=XB_n^{-1}\in \mathrm{LG}(n-1,V)$. The lemma below provides a formula to express  Plücker coordinates of $\mathrm{Gr}(n-1,2n)\cap\mathbb P H$ in terms of  Plücker coordinates of $\mathrm{LG}(n-1,V)$. 
\begin{lemma} \label{Connection between  Plücker coordinates of Lam's embedding and of LGr}
Let $I=\{i_1,i_2,\ldots,i_{n-1}\}$, then
$$\Delta_I=\sum\limits_{i_1',i_2',\ldots,i_{n-1}'}\Delta_{i_1',i_2',\ldots,i_{n-1}'}^{\bullet},$$
where the sum is over all $i_k'\in [1,\ldots,2n-2]\cap\{i_k-2,i_k\}$.
\end{lemma}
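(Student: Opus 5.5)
The identity is a statement about maximal minors of a product. A representing matrix $M$ of $X$ and the matrix $\widetilde M=MB_n^{-1}$ representing $\widetilde X$ are related by $M=\widetilde M B_n$, where $B_n$ is the $(2n-2)\times 2n$ matrix whose rows are $v_1,\dots,v_{2n-2}$. I would therefore apply the Cauchy--Binet formula to the column submatrix $M_I=\widetilde M\,(B_n)_I$, where $(B_n)_I$ denotes the $(2n-2)\times(n-1)$ submatrix of $B_n$ formed by the columns indexed by $I$. This gives
\[
\Delta_I=\sum_{J\in\binom{[2n-2]}{n-1}}\Delta_J^{\bullet}\,\det\bigl((B_n)_{J,I}\bigr).
\]
The whole proof then reduces to computing the minors $\det((B_n)_{J,I})$ and showing they are $1$ exactly for the $J$ described in the statement and $0$ otherwise.

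The next step is to describe $B_n$ column by column. Since $v_k=e_k+e_{k+2}$, column $c$ of $B_n$ has a $1$ in row $c$ (if $c\le 2n-2$) and a $1$ in row $c-2$ (if $c\ge 3$), and zeros elsewhere. Thus column $c$ is the indicator vector of $[1,2n-2]\cap\{c-2,c\}$. Expanding $\det((B_n)_{J,I})$ by multilinearity in the columns, each column $i_k$ contributes a choice of a single row $i_k'\in[1,2n-2]\cap\{i_k-2,i_k\}$. Each resulting term is the determinant of a $0/1$ matrix with exactly one $1$ in each column, and this equals $\pm1$ if the chosen rows are distinct and $0$ otherwise.

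The main obstacle is the signs: I need every surviving term to come with sign $+1$. If $i_1<\dots<i_{n-1}$, then the choices $i_k'\in\{i_k-2,i_k\}$ are weakly increasing except when $i_{k+1}=i_k+1$ with $i_k'=i_k$ and $i_{k+1}'=i_{k+1}-2=i_k-1$, which creates an inversion. I would handle this by splitting the columns into odd and even parity classes. Within each class the choices are strictly monotone. Rows from different classes have different parities, so two columns can only collide if they are in the same class, and collisions within a class, namely $i_k'=i_{k+1}'$ for $i_{k+1}=i_k+2$, must be discarded. Sign issues therefore come only from the interleaving of the two classes.

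The cleanest way to control the interleaving is to substitute directly. Writing $M=\widetilde M B_n$, the column $M_{\cdot c}$ equals $\widetilde M_{\cdot c}+\widetilde M_{\cdot c-2}$, with out-of-range terms omitted. Expanding $\det(M_{\cdot i_1},\dots,M_{\cdot i_{n-1}})$ by multilinearity gives $\sum\det(\widetilde M_{\cdot i_1'},\dots,\widetilde M_{\cdot i_{n-1}'})$. Each term is either $0$ (from a repeated column) or $\pm\Delta^\bullet_{\{i'\}}$, the sign being that of the permutation sorting $(i_k')$. So the statement holds exactly when these sorting signs are all $+$. If sorting sometimes introduces transpositions, I would check whether the statement's sum, read as over multisets of ordered tuples, absorbs the signs, and whether Theorem \ref{lem: map_bij} (the Lagrangian condition $\widetilde M\Lambda_{2n-2}\widetilde M^T=0$) forces the offending terms to cancel in pairs. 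In the worst case I would interpret $\Delta^\bullet_{i'_1,\dots,i'_{n-1}}$ as the determinant taken in the given (unsorted) column order, under which the multilinear expansion proves the formula verbatim.
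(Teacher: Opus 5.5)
Your argument is the paper's proof. The paper expands $X=\sum_J\Delta^{\bullet}_J\,v_{j_1}\wedge\cdots\wedge v_{j_{n-1}}$ multilinearly, which is exactly your column-by-column expansion of $M=\widetilde M B_n$. Your reading $v_k=e_k+e_{k+2}$ is also the right one: the paper's proof misprints it as $e_{j-2}+e_j$, which would give $i_k'\in\{i_k,i_k+2\}$ instead of $\{i_k-2,i_k\}$.

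The sign issue you flag is real. It is resolved by your last option, which is the paper's own convention. In the example right after the lemma, $\Delta^{\bullet}_{215}+\Delta^{\bullet}_{233}+\Delta^{\bullet}_{213}$ is rewritten as $-\Delta^{\bullet}_{125}-\Delta^{\bullet}_{123}$. So $\Delta^{\bullet}$ with unsorted indices means the determinant in the given column order, and with that reading your multilinear expansion proves the formula verbatim.

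Commit to that reading and drop the other two ideas. In particular, the Lagrangian condition does not make the negative terms cancel: with sorted indices the formula is false even on $\mathrm{LG}(n-1,V)$. For $n=4$, the rows $(0,1,0,0,0,0)$, $(-1,0,1,0,0,0)$, $(0,0,0,0,1,0)$ span a Lagrangian subspace for $\Lambda_{6}$. It has $\Delta^{\bullet}_{235}=\Delta^{\bullet}_{125}=1$ and $\Delta^{\bullet}_{123}=0$. The corresponding $M=\widetilde M B_4$ has zero third column, so $\Delta_{235}=0$, whereas the sorted reading of the formula gives $2$.
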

\begin{proof}
     Represent $\widetilde{X}$ as a point of the Lagrangian Grassmannian $\mathrm{LG}(n-1,V)$ identified with its image under the  Plücker embedding: $$X=\sum \Delta_{j_1, \dots, j_{n-1}}^{\bullet}v_{j_1}\wedge v_{j_2}\dots \wedge v_{j_{n-1}}\in \bigwedge^{n-1}V.$$ Using the fact that $v_{j}=e_{j-2}+e_{j}$, we obtain:
$$\sum \Delta_{j_1, \dots, j_{n-1}}^{\bullet}(e_{j_1-2}+e_{j_1})\wedge (e_{j_2-2}+e_{j_2}) \wedge \dots \wedge (e_{j_{n-1}-2}+e_{j_{n-1}})=$$
$$=\sum \Delta_{i_1, i_2, \dots, i_{n-1}} e_{i_1}\wedge e_{i_2} \wedge  \dots \wedge e_{i_{n-1}}.$$
\end{proof}

\begin{example}
Consider a  Plücker coordinate $\Delta_{235},$ then
$i_1' \in \{0, 2\} \cap  [1,\ldots, 6],$ $i_2' \in \{1, 3\} \cap  [1,\ldots, 6],$ $i_3' \in \{3, 5\} \cap [1,\dots, 6]  $ and
$$\Delta_{235}=\Delta_{235}^{\bullet}+\Delta_{215}^{\bullet}+\Delta_{233}^{\bullet}+\Delta_{213}^{\bullet}=\Delta_{235}^{\bullet}-\Delta_{125}^{\bullet}-\Delta_{123}^{\bullet}.$$
\end{example}

\begin{corollary}\label{Even coordinate}
    The following holds:
    $$\Delta_{246\dots 2n-2}=\Delta_{246\dots 2n-2}^{\bullet}.$$
\end{corollary}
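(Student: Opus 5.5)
The plan is to apply Lemma \ref{Connection between  Plücker coordinates of Lam's embedding and of LGr} directly to $I=\{2,4,\ldots,2n-2\}$, that is $i_k=2k$ for $k=1,\ldots,n-1$, and to show that exactly one term of the resulting sum survives, and that it carries a plus sign.

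First, list the admissible choices for each index. For $k=1$ the set is $\{0,2\}\cap[1,2n-2]=\{2\}$, so $i_1'=2$ is forced. For $2\le k\le n-1$ the set is $\{2k-2,2k\}$, and both elements lie in $[1,2n-2]$.

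Next, discard the choices with repeated indices. Each term $\Delta^{\bullet}_{i_1',\ldots,i_{n-1}'}$ comes from a wedge product $v_{i_1'}\wedge\cdots\wedge v_{i_{n-1}'}$, as in the proof of the lemma. Such a term vanishes whenever two indices coincide, because a wedge product with a repeated vector is zero. Now argue by induction on $k$ that $i_k'=2k$. The base case $k=1$ is the forced choice above. If $i_{k-1}'=2k-2$, then the option $i_k'=2k-2$ repeats an index and gives zero, so $i_k'=2k$.

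Hence the only nonzero contribution is $i_k'=i_k$ for every $k$. This tuple is already in increasing order, so no reordering sign appears. Therefore $\Delta_{246\dots 2n-2}=\Delta^{\bullet}_{246\dots 2n-2}$.

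The only point needing care is the bookkeeping of signs and vanishing terms in the lemma's sum when indices repeat or get reordered. The example $\Delta_{235}$ preceding the corollary shows the convention: repeated indices give zero and reordering gives a sign. In our case the surviving tuple is already sorted, so this obstacle is trivial.
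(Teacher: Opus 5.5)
Your argument is correct and matches the paper's, which simply says the corollary is immediate from the lemma expressing $\Delta_I$ as a sum of the coordinates $\Delta^{\bullet}$. Your forced-choice induction supplies exactly the bookkeeping behind that: $i_1'=2$ because $0\notin[1,2n-2]$, then $i_k'=2k$ because the other choice repeats an index, and the one surviving tuple is already increasing, so it carries no sign.
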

\begin{proof}
    This is immediate from Lemma \ref{Connection between  Plücker coordinates of Lam's embedding and of LGr}.
\end{proof}

\begin{proof}[Proof of Lemma \ref{Parametrization of the top-cell}]
Consider an arbitrary $X\in\mathcal{X}_\varepsilon$. In the following we assume that $X$ is identified with its representative given by a matrix. By \cite[Lemma 4.2]{BGKT} $X\in \bigwedge^{n-1}V$, thus $X\subset V$. Expand the rows of the matrix $X$ in the basis $v_1, \dots, v_{2n-2}$ and obtain the matrix $\widetilde{X}:=XB_n^{-1}\in \mathrm{LG}(n-1,V)$.
Since $X\in\mathcal{X}_\varepsilon$ we have, by Corollary \ref{Even coordinate}, that $\Delta_{246\ldots 2n-2}^{\bullet}\ne 0$. Denote by $D$ the matrix composed of even columns of $\widetilde{X}$. Since $\Delta_{246\ldots 2n-2}^{\bullet}\ne 0$ we have that $\det(D)\ne 0$. Thus we can multiply $\widetilde{X}$ by $D^{-1}$ on the left:
\begin{equation}
\label{eq:Binverse1}
D^{-1}\widetilde{X}= \left(
\begin{array}{ccccccc}
*& 1  & *  & 0 & \dots& * & 0  \\
*& 0  & *  & 1 & \dots& * & 0\\
*& 0  & *  & 0 & \dots& * & 0\\
*&0&*&0&\dots& * & 0\\
\vdots&\vdots&\vdots&\vdots&\cdots&\vdots&\vdots\\
*& 0  & *  & 0 & \dots& * &1
\end{array}
\right).
\end{equation}

Note that $D^{-1}\widetilde{X}$ and $\widetilde{X}$ represent the same point in $\mathrm{LG}(n-1,V)$ and 
denote the $i$-th row of the matrix $D^{-1}\widetilde{X}$ by $(D^{-1}\widetilde{X})_{i}$. Denote by $\widetilde{\Phi}_n$ the matrix obtained from $D^{-1}\widetilde{X}$ by adding the row $(D^{-1}\widetilde{X})_{1}-(D^{-1}\widetilde{X})_{2}+\ldots(-1)^n (D^{-1}\widetilde{X})_{n-1}$
as the first row. 
Again, since the row span of $\widetilde{\Phi}_n$ coincides with the row span of $D^{-1}\widetilde{X}$, $\Phi_n$ represents the same point in $\mathrm{LG}(n-1,V)$ as $D^{-1}\widetilde{X}$. In fact we obtain that
\begin{equation}
\label{Matrix Omega_n}
\widetilde{\Phi}_n= \left(
\begin{array}{ccccccc}
* & 1 & * & -1 & \ldots & * & (-1)^n \\
*& 1  & *  & 0 & \dots& * & 0  \\
*& 0  & *  & 1 & \dots& * & 0\\
*& 0  & *  & 0 & \dots& * & 0\\
*&0&*&0&\dots& * & 0\\
\vdots&\vdots&\vdots&\vdots&\cdots&\vdots&\vdots\\
*& 0  & *  & 0 & \dots& * &1
\end{array}
\right).
\end{equation}

By the straightforward matrix multiplication  we obtain:
\begin{equation}
\label{eq:Binverse2}
\widetilde{\Phi}_nB_n= \left(
\begin{array}{cccccc}
z_{11}& 1  & z_{12}  & 0 & \dots& (-1)^n  \\
z_{21}& 1  & z_{22}  & 1 & \dots& 0  \\
z_{23}& 0  & z_{24}  & 1 & \dots& 0\\
z_{31}& 0  & z_{32}  & 0 & \dots& 0\\

\vdots&\vdots&\vdots&\vdots&\ddots&\vdots\\
\end{array}
\right).
\end{equation}

The matrix $\widetilde{\Phi}_nB_n$ represents the point $X$,
and also, as explained above, $X\subset V$. Thus the rows of the matrix $\widetilde{\Phi}_nB_n$ lie in the subspace $V$. Thus the alternated sum of odd elements in each row is equal to $0$.

Thus the matrix $\Phi_n$ has the form:
$$\Phi_n=\left(\begin{matrix}
x_{11} & 1 & -x_{12} & 0 & x_{13} & 0 & \ldots & (-1)^n \\
-x_{21} & 1 & x_{22} & 1 & -x_{23} & 0 & \ldots & 0 \\
x_{31} & 0 & -x_{32} & 1 & x_{33} & 1 & \ldots & 0 \\
\vdots & \vdots & \vdots & \vdots & \vdots & \vdots & \ddots & \vdots &  
\end{matrix}\right),$$
where $\sum\limits_{k\in \{1,3,\ldots,2n-1\}}x_{ik}=0$. It remains to prove that $(x_{ij})_{i,j\in[n]}$ is symmetric.

Applying Theorem \ref{lem: map_bij} we have that $\Phi_nB_n^{-1}$ is Lagrangian with respect to the symplectic form $\Lambda_{2n-2}$, i.e. 
$(\Phi_nB_n^{-1})\Lambda_{2n-2}(\Phi_nB_n^{-1})^T=0.$
Finally, using \cite[Lemma 4.6]{BGKT} we obtain that $x_{ij}=x_{ji}$. 
\end{proof}
\begin{remark}
Let us provide a sketch of an alternative proof of Lemma \ref{Parametrization of the top-cell}.   In \cite[Theorem 1.6]{CGS} the authors obtain  an explicit  matrix embedding  of electrical points (denoted $U_{\textit{}{not\ shorted}}$ in their terminology) into the Isotropic Grassmannian $\mathrm{IG}(n+1, 2n)$. Combining this result with duality \cite[Section 2.7]{Galashin} (described in this context precisely in \cite[Proposition 3.1]{BGGK}), one recovers   Lemma \ref{Parametrization of the top-cell}.
\end{remark}

\section{Reachability of contiguous  minors} \label{sec:appendix B}
The following was claimed in \cite[Theorem 6.2.16]{ALT}. We provide here a proof for completeness.
\begin{lemma}
Let $y_{(P;Q)}$ be a variable where $(P;Q)$ is a non-symmetric contiguous circular pair. Then $y_{(P;Q)}\in \mathcal{CM}_n$, and can be reached by mutations of the form \eqref{eq: Grassmann Plücker} from the initial seed $(\tilde{\mathbf{y}}_{D_n^\dagger};Q_{\mathcal{CM}_n})$.
\end{lemma}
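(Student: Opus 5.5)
Our plan is to avoid working directly in $\mathcal{CM}_n$ and instead transport the problem to Scott's cluster algebra $\mathcal{A}_{n-1,2n}$ via Theorem \ref{thm: cluster algebras coinside} (and Theorem \ref{thm: even case} for even $n$). Under the correspondence \eqref{eq: isomorphism}, a contiguous circular pair $(P;Q)$ corresponds by Lemma \ref{lemma: bijection} to the Plücker coordinate $\Delta_I$ with $I$ of the form \eqref{eq: I for lemma on Lam minors}. It therefore suffices to produce a sequence of square (Plücker) moves in Scott's seed $S_n$, never mutating at the $n$ central vertices labeled by subsets of $\{2,4,\ldots,2n\}$, which reaches a plabic graph having a face labeled $I$. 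Since such Plücker mutations are carried by the isomorphism to mutations of the form \eqref{eq: Grassmann Plücker}, the same sequence applied to $(\tilde{\mathbf{y}}_{D_n^\dagger};Q_{\mathcal{CM}_n})$ then produces $y_{(P;Q)}$. This is the same observation as in the remark on solid seeds, together with \cite[Lemma 6.2.14]{ALT}.

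We will proceed by induction on the ``distance from centrality'' of $(P;Q)$, for example $|D(P;Q)|=|d_1-d_2|$, at fixed size $k$ and fixed central diagonal direction. The base case $|D|\le 1$ (respectively $|D|\in\{0,2\}$ for even $n$) consists of central pairs, or of the pairs in $(CM_n\setminus D_n)^\dagger$, which lie in the initial seed or are one mutation away from it.

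For the inductive step, we take a contiguous pair $(P;Q)$ with larger $|D|$ and let $(P';Q')$ be the pair obtained by sliding it one step toward the center along the same diagonal. We then write the Desnanot–Jacobi/Grassmann–Plücker relation \eqref{eq: Grassmann Plücker} in which $(P;Q)$ and $(P';Q')$ appear on the left-hand side. The four terms on the right-hand side are contiguous pairs of sizes $k\pm1$ and $k$ that are closer to center. We aim to show that, in a seed reached inductively, all five of these other pairs appear together as the neighbourhood of a square face labeled $(P';Q')$, so that mutating there yields $(P;Q)$. Concretely, we will realize this seed-by-seed as ``sweeping'' one part of an angle outward: we mutate successively along the staircase described in Section \ref{seq: Arrangement of labels}. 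Each such step corresponds to a square move in the plabic graph, which keeps all labels of the form \eqref{eq: I for lemma on Lam minors}, since each move swaps one even index in $J_l$ or $J_r$ for an adjacent odd one.

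We expect the main obstacle to be checking that each intermediate face is indeed quadrilateral, i.e. has exactly four neighbours in the current quiver with the required alternating orientation, so that the move is a genuine Plücker mutation. Our first attempt will be to track the plabic graph explicitly as a rhombus-tiling-like (wiring) picture. In this picture, moving a contiguous pair off-center corresponds to a layered sequence of braid moves in the associated alternating strand arrangement, and squareness follows from the staircase shape of the zig-zag paths already used in the proof of Theorem \ref{thm: cluster algebras coinside}. Symmetry under $s$ then handles $(\widetilde{Q};\widetilde{P})$, and non-symmetry of $(P;Q)$ guarantees that we never need the merged central vertex.
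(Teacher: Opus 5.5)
Your proposal has a genuine gap: the one step that carries the content of the lemma is stated as an aim and never proved.

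\textbf{Where the gap is.} The decisive step is the sentence you phrase as a goal. You need a seed in which the predecessor $(P';Q')$ sits at a vertex of degree $4$, with alternating orientation, whose neighbours are exactly the four right-hand terms $(D,T\pm\frac12,k)$ and $(D,T,k\pm1)$ of \eqref{eq: Grassmann Plucker in statistics}. Your induction cannot supply this, for three reasons.
\begin{enumerate}
\item It is set up at fixed $k$ and fixed diagonal, yet the required neighbours lie on the two adjacent diagonals and in sizes $k\pm1$.
\item Knowing that each neighbour is reachable in \emph{some} seed does not put all of them into a common cluster.
\item Every mutation inserts diagonals into the squares around the mutated vertex, which destroys the degree-$4$ neighbourhoods nearby. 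For example, once $(T,k)$ has been mutated from $|D|=1$ to $|D|=3$, the four neighbours of $(T,k+1)$ no longer share a common $D$-value, and $(T,k+1)$ gains two diagonal arrows. So the next step of an outward ``sweep'' along the radius is not of the form \eqref{eq: Grassmann Plücker}.
\end{enumerate}

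\textbf{The transport to Scott's algebra does not help.} The reduction to $\mathcal{A}_{n-1,2n}$ is sound but makes nothing easier. After freezing and trivializing, the quivers in Theorem \ref{thm: cluster algebras coinside} are identical, so ``the face is a quadrilateral'' is the same condition as ``the vertex has degree $4$ with alternating orientation'' in $Q_{\mathcal{CM}_n}$. Moreover, the staircase shape of zig-zag paths from Section \ref{seq: Arrangement of labels} is a property of $S_n$ itself. It does not persist after square moves, so it cannot certify that later faces are quadrilaterals.

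\textbf{A smaller inaccuracy.} A square move changes two elements of the label, not one. Sliding $(P;Q)$ to $(P+1;Q-1)$ replaces $I$ by $I\setminus\{2p_1-1,2q_k-4\}\cup\{2p_k+1,2q_1\}$.

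\textbf{What fills the gap in the paper.} The paper uses an explicit recursive mutation sequence, the highest $D$-statistic transformation $\mu_{n,v}$. It first performs $\mu_{n-2,u}$ at the three mutable neighbours $u$ of $v$, innermost first and without repeating shared mutations, and then performs $\mu_v$.
\begin{enumerate}
\item By induction on $n$, $\mu_{n,v}(Q_{\mathcal{CM}_n})$ differs from $Q_{\mathcal{CM}_n}$ only by a pyramid of squares filled with diagonals. This follows from a parity count: each square of the pyramid sees an odd number of mutations at its corners, because mutations shared between sub-pyramids are skipped.
\item The final mutation in $\mu_{n-2,u_3}$ removes the diagonals at $v$. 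Hence $v$ has degree $4$ with alternating orientation at the moment it is mutated.
\item By \cite[Remark 6.2.13, Lemma 6.2.14]{ALT}, this mutation has the form \eqref{eq: Grassmann Plücker} and produces a contiguous pair.
\item For odd $n$, the $T$-statistic and the top vertex of each pyramid show that the $n(n-1)$ resulting $1\times1$ pairs are distinct, so they exhaust all such pairs. Larger sizes are handled by starting at vertices further from the centre, and even $n$ needs only a small adjustment.
\end{enumerate}

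Without an explicit, verified mutation sequence of this kind, your proposal does not prove the lemma.
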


\begin{proof}
We first prove the statement for odd $n$ and for pairs of size $1$, i.e. $|P|=|Q|=1$. 

Let $v$ be a vertex of the quiver $Q_{\mathcal{CM}_n}$. Define the {\itshape highest $D$-statistic transformation} $\mu_{n,v}$ at $v$ by the following inductive process. For $n=5$ define $\mu_{n,v}=\mu_v$, see Fig. \ref{fig:mu5}. For $n>5$ define $\mu_{n,v}$ to be the sequence of mutations: $\mu_{n-2,u}$, where $u$ runs over all mutable neighbors of $v$ (in any order but such that the transformations in the most central neighbors are performed first), and finally the mutation $\mu_v$ itself. Note that the transformations $\mu_{n-2,u}$ require mutations in mutual vertices for different neighbors $u$, but we perform each mutation exactly once. Namely, if a certain mutation was performed as a part of $\mu_{n-2,u_1}$ at a neighbor $u_1$ we do not repeat it if it is required later for $\mu_{n-2,u_2}$ for another neighbor $u_2$.  

There are observations ensuring that the process is well-defined. First one is that the leftmost and the rightmost vertices involved in $\mu_{n,v}$ are in clockwise order. This follows from the fact that there are $\left\lfloor \frac{n}{2}\right\rfloor-1$ circles and $2n$ radii in the quiver $Q_{\mathcal{CM}_n}$. The second observation is that for $n_1< n_2$, $\mu_{n_1,v}$ can be performed in $Q_{\mathcal{CM}_{n_2}}$. This follows from the previous observation. 
Note also that in the conventions made above, at each step there are actually only three neighbors at which the transformations have to be applied. 

Now we prove that $\mu_{n,v}(v)$ is contiguous.
In order to obtain it we prove (by induction on $n$) that $\mu_{n,v}(Q_{\mathcal{CM}_n})$ differs from $Q_{\mathcal{CM}_n}$ by a pyramid $P_{\mu_{n,v}}$ centered at $v$ and filled with diagonals, see Fig. \ref{Fig:pyramid-7} for a definition.
For the base case $n=5$ the statement holds trivially: a single mutation at $v$ of the form \eqref{eq: Grassmann Plücker}, which constitutes $\mu_{5,v}$ entirely, clearly produces the simplest possible pyramid: two squares with diagonals not adjacent to $v$ (Fig. \ref{fig:mu5}). This is a general phenomenon for mutations of the form \eqref{eq: Grassmann Plücker} on a square grid with an alternating orientation around each vertex: after a mutation in vertex $v$ a diagonal which is not adjacent to $v$ appears in every square adjacent to $v$ if there was no diagonal in this square, and disappears otherwise. Moreover, no other edges, except for the edges of $Q_{\mathcal{CM}_n}$ and the diagonals in the squares, can appear, otherwise, the quiver will not be planar, however, mutations of the form \eqref{eq: Grassmann Plücker} preserve planarity.

We now prove the induction step. Let $u_1,u_2,u_3$ be an ordering of the three neighbors of $v$ at which $\mu_{n-2,u_i}$ are to be performed. By the induction assumption, $\mu_{n-2,u_1}$ gives a pyramid centered at $u_1$. We next perform $\mu_{n-2,u_2}$, and then $\mu_{n-2,u_3}$. The induction hypothesis could be applied to each of them individually performed on the initial quiver. However, we perform them one after another, and by our inductive process, we do not repeat mutual mutations for different neighbors. Thus, diagonals in certain squares of a pyramid appeared during $\mu_{n-2,u_1}$ (resp. $\mu_{n-2,u_2}$) can potentially disappear during $\mu_{n-2,u_2}$ or $\mu_{n-2,u_3}$ (resp. $\mu_{n-2,u_3}$). We prove that actually this is not the case. In a fixed square there is a diagonal after $\mu_{n-2,u_i}$ if at the corners of this square an odd number of mutations was performed in total (later we will say for simplicity: an odd number of mutations in a square). By the induction hypothesis we know that after $\mu_{n-2,u_1}$ the pyramid centered at $u_1$ appears. It means that $\mu_{n-1,u_1}$ requires an odd number of mutations at each of the squares of the pyramid. Attaching the pyramids $P_{\mu_{n-2,u_2}}$ and $P_{\mu_{n-2,u_3}}$ to the pyramid $P_{\mu_{n-2,u_1}}$, we do not perform certain mutations. These mutations correspond to the smaller pyramids (it follows from the property that pyramids intersect by smaller pyramids). By the induction hypothesis each of these smaller pyramids requires an odd number of mutations in the corners of each square. Since performing $\mu_{n-2,u_2}$ and $\mu_{n-2,u_3}$ we skip mutations corresponding to these smaller pyramids, we skip an odd number of mutations. Thus, in each square of an intersection we add an even number of mutations to an odd number of mutations that were performed on previous steps. It follows that we have performed an odd number of mutations in each square, so we get a pyramid filled with diagonals as was asserted. 

We next prove that the diagonals in the two squares of the pyramid $P_{\mu_{n,v}}$ whose corner is $v$ are not adjacent to $v$. After applying $\mu_{n-2,u_1}$ and $\mu_{n-2,u_2}$, the first row of the pyramid of $\mu_{n,v}$ is ready and thus filled with diagonals. The last mutation in $\mu_{n-2,u_3}$ is $\mu(u_3)$, and it deletes the diagonals in the two squares of the first row adjacent to $v$. This means that in $\mu_{n-2,u_3}\circ\mu_{n-2,u_2}\circ\mu_{n-2,u_1}(Q_{\mathcal{CM}_n})$ the vertex $v$ is of degree $4$, thus, by \cite[Remark 6.2.13]{ALT}, we can perform a mutation of the form \eqref{eq: Grassmann Plücker} at $v$. Hence, by \cite[Lemma 6.2.14]{ALT}, $\mu_{n,v}(v)$ is a contiguous minor.

Now we explain how to reach all $1\times1$ contiguous pairs from the initial quiver $Q_{\mathcal{CM}_n}$. Denote the set of the $2n$ vertices lying on the most central circle in $Q_{\mathcal{CM}_n}$ by $v_1,\ldots,v_{2n}$. We perform $\mu_{i,v_1}(v_1),\ldots,\mu_{i,v_{2n}}(v_{2n})$ for every odd $i$ where $5\le i\le n$. Together with initial $v_1,\ldots,v_{2n}$ this gives $2\cdot\frac{n(n-1)}{2}$ variables since for every $i$ we get $2n$ variables and $i$ runs over the set of size $\frac{n-3}{2}$. It remains to explain why these variables are different. 

In order to do that, we need the statistics $D(P;Q),\ T(P;Q)$ and $k(P;Q)$, see Section \ref{sec: Central circular minors}. As remarked in \cite[Remark 6.2.8]{ALT} in terms of these statistics a mutation of the form \eqref{eq: Grassmann Plücker} can be rewritten as 
\begin{equation} \label{eq: Grassmann Plucker in statistics}
    (D-2,T,k)(D+2,T,k)=(D,T-1/2,k)(D,T+1/2,k)+(D,T,k+1)(D,T,k-1).
\end{equation}

From \eqref{eq: Grassmann Plucker in statistics} it is clear that $\mu_{i,v_l}(v_l)\ne\mu_{j,v_m}(v_m)$ for $l\ne m$ since they have different $T$-statistics. It remains to explain why $\mu_{i,v}(v)\ne\mu_{j,v}(v)$ for $i\ne j$. An expression of $\mu_{i,v}(v)$ in terms of the initial cluster contains a unique monomial coming from the top vertex of the pyramid $P_{\mu_{i,v}}$. For different $i$ these top vertices are different, and we are done.

For the case $|P|=|Q|>1$ the proof is similar with the only difference that we apply $\mu_{n,v}$ not at the vertex of the most central circle.

The proof in the case of even $n$ needs only a small adjustment: by performing symmetric mutations in $Q_{\mathcal{CM}_n}$ along one side of the part of the angle containing the diagonals, we can shift the location of the diagonals by one. Hence all quivers containing diagonals in any part of any angle lie in our algebra. Moreover, the base of the pyramid, appearing in the proof is less than the half of the circle, therefore additional diagonals (see Fig. \ref{fig:quiver C for even n}) do not obstruct the construction.
\end{proof}

\begin{remark}
Note that the pyramids $P_{\mu_{n,v}}$ closely resemble the Aztec diamonds defined in \cite[Section 4]{KW space}. Moreover, our reachability algorithm is reminiscent of the Kuo's graphical condensations \cite{Kuo}. 

Also, notice that there are  generalized Aztec diamonds corresponding to  semicontiguous circular minors (see \cite{Lai}), which may be related to their reachability by cluster mutations.
\end{remark}

\begin{figure}[ht]

\begin{tikzpicture}[
    blueNode/.style={
        circle, draw=black, fill=cyan!40, inner sep=0pt, minimum size=5pt, line width=0.4pt
    },
    redNode/.style={
        circle, draw=black, fill=red!40, inner sep=0pt, minimum size=5pt, line width=0.4pt
    },
    arrow/.style={
        -{Stealth[scale=1.0]}, thick, shorten >=1.5pt, shorten <=1.5pt
    }
]

    \node[blueNode] (Center) at (0,0) {};

    \foreach \i in {0,1,...,9} {
        \pgfmathsetmacro{\angle}{90 + \i*36} 
        
        \node[redNode] (R\i) at (\angle:2) {};
        
        \node[blueNode] (B\i) at (\angle:3.5) {};
    }

        
        

\draw[arrow] (Center) -- (R1);
\draw[arrow] (Center) -- (R3);
\draw[arrow] (Center) -- (R5);
\draw[arrow] (Center) -- (R7);
\draw[arrow] (Center) -- (R9);

\draw[arrow] (R0) -- (Center);
\draw[arrow] (R2) -- (Center);
\draw[arrow] (R4) -- (Center);
\draw[arrow] (R6) -- (Center);
\draw[arrow] (R8) -- (Center);

\draw[arrow] (B1) -- (R1);
\draw[arrow] (B3) -- (R3);
\draw[arrow] (B5) -- (R5);
\draw[arrow] (B7) -- (R7);
\draw[arrow] (B9) -- (R9);

\draw[arrow] (R0) -- (B0);
\draw[arrow] (R2) -- (B2);
\draw[arrow] (R4) -- (B4);
\draw[arrow] (R6) -- (B6);
\draw[arrow] (R8) -- (B8);

\draw[arrow] (R1) -- (R0);
\draw[arrow] (R1) -- (R2);
\draw[arrow] (R3) -- (R2);
\draw[arrow] (R3) -- (R4);
\draw[arrow] (R5) -- (R4);
\draw[arrow] (R5) -- (R6);
\draw[arrow] (R7) -- (R6);
\draw[arrow] (R7) -- (R8);
\draw[arrow] (R9) -- (R8);
\draw[arrow] (R9) -- (R0);
\end{tikzpicture}\phantom{aaaaaaaaa}
\begin{tikzpicture}[
    blueNode/.style={
        circle, draw=black, fill=cyan!40, inner sep=0pt, minimum size=5pt, line width=0.4pt
    },
    redNode/.style={
        circle, draw=black, fill=red!40, inner sep=0pt, minimum size=5pt, line width=0.4pt
    },
     blackNode/.style={
        circle, draw=black, fill=black!100, inner sep=0pt, minimum size=5pt, line width=0.4pt
    },
    arrow/.style={
        -{Stealth[scale=1.0]}, thick, shorten >=1.5pt, shorten <=1.5pt
    },
    arrow1/.style={
        -{Stealth[scale=1.0]}, yellow, thick, shorten >=1.5pt, shorten <=1.5pt
    }
]

    \node[blueNode] (Center) at (0,0) {};

    \foreach \i in {0,1,...,9} {
        \pgfmathsetmacro{\angle}{90 + \i*36} 
        
        \node[redNode] (R\i) at (\angle:2) {};
        
        \node[blueNode] (B\i) at (\angle:3.5) {};
    }

        
        

\draw[arrow1] (R1) -- (B0);
\draw[arrow] (Center) -- (R3);
\draw[arrow] (Center) -- (R5);
\draw[arrow] (Center) -- (R7);
\draw[arrow1] (R9) -- (B0);

\draw[arrow1] (Center) -- (R0);
\draw[arrow] (R2) -- (Center);
\draw[arrow] (R4) -- (Center);
\draw[arrow] (R6) -- (Center);
\draw[arrow] (R8) -- (Center);

\draw[arrow] (B1) -- (R1);
\draw[arrow] (B3) -- (R3);
\draw[arrow] (B5) -- (R5);
\draw[arrow] (B7) -- (R7);
\draw[arrow] (B9) -- (R9);

\draw[arrow1] (B0) -- (R0);
\draw[arrow] (R2) -- (B2);
\draw[arrow] (R4) -- (B4);
\draw[arrow] (R6) -- (B6);
\draw[arrow] (R8) -- (B8);

\draw[arrow1] (R0) -- (R1);
\draw[arrow] (R1) -- (R2);
\draw[arrow] (R3) -- (R2);
\draw[arrow] (R3) -- (R4);
\draw[arrow] (R5) -- (R4);
\draw[arrow] (R5) -- (R6);
\draw[arrow] (R7) -- (R6);
\draw[arrow] (R7) -- (R8);
\draw[arrow] (R9) -- (R8);
\draw[arrow1] (R0) -- (R9);
\node[blackNode]  at (90:2) {};
\end{tikzpicture}
\caption{Quivers $Q_{\mathcal{CM}_5}$ and $\mu_{5,v}(Q_{\mathcal{CM}_5})$}
\label{fig:mu5}
\end{figure}

\begin{figure}
\centering
\scalebox{0.6} 
	{
\begin{tikzpicture}[
    baseNode/.style={
        circle, 
        draw=black, 
        fill=cyan!40, 
        inner sep=0pt, 
        minimum size=5pt, 
        line width=0.6pt
    },
    redNode/.style={
        fill=red!40 
    },
    arrow/.style={
        -{Stealth[scale=1.0]}, 
        thick,
        shorten >=1.5pt, 
        shorten <=1.5pt
    },
     arrow1/.style={
        -{Stealth[scale=1.0]}, yellow, thick, shorten >=1.5pt, shorten <=1.5pt
    }
]

    \node[baseNode] (N1) at (0,0) {};

    \def\n{14}          
    \def\rI{2.0}        
    \def\rII{3.8}       
    \def\rB{5.5}        

    \foreach \i in {0,1,...,13} {
        \pgfmathsetmacro{\angle}{90 - \i*(360/\n)} 
        
        \node[baseNode, redNode] (R1-\i) at (\angle:\rI) {};
        \node[baseNode, redNode] (R2-\i) at (\angle:\rII) {}; 
        \node[baseNode] (B-\i) at (\angle:\rB) {};
    }

    \foreach \i in {0,1,...,13} {
        \pgfmathsetmacro{\next}{int(mod(\i+1,\n))}
        
        \ifodd\i
            \draw[arrow] (R2-\i) -- (B-\i);
            \draw[arrow] (R2-\i) -- (R1-\i);
            \draw[arrow] (N1) -- (R1-\i);
            
            \draw[arrow] (R1-\i) -- (R1-\next);
            \draw[arrow] (R2-\next) -- (R2-\i);
        \else
            \draw[arrow] (R1-\i) -- (N1);
            \draw[arrow] (R1-\i) -- (R2-\i);
            \draw[arrow] (B-\i) -- (R2-\i);
            
            \draw[arrow] (R1-\next) -- (R1-\i);
            \draw[arrow] (R2-\i) -- (R2-\next);
        \fi
    }

\end{tikzpicture}
\hspace{6mm}
\begin{tikzpicture}[
    baseNode/.style={
        circle, 
        draw=black, 
        fill=cyan!40, 
        inner sep=0pt, 
        minimum size=5pt, 
        line width=0.6pt
    },
    redNode/.style={
        fill=red!40 
    },
    arrow/.style={
        -{Stealth[scale=1.0]}, 
        thick,
        shorten >=1.5pt, 
        shorten <=1.5pt
    },
     blackNode/.style={
        circle, draw=black, fill=black!100, inner sep=0pt, minimum size=5pt, line width=0.4pt
    },
    arrow1/.style={
        -{Stealth[scale=1.0]}, yellow, thick, shorten >=1.5pt, shorten <=1.5pt
    }
]

    \node[baseNode] (N1) at (0,0) {};

    \def\n{14}          
    \def\rI{2.0}        
    \def\rII{3.8}       
    \def\rB{5.5}        

    \foreach \i in {0,1,...,13} {
        \pgfmathsetmacro{\angle}{90 - \i*(360/\n)} 
        
        \node[baseNode, redNode] (R1-\i) at (\angle:\rI) {};
        \node[baseNode, redNode] (R2-\i) at (\angle:\rII) {}; 
        \node[baseNode] (B-\i) at (\angle:\rB) {};
    }

    \foreach \i in {0,1,...,13} {
        \pgfmathsetmacro{\next}{int(mod(\i+1,\n))}
        
        \ifodd\i
            \draw[arrow] (R2-\i) -- (B-\i);
            \draw[arrow] (R2-\i) -- (R1-\i);
            \draw[arrow] (N1) -- (R1-\i);
            
            \draw[arrow] (R1-\i) -- (R1-\next);
            \draw[arrow] (R2-\next) -- (R2-\i);
        \else
            \draw[arrow] (R1-\i) -- (N1);
            \draw[arrow] (R1-\i) -- (R2-\i);
            \draw[arrow] (B-\i) -- (R2-\i);
            
            \draw[arrow] (R1-\next) -- (R1-\i);
            \draw[arrow] (R2-\i) -- (R2-\next);
        \fi
    }

\draw[white, ->, line width=5pt] (Center) -- (R1-0);
\draw[white, ->, line width=5pt] (Center) -- (R1-12);
\draw[white, ->, line width=5pt] (R1-13) -- (Center);
\draw[white, ->, line width=5pt] (R1-13) -- (R1-0);
\draw[white, ->, line width=5pt] (R1-13) -- (R1-12);
\draw[white, ->, line width=5pt] (R2-13) -- (R1-13);
\draw[arrow1] (R1-13) -- (Center);
\draw[arrow1] (R1-13) -- (R2-13);
\draw[arrow1] (R1-0) -- (R1-13);
\draw[arrow1] (R1-12) -- (R1-13);
\draw[arrow1] (R2-13) -- (R1-0);
\draw[arrow1] (R2-13) -- (R1-12);
 \node[blackNode]  at (90 - 13*(360/\n:\rI) {};
\end{tikzpicture}
}
\end{figure}

\begin{figure}

\centering
\scalebox{0.6} 
	{
\begin{tikzpicture}[
    baseNode/.style={
        circle, 
        draw=black, 
        fill=cyan!40, 
        inner sep=0pt, 
        minimum size=5pt, 
        line width=0.6pt
    },
    redNode/.style={
        fill=red!40 
    },
    arrow/.style={
        -{Stealth[scale=1.0]}, 
        thick,
        shorten >=1.5pt, 
        shorten <=1.5pt
    },
      blackNode/.style={
        circle, draw=black, fill=black!100, inner sep=0pt, minimum size=5pt, line width=0.4pt
    },
    arrow1/.style={
        -{Stealth[scale=1.0]}, yellow, thick, shorten >=1.5pt, shorten <=1.5pt
    }
]

    \node[baseNode] (N1) at (0,0) {};

    \def\n{14}          
    \def\rI{2.0}        
    \def\rII{3.8}       
    \def\rB{5.5}        

    \foreach \i in {0,1,...,13} {
        \pgfmathsetmacro{\angle}{90 - \i*(360/\n)} 
        
        \node[baseNode, redNode] (R1-\i) at (\angle:\rI) {};
        \node[baseNode, redNode] (R2-\i) at (\angle:\rII) {}; 
        \node[baseNode] (B-\i) at (\angle:\rB) {};
    }

    \foreach \i in {0,1,...,13} {
        \pgfmathsetmacro{\next}{int(mod(\i+1,\n))}
        
        \ifodd\i
            \draw[arrow] (R2-\i) -- (B-\i);
            \draw[arrow] (R2-\i) -- (R1-\i);
            \draw[arrow] (N1) -- (R1-\i);
            
            \draw[arrow] (R1-\i) -- (R1-\next);
            \draw[arrow] (R2-\next) -- (R2-\i);
        \else
            \draw[arrow] (R1-\i) -- (N1);
            \draw[arrow] (R1-\i) -- (R2-\i);
            \draw[arrow] (B-\i) -- (R2-\i);
            
            \draw[arrow] (R1-\next) -- (R1-\i);
            \draw[arrow] (R2-\i) -- (R2-\next);
        \fi
    }

\draw[white, ->, line width=5pt] (Center) -- (R1-0);
\draw[white, ->, line width=5pt] (Center) -- (R1-12);
\draw[white, ->, line width=5pt] (R1-13) -- (Center);
\draw[white, ->, line width=5pt] (R1-13) -- (R1-0);
\draw[white, ->, line width=5pt] (R1-13) -- (R1-12);
\draw[white, ->, line width=5pt] (R2-13) -- (R1-13);
\draw[white, ->, line width=5pt] (R1-1) -- (R1-0);
\draw[white, ->, line width=5pt] (R1-0) -- (R2-0);
\draw[white, ->, line width=5pt] (R1-1) -- (R1-2);
\draw[white, ->, line width=5pt] (Center) -- (R1-2);
\draw[white, ->, line width=5pt] (R2-1) -- (R1-1);
\draw[white, ->, line width=5pt] (B-0) -- (R2-0);
\draw[white, ->, line width=5pt] (R2-0) -- (R2-1);
\draw[white, ->, line width=5pt] (R2-0) -- (R2-13);
\draw[white, ->, line width=5pt] (R1-1) -- (Center);

\draw[arrow1] (Center) -- (R1-0);
\draw[arrow1] (R1-13) -- (Center);
\draw[arrow1] (R1-1) -- (Center);
\draw[arrow1] (B-0) -- (R2-13);
\draw[arrow1] (B-0) -- (R2-1);
\draw[arrow1] (R2-0) -- (B-0);
\draw[arrow1] (R2-1) -- (R2-0);
\draw[arrow1] (R2-13) -- (R2-0);
\draw[arrow1] (R2-0) -- (R1-0);
\draw[arrow1] (R1-13) -- (R2-13);
\draw[arrow1] (R1-1) -- (R2-1);
\draw[arrow1] (R1-0) -- (R1-13);
\draw[arrow1] (R1-0) -- (R1-1);
\draw[arrow1] (R1-12) -- (R1-13);
\draw[arrow1] (R1-2) -- (R1-1);
\draw[arrow1] (R2-1) -- (R1-2);
\draw[arrow1] (R2-13) -- (R1-12);
 \node[blackNode]  at (90:\rI) {};

\end{tikzpicture}
\hspace{6mm}
\begin{tikzpicture}[
    baseNode/.style={
        circle, 
        draw=black, 
        fill=cyan!40, 
        inner sep=0pt, 
        minimum size=5pt, 
        line width=0.6pt
    },
    redNode/.style={
        fill=red!40 
    },
    arrow/.style={
        -{Stealth[scale=1.0]}, 
        thick,
        shorten >=1.5pt, 
        shorten <=1.5pt
    },
     blackNode/.style={
        circle, draw=black, fill=black!100, inner sep=0pt, minimum size=5pt, line width=0.4pt
    },
    arrow1/.style={
        -{Stealth[scale=1.0]}, yellow, thick, shorten >=1.5pt, shorten <=1.5pt
    }
]

    \node[baseNode] (N1) at (0,0) {};

    \def\n{14}          
    \def\rI{2.0}        
    \def\rII{3.8}       
    \def\rB{5.5}        

    \foreach \i in {0,1,...,13} {
        \pgfmathsetmacro{\angle}{90 - \i*(360/\n)} 
        
        \node[baseNode, redNode] (R1-\i) at (\angle:\rI) {};
        \node[baseNode, redNode] (R2-\i) at (\angle:\rII) {}; 
        \node[baseNode] (B-\i) at (\angle:\rB) {};
    }

    \foreach \i in {0,1,...,13} {
        \pgfmathsetmacro{\next}{int(mod(\i+1,\n))}
        
        \ifodd\i
            \draw[arrow] (R2-\i) -- (B-\i);
            \draw[arrow] (R2-\i) -- (R1-\i);
            \draw[arrow] (N1) -- (R1-\i);
            
            \draw[arrow] (R1-\i) -- (R1-\next);
            \draw[arrow] (R2-\next) -- (R2-\i);
        \else
            \draw[arrow] (R1-\i) -- (N1);
            \draw[arrow] (R1-\i) -- (R2-\i);
            \draw[arrow] (B-\i) -- (R2-\i);
            
            \draw[arrow] (R1-\next) -- (R1-\i);
            \draw[arrow] (R2-\i) -- (R2-\next);
        \fi
    }

\draw[white, ->, line width=5pt] (Center) -- (R1-0);
\draw[white, ->, line width=5pt] (Center) -- (R1-12);
\draw[white, ->, line width=5pt] (R1-13) -- (Center);
\draw[white, ->, line width=5pt] (R1-13) -- (R1-0);
\draw[white, ->, line width=5pt] (R1-13) -- (R1-12);
\draw[white, ->, line width=5pt] (R2-13) -- (R1-13);
\draw[white, ->, line width=5pt] (R1-1) -- (R1-0);
\draw[white, ->, line width=5pt] (R1-0) -- (R2-0);
\draw[white, ->, line width=5pt] (R1-1) -- (R1-2);
\draw[white, ->, line width=5pt] (Center) -- (R1-2);
\draw[white, ->, line width=5pt] (R2-1) -- (R1-1);
\draw[white, ->, line width=5pt] (B-0) -- (R2-0);
\draw[white, ->, line width=5pt] (R2-0) -- (R2-1);
\draw[white, ->, line width=5pt] (R2-0) -- (R2-13);
\draw[white, ->, line width=5pt] (R1-1) -- (Center);

\draw[arrow1] (Center) -- (R1-0);
\draw[arrow1] (R1-13) -- (Center);
\draw[arrow1] (R1-1) -- (Center);
\draw[arrow1] (B-0) -- (R2-13);
\draw[arrow1] (B-0) -- (R2-1);
\draw[arrow1] (R2-0) -- (B-0);
\draw[arrow1] (R2-1) -- (R2-0);
\draw[arrow1] (R2-13) -- (R2-0);
\draw[arrow1] (R2-0) -- (R1-0);
\draw[arrow1] (R1-13) -- (R2-13);
\draw[arrow1] (R1-1) -- (R2-1);
\draw[arrow1] (R1-0) -- (R1-13);
\draw[arrow1] (R1-0) -- (R1-1);
\draw[arrow1] (R1-12) -- (R1-13);
\draw[arrow1] (R1-2) -- (R1-1);
\draw[arrow1] (R2-1) -- (R1-2);
\draw[arrow1] (R2-13) -- (R1-12);

\draw[white, ->, line width=5pt] (Center) -- (R1-0);
\draw[white, ->, line width=5pt] (R1-13) -- (Center);
\draw[white, ->, line width=5pt] (R1-1) -- (Center);
\draw[white, ->, line width=5pt] (R1-0) -- (R1-1);
\draw[white, ->, line width=5pt] (R1-0) -- (R1-13);
\draw[white, ->, line width=5pt] (R2-0) -- (R1-0);

\draw[arrow1] (R1-0) -- (R2-0);
\draw[arrow1] (R1-1) -- (R1-0);
\draw[arrow1] (R1-13) -- (R1-0);
\draw[arrow1] (R2-0) -- (R1-1);
\draw[arrow1] (R2-0) -- (R1-13);
\draw[arrow1] (R1-0) -- (Center);
\node[blackNode]  at (90:\rI) {};

\end{tikzpicture}
}
\caption{Quivers $Q_{\mathcal{CM}_7}$ and $\mu_{7,v}(Q_{\mathcal{CM}_7})$}
\label{Fig:pyramid-7}
\end{figure}

\begin{figure}
\vspace{5mm}
\centering
\begin{tikzpicture}[scale=0.55,
    blueNode/.style={
        circle, 
        draw=black, 
        fill=cyan!40, 
        inner sep=0pt, 
        minimum size=5pt, 
        line width=0.6pt
    },
    redNode/.style={
        circle, 
        draw=black, 
        fill=red!40, 
        inner sep=0pt, 
        minimum size=5pt, 
        line width=0.6pt 
    },
    arrow/.style={
        -{Stealth[scale=1.0]}, 
        thick,
        shorten >=1.5pt, 
        shorten <=1.5pt
    }
]

    \node[blueNode] (Center) at (0,0) {};

    \def\n{18}          
    \def\rI{4}        
    \def\rII{6}       
    \def\rIII{8}      
    \def\rB{10}        

    \foreach \i in {0,1,...,17} {
        \pgfmathsetmacro{\angle}{90 - \i*(360/\n)} 
        
        \node[redNode]  (R1-\i) at (\angle:\rI)   {};
        \node[redNode]  (R2-\i) at (\angle:\rII)  {};
        \node[redNode]  (R3-\i) at (\angle:\rIII) {};
        \node[blueNode] (B-\i)  at (\angle:\rB)   {};
    }

    \foreach \i in {0,1,...,17} {
        \pgfmathsetmacro{\next}{int(mod(\i+1,\n))}
        
        \ifodd\i
            \draw[arrow] (B-\i)  -- (R3-\i);
            \draw[arrow] (R2-\i) -- (R3-\i) ;
            \draw[arrow] (R2-\i) -- (R1-\i);
            \draw[arrow] (Center) -- (R1-\i) ;
            
            \draw[arrow] (R1-\i) -- (R1-\next);
            \draw[arrow] (R2-\next) -- (R2-\i);
            \draw[arrow] (R3-\i) -- (R3-\next);
        \else
            \draw[arrow] (R1-\i) -- (Center) ;
            \draw[arrow] (R1-\i)  -- (R2-\i);
            \draw[arrow] (R3-\i) -- (R2-\i) ;
            \draw[arrow] (R3-\i)  -- (B-\i);
            
            \draw[arrow] (R1-\next) -- (R1-\i);
            \draw[arrow] (R2-\i)    -- (R2-\next);
            \draw[arrow] (R3-\next) -- (R3-\i);
        \fi
    }

\end{tikzpicture}
\end{figure}

\begin{figure}
\vspace{5mm}
\centering
\begin{tikzpicture}[scale=0.55,
    blueNode/.style={
        circle, 
        draw=black, 
        fill=cyan!40, 
        inner sep=0pt, 
        minimum size=5pt, 
        line width=0.6pt
    },
    redNode/.style={
        circle, 
        draw=black, 
        fill=red!40, 
        inner sep=0pt, 
        minimum size=5pt, 
        line width=0.6pt 
    },
    arrow/.style={
        -{Stealth[scale=1.0]}, 
        thick,
        shorten >=1.5pt, 
        shorten <=1.5pt
    },
     blackNode/.style={
        circle, draw=black, fill=black!100, inner sep=0pt, minimum size=5pt, line width=0.4pt
    },
    arrow1/.style={
        -{Stealth[scale=1.0]}, yellow, thick, shorten >=1.5pt, shorten <=1.5pt
    }
]

    \node[blueNode] (Center) at (0,0) {};

    \def\n{18}          
    \def\rI{4}        
    \def\rII{6}       
    \def\rIII{8}      
    \def\rB{10}        

    \foreach \i in {0,1,...,17} {
        \pgfmathsetmacro{\angle}{90 - \i*(360/\n)} 
        
        \node[redNode]  (R1-\i) at (\angle:\rI)   {};
        \node[redNode]  (R2-\i) at (\angle:\rII)  {};
        \node[redNode]  (R3-\i) at (\angle:\rIII) {};
        \node[blueNode] (B-\i)  at (\angle:\rB)   {};
    }

    \foreach \i in {0,1,...,17} {
        \pgfmathsetmacro{\next}{int(mod(\i+1,\n))}
        
        \ifodd\i
            \draw[arrow] (B-\i)  -- (R3-\i);
            \draw[arrow] (R2-\i) -- (R3-\i) ;
            \draw[arrow] (R2-\i) -- (R1-\i);
            \draw[arrow] (Center) -- (R1-\i) ;
            
            \draw[arrow] (R1-\i) -- (R1-\next);
            \draw[arrow] (R2-\next) -- (R2-\i);
            \draw[arrow] (R3-\i) -- (R3-\next);
        \else
            \draw[arrow] (R1-\i) -- (Center) ;
            \draw[arrow] (R1-\i)  -- (R2-\i);
            \draw[arrow] (R3-\i) -- (R2-\i) ;
            \draw[arrow] (R3-\i)  -- (B-\i);
            
            \draw[arrow] (R1-\next) -- (R1-\i);
            \draw[arrow] (R2-\i)    -- (R2-\next);
            \draw[arrow] (R3-\next) -- (R3-\i);
        \fi
    }
\draw[white, ->, line width=5pt] (R1-1) -- (R1-0);
\draw[white, ->, line width=5pt] (R1-17) -- (R1-0);
\draw[white, ->, line width=5pt] (R1-1) -- (R1-2);
\draw[white, ->, line width=5pt] (Center) -- (R1-1);
\draw[white, ->, line width=5pt] (R1-0) -- (Center);
\draw[white, ->, line width=5pt] (Center) -- (R1-17);
\draw[white, ->, line width=5pt] (R1-16) -- (Center);
\draw[white, ->, line width=5pt] (Center) -- (R1-15);
\draw[white, ->, line width=5pt] (R1-17) -- (R1-16);
\draw[white, ->, line width=5pt] (R1-15) -- (R1-16);
\draw[white, ->, line width=5pt] (R1-16) -- (R2-16);
\draw[white, ->, line width=5pt] (R2-17) -- (R1-17);
\draw[white, ->, line width=5pt] (R1-0) -- (R2-0);
\draw[white, ->, line width=5pt] (R2-17) -- (R3-17);
\draw[white, ->, line width=5pt] (R2-16) -- (R2-17);
\draw[white, ->, line width=5pt] (R2-0) -- (R2-17);

\draw[arrow1] (Center) -- (R1-17);
\draw[arrow] (R1-14) -- (Center);
\draw[arrow] (R1-2) -- (Center);
\draw[arrow] (R1-1) -- (R1-2);
\draw[arrow1] (R1-0) -- (R2-17);
\draw[arrow1] (R1-16) -- (R2-17);
\draw[arrow1] (R1-15) -- (R2-16);
\draw[arrow1] (R2-16) -- (R3-17);
\draw[arrow1] (R1-1) -- (R2-0);
\draw[arrow1] (R2-0) -- (R3-17);
\draw[arrow1] (R3-17) -- (R2-17);
\draw[arrow1] (R2-17) -- (R1-17);
\draw[arrow1] (R2-17) -- (R2-0);
\draw[arrow1] (R2-17) -- (R2-16);
\draw[arrow1] (R1-17) -- (R1-0);
\draw[arrow1] (R1-0) -- (R1-1);
\draw[arrow1] (R1-17) -- (R1-16);
\draw[arrow1] (R1-16) -- (R1-15);
\draw[arrow1] (R2-0) -- (R1-0);
\draw[arrow1] (R2-16) -- (R1-16);
\node[blackNode] at (-250: 4) {};

\end{tikzpicture}
\end{figure}

\begin{figure}
\centering
\begin{tikzpicture}[scale=0.55,
    blueNode/.style={
        circle, 
        draw=black, 
        fill=cyan!40, 
        inner sep=0pt, 
        minimum size=5pt, 
        line width=0.6pt
    },
    redNode/.style={
        circle, 
        draw=black, 
        fill=red!40, 
        inner sep=0pt, 
        minimum size=5pt, 
        line width=0.6pt 
    },
    arrow/.style={
        -{Stealth[scale=1.0]}, 
        thick,
        shorten >=1.5pt, 
        shorten <=1.5pt
    },
     blackNode/.style={
        circle, draw=black, fill=black!100, inner sep=0pt, minimum size=5pt, line width=0.4pt
    },
    arrow1/.style={
        -{Stealth[scale=1.0]}, yellow, thick, shorten >=1.5pt, shorten <=1.5pt
    }
]

    \node[blueNode] (Center) at (0,0) {};

    \def\n{18}          
    \def\rI{4}        
    \def\rII{6}       
    \def\rIII{8}      
    \def\rB{10}        

    \foreach \i in {0,1,...,17} {
        \pgfmathsetmacro{\angle}{90 - \i*(360/\n)} 
        
        \node[redNode]  (R1-\i) at (\angle:\rI)   {};
        \node[redNode]  (R2-\i) at (\angle:\rII)  {};
        \node[redNode]  (R3-\i) at (\angle:\rIII) {};
        \node[blueNode] (B-\i)  at (\angle:\rB)   {};
    }

    \foreach \i in {0,1,...,17} {
        \pgfmathsetmacro{\next}{int(mod(\i+1,\n))}
        
        \ifodd\i
            \draw[arrow] (B-\i)  -- (R3-\i);
            \draw[arrow] (R2-\i) -- (R3-\i) ;
            \draw[arrow] (R2-\i) -- (R1-\i);
            \draw[arrow] (Center) -- (R1-\i) ;
            
            \draw[arrow] (R1-\i) -- (R1-\next);
            \draw[arrow] (R2-\next) -- (R2-\i);
            \draw[arrow] (R3-\i) -- (R3-\next);
        \else
            \draw[arrow] (R1-\i) -- (Center) ;
            \draw[arrow] (R1-\i)  -- (R2-\i);
            \draw[arrow] (R3-\i) -- (R2-\i) ;
            \draw[arrow] (R3-\i)  -- (B-\i);
            
            \draw[arrow] (R1-\next) -- (R1-\i);
            \draw[arrow] (R2-\i)    -- (R2-\next);
            \draw[arrow] (R3-\next) -- (R3-\i);
        \fi
    }
\draw[white, ->, line width=5pt] (R1-1) -- (R1-0);
\draw[white, ->, line width=5pt] (R1-17) -- (R1-0);
\draw[white, ->, line width=5pt] (R1-1) -- (R1-2);
\draw[white, ->, line width=5pt] (Center) -- (R1-1);
\draw[white, ->, line width=5pt] (R1-0) -- (Center);
\draw[white, ->, line width=5pt] (Center) -- (R1-17);
\draw[white, ->, line width=5pt] (R1-16) -- (Center);
\draw[white, ->, line width=5pt] (Center) -- (R1-15);
\draw[white, ->, line width=5pt] (R1-17) -- (R1-16);
\draw[white, ->, line width=5pt] (R1-15) -- (R1-16);
\draw[white, ->, line width=5pt] (R1-16) -- (R2-16);
\draw[white, ->, line width=5pt] (R2-17) -- (R1-17);
\draw[white, ->, line width=5pt] (R1-0) -- (R2-0);
\draw[white, ->, line width=5pt] (R2-17) -- (R3-17);
\draw[white, ->, line width=5pt] (R2-16) -- (R2-17);
\draw[white, ->, line width=5pt] (R2-0) -- (R2-17);
\draw[white, ->, line width=5pt] (R1-3) -- (R1-2);
\draw[white, ->, line width=5pt] (R1-2) -- (Center);
\draw[white, ->, line width=5pt] (Center) -- (R1-3);
\draw[white, ->, line width=5pt] (R2-1) -- (R1-1);
\draw[white, ->, line width=5pt] (R1-2) -- (R2-2);
\draw[white, ->, line width=5pt] (R2-0) -- (R2-1);
\draw[white, ->, line width=5pt] (R3-0) -- (R2-0);
\draw[white, ->, line width=5pt] (R3-17) -- (R3-0);
\draw[white, ->, line width=5pt] (R3-1) -- (R3-0);
\draw[white, ->, line width=5pt] (R3-0) -- (B-0);
\draw[white, ->, line width=5pt] (R2-2) -- (R2-1);
\draw[white, ->, line width=5pt] (R2-1) -- (R3-1);

\draw[arrow] (R1-14) -- (Center);
\draw[arrow] (R1-4) -- (Center);

\draw[arrow1] (R1-15) -- (R2-16);
\draw[arrow1] (R2-16) -- (R3-17);
\draw[arrow1] (R3-17) -- (B-0);
\draw[arrow1] (R1-3) -- (R2-2);
\draw[arrow1] (R2-2) -- (R3-1);
\draw[arrow1] (R3-1) -- (B-0);

\draw[arrow1] (R1-16) -- (R2-17);
\draw[arrow1] (R2-17) -- (R3-0);
\draw[arrow1] (R1-2) -- (R2-1);
\draw[arrow1] (R2-1) -- (R3-0);


\draw[arrow1] (R3-0) -- (R3-17);
\draw[arrow1] (R3-0) -- (R3-1);

\draw[arrow1] (R2-0) -- (R2-17);
\draw[arrow1] (R2-17) -- (R2-16);
\draw[arrow1] (R2-0) -- (R2-1);
\draw[arrow1] (R2-1) -- (R2-2);

\draw[arrow1] (R1-17) -- (R1-0);
\draw[arrow1] (R1-1) -- (R1-0);
\draw[arrow1] (R1-17) -- (R1-16);
\draw[arrow1] (R1-16) -- (R1-15);
\draw[arrow1] (R1-1) -- (R1-2);
\draw[arrow1] (R1-2) -- (R1-3);

\draw[arrow1] (R2-16) -- (R1-16);
\draw[arrow1] (R3-17) -- (R2-17);
\draw[arrow1] (R2-17) -- (R1-17);

\draw[arrow1] (R2-1) -- (R1-1);
\draw[arrow1] (R3-1) -- (R2-1);
\draw[arrow1] (R2-1) -- (R1-1);

\draw[arrow1] (B-0) -- (R3-0);
\draw[arrow1] (R3-0) -- (R2-0);
\draw[arrow1] (R1-0) -- (R2-0);
\draw[arrow1] (R1-0) -- (Center);
\draw[arrow1] (Center) -- (R1-1);
\draw[arrow1] (Center) -- (R1-17);
\node[blackNode] at (90: 4) {};

\end{tikzpicture}
\end{figure}

\begin{figure}
\vspace{1mm}
\centering
\begin{tikzpicture}[scale=0.55,
    blueNode/.style={
        circle, 
        draw=black, 
        fill=cyan!40, 
        inner sep=0pt, 
        minimum size=5pt, 
        line width=0.6pt
    },
    redNode/.style={
        circle, 
        draw=black, 
        fill=red!40, 
        inner sep=0pt, 
        minimum size=5pt, 
        line width=0.6pt 
    },
    arrow/.style={
        -{Stealth[scale=1.0]}, 
        thick,
        shorten >=1.5pt, 
        shorten <=1.5pt
    },
     blackNode/.style={
        circle, draw=black, fill=black!100, inner sep=0pt, minimum size=5pt, line width=0.4pt
    },
    arrow1/.style={
        -{Stealth[scale=1.0]}, yellow, thick, shorten >=1.5pt, shorten <=1.5pt
    }
]

    \node[blueNode] (Center) at (0,0) {};

    \def\n{18}          
    \def\rI{4}        
    \def\rII{6}       
    \def\rIII{8}      
    \def\rB{10}        

    \foreach \i in {0,1,...,17} {
        \pgfmathsetmacro{\angle}{90 - \i*(360/\n)} 
        
        \node[redNode]  (R1-\i) at (\angle:\rI)   {};
        \node[redNode]  (R2-\i) at (\angle:\rII)  {};
        \node[redNode]  (R3-\i) at (\angle:\rIII) {};
        \node[blueNode] (B-\i)  at (\angle:\rB)   {};
    }

    \foreach \i in {0,1,...,17} {
        \pgfmathsetmacro{\next}{int(mod(\i+1,\n))}
        
        \ifodd\i
            \draw[arrow] (B-\i)  -- (R3-\i);
            \draw[arrow] (R2-\i) -- (R3-\i) ;
            \draw[arrow] (R2-\i) -- (R1-\i);
            \draw[arrow] (Center) -- (R1-\i) ;
            
            \draw[arrow] (R1-\i) -- (R1-\next);
            \draw[arrow] (R2-\next) -- (R2-\i);
            \draw[arrow] (R3-\i) -- (R3-\next);
        \else
            \draw[arrow] (R1-\i) -- (Center) ;
            \draw[arrow] (R1-\i)  -- (R2-\i);
            \draw[arrow] (R3-\i) -- (R2-\i) ;
            \draw[arrow] (R3-\i)  -- (B-\i);
            
            \draw[arrow] (R1-\next) -- (R1-\i);
            \draw[arrow] (R2-\i)    -- (R2-\next);
            \draw[arrow] (R3-\next) -- (R3-\i);
        \fi
    }
\draw[white, ->, line width=5pt] (R1-1) -- (R1-0);
\draw[white, ->, line width=5pt] (R1-17) -- (R1-0);
\draw[white, ->, line width=5pt] (R1-1) -- (R1-2);
\draw[white, ->, line width=5pt] (Center) -- (R1-1);
\draw[white, ->, line width=5pt] (R1-0) -- (Center);
\draw[white, ->, line width=5pt] (Center) -- (R1-17);
\draw[white, ->, line width=5pt] (R1-16) -- (Center);
\draw[white, ->, line width=5pt] (Center) -- (R1-15);
\draw[white, ->, line width=5pt] (R1-17) -- (R1-16);
\draw[white, ->, line width=5pt] (R1-15) -- (R1-16);
\draw[white, ->, line width=5pt] (R1-16) -- (R2-16);
\draw[white, ->, line width=5pt] (R2-17) -- (R1-17);
\draw[white, ->, line width=5pt] (R1-0) -- (R2-0);
\draw[white, ->, line width=5pt] (R2-17) -- (R3-17);
\draw[white, ->, line width=5pt] (R2-16) -- (R2-17);
\draw[white, ->, line width=5pt] (R2-0) -- (R2-17);
\draw[white, ->, line width=5pt] (R1-3) -- (R1-2);
\draw[white, ->, line width=5pt] (R1-2) -- (Center);
\draw[white, ->, line width=5pt] (Center) -- (R1-3);
\draw[white, ->, line width=5pt] (R2-1) -- (R1-1);
\draw[white, ->, line width=5pt] (R1-2) -- (R2-2);
\draw[white, ->, line width=5pt] (R2-0) -- (R2-1);
\draw[white, ->, line width=5pt] (R3-0) -- (R2-0);
\draw[white, ->, line width=5pt] (R3-17) -- (R3-0);
\draw[white, ->, line width=5pt] (R3-1) -- (R3-0);
\draw[white, ->, line width=5pt] (R3-0) -- (B-0);
\draw[white, ->, line width=5pt] (R2-2) -- (R2-1);
\draw[white, ->, line width=5pt] (R2-1) -- (R3-1);

\draw[arrow] (R1-14) -- (Center);
\draw[arrow] (R1-4) -- (Center);

\draw[arrow1] (R1-15) -- (R2-16);
\draw[arrow1] (R2-16) -- (R3-17);
\draw[arrow1] (R3-17) -- (B-0);
\draw[arrow1] (R1-3) -- (R2-2);
\draw[arrow1] (R2-2) -- (R3-1);
\draw[arrow1] (R3-1) -- (B-0);

\draw[arrow1] (R1-16) -- (R2-17);
\draw[arrow1] (R2-17) -- (R3-0);
\draw[arrow1] (R1-2) -- (R2-1);
\draw[arrow1] (R2-1) -- (R3-0);

\draw[arrow1] (R3-0) -- (R3-17);
\draw[arrow1] (R3-0) -- (R3-1);

\draw[arrow1] (R2-0) -- (R2-17);
\draw[arrow1] (R2-17) -- (R2-16);
\draw[arrow1] (R2-0) -- (R2-1);
\draw[arrow1] (R2-1) -- (R2-2);

\draw[arrow1] (R1-17) -- (R1-16);
\draw[arrow1] (R1-16) -- (R1-15);

\draw[arrow1] (R2-16) -- (R1-16);
\draw[arrow1] (R3-17) -- (R2-17);
\draw[arrow1] (R2-17) -- (R1-17);

\draw[arrow1] (R2-1) -- (R1-1);
\draw[arrow1] (R3-1) -- (R2-1);
\draw[arrow1] (R2-1) -- (R1-1);

\draw[arrow1] (B-0) -- (R3-0);
\draw[arrow1] (R3-0) -- (R2-0);
\draw[arrow1] (R1-17) -- (R2-0);
\draw[arrow1] (R1-1) -- (R2-0);
\draw[arrow1] (R1-0) -- (R1-17);
\draw[arrow1] (R1-0) -- (R1-1);
\draw[arrow1] (R1-1) -- (R1-2);
\draw[arrow1] (R1-2) -- (R1-3);
\draw[arrow1] (Center) -- (R1-0);
\draw[arrow1] (R2-0) -- (R1-0);
\node[blackNode]  at (90:\rI) {};
\end{tikzpicture}
\caption{Quivers $Q_{\mathcal{CM}_9}$ and $\mu_{9,v}(Q_{\mathcal{CM}_9})$}
\end{figure}

\end{document}